\newcommand{\ZA}[1]{{\color{black}{#1}}}      
\newcommand{\ZAA}[1]{{\color{black}{#1}}} 
\newcommand{\ZAB}[1]{{\color{black}{#1}}}   
\newcommand{\ZAP}[1]{{\color{black}{#1}}}   
\tikzstyle{vertex}=[circle, draw, inner sep=0pt, minimum size=6pt]
\newcommand{\abs}[1]{\left\vert #1 \right\vert}
\newcommand{\notforthesis}[1]{}
	\newtheorem{definition}{Definition}
	\newtheorem{claim}{Claim}
	\newtheorem{lemma}{Lemma}
	 \newtheorem{assumption}{Assumption}
	 \newtheorem{proof of proposition}{Proof of Proposition}
	\newtheorem{proof of claim}{Proof of Claim}
	\newtheorem{proposition}{Proposition} 
	\newtheorem{theorem}{Theorem}
	\newtheorem{notation}{Notation}
        \newtheorem{corollary}{Corollary}
        \newtheorem{remark}{Remark}
\newcommand{\sech}{\mbox{sech}\,}
\renewcommand{\r}{{\mathbb R}}
\renewcommand{\a}{{\alpha}}
\renewcommand{\o}{{\omega}}
\newcommand{\w}{{w}}   
\renewcommand{\c}{{c}}  
\newcommand{\x}{{\xi}} 
\newcommand{\et}{{\eta}}
\newcommand{\ee}{\end{equation}}
\newcommand{\bal}{\begin{aligned}}
\newcommand{\eal}{\end{aligned}}
\newcommand{\bi}{\begin{itemize}}
\newcommand{\ei}{\end{itemize}}
\newcommand{\ben}{\begin{enumerate}}
\newcommand{\een}{\end{enumerate}}
\newcommand{\beqn}{\begin{eqnarray*}}
\newcommand{\eeqn}{\end{eqnarray*}}
\newcommand{\be}[1]{\begin{equation}\label{#1}}
\newcommand{\bp}{\begin{proof}}
\newcommand{\ep}{\end{proof}}
\newcommand{\bremark}{\begin{remark}\rm } 
\newcommand{\eremark}{\end{remark}}
\newcommand{\blem}{\begin{lemma}}
\newcommand{\elem}{\end{lemma}}
\newcommand{\bclaim}{\begin{claim}}
\newcommand{\eclaim}{\end{claim}}
\newcommand{\bnote}{\begin{notation}}
\newcommand{\enote}{\end{notation}}
\newcommand{\bthm}{\begin{theorem}}
\newcommand{\ethm}{\end{theorem}}
\newcommand{\bprop}{\begin{proposition}}
\newcommand{\eprop}{\end{proposition}}
\newcommand{\bcor}{\begin{corollary}}
\newcommand{\ecor}{\end{corollary}}
\newcommand{\dis}{\displaystyle}
\newcommand{\lt}{\left}
\newcommand{\rt}{\right}
\newcommand{\FR}{A_{FR} }
\newcommand{\FL}{A_{FL} }
\newcommand{\BR}{A_{BR} }
\newcommand{\BL}{A_{BL} }
\newcommand{\T}{A_{Tri} }
\title{Gait transitions in a phase oscillator model \\ of an insect  central pattern generator}
\author{Zahra Aminzare%
  \thanks{The Program in Applied and Computational Mathematics and  Department of Mechanical and Aerospace Engineering,  Princeton University, NJ, USA.
  Email: aminzare@math.princeton.edu (Zahra Aminzare).}
  \and Vaibhav Srivastava%
  \thanks{Department of Electrical and Computer Engineering Michigan State University, Lansing, MI , USA.
Email: vaibhav@egr.msu.edu (Vaibhav Srivastava).}
 \and Philip Holmes%
  \thanks{The Program in Applied and Computational Mathematics,  Department of Mechanical and Aerospace Engineering,  and Princeton Neuroscience Institute,  Princeton University, NJ, USA.
 Email: pholmes@math.princeton.edu (Philip Holmes).}
 }
\date{}                                           
\begin{document}
\maketitle

%
%
%
%
%

\begin{abstract}
Legged locomotion involves various gaits. It has been observed that fast
running insects (cockroaches) employ a tripod gait with three legs lifted
off the ground simultaneously in swing, while slow walking insects (stick
insects) use a tetrapod gait with two legs lifted off the ground simultaneously. 
Fruit flies use both gaits and exhibit a transition from tetrapod to tripod
at intermediate speeds. Here we study the effect of stepping frequency
on gait transition in an ion-channel bursting neuron model in which each
cell represents a hemi-segmental thoracic circuit of the central pattern generator. 
Employing phase reduction, we collapse the network of bursting neurons
represented by 24 ordinary differential equations to 6 coupled nonlinear
phase oscillators, each corresponding to a sub-network of neurons controlling one leg.
Assuming that the left  and right legs maintain constant phase differences
(contralateral symmetry), we reduce from 6 equations to 3, allowing
analysis of a  dynamical system with 2 phase differences defined on a torus. 
We show that  bifurcations occur from multiple stable tetrapod gaits to a
unique stable tripod gait as speed increases. Finally, we consider gait
transitions in two sets of data fitted to freely walking fruit flies. 
\end{abstract}

{\bf Key words.} bifurcation, bursting neurons, coupling functions, insect gaits, 
phase reduction, phase response curves, stability

{\bf AMS subject classifications.} 34C16, 34C60, 37G10, 92B20, 92C20 

\section{Introduction: idealized insect gaits}
\label{s.introduction}

Legged locomotion involves alternating stance and swing phases in which
legs respectively provide thrust to move the body and are then raised
and repositioned for the next stance phase. Insects, having six legs,
are capable of complex walking gaits in which various combinations
of legs can be simultaneously in stance and swing. However, when
walking on level ground, their locomotive behavior can be characterized
by the following kinematic rules,  \cite{Wilson66, Graham85}. 

\begin{enumerate}

\item A wave of protractions (swing) runs from posterior to
anterior legs.

\item Contralateral legs of the same segment alternate approximately
in anti-phase.

In addition, in \cite{Wilson66}, Wilson assumed that: 

\item Swing duration remains approximately constant as speed
increases. 

\item Stance (retraction) duration decreases as speed increases.

\end{enumerate}
Rules 3 and 4  have been documented in fruit flies by Mendes et. al. 
\cite{Mendes_eLife_2013}. 

In the slow metachronal gait, the hind, middle and front legs on one
side swing in succession followed by those on the other side; at most
one leg is in swing at any time. As speed increases, in view of rules
3 and 4, the swing phases of contralateral pairs of legs begin to overlap,
so that two legs swing while four legs are in stance in a tetrapod gait,
as observed for fruit flies in  \cite{Mendes_eLife_2013}. 
At the highest speeds the hind and front legs on one side swing together 
with the contralateral middle leg while their contralateral partners provide
support in an alternating tripod gait which is typical for insects at high speeds. 

Motivated by observations and data from fruit flies, which use both tetrapod and tripod gaits, 
\ZA{and from cockroaches, which use tripod gaits \cite{Fuchs14}, and stick insects, which use tetrapod gaits \cite{AYALI20151}},
our goal is to understand the transition between these gaits and their stability properties, 
 analytically. 
 Our dynamical analysis provides a mechanism that supplements the kinematic description given above. 
  This will allow us to distinguish tetrapod, tripod, and transition gaits precisely and ultimately to 
  obtain rigorous results characterizing their existence and stability.  
  \ZAP{For gait transitions in vertebrate animals, 
  see e.g. \cite{Golubitsky1999, Collins1993}.}

In \cite{Fuchs14}, a $6$-oscillator model, first proposed in  \cite{SIAM2}, was used to fit data from freely
running cockroaches that use tripod gaits over much of their speed range
\cite{Delcomyn_1971}. Here, in addition to the tripod gait, we consider 
tetrapod gaits and study  the transitions among them and tripod gaits. 
We derive a $6$-oscillator model from a network of 6 bursting neurons 
\ZA{with inhibitory nearest neighbor coupling.}
After showing numerically that it can produce multiple tetrapod gaits
as well as a tripod gait, we appeal to the methods of phase reduction and bifurcation theory 
to study gait transitions. 
\ZAA{Our coupling assumption is supported by studies of freely running cockroaches in \cite{Fuchs14}, 
in which various architectures were compared and inhibitory nearest neighbor coupling provided the 
best fits to data according to Akaike and Bayesian Information Criteria (AIC and BIC). The inhibitory assumption is 
motivated by  the fact that neighboring oscillators' solutions are out of phase \cite{pearson_Iles_1973}.} 

Phase reduction is also used by Yeldesbay et. al.
in \cite{YeldHolTothDaun_2016, YeldTothDaun_2017} to model stick insect locomotion and
display gait transitions. \ZAA{Their reduced model contains 3 ipsilateral legs and has 
a cyclical coupling architecture, with a connection from hind to front segments. 
Here we show that the nearest neighbor architecture also produces such 
gait transitions.}

\ZA{Our main contributions are as follows. First, we confirm that speed changes in the bursting neuron 
model can be achieved by parameter variations (cf. \cite{SIAM2, SIAM1}) and we numerically illustrate 
that increasing speed leads to transition from tetrapod to tripod gaits. 
We then reduce the bursting neuron model from 24 ODEs to 2 phase difference equations and characterize 
coupling functions that produce these gait transitions. 
We illustrate them via analysis and  simulations of the 24 ODE model and the phase difference equations, using parameters
derived from fruit fly data, thereby showing biological feasibility of the mechanisms.}
 
 This paper is organized as follows. 
In Section~\ref{Bursting neuron model}, we review the ion-channel
model for bursting neurons which was developed in \cite{SIAM2,SIAM1}, 
study the influence of the parameters on speed and demonstrate gait transitions numerically.  
%
\ZA{In Section~\ref{phase_reduction}, we
describe the derivation of reduced phase equations, 
and define tetrapod, tripod and transition gaits. 
At any fixed speed, we  assume constant
phase differences between left- and right-hand oscillators, so that an
ipsilateral network of 3 oscillators determines the dynamics of all
6 legs. 
We further reduce to a pair of
phase-difference equations defined on a 2-dimensional torus. 
In Section~\ref{existence_and_stability} we prove the existence of
tetrapod, tripod and transition gaits under specific conditions on the
intersegmental coupling strengths, and establish their stability types. 

In Section \ref{application_BN} we apply the results of 
Section~\ref{existence_and_stability} to the bursting neuron model. 
We show that the form of the coupling functions, which depend upon speed, 
imply the existence of transition solutions connecting tetrapod gaits to the tripod gait.   
%
In Section \ref{application_general_H} we characterize a class of explicit coupling 
functions that exhibit transitions from tetrapod gaits 
to the tripod gait. 
As an example, we  analyze phase-difference equations, 
using coupling functions approximated by Fourier series 
and derive bifurcation diagrams via branch-following methods.
In Section~\ref{DATA} we describe gait transitions in a phase model
with  coupling strengths estimated by fitting data from
freely running fruit flies, and show that such transitions occur even 
when coupling strengths are far from the special cases
studied in Sections~\ref{existence_and_stability} and  \ref{application_BN}. We conclude in 
Section~\ref{conclusion}. 
}

\section{Bursting neuron model }
\label{Bursting neuron model}
In this section we define the bursting neuron model, describe its behavior,  
and illustrate the gait transitions in a system of 24 ODEs representing 6 coupled bursting neurons. 
\subsection{A single neuron}

CPGs in insects are networks of neurons 
in the thoracic and other ganglia that produce rhythmic motor patterns
such as walking, swimming, and flying. 
\ZAP{CPGs for rhythmic movements are reviewed in e.g. \cite{Marder_Bucher_CPG_review, Ijspeert_CPG_review}.}
In this work, we employ a bursting neuron model which was developed
in \cite{SIAM1} to model the local neural network driving each leg.
This system includes a fast nonlinear current, e.g., $I_{Ca}$, a slower
potassium current $I_K$, an additional very slow current $I_{KS}$, and
a linear leakage current $I_L$. The following system of ordinary 
differential equations (ODEs) describes the bursting neuron model 
and its synaptic output $s(t)$. 
\begin{subequations}\label{BN}
\begin{align}
      C\dot {v}&= -\left\{I_{Ca}(v)+I_K(v,m)+I_{KS}(v,\w)+I_L(v)\right\}  + I_{ext},\label{BN1}\\
         \dot {m}&= \dis\frac{\epsilon}{\tau_m(v)} [m_{\infty} (v)-m],\label{BN2}\\
         \dot {\w}&= \dis\frac{\delta}{\tau_{\w}(v)} [\w_{\infty} (v)-\w],\label{BN3}\\
         \dot {s}&= \dis\frac{1}{\tau_{s}}[s_{\infty} (v)(1-s)-s],
\label{BN4}
\end{align}
\end{subequations}
where the ionic currents are of the following forms
\be{currents}
\bal
      &I_{Ca} (v)\;=\; \bar g_{Ca} n_{\infty}(v) (v-E_{Ca}),\qquad
        I_{K} (v,m)\;=\; \bar g_{K} \;m \;(v-E_{K}),\\
      &I_{KS} (v,\w)\;=\; \bar g_{KS} \w \;(v-E_{KS}), \qquad
        I_L(v)\;\;=\;\;\bar g_L(v-E_L).
\eal
\ee
The steady state gating variables associated with ion channels and their
time scales take the forms
\be{steady_states}
\bal 
        &m_{\infty}(v) \;=\; \frac{1}{1+e^{-2k_{K}(v-v_K)}}\;,\qquad \qquad
          \w_{\infty}(v) \;=\; \frac{1}{1+e^{-2k_{KS}(v-v_{KS})}}\;,\\
        &n_{\infty} (v)\;=\;  \frac{1}{1+e^{-2k_{Ca}(v-v_{Ca})}}\;,\qquad\qquad
          s_{\infty} (v)\;=\;  \frac{a}{1+e^{-2k_s(v-E^{pre}_{s})}}\;,
\eal
\ee 
 and 
\be{time_scales}
\bal 
       \tau_{m} (v)= \sech( k_{K}(v-v_{K})),\quad
       \tau_{\w} (v)= \sech( k_{KS}(v-v_{KS})).
\eal
\ee
Here the variable $s$ represents neurotransmitter released at the synapse 
and the constant parameter $\tau_s$ specifies the synaptic time scale.  
The constant parameters are generally fixed as specified in 
Table \ref{table_of_parameters}. Most of the parameter values are taken 
from \cite{SIAM1}, but some of our notations are different.  
\begin{table}[h!]
 \begin{tabular}{|c | c c c c c  c c c c c  c c |} 
 \hline
  & $\delta$  
  & $I_{ext}$ 
  & $\bar g_{Ca}$ 
  & $\bar g_K$ 
  & $\bar g_{KS}$ 
  & $\bar g_{L}$ 
  & $\bar g_{syn}$ 
  & $E_{Ca}$ 
  & $ E_K$ 
  &$ E_{KS}$ 
  &  $ E_{L}$ 
  & $E^{post}_s$  
 \\ 
  [0.5ex] 
 \hline\hline
 $\delta$ control & varies 
 & 35.6
 &4.4 
 & 9.0 
 & 0.19 
 & 2.0 
 & 0.01
 &120 
 & -80 
 &-80 
 & -60 
 & -70 
  \\ 
 \hline
 $I_{ext}$ control 
 & 0.027 
 &varies 
 &4.4 
 &9.0 
 &0.5 
 &2.0
 &0.01
 &120
 &-80 
 &-80
 &-60
 &-70
 \\
  [1ex] 
 \hline
 \end{tabular}
 
 \begin{tabular}{|c | c c c  c c c c c c c c c |} 
 \hline
   & $k_{Ca}$ 
   & $k_K$
   & $k_{KS}$
   & $k_s$ 
   & $v_{Ca}$ 
   & $v_K$
   & $v_{KS}$
   &  $E^{pre}_s$ 
   & a 
   & $C$ 
   & $\epsilon$ 
   & $\tau_s$
   \\ 
  [0.5ex] 
 \hline\hline
 $\delta$ control 
 & 0.056 
 & 0.1 
 & 0.8 
 &0.11  
 &-1.2
 &2
 &-27
 &2 
 &55.56
 &1.2
 & 4.9
 &5.56
  \\ 
 \hline
 $I_{ext}$ control 
 & 0.056
 & 0.1 
 &0.8 
 & 0.11
 &-1.2
 &2
 &-26
 & 2
 &444.48
 &1.2
 &5.0
 &5.56
  \\
  [1ex] 
 \hline
 \end{tabular}
\caption{{The constant parameters in the bursting neuron model,
as $\delta$ (first row) and $I_{ext}$ (second row) vary.}}
\label{table_of_parameters}
\end{table}

Figure \ref{v_m_w_s_del_Iext} (first row) shows the solution of 
Equation~(\ref{BN}) for the parameters specified in the first row of
Table~\ref{table_of_parameters}, and  for $\delta=0.02$. 
Figure~\ref{v_m_w_s_del_Iext} (second row) shows the solution of 
Equation~(\ref{BN}) for the parameters specified in the second row
of Table~\ref{table_of_parameters}, and for $I_{ext}=36.5$. 
 We solved the equation using a fourth order \ZAB{explicit} Runge-Kutta method 
 \ZAB{in a custom-written code, with fixed time step, $0.001$ ms} and ran the 
 simulation for 1000 ms with initial conditions:
\[
v(0) = -70, \;
m(0)= -10,\;
\w(0)= -4,\;
s(0) = 2.
\]

 \begin{figure}[h!]
\begin{center}
 \includegraphics[scale=.4]{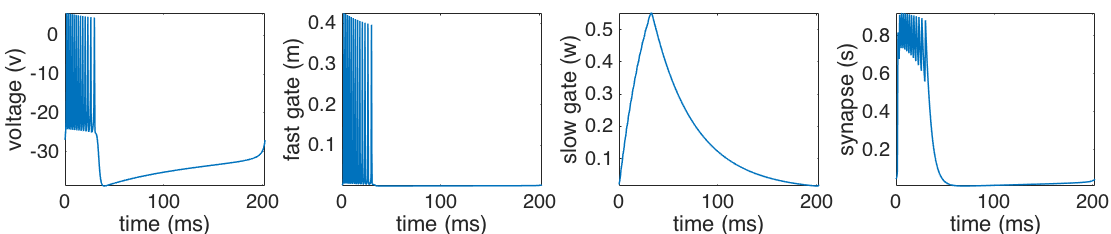}
 \includegraphics[scale=.4]{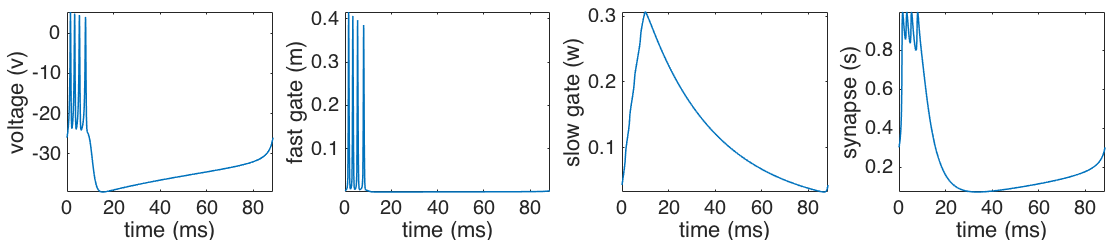}
\end{center}
\caption{First row: A solution of Equation (\ref{BN}) for the parameters
 in the first row of Table \ref{table_of_parameters}, and for $\delta=0.02$. 
 Second row: A solution of Equation (\ref{BN}) for the parameters in the
 second row of Table \ref{table_of_parameters}, and for $I_{ext}=36.5$. 
 Each case is shown for one period of the bursting process.}
\label{v_m_w_s_del_Iext}
\end{figure} 

 The periodic orbit in $(v,m,\w)$ space contains a sequence of spikes (a burst) followed by
 a quiescent phase, which correspond respectively to the swing and
 stance phases of one leg. The burst from the CPG inhibits depressor
 motoneurons, allowing the swing leg to lift from the ground
 \cite{SIAM2, pearson_Iles_1973} (see also  \cite{pearson_Iles_1970,pearson_1972}). 
We denote the period of the periodic orbit by $T$, i.e., it takes $T$
time units (ms here) for an insect to complete the cycle of each leg.  
The number of  steps completed by one leg per unit of time is the
stepping frequency and is equal to $\omega = 2\pi/T$. The period of
the limit cycles shown in Figure \ref{v_m_w_s_del_Iext} are
approximately 202 ms and  88.57 ms, and their  frequencies 
are approximately 4.95 Hz and 11.29 Hz, respectively. 
The swing phase ($SW$) is the duration of one burst and represents
the time when the leg is off the ground, and the stance phase ($ST$)
is the duration of the quiescence  in each periodic orbit and
represents the time when the leg is on the ground. Hence, $SW + ST
=T$. The swing duty cycle, denoted by $DC$, is equal to $SW/T$. 
 Note that an insect decreases its speed primarily by decreasing its
 stance phase duration (see the data in \cite{Mendes_eLife_2013},
 and the rules from \cite{Wilson66}, given in the introduction). 
  
In what follows, we show the effect of two parameters in the bursting
neuron model, $\delta$ and $I_{ext}$, on period, swing, stance and
duty cycle. We will see that these parameters have a major
effect on speed; i.e., when either $\delta$ and $I_{ext}$ increase, the
period of the periodic orbit decreases, primarily by decreasing stance
phase duration, and so the insect's speed increases. We consider
the effects of each parameter separately but in parallel. As we study
the effect of $\delta$ (resp. $I_{ext}$), we fix all other parameters
as in the first (resp. second) row of Table~\ref{table_of_parameters}.
We let $\delta$ vary in the range $\ZAA{[\delta_1, \delta_2]} = [0.0097,0.04]$
and $I_{ext}$ vary  in the range $\ZAA{[I_1, I_2]} = [35.65,37.7]$.

 \subsubsection{Effect of the slowest  time scale $\delta$ and external
 input $I_{ext}$ on stepping frequency}
 \label{range_frequency}

Figure~\ref{FDSS_delta} shows the frequency, duty cycle, stance, 
and swing as functions of $\delta$. We computed these quantities by
numerically solving the bursting neuron model (\ref{BN}) for a fixed set
of parameters (first row of Table \ref{table_of_parameters}) as $\delta$
varies. As the figure depicts, as $\delta$ increases from $0.0097$ to $0.04$,
stepping frequency increases from approximately $2.66$ Hz to $8.59$ Hz,
i.e., the speed of the animal increases. Also, note that the stance and swing
phase durations decrease, while the duty cycle remains approximately constant.  
\begin{figure}[h!]
\begin{center}
        \includegraphics[scale=.43]{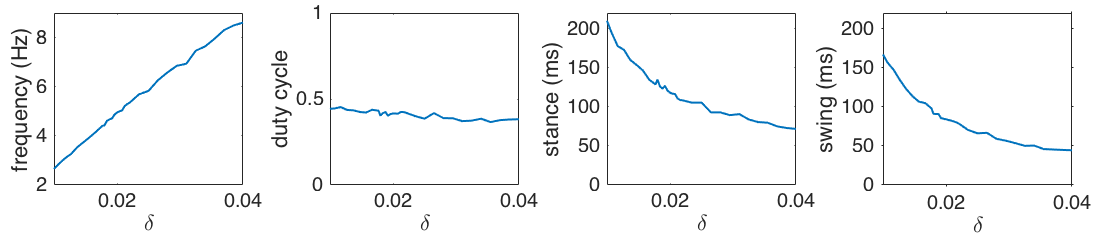}
\end{center}
\caption{The effect of $\delta$ on frequency, duty cycle, stance, and swing in 
a single uncoupled bursting neuron model. See the first row of 
Table~\ref{table_of_parameters} for parameters.}
\label{FDSS_delta}
\end{figure}

We repeat the scenario with fixed parameters in the second row of
Table~\ref{table_of_parameters}  and varying $I_{ext}$. 
Figure~\ref{FDSS_Iext} shows frequency, duty cycle, stance, and swing as
functions of  $I_{ext}$. As $I_{ext}$ increases from $35.65$ to $37.7$,
stepping frequency increases from approximately $6.9$ Hz to $14.9$ Hz.
Now, the duty cycle increases slightly, in contrast to Figure~\ref{FDSS_delta},
while the swing duration remains approximately constant. This is closer to 
the rules given in Section~\ref{s.introduction}. 
\begin{figure}[h!]
\begin{center}
          \includegraphics[scale=.43]{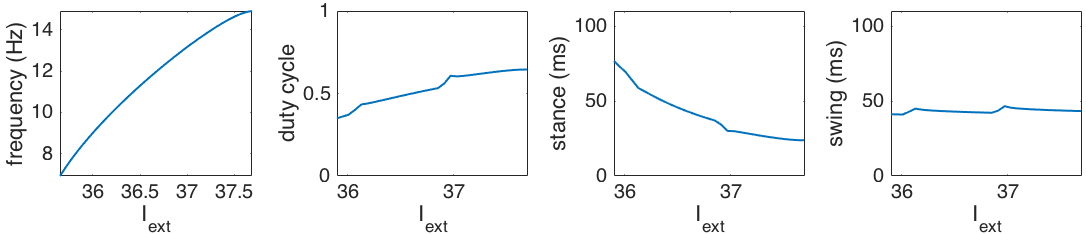}  
\end{center}
\caption{The effect of $I_{ext}$ on frequency, duty cycle, stance, and swing
in a single uncoupled bursting neuron model. See the second rows of
Table~\ref{table_of_parameters} for parameters.}
\label{FDSS_Iext}
\end{figure}

For the rest of the paper, we use the symbol $\x$ to denote the speed
parameter $\delta$ or $I_{ext}$. We note that it is more realistic
to use $I_{ext}$ as speed parameter, for the following three reasons.
\begin{enumerate}[leftmargin=*]
\item Input currents provide a more biologically relevant control mechanism, \cite{TothKnopsDaun-G-JNPhys12}. 
\item Swing duration remains approximately constant, as proposed in rule 3
of Section~\ref{s.introduction}, while $\delta$ affects burst duration
\cite{SIAM1}.
\item The frequency range obtained is closer to that seen in fruit fly
\cite{Mendes_eLife_2013} and cockroach data \cite{Fuchs14}. 
\end{enumerate}
 
\bremark
We have observed that $\bar{g}_{KS}$ has a similar influence on stepping
frequency as $\delta$ and $I_{ext}$, but we will not study the effects of this
parameter on gait transition in this paper.  
 \eremark 

\subsection{Weakly interconnected  neurons}

\begin{wrapfigure}[13]{r}{0.3\textwidth}
\centering 
         \includegraphics[width=.16\textwidth]{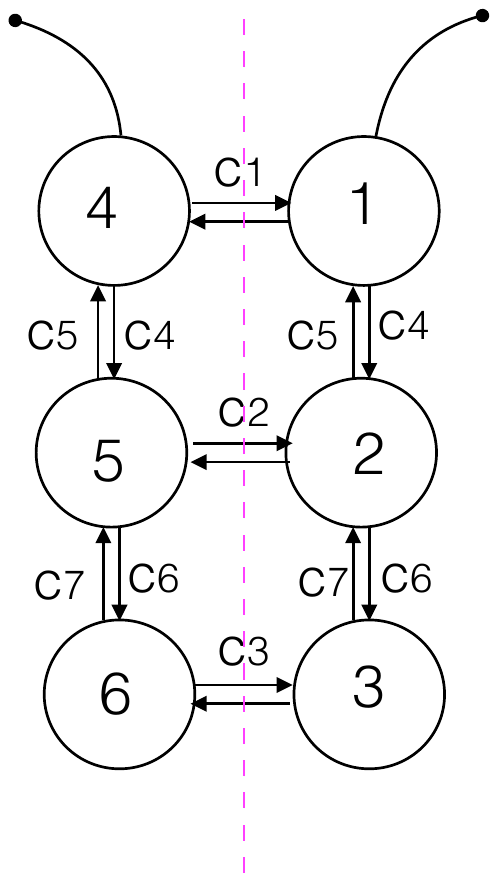}
\caption{Network of CPGs}
\label{6-oscillator}
\vspace{-110pt}
\end{wrapfigure}
We now consider a network of six mutually inhibiting units, representing
the hemi-segmental CPG networks contained in the insect's thorax, as 
shown in Figure~\ref{6-oscillator}.
We assume that inhibitory coupling is achieved via synapses that produce
negative postsynaptic currents. The synapse variable $s$ enters the 
postsynaptic cell in Equation~(\ref{BN1}) as an additional term, $I_{syn}$, 
\be{synapse_v}
\bal 
          C\dot {v}_i&= -\left\{I_{Ca}+I_K+I_{KS}+I_L\right\}  + I_{ext} +I_{syn}\;,
\eal
\ee 
where
\be{I_syn}
\bal 
          I_{syn}=\dis\sum_{j\in \mathcal{N}_i} I_{syn}(v_i,s_j) = \dis\sum_{j\in \mathcal{N}_i} - \bar\c_{ji} \bar g_{syn} s_j \lt(v_i- E_s^{post}\rt), 
 \eal
\ee  
$\bar g_{syn}$ denotes the synaptic strength, and $\mathcal{N}_i$
 denotes the set of the nodes adjacent to node $i$.   
The multiplicative factor $\bar\c_{ji}$ accounts for the fact that multiple
bursting neurons are interconnected in the real animals,  and 
$- \bar\c_{ji} \bar g_{syn}$ represents an overall coupling strength between 
hemi-segments. Following \cite{Fuchs14} we assume contralateral symmetry
and include only nearest neighbor coupling, so that there are three
contralateral coupling strengths $\c_1, \c_2, \c_3$ and four ipsilateral
coupling strengths $\c_4, \c_5, \c_6,$ and $\c_7$; \ZAB{see Figure \ref{6-oscillator}}. 
For example, $\bar\c_{21} = \c_5$, $\bar\c_{41} = \c_1,$ etc. Furthermore, 
we assume that all connections are inhibitory, i.e., $- \c_{i} \bar g_{syn}<0$, 
therefore, all the $\c_{i}$'s are positive.
 
A system of $24$ equations describes the dynamics of the $6$ coupled
cells in the network as shown in Figure~\ref{6-oscillator}. We assume that
each cell which is governed by Equation (\ref{BN}), represents one leg of
the insect. Cells $1,2$, and $3$ represent right front, middle, and hind legs,
and cells $4, 5$, and $6$ represent left front, middle, and hind legs, respectively. 
 For example, assuming that each cell is described by $(v_i,m_i,\w_i,s_i)^T$,
 $i=1,\ldots, 6$, the synapses from presynaptic cells $2$  and $4$, denoted
 by $s_2$ and $s_4$, respectively, enter the postsynaptic cell $1$. The
 following system of $4$ ODEs describe  the dynamics of cell $1$ when 
 connected to cells $2$ and $4$:
\be{cell1}
\bal 
         C\dot {v}_1&=  -\left\{I_{Ca}(v_1)+I_K(v_1,m_1)+I_{KS}(v_1,\w_1)+I_L(v_1)\right\} + I_{ext}\\
                           &\quad - \c_1 \bar g_{syn} s_4 (v_1-E_s^{post}) - \c_5 \bar g_{syn} s_2 (v_1-E_s^{post}),\\
           \dot {m}_1&= \dis\frac{\epsilon}{\tau_m(v_1)} [m_{\infty} (v_1)-m_1],\\
          \dot {\w}_1&= \dis\frac{\delta}{\tau_{\w}(v_1)} [\w_{\infty} (v_1)-\w_1],\\
            \dot {s}_1&= \dis\frac{1}{\tau_{s}}[s_{\infty} (v_1)(1-s_1)-s_1],
 \eal
\ee  
where $\c_1$ and $\c_5$ are the coupling strengths from  cell $4$ and cell
$2$ to cell $1$, respectively. Note that we assume contralateral symmetry,
so the coupling strength from cell $1$ to cell $4$ is  equal to the coupling
strength from cell $4$ to cell $1$, etc.  Five sets, each of analogous ODEs,
describe the dynamics of the other five legs. Moreover, unlike the front and
hind legs, the middle leg cells are connected to three neighbors; see 
Figure~\ref{6-oscillator}. Thus, the full model is described by 24 ODEs.

\subsection{Tetrapod and tripod gaits}
\label{Tetra+Tri_gaits}

In this section, we show numerically the gait transition from tetrapod to
tripod as the speed parameter $\x$ increases.
An insect is said to move in a tetrapod gait if at each step two legs
swing in synchrony while the remaining four are in stance. The following
four patterns are possible. 
\ZA{
 \begin{enumerate}[leftmargin=*]
\item Forward right tetrapod: $ (R2, L3), (R1, L2), (R3, L1)$.
\item Forward left tetrapod: $(R2, L1), (R1, L3), (R3, L2)$.
\item Backward right tetrapod: $(R2, L3), (R3, L1), (R1, L2)$.
\item Backward left tetrapod: $(R2, L1), (R3, L2), (R1, L3)$.
 \end{enumerate}}
Here $R1, R2, R3$ denote the right front, middle and hind legs, and 
$L1, L2, L3$ denote the left front, middle and hind legs, respectively.
The legs in each pair swing simultaneously, and touchdown of the legs
in each pair coincides with lift off of the next pair. For example, in
$(R1,L3)$, the right front leg and left hind legs are in synchrony, etc. 
Figure~\ref{Cartoon_tetrapod_tripod} (left) shows  cartoons of an insect executing
one cycle of the forward and backward tetrapod gaits, in which each
leg completes one swing and one stance phase.

\begin{figure}[h!]
\begin{center}
\includegraphics[scale=.25]{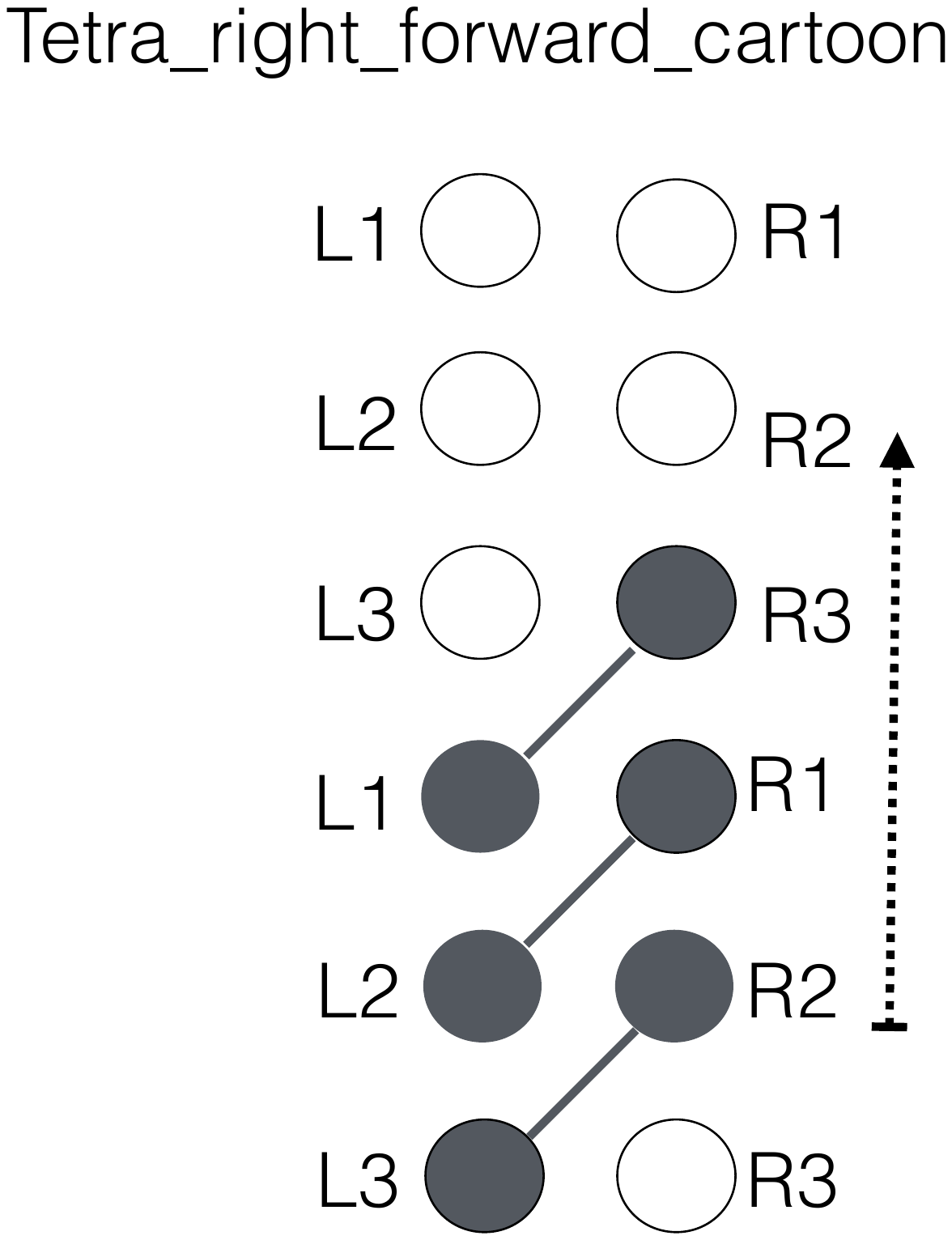}
\qquad
\includegraphics[scale=.25]{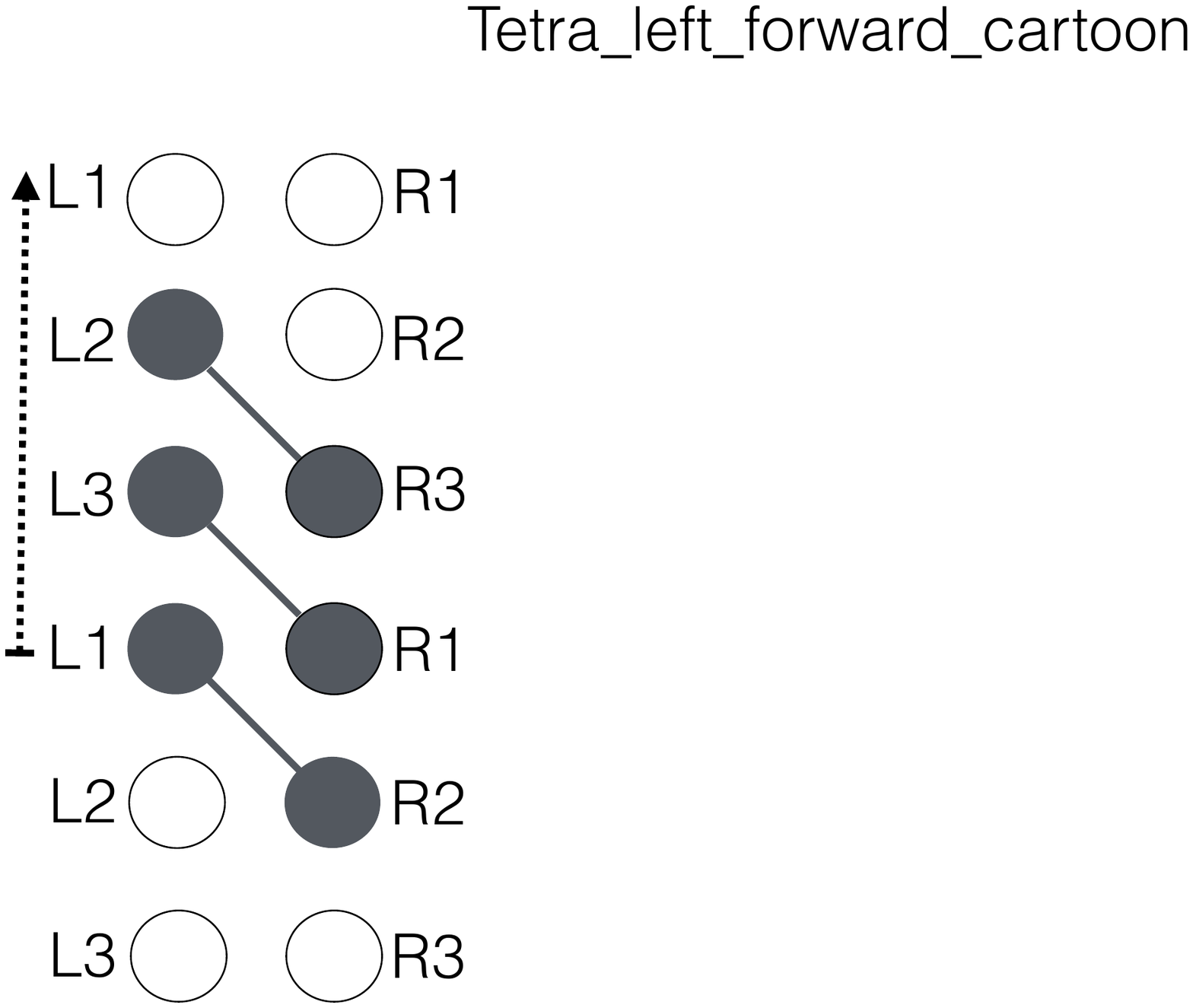}
\qquad
\includegraphics[scale=.25]{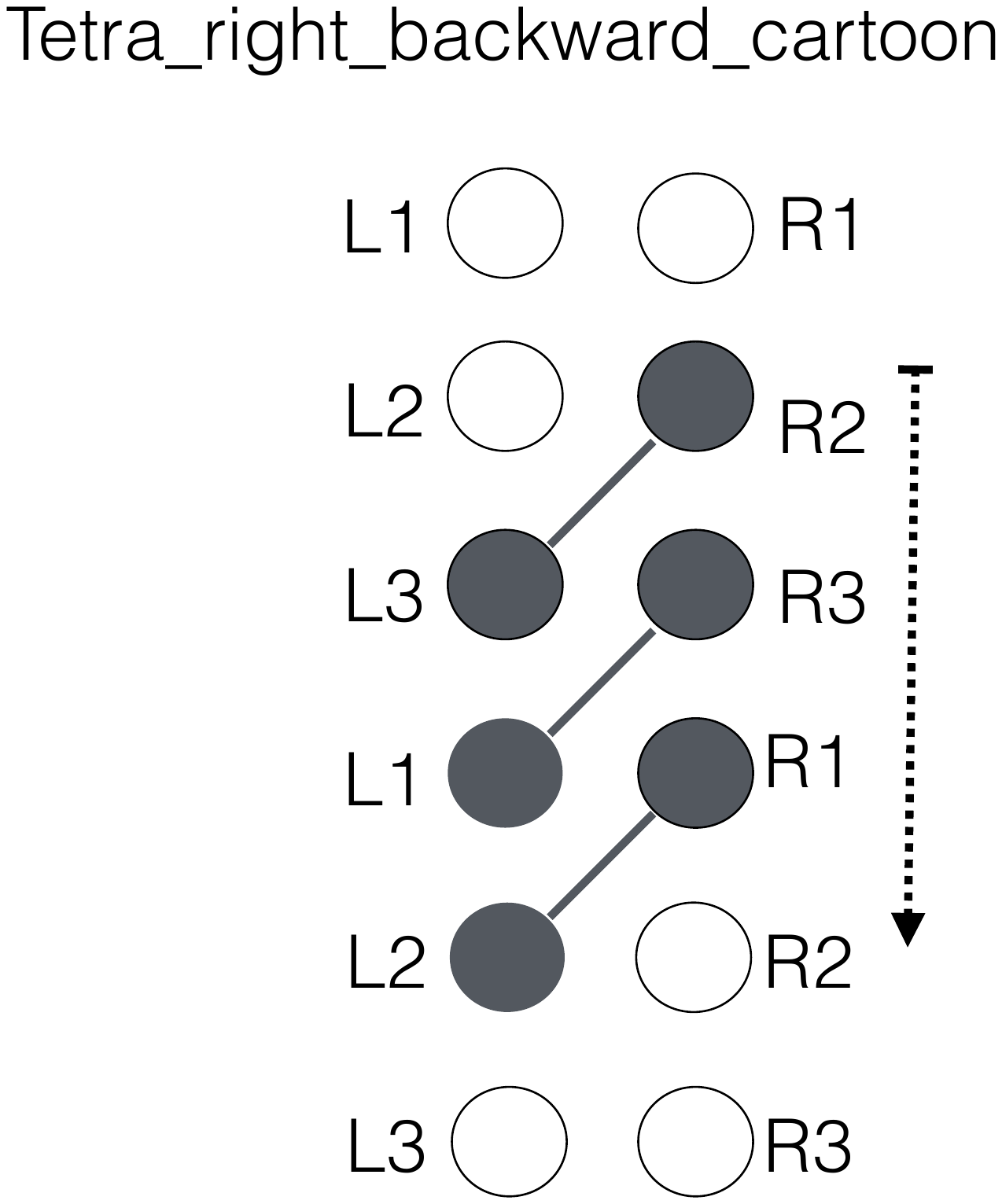}
\qquad
\includegraphics[scale=.25]{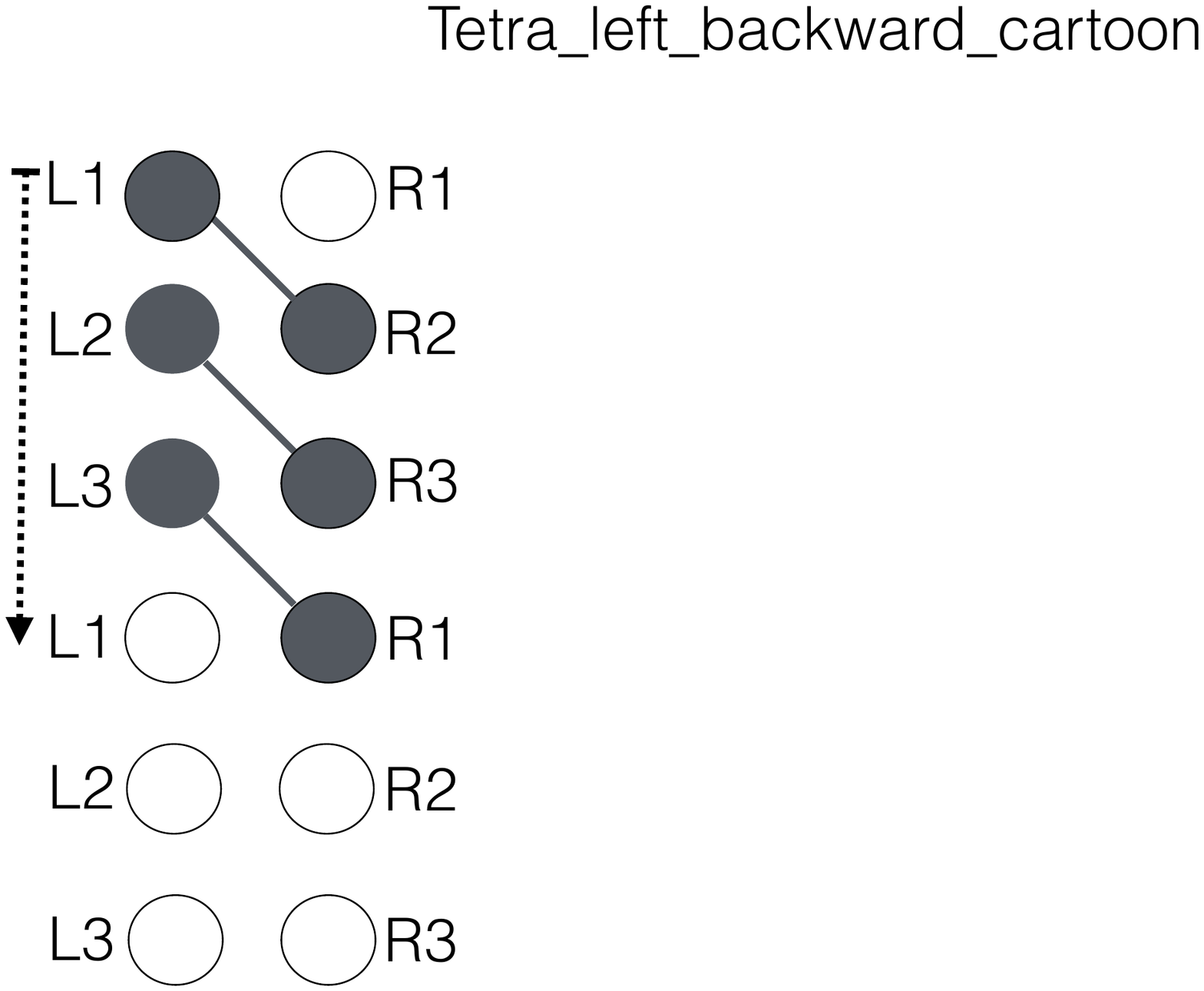}
\qquad\qquad\qquad
\includegraphics[scale=.25]{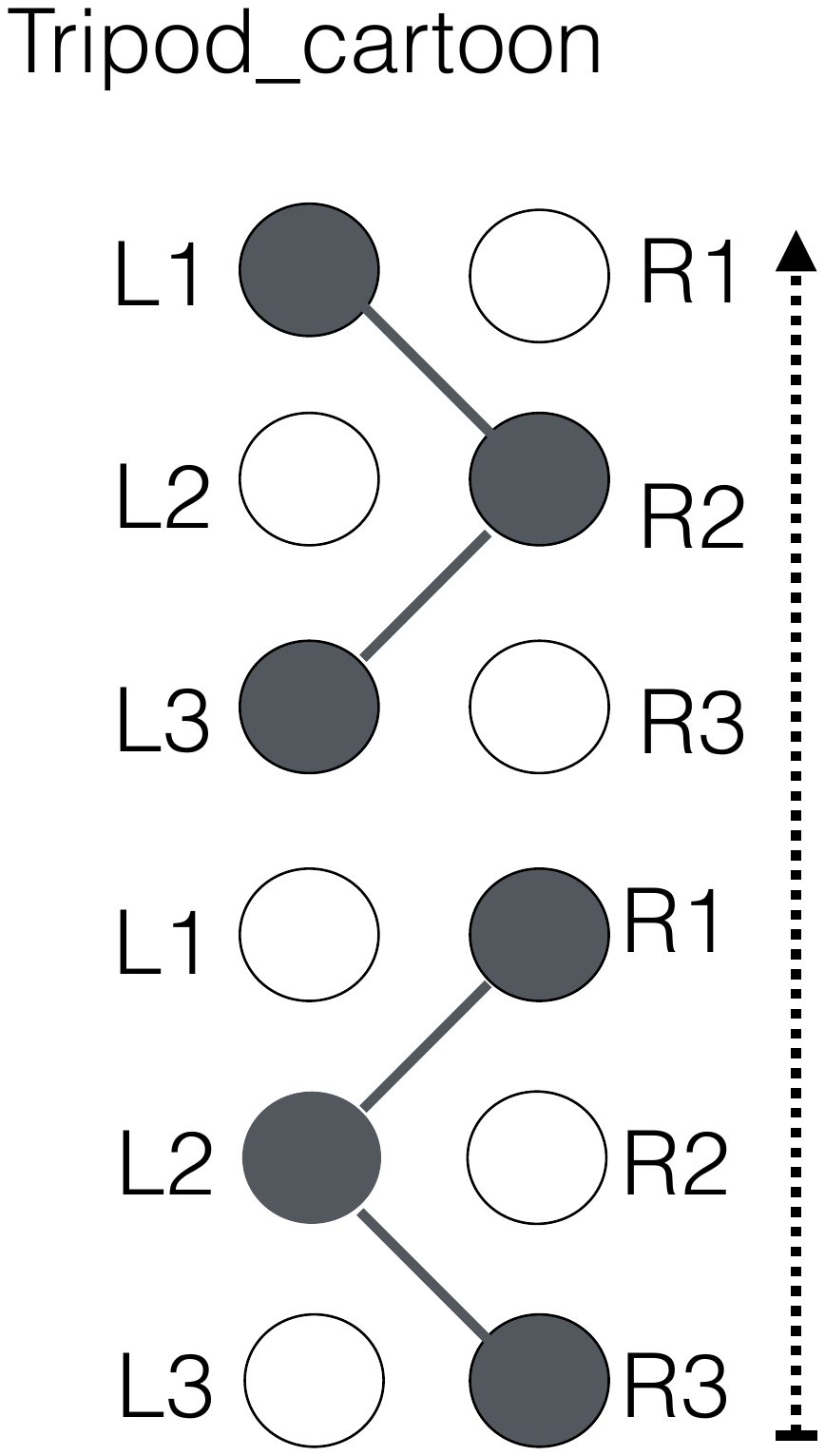}
\caption{ \ZA{(Left to right)  one cycle of  forward right,  forward left, backward
right, backward left tetrapod gaits, and a tripod gait are shown. The diagonal lines connect legs that
swing together; arrows indicate forward (resp. backward) waves.}}
\label{Cartoon_tetrapod_tripod}
\end{center}
\end{figure}


In forward gaits, a forward wave of swing phases from hind  to front legs
causes a movement, while in backward gaits, the swing phases pass
from front  to hind legs. In right gaits, \ZA{the right  legs lead  while in
left gaits the left  legs lead}. We will exhibit a gait transition from
forward right tetrapod to tripod as $I_{ext}$ varies, and a gait transition 
from forward left tetrapod to tripod  as $\delta$ varies. \ZA{Backward
gaits have not been observed in forward walking; however, see 
Figure~\ref{backward_gait} and the 
corresponding discussion in the text.}

An insect is said to move in a tripod gait (also called alternating tripod),
when the following triplets of legs swing simultaneously, and touchdown
of each triplet coincides with lift off of the other.  
\[(R1, L2, R3), (L1, R2, L3 ).\]
Figure~\ref{Cartoon_tetrapod_tripod} (right) shows a cartoon of an insect executing one
cycle of the tripod gait, in which each leg completes one swing and one
stance phase. 

Figure~\ref{transition_gait_delta_Iext} depicts a gait transition from a
forward \ZA{right} tetrapod  to a tripod  in the bursting neuron model as 
$I_{ext}$ increases (first column) and from a forward \ZA{left} tetrapod   to a 
tripod as $\delta$ increases (second column), and for a fixed set of parameters, 
initial conditions, and coupling strengths $\c_i$ as given below.
Figure~\ref{transition_v_delta_Iext}
shows the corresponding voltages. Coupling strengths $\c_i$ are fixed
at the following values for the simulations shown in 
Figures~\ref{transition_gait_delta_Iext} and \ref{transition_v_delta_Iext}:
 \be{coupling:numerical:tetra:tri}
  \c_ 1= 
   \c_ 2= 
  \c_3 = 
  \c_4 = 1,\; 
  \c_5 =
  \c_6 = 3,\; 
  \c_ 7= 2. 
  \ee
%
\begin{figure}[h!]
\begin{center}
\includegraphics[scale=.25]{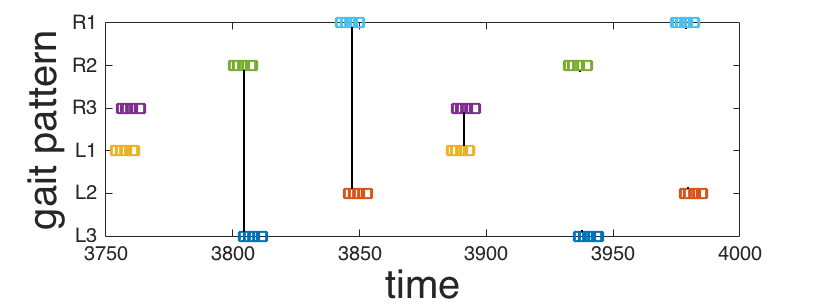} \quad \includegraphics[scale=.5]{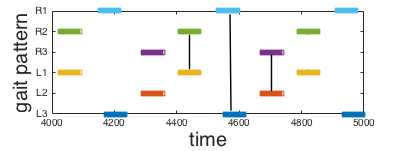}

\includegraphics[scale=.25]{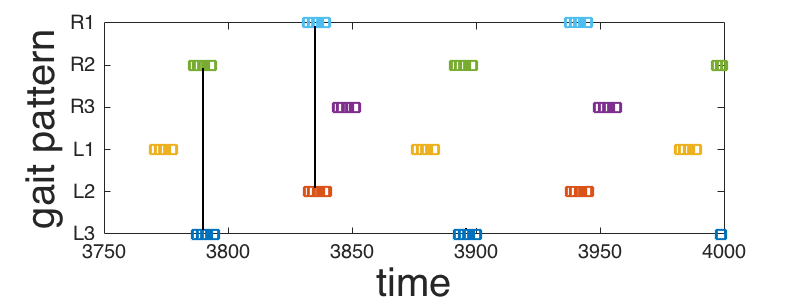} \quad \includegraphics[scale=.5]{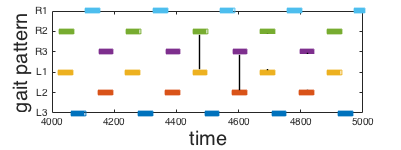}

\includegraphics[scale=.25]{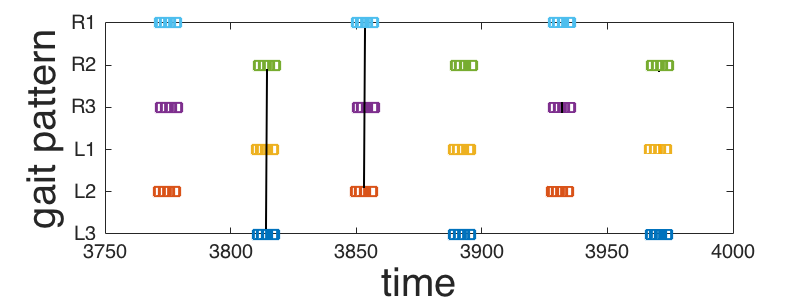} \quad \includegraphics[scale=.5]{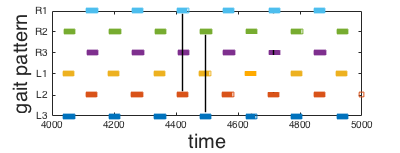}
\caption{Interconnected bursting neuron model: gait transitions from
forward {right} tetrapod  to tripod  as $I_{ext}$  increases, $I_{ext} = 35.9,
36.2, 37.0$ (left column, top to bottom), and
from forward {left} tetrapod  to tripod  as $\delta$ increases, $\delta =0.01, 0.019,
0.03$ (right column, top to bottom).
  Width of horizontal bars indicate  swing durations. 
Note the transitional gaits with
partial overlap of swing durations in the middle row. }
\label{transition_gait_delta_Iext}
\end{center}
\end{figure}
%
%
\begin{figure}[h!]
\begin{center}
\includegraphics[scale=.25]{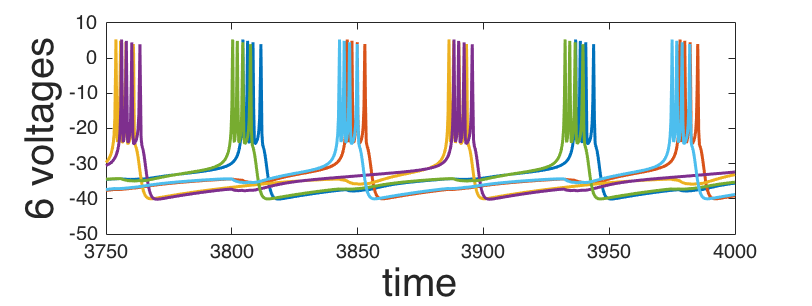} \quad \includegraphics[scale=.5]{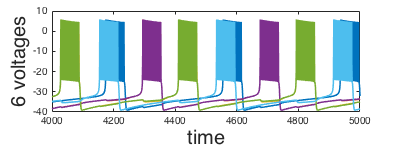}

\includegraphics[scale=.25]{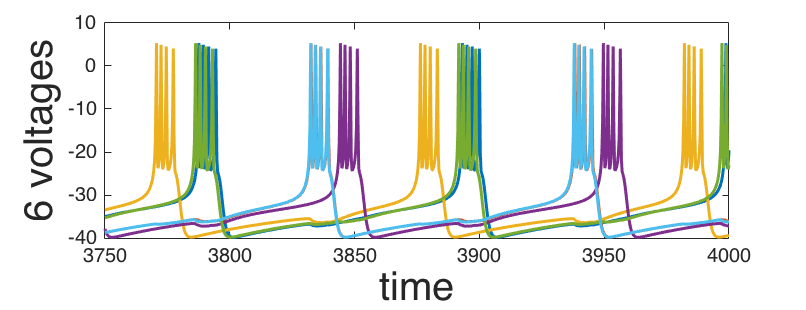}  \quad \includegraphics[scale=.5]{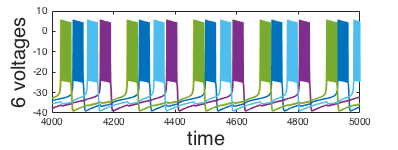}

\includegraphics[scale=.25]{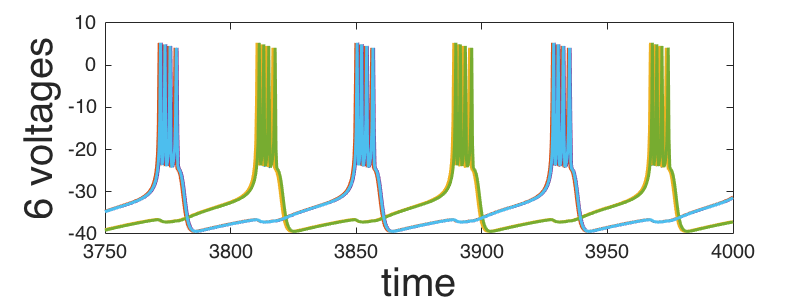} \quad \includegraphics[scale=.5]{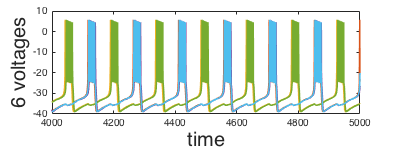}
\caption{The corresponding voltages for the gaits presented in 
Figure~\ref{transition_gait_delta_Iext}. \ZAB{Color code matches that
for legs in Figure~\ref{transition_gait_delta_Iext}. Note that some traces are 
hidden due to in-phase bursts.}}
\label{transition_v_delta_Iext}
\end{center}
\end{figure}
In the simulations shown in first column of  
Figure~\ref{transition_gait_delta_Iext} (as $I_{ext}$ varies), the 24 initial 
conditions for the 24 ODEs  are equal to 
 \be{IC:numerical:tetra:tri:v:Iext}
v_1(0)=-31.93, \;
v_2(0)=-38.55,  \;
v_3(0)=-23.83, \;
v_4(0)=-24.12, \; 
v_5(0)=-31.93, \;
v_6(0)=-38.55,
  \ee
  and for $i=1,\cdots,6$, $m_i$, $\w_i$, and $s_i$ take their steady
  state values as in Equation (\ref{IC:numerical:tetra:tri:m,w,s:delta}). 
  We computed the solutions up to time $t=4000$ ms but only
  show the  time window $[3750,  4000],$ after transients have died out. 
 In the simulations shown in second column of  Figure~\ref{transition_gait_delta_Iext}
(as $\delta$ varies), the 24 initial conditions for the 24 ODEs  are equal to 
 \be{IC:numerical:tetra:tri:v:delta}
v_1(0)= -10, \;
v_2(0)=-40,  \;
v_3(0)=-30, \;
v_4(0)=-40, \; 
v_5(0)=5, \;
v_6(0)=20,
  \ee
  and for $i=1,\cdots,6$, $m_i$, $\w_i$, and $s_i$ take their steady state values:
    \be{IC:numerical:tetra:tri:m,w,s:delta}
    m_i(0)= m_{\infty} (v_i(0)), \; 
     \w_i(0) = \w_{\infty} (v_i(0)), \; 
      s_i(0)= \frac{s_{\infty} (v_i(0))}{s_{\infty} (v_i(0))+1}.  
    \ee
We computed solutions up to time $t=5000$ ms but only show the 
time window $[4000,  5000],$ after transients have died out. 

Our goal is to show that, for the fixed set of parameters in
Table~\ref{table_of_parameters}, and appropriate coupling strengths
$\c_i$, as the speed parameter $\x$, $I_{ext}$ or $\delta$, increases,
a gait transition  from (forward) tetrapod  to tripod gait occurs. We will
provide appropriate  conditions on the $\c_i$'s  in
Section~\ref{existence_and_stability}.  
To reach our goal we first need to define the tetrapod and tripod gaits
mathematically. To this end, in the following section, we reduce the
interconnected  bursting neuron model to $6$ interconnected phase
oscillators, each describing one leg's cyclical movement.  

\section{A phase oscillator model}
\label{phase_reduction}

In this section, we apply  the theory of weakly coupled oscillators (see Section~\ref{Appendix})
 to the coupled bursting neuron models to reduce the 24 
ODEs to 6 phase oscillator equations. 
\ZAP{For a comprehensive review of oscillatory dynamics in neuroscience with many 
references see \cite{Ashwin-JMatNSci16}.}

\subsection{Phase equations for a pair of weakly coupled oscillators}
\label{pair_coupled}
Let the ODE
\be{single:neuron}
\dot X = f(X), \qquad X\in\r^n,
\ee
describe the dynamics of a single neuron. In our model, $X=(v,m,\w,s)^T$
and $f(X)$ is as the right hand side of Equations~(\ref{BN}).  Assume that
Equation~(\ref{single:neuron}) has an attracting hyperbolic limit cycle
{$\Gamma = \Gamma(t)$}, with period $T$ and frequency $\o=2\pi/T$. 

Now consider the system of weakly coupled identical neurons 
\be{coupled:neurons}
\bal
&\dot X_1= f(X_1) + \epsilon g(X_1,X_2),\\
&\dot X_2= f(X_2) + \epsilon g(X_2,X_1),
\eal
\ee
where $0<\epsilon\ll1$ is the coupling strength and $g$ is the coupling function.
 The phase of a neuron, denoted by $\phi$, is the time that has elapsed as its state
 moves around $\Gamma$, starting from an arbitrary reference point
 in the cycle.
  For each neuron,  the phase equation is: 
\be{take:average:3}
\dis\frac{d\phi_i}{dt}(t) =\omega + \epsilon H(\phi_j(t) -\phi_i(t)),
\ee
where 
\[H = H(\theta) =  \frac{1}{T}\int_0^T Z (\Gamma(\tilde t)) \cdot g(\Gamma(\tilde t),\Gamma(\tilde t +\theta )) \; d\tilde t,\]
 is the coupling function: the convolution  of the synaptic current  input to the neuron via coupling $g$ and the neuron's infinitesimal phase response curve (iPRC), $Z$. 
 For more details see the Appendix (Section~\ref{Appendix}).  

In the interconnected bursting neuron model, the coupling function $g$ 
is defined as follows. 
\be{g:interconnected:bursting:neuron}
g (x_i,x_j)= \lt(-  \bar g_{syn} s_j \lt(v_i-E_s^{post}\rt) , 0, 0, 0 \rt)^T, 
\ee
where $x_i=(v_i, m_i, \w_i, s_i)^T$ represents a single neuron (cf. 
Equations~(\ref{synapse_v})-(\ref{cell1})). Therefore, $Z\cdot g = -Z_v
\bar g_{syn} s_j \lt(v_i-E_s^{post}\rt)$,  where $Z_v$ is the iPRC in
the direction of voltage (Figures~\ref{H_PRC_versus_delta} and 
\ref{H_PRC_versus_Iext} (first rows)), and the coupling function, denoted by  $H_{BN}$, 
takes the following form:
\be{H:interconnected:bursting:neuron}
H_{BN}(\theta )=   -\frac{\bar g_{syn}}{T}\int_0^T Z_v (\Gamma(\tilde t))  \lt(v_i(\Gamma(\tilde t))  -E_s^{post}\rt) s_j\lt(\Gamma(\tilde t +\theta )\rt)  \; d\tilde t. 
\ee
In Figures~\ref{H_PRC_versus_delta} and~\ref{H_PRC_versus_Iext}
(second rows), we show the coupling functions $H_{BN}$ derived in 
Equation~(\ref{H:interconnected:bursting:neuron}) for two different
values of $\delta$ and $I_{ext}$, respectively. Note that $H_{BN}(\theta)<0$
over most of its range, and in particular over the interval $[T/3,2T/3]$
corresponding to tetrapod and tripod gaits. Here and for the remainder
of the paper,  coupling functions are plotted with domain $[0,1]$, although
we continue to specify the period $T$ and stepping frequency 
$\omega = 2\pi/T$ in referring to gaits in the text. 

\ZAA{Similar iPRCs to ours have been obtained for the non-spiking half center oscillator model 
used by Yeldesbay et. al. \cite{YeldTothDaun_2017},  apart from
in the region of the burst (personal communication).}
\begin{figure}[h!]
\begin{center}
 \includegraphics[scale=.17]{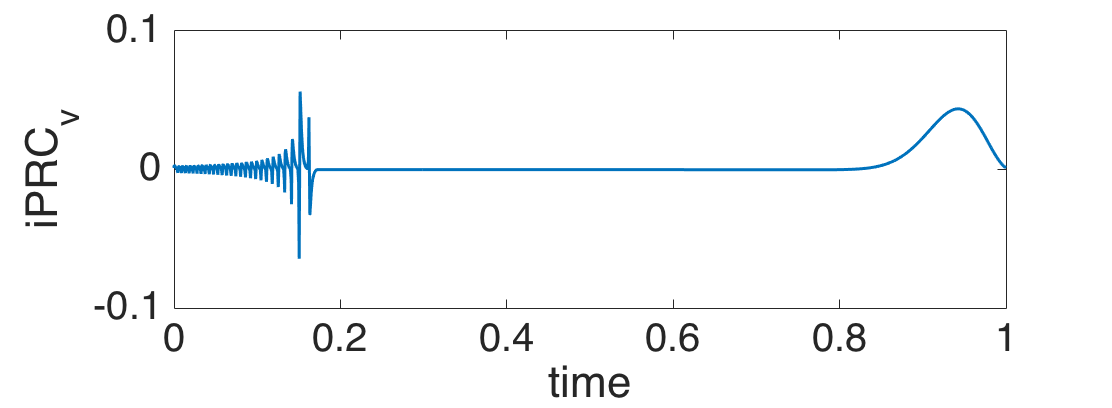}
  \includegraphics[scale=.17]{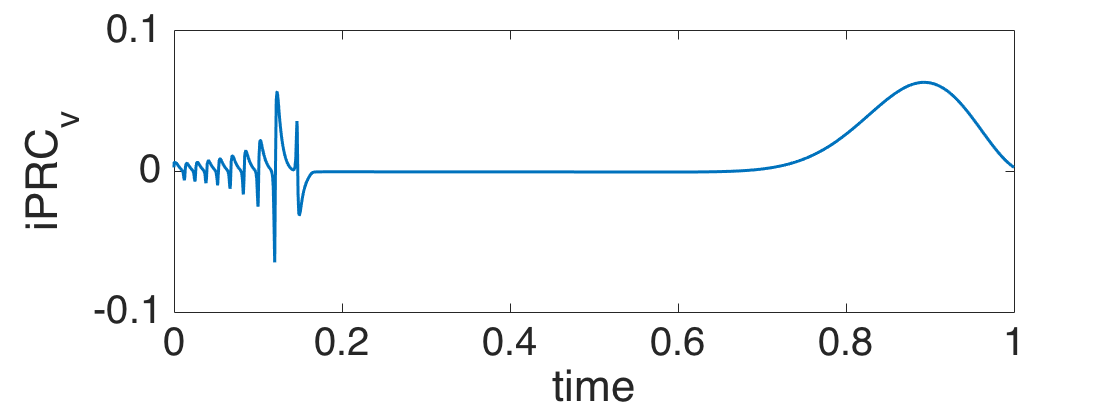}
  \includegraphics[scale=.17]{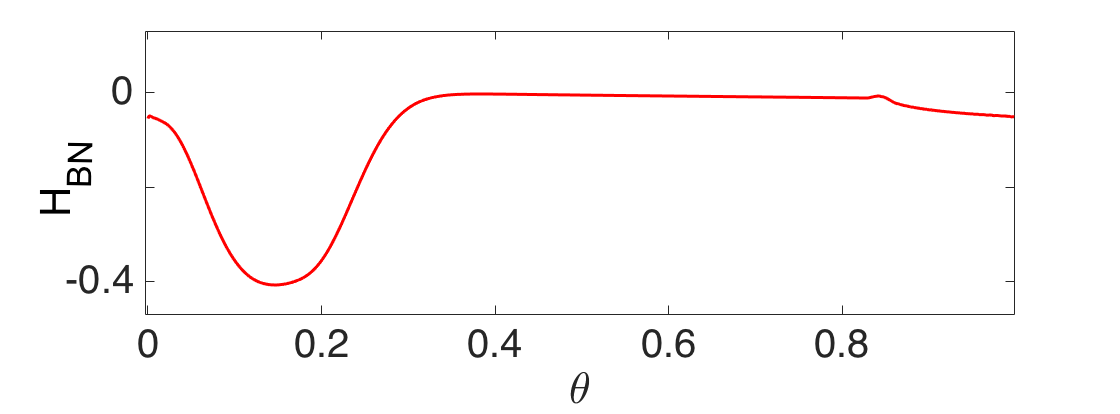}
    \includegraphics[scale=.17]{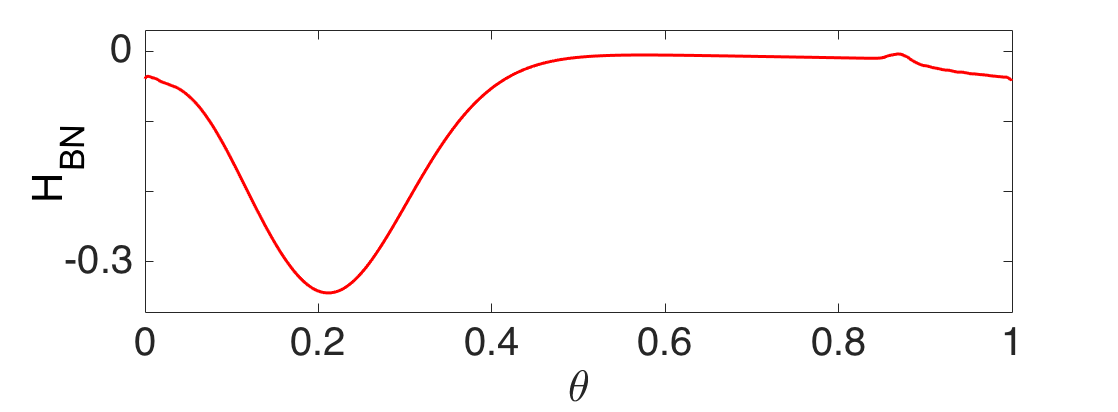}
\caption{First row:  iPRC (in the direction of $v$) for $\delta=0.0097$ (left), $\delta=0.03$ (right). 
Second row:  the coupling functions $H_{BN}(\theta)$ for  $\delta=0.0097$ (left), $\delta=0.03$ (right) }
\label{H_PRC_versus_delta}
\end{center}
\end{figure}
\begin{figure}[h!]
\begin{center}
 \includegraphics[scale=.35]{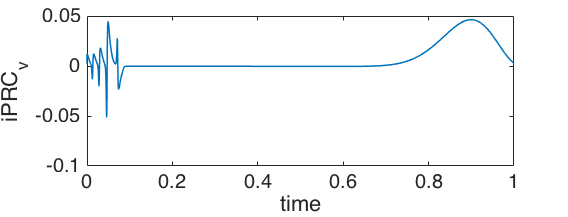}
  \includegraphics[scale=.37]{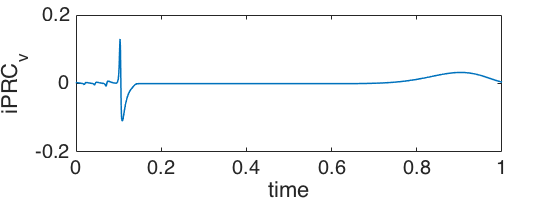}
  \includegraphics[scale=.35]{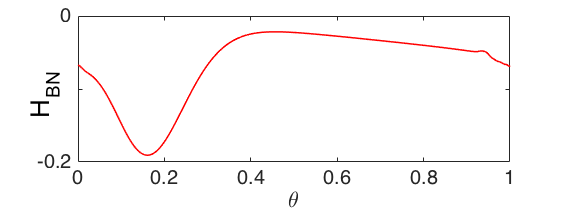}
    \includegraphics[scale=.37]{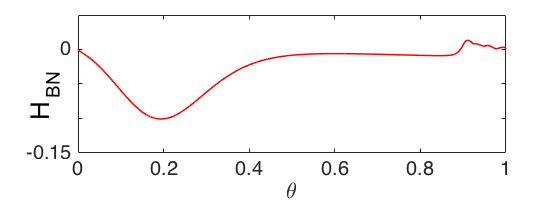}
\caption{First row:  iPRC (in the direction of $v$) for $I_{ext}=35.9$ (left), $I_{ext}=37.1$ (right). 
Second row:  the coupling functions $H_{BN}(\theta)$ for  $I_{ext}=35.9$ (left), $I_{ext}=37.1$ (right). }
\label{H_PRC_versus_Iext}
\end{center}
\end{figure}

\subsection{Phase equations for six weakly coupled neurons}
\label{6oscillators}

We now apply the techniques from Section \ref{pair_coupled} to
six coupled neurons and derive the 6-coupled phase oscillator model
via phase reduction. We assume  that all six hemi-segmental units have
the same intrinsic (uncoupled) frequency $\omega$ and that the coupling
functions $H_i$ are all identical ($H_i= H$) and T-periodic,
$T = 2\pi/\omega$. Recalling Equation~(\ref{take:average:3}) for a pair
of neurons, this leads to the following system of ODEs describing
the six legs' motions. 
\begin{equation} \label{eq.osc1}
\begin{array}{ll}
& \dot{\phi}_1 \;=\;\omega + \c_1H(\phi_4 - \phi_1) + \c_5H(\phi_2 - \phi_1), \\
& \dot{\phi}_2  \;=\; \omega + \c_2H(\phi_5 - \phi_2) + \c_4H(\phi_1 - \phi_2) + \c_7H(\phi_3 - \phi_2), \\
& \dot{\phi}_3  \;=\; \omega + \c_3H(\phi_6 - \phi_3) + \c_6H(\phi_2 - \phi_3), \\
& \dot{\phi}_4  \;=\;\omega + \c_1H(\phi_1 - \phi_4) + \c_5H(\phi_5 - \phi_4), \\
& \dot{\phi}_5  \;=\;\omega +  \c_2H(\phi_2 - \phi_5) + \c_4H(\phi_4 - \phi_5) + \c_7H(\phi_6 - \phi_5), \\
& \dot{\phi}_6  \;=\; \omega + \c_3H(\phi_3 - \phi_6) + \c_6H(\phi_5 - \phi_6) .
\end{array}
\end{equation}
Oscillators 1, 2, and 3 drive the front, middle, and hind legs on the right
with phases $\phi_1, \phi_2,$ and $\phi_3$, and oscillators 4, 5, and 6
drive the analogous  legs on the left with phases $\phi_4, \phi_5,$ and
$\phi_6$ {($\phi_i\in[0,T)$)}. Note that the derivation of the phase reduced
system in Section \ref{pair_coupled} assumes that the coupling strength
$\epsilon$ is small, implying that the product of the coefficients $\c_i$
and $H$ in Equations (\ref{eq.osc1}) should be small compared to the
uncoupled frequency $\omega$.  Since $H$ includes $\bar g_{syn}$, 
(Equation~(\ref{H:interconnected:bursting:neuron})) and $\bar g_{syn}=0.01$, 
(Table~\ref{table_of_parameters}), we have $H=\mathcal{O}(0.1)$ 
(Figures~\ref{H_PRC_versus_delta} and \ref{H_PRC_versus_Iext}). 
In the examples studied below we will take  $\c_i=\mathcal{O}(1)$. 

Next, we provide  sufficient conditions such that an insect employs a 
tetrapod gait at low speeds and a tripod gait at high speeds. We first
define idealized tetrapod and tripod gaits mathematically.    

\begin{definition}[Tetrapod and tripod gaits]  \label{defn1}
We define four versions of  tetrapod gaits as follows. Each gait corresponds
to a $T$-periodic solution of Equation~(\ref{eq.osc1}). In each version two
legs swing simultaneously in the sequences indicated in braces, and all six
oscillators share the common coupled stepping frequency $\hat\omega$. 
\begin{enumerate}[leftmargin=*]
\ZA{
\item Forward right tetrapod gait $\FR,  \{(R2, L3), (R1, L2), (R3, L1)\}$,  corresponds to
\[\FR:=\lt(\hat\omega t+\frac{2T}{3}, \;\;  \hat\omega t, \;\;   \hat\omega t+\frac{T}{3};  \;\;   \hat\omega t+\frac{T}{3}, \;\;   \hat\omega t+\frac{2T}{3}, \;\;   \hat\omega t \rt).\]

\item Forward left tetrapod gait $\FL, \{(R2, L1), (R1, L3), (R3, L2))\}$, corresponds to
\[\FL:=\lt(\hat\omega t+\frac{2T}{3}, \;\;  \hat\omega t, \;\;   \hat\omega t+\frac{T}{3};  \;\;   \hat\omega t, \;\;   \hat\omega t+\frac{T}{3}, \;\;   \hat\omega t +\frac{2T}{3} \rt).\]

\item Backward right tetrapod gait $\BR, \{(R2, L3), (R3, L1), (R1, L2)\}$, corresponds to
\[\BR:=\lt(\hat\omega t+\frac{T}{3}, \;\;  \hat\omega t, \;\;   \hat\omega t+\frac{2T}{3}; \;\;   \hat\omega t+\frac{2T}{3}, \;\;   \hat\omega t+\frac{T}{3}, \;\;   \hat\omega t\rt).\]

\item Backward left tetrapod gait $\BL, \{(R2, L1), (R3, L2), (R1, L3)\}$, corresponds to
\[\BL := \lt(\hat\omega t+\frac{T}{3}, \; \; \hat\omega t,\;\; \hat\omega t+\frac{2T}{3};\;\; \hat\omega t, \; \;\hat\omega t+\frac{2T}{3},\;\;   \hat\omega t+\frac{T}{3}\rt).\]
}
\end{enumerate}
Finally the tripod gait $\T, \{(R1, L2, R3), (R2, L1, L3)\}$,  corresponds to
\[\T:=\lt(\hat\omega t+\frac{T}{2}, \;\;  \hat\omega t, \;\;   \hat\omega t+\frac{T}{2}; \;\;   \hat\omega t, \;\;   \hat\omega t+\frac{T}{2}, \;\;   \hat\omega t \rt).\]
The frequency $\hat\omega$ will be determined later in Proposition~\ref{omega_hat}.
Depending on the sign of the coupling, $\c_iH$, $\hat\omega$ is either larger
or smaller than $\omega.$  Since we assumed  that all the connections are inhibitory, 
 $\c_i H(\phi)<0$ in the relevant range $[T/3, 2T/3]$, and therefore $\hat\omega<\omega.$
 \end{definition}

Note that in both tetrapod and tripod gaits, the phase difference between
the left and right legs in each segment is constant and is either equal to
$T/3$ or $2T/3$ (in tetrapod gaits) or $T/2$ (in tripod gaits).   

We would like to show that Equations~(\ref{eq.osc1}) admit a stable
solution at $\FR$ or $\FL$ corresponding to a  forward right or left tetrapod
gait, respectively,  when the speed parameter $\x$ (representing either
$\delta$ or $I_{ext}$) is ``small," and  a stable solution at $\T$ corresponding
to a tripod gait, when the speed parameter $\x$ is ``large."  
Since we are interested in studying the effect of the speed parameter
$\x$ on gait transition, we let the coupling function $H$ and the frequency
$\omega$ depend on $\x$ and write $H=H(\phi;\x)$ and $\omega=\omega(\x)$. 
 
\begin{definition} [Transition gaits] 
For any fixed number $\et\in[0,T/6]$,  the forward right and 
forward left transition gaits, $\FR(\et)$ and $\FL(\et)$ respectively, are as follows.  
\ZA{
\begin{subequations}\label{transition_gaits}
\begin{align}
\FR(\et)&:= \lt(\hat\omega t+\frac{2T}{3}-\et, \; \hat\omega t, \; \hat\omega t+\frac{T}{3}+\et; \;\; \hat\omega t+\frac{T}{3}-2\et, \; \hat\omega t+\frac{2T}{3}-\et, \; \hat\omega t\rt),\\
\FL(\et)& := \left(\hat\omega t+\frac{2T}{3} -\et, \; \hat\omega t, \; \hat\omega t+\frac{T}{3}+\et;\;  \;\hat\omega t, \; \hat\omega t+\frac{T}{3}+\et, \; \hat\omega t+\frac{2T}{3} +2\et\right).
\end{align}
\end{subequations}
}
\end{definition}

We call $\FR(\et)$ and $\FL(\et)$ ``transition gaits" since as $\et$ varies 
from $0$ to $T/6$, $\FR(\et)$ (resp. $\FL(\et)$) transits from the forward right
(resp. left) tetrapod gait to the tripod gait. 
For $\et=0$, $\FR(0) = \FR$  corresponds to the forward right tetrapod gait,
and $\FL(0) = \FL$  corresponds to the forward left tetrapod gait. Also for
$\et=T/6$, $\FR(T/6) = \FL(T/6) = \T$  corresponds to the tripod gait.
In addition, the phase differences between the left and  right legs
($\phi_4-\phi_1, \phi_5-\phi_2, \phi_6-\phi_3$), 
are constant and equal to $2T/3-\et$ in  $\FR(\et)$, and  $T/3+\et$ in  
$\FL(\et)$. This value is  equal to $2T/3$
(resp. $T/3$) when $\et=0$, as in the  forward right  (resp. left) tetrapod
gait, and is equal to $T/2$, when $\et=T/6$, as in the tripod gait.  
 
 \ZAA{We further assume that the phase differences between the left and right 
legs are equal to the  steady state phase differences in $\FR(\et)$ or  $\FL(\et)$ 
(later we will see that there are no differences between these two choices), 
i.e., we assume that for a
fixed $\et$, and for any $i=1,2,3$, 
\be{constant_contralateral}
 \phi_{i+3} = \phi_i + \bar\phi(\et), 
 \ee
where $\bar\phi(\et) = {2T}/{3} -\et$ or $\bar\phi(\et) = {T}/{3} + \et$.
For steady states, this assumption is supported by experiments for tripod gaits \cite{Fuchs14}, where $\bar\phi(\et)= T/2$, 
and by simulations for tripod and tetrapod gaits in the bursting neuron model, Figures~\ref{transition_gait_delta_Iext} and 
\ref{transition_v_delta_Iext}. We make a further simplifying assumption that the steady state contralateral phase differences remain constant 
for all $t$.}

Thus, assuming that the phase difference between the left and right legs
 $ \phi_{i+3} - \phi_i= \bar\phi(\et) = {2T}/{3} - \et$ or $\bar\phi(\et) = {T}/{3} + \et$, 
 and noting that since $H= H(\phi;\x)$ is $T$-periodic in its first argument, 
 $\phi_i-\phi_{i+3} = -\bar\phi(\et) = {T}/{3} +\et \; \mbox{or}\; {2T}/{3} -\et $  
(recall that $-2T/3=T/3$ mod $T$),
we can rewrite Equation~(\ref{eq.osc1}) for the forward right transition gait
$\FR(\et)$ as follows. 
\begin{subequations}\label{eq.osc3}
\begin{align}
& \dot{\phi}_1 \;=\;\omega(\x) + \c_1H\lt(\frac{2T}{3} - \et; \x\rt) + \c_5H(\phi_2 - \phi_1; \x),\label{eq.osc3.1} \\
& \dot{\phi}_2  \;=\; \omega (\x)+ \c_2H\lt(\frac{2T}{3} -\et; \x\rt) + \c_4H(\phi_1 - \phi_2; \x) + \c_7H(\phi_3 - \phi_2; \x),\label{eq.osc3.2} \\
& \dot{\phi}_3  \;=\; \omega(\x) + \c_3H\lt(\frac{2T}{3} -\et; \x\rt) + \c_6H(\phi_2 - \phi_3; \x),\label{eq.osc3.3} \\
& \dot{\phi}_4  \;=\;\omega(\x) + \c_1H\lt(\frac{T}{3} +\et; \x\rt) + \c_5H(\phi_5 - \phi_4; \x), \label{eq.osc3.4}\\
& \dot{\phi}_5  \;=\;\omega (\x)+  \c_2H\lt(\frac{T}{3} +\et; \x\rt) + \c_4H(\phi_4 - \phi_5; \x) + \c_7H(\phi_6 - \phi_5; \x),\label{eq.osc3.5} \\
& \dot{\phi}_6  \;=\; \omega (\x)+ \c_3H\lt(\frac{T}{3} + \et; \x\rt) + \c_6H(\phi_5 - \phi_6; \x)\label{eq.osc3.6} .
\end{align}
\end{subequations}

A similar equation is obtained for $\FL(\et)$ as follows. 
\ZA{
\begin{subequations}\label{eq.osc4}
\begin{align}
& \dot{\phi}_1 \;=\;\omega(\x) + \c_1H\lt(\frac{T}{3} + \et; \x\rt) + \c_5H(\phi_2 - \phi_1; \x),\label{eq.osc4.1} \\
& \dot{\phi}_2  \;=\; \omega (\x)+ \c_2H\lt(\frac{T}{3} +\et; \x\rt) + \c_4H(\phi_1 - \phi_2; \x) + \c_7H(\phi_3 - \phi_2; \x),\label{eq.osc4.2} \\
& \dot{\phi}_3  \;=\; \omega(\x) + \c_3H\lt(\frac{T}{3} +\et; \x\rt) + \c_6H(\phi_2 - \phi_3; \x),\label{eq.osc4.3} \\
& \dot{\phi}_4  \;=\;\omega(\x) + \c_1H\lt(\frac{2T}{3} -\et; \x\rt) + \c_5H(\phi_5 - \phi_4; \x), \label{eq.osc4.4}\\
& \dot{\phi}_5  \;=\;\omega (\x)+  \c_2H\lt(\frac{2T}{3} -\et; \x\rt) + \c_4H(\phi_4 - \phi_5; \x) + \c_7H(\phi_6 - \phi_5; \x),\label{eq.osc4.5} \\
& \dot{\phi}_6  \;=\; \omega (\x)+ \c_3H\lt(\frac{2T}{3} - \et; \x\rt) + \c_6H(\phi_5 - \phi_6; \x)\label{eq.osc4.6} .
\end{align}
\end{subequations}
}
\ZAA{Although we are interested in gait transitions in the bursting neuron model and 
in the phase reduction equations derived from the bursting neuron model, we prove our
results for more general $H$.}
Our goal is to provide sufficient conditions on the coupling function $H$ and 
the coupling strengths $\c_i$
that guarantee for any $\et\in[0,T/6]$, $\FR(\et)$ or $\FL(\et)$ is a stable
solution of Equations~(\ref{eq.osc3}) and (\ref{eq.osc4}). To this end, in the following section
we reduce the 6 equations (\ref{eq.osc3.1})-(\ref{eq.osc3.6}) \ZA{and  
the 6 equations (\ref{eq.osc4.1})-(\ref{eq.osc4.6})} to 2 equations
on a 2-torus. 
\ZA{The coupling strengths $\c_i$ may also depend on the speed
parameter $\x$ (see Section \ref{DATA} below). For the rest of the paper, we assume that 
$\c_i$ depends on $\x$, $\c_i=\c_i(\x)$, but for simplicity, we drop the argument $\x$.}

\subsection{Phase differences model}
\label{phase_diffs}

In this section, the goal is to reduce 
the 6 equations (\ref{eq.osc3.1})-(\ref{eq.osc3.6}) {and  
the 6 equations (\ref{eq.osc4.1})-(\ref{eq.osc4.6})} to 2 equations
on a 2-torus. 
To this end, we assume the following condition 
for the coupling function $H$. 

\begin{assumption}
\label{eta_assumption}
Assume that $H = H(\theta; \x)$ is a  differentiable function, defined on
$\r\times\ZAA{[\x_1,\x_2]}$ which is $T$-periodic on its first argument and has 
the following property. For any fixed $\x \in [\x_1,\x_2]$, 
\be{eta}
H\lt(\frac{2T}{3} - \et; \x\rt) = H\lt(\frac{T}{3}+\et; \x\rt),
\ee
has a unique solution $\eta(\x)$ such that $\eta = \eta(\x):[\x_1,\x_2] \to [0,T/6]$
is an onto and  non-decreasing function.  Note that Equation~(\ref{eta}) is also trivially satisfied by the constant 
solution $\et= T/6$. 
 \end{assumption}
\ZAA{For the rest of the paper, we assume that the coupling function $H$ satisfies Assumption \ref{eta_assumption}. 
In Proposition~\ref{H-properties}, Section \ref{application_general_H}, we characterize
a class of functions $H$, that guarantee 
solutions of Equation~(\ref{eta}). Also, we will show that the coupling functions $H_{BN}$ derived from the 
bursting neuron model satisfy  Assumption \ref{eta_assumption}, see Figures~\ref{H_PRC_versus_delta} and  \ref{H_PRC_versus_Iext}, and Section \ref{application_BN} below.}

Using Equations~(\ref{constant_contralateral}) and (\ref{eta}), 
Equations~(\ref{eq.osc3}) and (\ref{eq.osc4}) can be  reduced to the following 3 equations 
describing the right legs' motions:
\begin{subequations}\label{eq.osc5}
\begin{align}
& \dot{\phi}_1 \;=\;\omega(\x) + \c_1H\lt(\frac{2T}{3} - \et; \x\rt) + \c_5H(\phi_2 - \phi_1; \x), \\
& \dot{\phi}_2  \;=\; \omega(\x) + \c_2H\lt(\frac{2T}{3} -\et; \x\rt) + \c_4H(\phi_1 - \phi_2; \x) + \c_7H(\phi_3 - \phi_2; \x), \\
& \dot{\phi}_3  \;=\; \omega(\x) + \c_3H\lt(\frac{2T}{3} -\et; \x\rt)  + \c_6H(\phi_2 - \phi_3; \x) .
\end{align}
\end{subequations}
Because only phase differences appear in the vector field, we may define
\[
\theta_1:=\phi_1-\phi_2 \quad \mbox{and}\quad \theta_2:=\phi_3-\phi_2,
\]
so that the following equations describe the dynamics of $\theta_1$ and $\theta_2$:
\begin{subequations} \label{torus:equation}
\begin{align}
&\dot\theta_1=  (\c_1 - \c_2)H\lt(\frac{2T}{3} - \et; \x\rt) + \c_5 H(-\theta_1; \x) - \c_4 H(\theta_1; \x) - \c_7 H(\theta_2; \x), \\ 
&\dot\theta_2=  (\c_3 - \c_2)H\lt(\frac{2T}{3} - \et; \x\rt) + \c_6 H(-\theta_2; \x) - \c_4 H(\theta_1; \x) - \c_7 H(\theta_2; \x).
\end{align}
\end{subequations}
Note that  Equations~(\ref{torus:equation}) are $T$-periodic in both variables,
i.e., $(\theta_1, \theta_2)\in \mathbb{T}^2$, where $\mathbb{T}^2$ is a 2-torus. 

In Equations~(\ref{torus:equation}), the tripod gait $\T$ corresponds 
to the fixed point $(T/2, T/2)$, the forward tetrapod gaits, $\FR$ and $\FL$,
correspond to the fixed point $(2T/3, T/3)$, and the transition gaits, 
$\FR(\et)$ and $\FL(\et)$, correspond to $(2T/3-\et, T/3+\et)$. 
\ZA{Note that since $\FR(\et)$ and $\FL(\et)$ correspond to the same fixed point on the 
torus, we may assume the  contralateral phase differences to be equal to 
$\phi_{i+3} - \phi_i = {2T}/{3} - \et$ or ${T}/{3} + \et$.} 
\ZAA{See \cite{Zhang2017} for another example of conditions on coupling functions that produce 
specific phase differences.}

\subsection{Qualitative behavior of the solutions of phase difference equations}\label{Example_Qualitative_behavior}

In this section, as an example, we illustrate the nullclines and phase plane of 
Equations (\ref{torus:equation}) with 
$H=H_{BN}$ and the coupling strengths as follows. 
\be{example_only_balance}
\c_1= 1, \;
 \c_2=2.5, \;
 \c_3=1.5, \;
 \c_4= 5, \; 
\c_5=7.5, \;
 \c_6= 7,\; 
 \c_7=1.    
\ee
Here and henceforth, in all the simulations, we normalize the range of  the coupling
function $H_{BN}$ and so the torus is represented by a $1\times1$ square. 
For example  $(2T/3, T/3)$ is shown by a point at $(2/3,1/3)$, etc. To
obtain phase portraits we solved Equations~(\ref{torus:equation}) using 
the fourth order Runge-Kutta method with fixed time step $0.001$ ms and ran the 
simulation up to 100 ms with multiple initial conditions. 

Figure~\ref{NC_PP_balance_Tr_Det} (left to right) shows the nullclines
and phase planes of  Equations (\ref{torus:equation}) with 
$H=H_{BN}$ computed in Figure~\ref{H_PRC_versus_delta}(left), for a small
$\delta = 0.0097$ and Figure~\ref{H_PRC_versus_delta}(right), for a large $\delta = 0.03$, respectively. 
Intersections of the nullclines indicate the location of  fixed points. We observe that for 
small $\delta$, the fixed points $(2/3, 1/3)$ (corresponding to the forward tetrapod) 
and $(1/3,2/3)$ (corresponding to the backward tetrapod) are stable, while $(1/2,1/2)$ (corresponding to the tripod)
is unstable. For larger $\delta$, the two tetrapod gaits 
 merge to $(1/2,1/2)$, which becomes a sink.  
\begin{figure}[h!]
\begin{center}
\includegraphics[scale=.1]{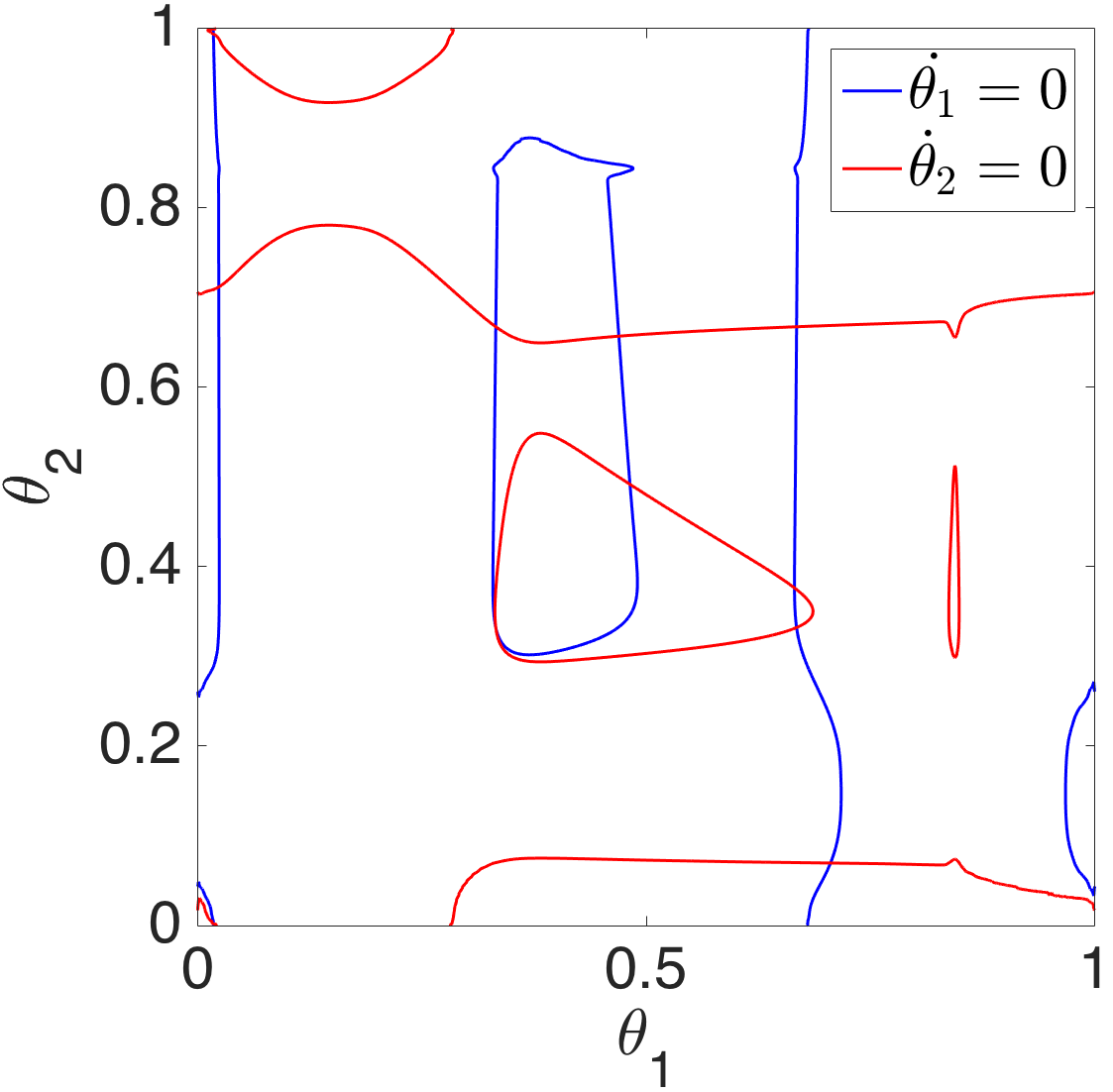}
\includegraphics[scale=.1]{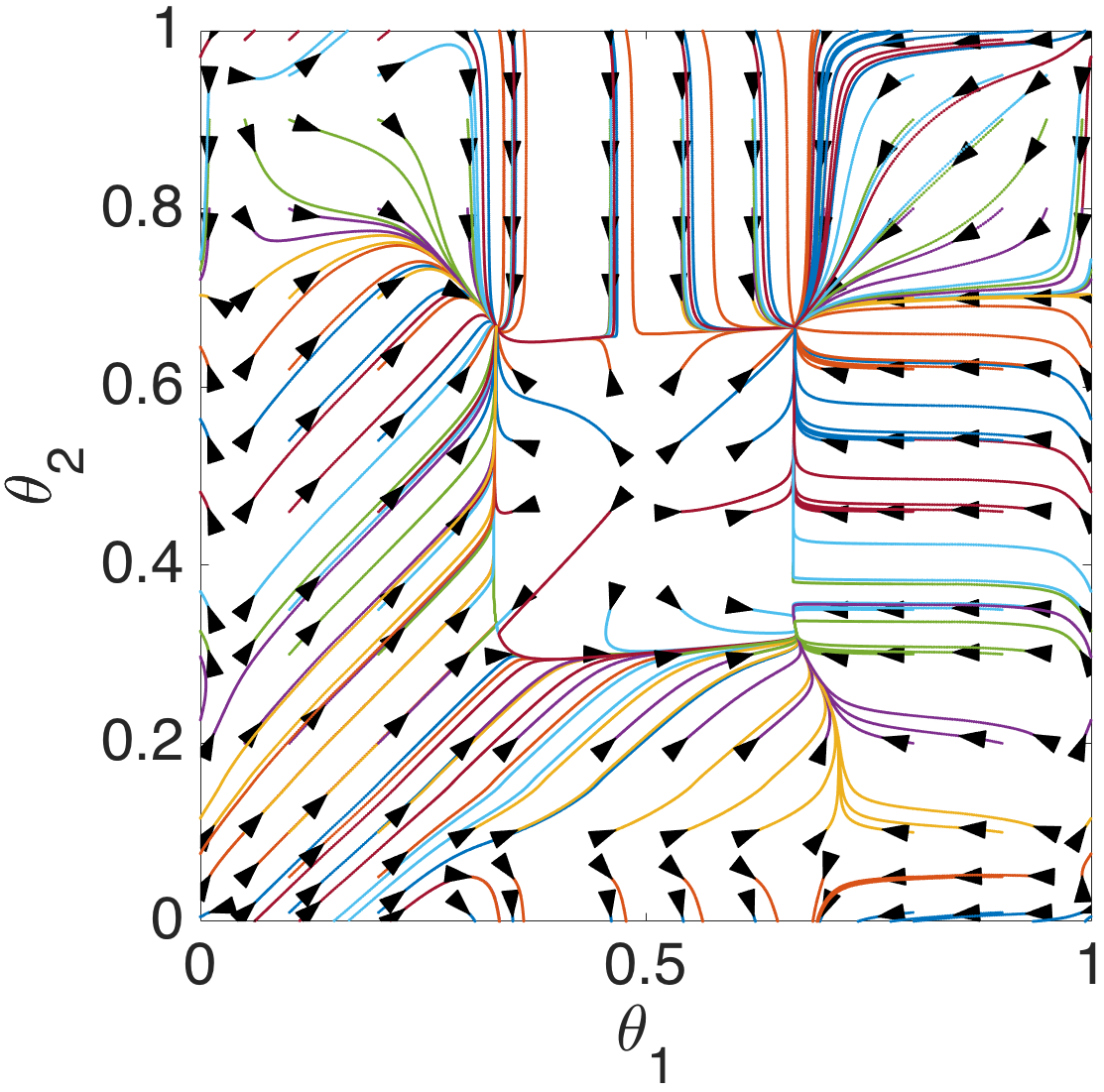}\quad
\includegraphics[scale=.1]{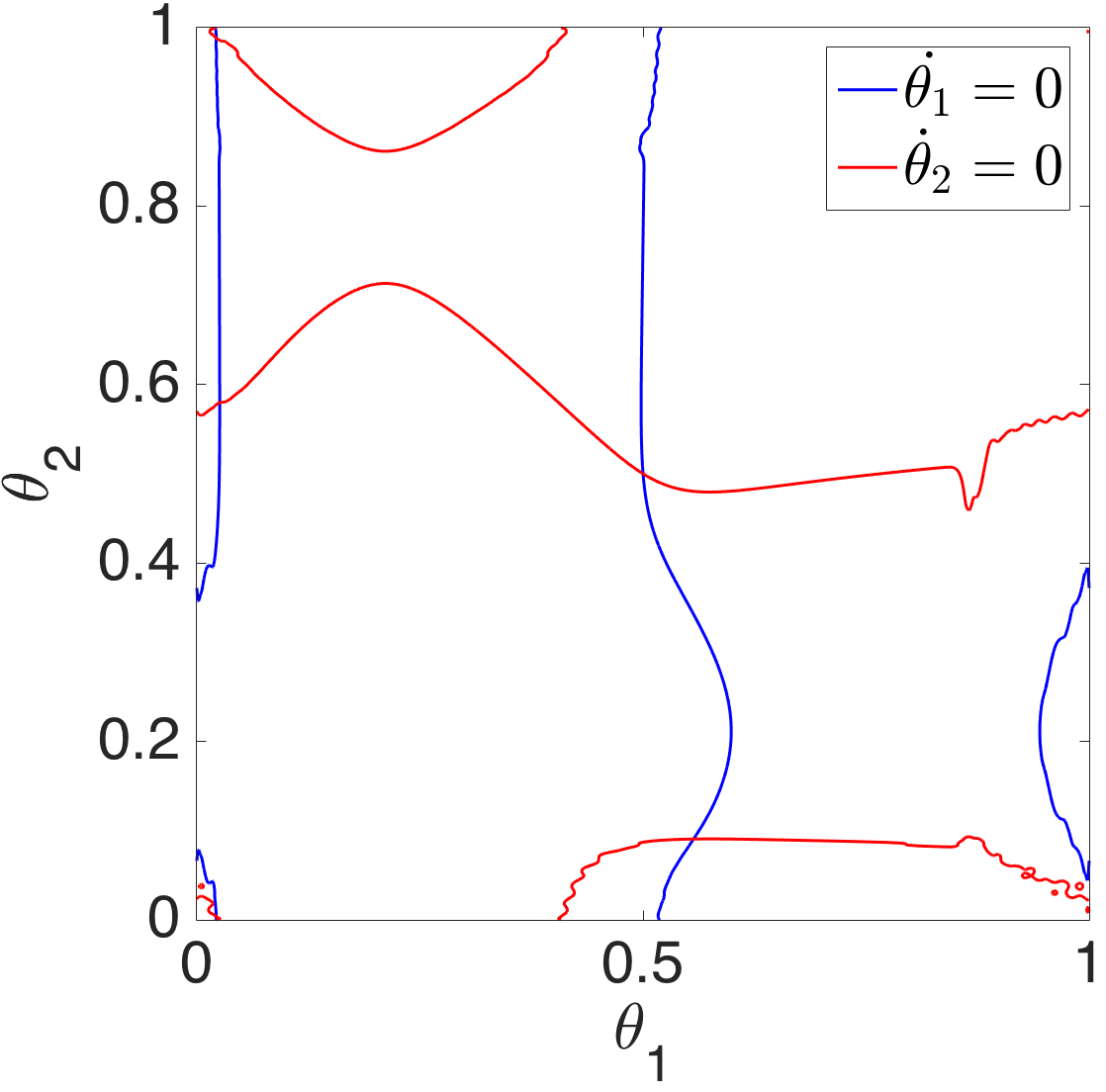}
\includegraphics[scale=.1]{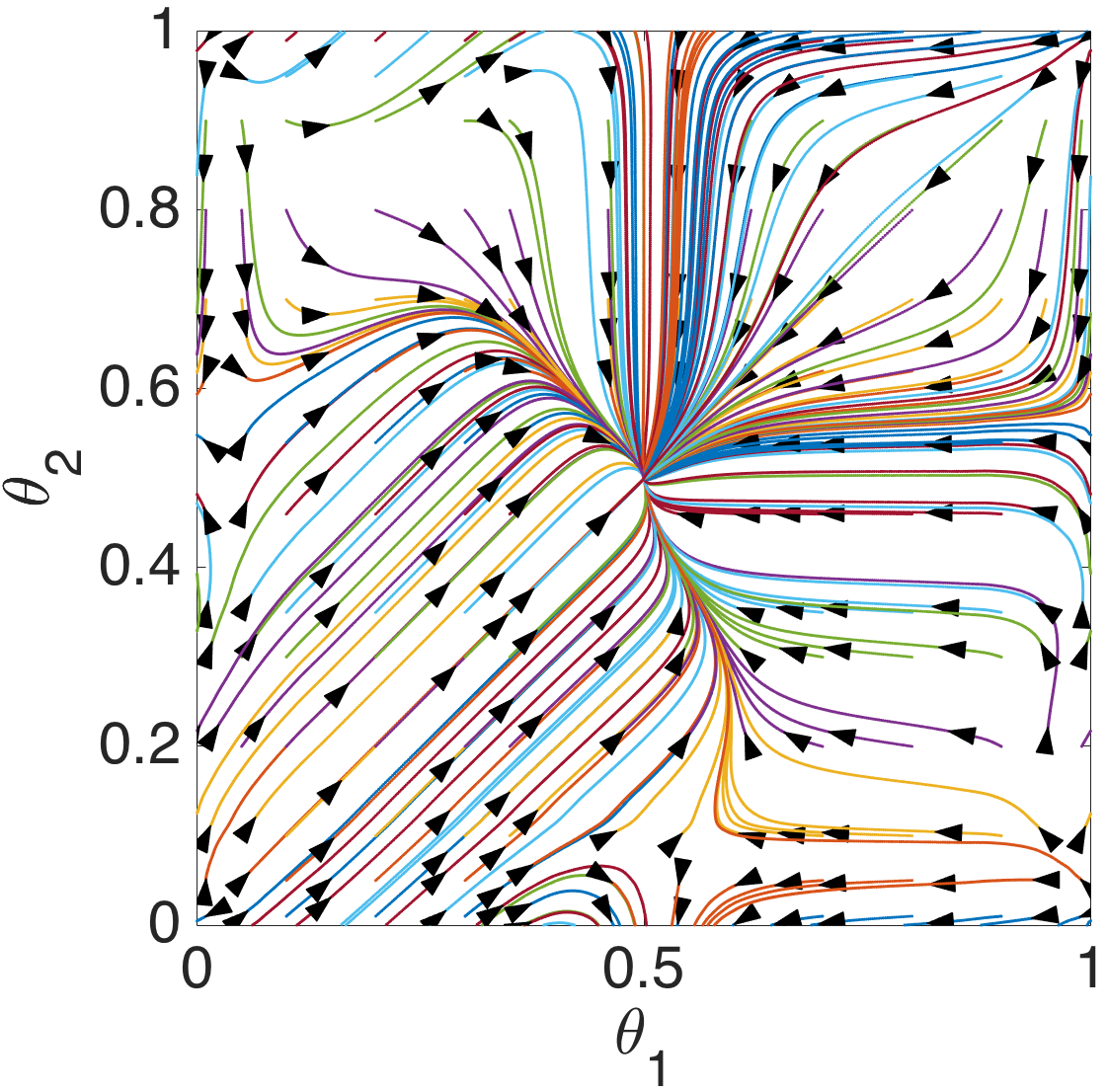}
\end{center}
\caption{(Left to right) Nullclines and  phase planes of 
Equations~(\ref{torus:equation}) when $\c_i$'s satisfy 
Equation~(\ref{example_only_balance}), and  $\delta = 0.0097$
and $0.03$, respectively. For computation of coupling functions,
all bursting neuron parameters are as in the first rows of 
Table~\ref{table_of_parameters}.}
\label{NC_PP_balance_Tr_Det}
\end{figure}

In the following sections we will address existence and stability of these fixed points and  
associated gaits and explore nonlinear phenomena involved in gait transitions.

\section{Existence and stability of tetrapod and tripod gaits}
\label{existence_and_stability}

We now prove that, under suitable conditions on the coupling
functions and coupling strengths, multiple fixed points exist for
Equations~(\ref{torus:equation}) and we derive explicit expressions
for eigenvalues of the linearized system at these fixed points.

\subsection{Existence with balance condition}
\label{existence_with_balance}

We first provide conditions on the coupling strengths $\c_i$ such that 
Equations~(\ref{torus:equation}) admit a stable fixed point at  
$(\theta_1^1, \theta_2^1):= (2T/3-\et, T/3+\et)$, for any $\et \in[0,T/6]$. 

\begin{proposition} \label{prop:tetra:stability}
If the coupling strengths $\c_i$ satisfy the following relations 
\begin{equation} \label{balance2}
\c_1+\c_5 = \c_2 + \c_4 +\c_7 = \c_3 +\c_6, 
\end{equation}
then for any $\et \in[0,T/6]$, Equations~(\ref{torus:equation}) admit 
a fixed point at $(\theta_1^1, \theta_2^1)= (2T/3-\et, T/3+\et)$. 
\ZA{Note that  $(\theta_1^1, \theta_2^1)= (2T/3-\et, T/3+\et)$ 
corresponds to  forward tetrapod ($\et=0$), forward transition ($0<\et<T/6$), 
and tripod ($\et=T/6$) gaits.}
In addition, if the following inequalities hold, 
then the fixed point is stable. 
\begin{subequations}\label{trace_det}
\begin{align}
&\mbox{Tr} := -(\c_5+\c_7) H'\lt(\frac{T}{3}+\et; \x\rt) -(\c_4+\c_6) H'\lt(\frac{2T}{3}-\et; \x\rt) <0 ,\\
&\mbox{Det} := \c_5\c_6 H'\lt(\frac{T}{3}+\et; \x\rt)H'\lt(\frac{2T}{3}-\et; \x\rt) + \c_4\c_6 \lt[H'\lt(\frac{2T}{3}-\et; \x\rt)\rt]^2 + \c_5\c_7 \lt[H'\lt(\frac{T}{3}+\et; \x\rt)\rt]^2 >0.
\end{align}
\end{subequations}
\end{proposition}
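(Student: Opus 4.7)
The proof naturally splits into two parts: verifying that $(\theta_1^1,\theta_2^1)=(2T/3-\eta,T/3+\eta)$ kills both right-hand sides of Equations~(\ref{torus:equation}), and computing the Jacobian there to read off trace and determinant.

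For existence, the plan is a direct substitution. Plugging $\theta_1 = 2T/3-\eta$ and $\theta_2 = T/3+\eta$ into the right-hand side of Equations~(\ref{torus:equation}), I would first use $T$-periodicity of $H(\cdot;\xi)$ to rewrite $H(-\theta_1;\xi) = H(T/3+\eta;\xi)$ and $H(-\theta_2;\xi) = H(2T/3-\eta;\xi)$. Then I would invoke Assumption~\ref{eta_assumption}, which gives $H(2T/3-\eta;\xi) = H(T/3+\eta;\xi)$; call this common value $H^*$. Each equation then collapses to $H^*$ multiplied by a linear combination of the $\c_i$: the first becomes $(\c_1+\c_5-\c_2-\c_4-\c_7)H^*$ and the second $(\c_3+\c_6-\c_2-\c_4-\c_7)H^*$. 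The balance condition~(\ref{balance2}) forces both coefficients to vanish, establishing the fixed point. No routine calculation is needed here beyond bookkeeping.

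For stability, I would compute the $2\times 2$ Jacobian of Equations~(\ref{torus:equation}) term by term, noting the chain-rule sign: $\partial_{\theta_1} H(-\theta_1;\xi) = -H'(-\theta_1;\xi)$. Evaluating at the fixed point and using $T$-periodicity of $H'(\cdot;\xi)$ (so $H'(-(2T/3-\eta);\xi) = H'(T/3+\eta;\xi)$ and $H'(-(T/3+\eta);\xi) = H'(2T/3-\eta;\xi)$), the entries come out as
\begin{align*}
J_{11} &= -\c_5 H'(T/3+\eta;\xi) - \c_4 H'(2T/3-\eta;\xi), & J_{12} &= -\c_7 H'(T/3+\eta;\xi), \\
J_{21} &= -\c_4 H'(2T/3-\eta;\xi), & J_{22} &= -\c_6 H'(2T/3-\eta;\xi) - \c_7 H'(T/3+\eta;\xi).
\end{align*}
The trace is immediate from $J_{11}+J_{22}$. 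For the determinant, setting $A = H'(T/3+\eta;\xi)$ and $B = H'(2T/3-\eta;\xi)$, one computes $J_{11}J_{22}-J_{12}J_{21} = (\c_5 A+\c_4 B)(\c_6 B+\c_7 A) - \c_4\c_7 AB$; expanding, the two $\c_4\c_7 AB$ terms cancel, leaving exactly $\c_5\c_6 AB + \c_5\c_7 A^2 + \c_4\c_6 B^2$, as claimed. Stability then follows from the classical two-dimensional criterion that negative trace and positive determinant imply eigenvalues with negative real parts.

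There is no substantial obstacle: the argument is essentially an algebraic verification with careful use of $T$-periodicity and Assumption~\ref{eta_assumption}. The only place where one has to watch signs is the chain-rule computation of $\partial_{\theta_i} H(-\theta_i;\xi)$ and the cancellation of the cross term $\c_4\c_7 AB$ in the determinant; both are straightforward once set up carefully.
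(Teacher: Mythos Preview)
Your proposal is correct and follows essentially the same approach as the paper's proof: substitute the candidate point, use $T$-periodicity together with Assumption~\ref{eta_assumption} to collapse all $H$-values to a common one, invoke the balance condition for existence, and then compute the Jacobian entrywise to read off trace and determinant. Your explicit verification of the cross-term cancellation in the determinant is in fact more detailed than the paper, which simply asserts the result of that calculation.
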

Equation~(\ref{balance2}) is called the balance equation; it expresses
the fact that the sum of the coupling strengths entering each leg are
equal. \ZAA{The equalities were assumed, without biological support, in
\cite{SIAM2}, and were subsequently found to approximately hold for
fast running cockroaches in \cite[Figure~9c]{Fuchs14}, 
according to the best data fits, judged by AIC and BIC, as reported in that paper.}

 \begin{proof}
Since by Equation~(\ref{eta}), 
$H({2T}/{3} - \et; \x) = H({T}/{3}+\et; \x)$, and 
\[-(T/3+\et) = 2T/3 -\et\quad\mbox{ mod $T$},\]
 the right hand sides of  
Equations~(\ref{torus:equation}) at $(\theta_1^1, \theta_2^1) = 
(2T/3-\et, T/3+\et)$ are 
 \begin{subequations}\label{torus:equation:fixed:points}
\begin{align}
 &(\c_1-\c_2+\c_5- \c_4- \c_7) H\lt(\frac{T}{3} +\et; \x\rt), \\ 
& (\c_3-\c_2+\c_6  - \c_4 - \c_7)H\lt(\frac{T}{3} +\et; \x\rt),
\end{align}
\end{subequations}
which both are zero by  Equations~(\ref{balance2}). Therefore, $(\theta_1^1,
\theta_2^1)$ is a fixed point of  Equations~(\ref{torus:equation}). 

To study the stability of $(\theta_1^1, \theta_2^1)$, we consider the linearization
of Equations~(\ref{torus:equation}) and evaluate the Jacobian of their
right hand side at  $(\theta_1^1, \theta_2^1)= (2T/3-\et, T/3+\et)$:
\begin{equation}\label{Jacobian_1}
J_1\;=\; -  \left(\begin{array}{ccc}\c_5 H'\lt(\frac{T}{3}+\et;\x\rt) +\c_4 H'\lt(\frac{2T}{3}-\et;\x\rt) & \c_7 H'\lt(\frac{T}{3}+\et;\x\rt)  
\\ \\
 \c_4 H'\lt(\frac{2T}{3}-\et;\x\rt) & \c_6 H'\lt(\frac{2T}{3}-\et;\x\rt)  + \c_7 H'\lt(\frac{T}{3}+\et;\x\rt) \end{array}\right),
\end{equation}
where $H'$ stands for the derivative $dH/d\theta$.
A calculation shows that the trace and the determinant of $J_1$ at 
$(\theta_1^1, \theta_2^1)$ are as in Equation~(\ref{trace_det}). 
Since $\mbox{Tr} <0$ and $\mbox{Det} >0$, both eigenvalues
of $J_1$ have  negative real parts and $(\theta_1^1, \theta_2^1)$ is a
stable fixed point of Equations~(\ref{torus:equation}).
\end{proof}

\bcor \label{cor:tetra}
Assume that $(\theta_1^1, \theta_2^1)= (2T/3-\et, T/3+\et)$ is a fixed point
of Equations (\ref{torus:equation}). Then, 
\bi
\item  $(\theta_1^2, \theta_2^2)= (T/3+\et, T/3+\et)$,
\item  $(\theta_1^3, \theta_2^3)= (T/3+\et, 2T/3-\et)$ which corresponds to
 a backward transition gait; and 
\item  $(\theta_1^4, \theta_2^4)= (2T/3-\et, 2T/3-\et)$, 
\ei 
are also fixed points of Equations~(\ref{torus:equation}). 
\ecor
\bp 
Since $-(T/3+\et) = 2T/3 -\et$ mod $T$ and by Equation~(\ref{eta}),
$H({2T}/{3} - \et; \x) = H({T}/{3}+\et,;\x)$, the right hand sides of
Equations~(\ref{torus:equation}) at $(\theta_1^1, \theta_2^1) = 
(2T/3-\et, T/3+\et)$ are equal to the  right hand sides of 
Equations~(\ref{torus:equation}) at $(\theta_1^i, \theta_2^i)$, $i=2,3,4,$
and  both are therefore equal to zero. 
\ep 

\bremark\label{index_zero}
Besides the four fixed points $(\theta_1^i, \theta_2^i)$, $i=1,2,3,4,$
and depending  on their stability types,  Equations~(\ref{torus:equation})
may or may not admit more fixed points. By the Euler characteristic 
\cite[Section 1.8]{Holmes_book}, the sum of the indices of all the fixed
points on a 2-torus must be zero; thus allowing us to infer the existence
of additional fixed points. 
\eremark

Next we determine the coupled stepping frequency $\hat\omega$
such that  the transition gaits defined in Equations~(\ref{transition_gaits})
become solutions of Equations~(\ref{eq.osc3}). 

\begin{proposition} \label{omega_hat}
If the coupling strengths $c_i$ satisfy Equations~(\ref{balance2})
and~(\ref{trace_det}), then for any $\et \in[0,T/6]$, Equations~(\ref{eq.osc3})
admit the following  stable $T$-periodic solutions
\ZA{
 \begin{subequations}\label{OMEGA1}
\begin{align}
 \FR(\et)&:= \lt(\hat\omega t+\frac{2T}{3}-\et, \; \hat\omega t, \; \hat\omega t+\frac{T}{3}+\et; \;\;
  \hat\omega t+\frac{T}{3}-2\et, \; \hat\omega t+\frac{2T}{3}-\et, \; \hat\omega t\rt),\\
\FL(\et)& := \left(\hat\omega t+\frac{2T}{3} -\et, \; \hat\omega t, \; \hat\omega t+\frac{T}{3}+\et;\;\;
\hat\omega t, \; \hat\omega t+\frac{T}{3}+\et, \; \hat\omega t+\frac{2T}{3} +2\et\right),
\end{align}
\end{subequations}
}
where the coupled stepping frequency  $\hat\omega= \hat\omega(\x)$,  satisfies 
\[
\hat\omega = \omega(\x) + (\c_1+\c_5) H\lt(\frac{2T}{3}-\et; \x\rt)= \omega(\x)+ (\c_2+\c_4+\c_7) H\lt(\frac{2T}{3}-\et; \x\rt)
= \omega(\x) + (\c_3+\c_6) H\lt(\frac{2T}{3}-\et; \x\rt).
\]
\end{proposition}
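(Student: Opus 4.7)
The plan is to verify, by direct substitution of the ansatz $\FR(\et)$ into system~(\ref{eq.osc3}) (and analogously $\FL(\et)$ into~(\ref{eq.osc4})), that each of the six equations becomes $\dot\phi_i = \hat\omega$ with the single expression claimed in the proposition. Observe first that under Assumption~\ref{eta_assumption} the two systems~(\ref{eq.osc3}) and~(\ref{eq.osc4}) are actually identical, since the prescribed constant contralateral arguments $H(2T/3-\et;\x)$ and $H(T/3+\et;\x)$ agree; so it suffices to work with~(\ref{eq.osc3}), and the list of solutions in the statement is consistent.

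For each of the six equations I will compute the ipsilateral phase differences $\phi_j - \phi_i$ that appear in the $H$-arguments and reduce them modulo $T$. For the $\FR(\et)$ ansatz, the six relevant ipsilateral arguments are $\phi_2-\phi_1,\, \phi_1-\phi_2,\, \phi_3-\phi_2,\, \phi_2-\phi_3,\, \phi_5-\phi_4,\, \phi_6-\phi_5,\, \phi_4-\phi_5,\, \phi_5-\phi_6$; each reduces (modulo $T$) to either $2T/3-\et$ or $T/3+\et$, using $\et\in[0,T/6]$ to pin down the correct representative in $[0,T)$. Assumption~\ref{eta_assumption} then collapses every $H$-value on every right-hand side to the common value $H(2T/3-\et;\x)$. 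Consequently Equations~(\ref{eq.osc3.1}) and~(\ref{eq.osc3.4}) reduce to $\omega(\x)+(\c_1+\c_5)H(2T/3-\et;\x)$; Equations~(\ref{eq.osc3.2}) and~(\ref{eq.osc3.5}) to $\omega(\x)+(\c_2+\c_4+\c_7)H(2T/3-\et;\x)$; and Equations~(\ref{eq.osc3.3}) and~(\ref{eq.osc3.6}) to $\omega(\x)+(\c_3+\c_6)H(2T/3-\et;\x)$. The balance condition~(\ref{balance2}) forces these three sums to coincide, so all six derivatives equal the single value $\hat\omega$ declared in the proposition. The argument for $\FL(\et)$ is structurally identical: the signs of the ipsilateral differences on the left side flip relative to $\FR(\et)$, but each flip is again absorbed by $H(T/3+\et;\x)=H(2T/3-\et;\x)$, so the same $\hat\omega$ emerges.

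For stability, I invoke Proposition~\ref{prop:tetra:stability} directly. Under the standing contralateral-symmetry assumption~(\ref{constant_contralateral}), the full $6$-oscillator system decouples into the scalar equation $\dot\phi_2=\hat\omega$ for the collective phase together with the planar phase-difference system~(\ref{torus:equation}) on the $2$-torus. Proposition~\ref{prop:tetra:stability} already shows that $(2T/3-\et,\,T/3+\et)$ is a hyperbolic sink of~(\ref{torus:equation}) whenever the trace/determinant inequalities~(\ref{trace_det}) hold, which in turn gives asymptotic stability of the corresponding $T$-periodic orbits $\FR(\et)$ and $\FL(\et)$ within the contralaterally symmetric invariant manifold.

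The main obstacle is essentially bookkeeping: tracking the twelve ipsilateral phase differences across the two ansatzes and reducing each to the canonical representative in $[0,T)$. A couple of them, for instance $\phi_6-\phi_5 = -T/3-\et$ for $\FL(\et)$, require explicit use of the $T$-periodicity of $H$ together with the range $\et\in[0,T/6]$ to identify them with $2T/3-\et$ (mod $T$); beyond this no genuine technical difficulty arises, since once every $H$-argument lies in $\{2T/3-\et,\,T/3+\et\}$, Assumption~\ref{eta_assumption} and the balance condition~(\ref{balance2}) do all the remaining work.
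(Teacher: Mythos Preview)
Your existence argument is essentially the same as the paper's: direct substitution, reduction of all $H$-arguments modulo $T$ to $\{2T/3-\et,\,T/3+\et\}$, and application of Assumption~\ref{eta_assumption} together with the balance condition~(\ref{balance2}).

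Your stability argument takes a different route. The paper linearizes Equations~(\ref{eq.osc3}) explicitly at $\FR(\et)$ (and $\FL(\et)$), obtaining a $6\times 6$ Jacobian $J_2$ that is block diagonal with two identical $3\times 3$ blocks $A$ (the right and left legs decouple once the contralateral coupling has been replaced by constants). It then computes the characteristic polynomial of $A$ as $-\lambda(\lambda^2 - \mathrm{Tr}\,\lambda + \mathrm{Det})$, with $\mathrm{Tr}$ and $\mathrm{Det}$ as in~(\ref{trace_det}), so the nonzero eigenvalues of $A$ coincide with those of $J_1$ from Proposition~\ref{prop:tetra:stability}, and each block contributes one zero eigenvalue corresponding to an overall phase shift. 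You instead invoke the structural reduction to the torus system~(\ref{torus:equation}) and apply Proposition~\ref{prop:tetra:stability} directly. Your route is cleaner and avoids the characteristic polynomial computation; the paper's is more explicit about the full $6$-dimensional spectrum and makes clear that stability is ``up to phase shifts.''

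One imprecision in your reduction: you describe the decoupling as ``the scalar equation $\dot\phi_2=\hat\omega$ together with the planar system~(\ref{torus:equation}),'' which accounts for only three of the six dimensions. In Equations~(\ref{eq.osc3}) the right legs $(\phi_1,\phi_2,\phi_3)$ and left legs $(\phi_4,\phi_5,\phi_6)$ evolve independently, so there are \emph{two} neutral phase-shift directions and \emph{two} copies of the planar dynamics, each governed by~(\ref{torus:equation}). This does not affect your conclusion, but it is worth stating correctly; it also clarifies why the paper's $J_2$ has two zero eigenvalues rather than one.
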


 \begin{proof}
 By the definition of $\hat\omega$, and using Equation~(\ref{eta}), it can 
 be seen that both $\FR(\et)$ and $ \FL(\et)$ are $T$-periodic solutions of
 Equations~(\ref{eq.osc3}). To check the stability of these solutions, we
 linearize the right hand side of Equations~(\ref{eq.osc3}) at $\FR(\et)$ 
 and $ \FL(\et)$ to obtain
 \[ 
 J_2 =\left(\begin{array}{cc}A & 0 \\0 & A\end{array}\right),
  \]
 where $0$ represents a $3\times3$ zero matrix and 
  \[ 
  A=\left(\begin{array}{ccccc}
  -\c_5 H'\lt(\frac{T}{3}+\et; \xi\rt) &\c_5 H'\lt(\frac{T}{3}+\et; \xi\rt) &0 \\ \\
  \c_4 H'\lt(\frac{2T}{3}-\et; \xi\rt) & -\c_4 H'\lt(\frac{2T}{3}-\et; \xi\rt)-\c_7 H'\lt(\frac{T}{3}+\et; \xi\rt)&\c_7 H'\lt(\frac{T}{3}+\et; \xi\rt)\\ \\
   0&\c_6 H'\lt(\frac{2T}{3}-\et; \xi\rt)&-\c_6 H'\lt(\frac{2T}{3}-\et; \xi\rt)
    \end{array}\right). 
   \]
Note that since we assumed a constant contralateral symmetry between
the right and left legs in Equations~(\ref{eq.osc3}), these sets of legs are
effectively decoupled and hence $J_2$ is a block diagonal matrix. 
    
Some calculations show that the characteristic polynomial of $A$ is
\[
g(\lambda) = -\lambda f(\lambda),
\]
where
\[ f
(\lambda) = \lambda^2 - \mbox{Tr} \lambda + \mbox{Det},
 \]
is the characteristic polynomial of $J_1$ (Equation~(\ref{Jacobian_1})) and
Tr and Det are defined in Equations~(\ref{trace_det}). The non-zero 
eigenvalues of $A$ therefore have the same stability properties as the non-zero
eigenvalues of $J_2$, and  Equations~(\ref{trace_det}) guarantee the
stability of both $\FR(\et)$ and $ \FL(\et)$, up to  overall shifts in phase
 \[
 \phi_i \rightarrow \phi_i+\bar\phi_R,\quad \mbox{for} \; i= 1,2,3, \qquad \mbox{and}\qquad \phi_i \rightarrow \phi_i+\bar\phi_L,  \quad\mbox{for} \;  i= 4,5,6,
 \]
 that correspond to the two zero eigenvalues of $J_2$. 
\end{proof}

\bremark
The balance condition Equation~(\ref{balance2}) is sufficient for the
existence of tripod or tetrapod gaits. In Section \ref{DATA}, we will show 
the existence of such gaits for coupling strengths which approximate 
balance and also which are far from balance.
\eremark

\subsection{Existence with balance condition and equal contralateral couplings}
\label{Existence_with_balance+RCsym}

In Proposition \ref{prop:tetra:stability}, we provided sufficient conditions
for the stability of tetrapod gaits when the coupling strengths satisfy
the balance condition, Equation~(\ref{balance2}).

In this section, in addition to the balance condition, we assume that  $\c_1=\c_2=\c_3$.
Then under some extra conditions on $\c_i$'s and $H$, 
we show that for any $\et\in[0,T/6]$, the fixed point  $(2T/3-\et, T/3+\et)$ is stable. 
The reason that we are interested in the assumption $\c_1=\c_2=\c_3$ is the following
estimated coupling strengths from fruit fly data \cite{Couzin_notes_16}.
We will return to this data set in Section \ref{DATA}. 
\[ \c_1=2.9145,\;
 \c_2=2.5610,\;
\c_3=2.6160,\;
 \c_4=2.9135,\;
 \c_5=5.1800,\;
 \c_6=5.4770,\;
 \c_7=2.6165.
 \]
In this set of data, the $\c_i$'s approximately satisfy the balance condition and also
\[\c_1\approx \c_2 \approx \c_3,\; \c_5 \approx \c_4+\c_7 \approx \c_6.\] 

\begin{proposition} \label{special_coupling}
Assume that the coupling strengths $\c_i$ satisfy Equation~(\ref{balance2}) and $\c_1=\c_2=\c_3$. 
 Also assume that $\forall \et \in [0,T/6]$, $H'=dH/d\theta$ satisfies
  \be{condition_dH_1} 
  \quad H'\lt( \frac{T}{3} +\eta; \x\rt)+H'\lt( \frac{2T}{3} -\eta; \x\rt) >0.
  \ee
  Let $\alpha$ and $\alpha_{max} $ be as follows:
  \be{alpha_alphamax}
  \alpha:=\frac{\c_4}{\c_4+\c_7},
   \quad 
   \alpha_{max} :=\frac{H'\lt(\frac{T}{3}+\et; \x\rt)}{H'\lt(\frac{T}{3}+\et; \x\rt)-H'\lt(\frac{2T}{3}-\et; \x\rt)}\;.
  \ee
If 
 \be{lambda12negative}
 (\alpha_{max}-\alpha)\lt(H'\lt( \frac{T}{3} +\eta; \x\rt)-H'\lt( \frac{2T}{3} -\eta; \x\rt)\rt) >0,
 \ee
 then $(\theta_1^1, \theta_2^1)= (2T/3-\et, T/3+\et)$
 is a stable fixed point of Equations~(\ref{torus:equation}) and if 
 \be{lambda12positive}
 (\alpha_{max}-\alpha)\lt(H'\lt( \frac{T}{3} +\eta; \x\rt)-H'\lt( \frac{2T}{3} -\eta; \x\rt)\rt) <0,
 \ee
  then $(\theta_1^1, \theta_2^1)= (2T/3-\et, T/3+\et)$ is a saddle point.
 \end{proposition}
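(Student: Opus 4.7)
The plan is to specialize the Jacobian $J_1$ from Equation (\ref{Jacobian_1}) using the hypothesis $\c_1=\c_2=\c_3$ together with the balance condition (\ref{balance2}), and then diagonalize it in closed form by exploiting a perfect-square structure in the discriminant of its characteristic polynomial.

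First I would set $\c_1=\c_2=\c_3 =: a$ in (\ref{balance2}), which forces $\c_5 = \c_4+\c_7 = \c_6$. Writing $\beta := \c_5 = \c_6$ and recalling $\alpha = \c_4/(\c_4+\c_7)$ from (\ref{alpha_alphamax}), one gets $\c_4 = \alpha\beta$ and $\c_7 = (1-\alpha)\beta$. With the shorthand $H_1 := H'(T/3+\et;\x)$ and $H_2 := H'(2T/3-\et;\x)$, the Jacobian (\ref{Jacobian_1}) at the fixed point $(2T/3-\et,\, T/3+\et)$ reduces to
\[
J_1 \;=\; -\beta \begin{pmatrix} H_1 + \alpha H_2 & (1-\alpha) H_1 \\ \alpha H_2 & H_2 + (1-\alpha) H_1 \end{pmatrix}.
\]

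The main obstacle, and the key algebraic observation, is that after this substitution the discriminant of the characteristic polynomial collapses to the perfect square $\beta^2 [\alpha H_1 + (1-\alpha) H_2]^2$. Once that is spotted, the two eigenvalues factor cleanly as $\lambda_1 = -\beta(H_1 + H_2)$ and $\lambda_2 = -\beta[(1-\alpha) H_1 + \alpha H_2]$. Hypothesis (\ref{condition_dH_1}) then immediately makes $\lambda_1 < 0$. To handle $\lambda_2$, I would use the elementary identity $(1-\alpha) H_1 + \alpha H_2 = (H_1 - H_2)(\alpha_{\max} - \alpha)$, which follows directly from the definition $\alpha_{\max}(H_1 - H_2) = H_1$ in (\ref{alpha_alphamax}). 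Hence $\lambda_2 = -\beta (H_1 - H_2)(\alpha_{\max} - \alpha)$, so hypothesis (\ref{lambda12negative}) yields $\lambda_2 < 0$ (both eigenvalues negative, stable fixed point), whereas hypothesis (\ref{lambda12positive}) yields $\lambda_2 > 0$ (eigenvalues of opposite sign, saddle), completing the argument.
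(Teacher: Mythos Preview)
Your proposal is correct and essentially the same as the paper's approach: both specialize the linearization using $\c_5=\c_4+\c_7=\c_6$ and obtain the explicit eigenvalues $-(H_1+H_2)$ and $-((1-\alpha)H_1+\alpha H_2)$ (up to the positive factor $\beta$), then read off the sign conditions. The only cosmetic difference is that the paper first rescales time to eliminate $\c_5$ and writes down a new Jacobian $J_3$ for the simplified system (\ref{eq.osc.simplified}), whereas you substitute directly into $J_1$ from (\ref{Jacobian_1}) and factor out $\beta$; the resulting eigenvalues agree up to that positive scalar, so the stability conclusions are identical.
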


\begin{proof}
Using the assumption $\c_1=\c_2=\c_3$ and  Equation~(\ref{balance2}),
the following relations among  the coupling strengths hold:
\begin{equation} \label{coupling_strengths}
\c_1 = \c_2 = \c_3, \; \c_5 = \c_4 + \c_7 = \c_6.
\end{equation}
Letting 
\be{alpha}
\alpha := \frac{\c_4}{\c_4+\c_7}, \quad (0< \alpha<1),
\ee
 and making a change of time variable that eliminates $\c_5$, 
 Equations~(\ref{torus:equation}) become
\begin{subequations} \label{eq.osc.simplified}
\begin{align}
&\dot\theta_1= H(-\theta_1; \x) - \alpha  H(\theta_1; \x)- (1-\alpha)  H(\theta_2; \x), \\ 
&\dot\theta_2 = H(-\theta_2; \x) -  \alpha H(\theta_1; \x) - (1-\alpha) H(\theta_2; \x) .
\end{align}
\end{subequations}
 
Consider the linearization of  Equation~(\ref{eq.osc.simplified}) at $(\theta_1, \theta_2)$:
\[
J_3(\theta_1,\theta_2)\;=\; -  \left(\begin{array}{ccc}H'(-\theta_1; \x)+\a H'(\theta_1; \x) & (1-\a)H'(\theta_2; \x) \\
 \\ 
\a H'(\theta_1; \x) & H'(-\theta_2; \x) + (1-\a) H'(\theta_2; \x)\end{array}\right).
\]
Standard calculations show that the eigenvalues of $J_3$ at
$(\theta_1^1, \theta_2^1)=(2T/3-\et, T/3+\et)$ are
\[ \lambda_1^1= -H'\lt(\frac{2T}{3}-\et; \x\rt)-H'\lt(\frac{T}{3}+\et; \x\rt), \quad \mbox{and}  \quad  
\lambda_2^1= -(1-\a)H'\lt(\frac{T}{3}+\et; \x\rt)-\a H'\lt(\frac{2T}{3}-\et; \x\rt). \]
By Equation~(\ref{condition_dH_1}), $H'\lt(\frac{2T}{3}-\et; \x\rt)+H'\lt(\frac{T}{3}+\et; \x\rt)>0$, hence $\lambda_1^1<0$. 
A calculation shows that $\lambda_2^1<0$ if 
$H'\lt( \frac{T}{3} +\eta; \x\rt)-H'\lt( \frac{2T}{3} -\eta; \x\rt)>0$ and $\a<\a_{max}$ 
 or 
$H'\lt( \frac{T}{3} +\eta; \x\rt)-H'\lt( \frac{2T}{3} -\eta; \x\rt)<0$ and $\a>\a_{max}$.
 Therefore, if Inequality (\ref{lambda12negative}) holds, then
$(\theta_1^1, \theta_2^1)=(2T/3-\et, T/3+\et)$  is a stable fixed point. Otherwise, $(\theta_1^1, \theta_2^1)=(2T/3-\et, T/3+\et)$  is a saddle point. 
\end{proof}

In the following corollary,  assuming that Equation (\ref{condition_dH_1}) holds and $H'\lt( {2T}/{3} -\eta; \x\rt) < 0$, we verify the stability types of the other fixed points introduced in Corollary \ref{cor:tetra} (in Section~\ref{application_BN} we will see that the coupling function computed for the bursting neuron model satisfies both of these assumptions):

 \begin{proposition} \label{special_coupling_corollary}
Assume that for some $\et \in [0,T/6]$, Equation (\ref{condition_dH_1}) holds and $H'\lt( {2T}/{3} -\eta; \x\rt) < 0$. Then

\begin{enumerate}[leftmargin=*]
\item 
$\lt(\theta_1^2, \theta_2^2\rt)= (T/3+\et, T/3+\et)$ is a saddle  point. 

\item  $\lt(\theta_1^3, \theta_2^3\rt)= (T/3+\et, 2T/3-\et)$, which corresponds to a backward tetrapod gait, is a sink if  
\be{alphamin}
\alpha>{\alpha_{min}:= \frac{H'\lt(\frac{2T}{3}-\et; \x\rt)}{H'\lt(\frac{2T}{3}-\et; \x\rt)-H'\lt(\frac{T}{3}+\et; \x\rt)}}\;,
\ee
and a saddle point if $\a_{min}>0$ and $\a<\a_{min}$. 

\item   $\lt(\theta_1^4, \theta_2^4\rt)= (2T/3-\et, 2T/3-\et)$ is a sink. 
\end{enumerate}
 \end{proposition}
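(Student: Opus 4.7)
The plan is to reuse the Jacobian $J_3(\theta_1,\theta_2)$ computed in the proof of Proposition~\ref{special_coupling} and evaluate it at each of the three fixed points in turn, extracting stability from the signs of trace and determinant. The key book-keeping observation is that $H$ is $T$-periodic, so $H'$ is too, giving $H'(-(T/3+\eta);\x) = H'(2T/3-\eta;\x)$ and $H'(-(2T/3-\eta);\x) = H'(T/3+\eta;\x)$. To compress notation I would set $a := H'(T/3+\eta;\x)$ and $b := H'(2T/3-\eta;\x)$; the standing hypotheses then read $a+b>0$ and $b<0$, which in particular force $a>0>b$ and $a-b>0$.

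For the first and third claims the computations are essentially symmetric. At $(\theta_1^2,\theta_2^2)=(T/3+\eta,T/3+\eta)$ the Jacobian becomes
\[
J_3 \;=\; -\begin{pmatrix} b+\alpha a & (1-\alpha)a \\ \alpha a & b+(1-\alpha)a \end{pmatrix},
\]
and a short calculation (the $\alpha(1-\alpha)a^2$ cross terms cancel) gives $\det J_3 = b(a+b)<0$, forcing real eigenvalues of opposite sign and hence a saddle. At $(\theta_1^4,\theta_2^4)=(2T/3-\eta,2T/3-\eta)$ the analogous calculation yields $\det J_3 = a(a+b)>0$ and $\operatorname{Tr} J_3 = -(2a+b) < 0$ (since $2a+b > a+b>0$), so both eigenvalues have negative real part and the point is a sink.

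The more delicate case is $(\theta_1^3,\theta_2^3)=(T/3+\eta,2T/3-\eta)$, where the Jacobian is no longer symmetric in $a,b$:
\[
J_3 \;=\; -\begin{pmatrix} b+\alpha a & (1-\alpha)b \\ \alpha a & a+(1-\alpha)b \end{pmatrix}.
\]
Expanding, the $\alpha(1-\alpha)ab$ cross terms cancel again and I get $\det J_3 = \alpha a^2 + ab + (1-\alpha)b^2$, which is affine in $\alpha$ with slope $a^2-b^2 = (a-b)(a+b)>0$. Solving $\det J_3 = 0$ gives exactly $\alpha = b/(b-a) = \alpha_{min}$, so the determinant is positive for $\alpha>\alpha_{min}$ and negative for $0<\alpha<\alpha_{min}$. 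For the trace, $\operatorname{Tr} J_3 = -\bigl(a+2b+\alpha(a-b)\bigr)$ is strictly decreasing in $\alpha$, and a direct evaluation at $\alpha=\alpha_{min}$ collapses (after cancellations) to $-(a+b)<0$. Hence the trace is negative for every $\alpha\ge\alpha_{min}$, and the sign of the determinant alone distinguishes sink ($\alpha>\alpha_{min}$) from saddle ($\alpha_{min}>0$ and $\alpha<\alpha_{min}$), as claimed.

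I do not expect a genuine obstacle here; the proof is a matter of careful arithmetic. The one place where I would slow down is verifying the trace bound at $\alpha=\alpha_{min}$ in case (2), since that is precisely where the determinant vanishes and one must rule out a zero eigenvalue paired with a positive one. The identity $\operatorname{Tr} J_3\big|_{\alpha=\alpha_{min}} = -(a+b)$ is what makes the dichotomy clean, and it is this algebraic coincidence — rather than any conceptual difficulty — that I would want to double-check when writing the proof out in full.
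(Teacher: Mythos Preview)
Your proof is correct and follows the same strategy as the paper --- linearize at each fixed point and read off stability --- but the paper computes the eigenvalues of $J_3$ explicitly rather than working through trace and determinant: at each of the three points one eigenvalue is exactly $-(a+b)$, and the other is $-b$, $-\bigl((1-\alpha)b+\alpha a\bigr)$, and $-a$ at $(\theta_1^2,\theta_2^2)$, $(\theta_1^3,\theta_2^3)$, $(\theta_1^4,\theta_2^4)$ respectively. That clean factorization is precisely what underlies your ``algebraic coincidence'' $\operatorname{Tr} J_3\big|_{\alpha=\alpha_{min}} = -(a+b)$: at $\alpha=\alpha_{min}$ the second eigenvalue vanishes, so the trace simply equals the persistent eigenvalue $-(a+b)$, and knowing this in advance would let you skip the separate trace check in case~(2).
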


\bp 
Note that for any  $i=1,\ldots,4$,  the fixed point $\lt(\theta_1^i, \theta_2^i\rt)$  either lies on the line $\theta_1=\theta_2$ 
or on the line $\theta_1= - \theta_2$. 
\begin{enumerate}[leftmargin=*]
\item The eigenvalues of $J_3$ at $\lt(\theta_1^2, \theta_2^2\rt)= (T/3+\et, T/3+\et)$ are 
\[
\lambda_1^2=  -H'(2T/3-\et; \x)-H'(T/3+\et; \x)\quad \mbox{and} \quad \lambda_2^2= -H'(2T/3-\et; \x).
\]
By Equation~(\ref{condition_dH_1}),  $\lambda_1^2<0$  and  since we assumed $H'(2T/3-\et; \x)<0$, $\lambda_2^2>0$. 
Therefore, independent of the choice of $\alpha$, $\lt(\theta_1^2, \theta_2^2\rt)$ is always a saddle point. 

\item   The eigenvalues  of $J_3$ at $\lt(\theta_1^3, \theta_2^3\rt)= (T/3+\et, 2T/3-\et)$ are
\[\lambda_1^3=  -H'(2T/3-\et; \x)-H'(T/3+\et;\x) \quad \mbox{and} \quad \lambda_2^3= -(1-\a)H'(2T/3-\et; \x)-\a H'(T/3+\et; \x).\]
By Equation~(\ref{condition_dH_1}), $\lambda_1^3<0$. Since $H'\lt( {2T}/{3} -\eta; \x\rt) < 0$, for $\alpha>\alpha_{min}$,   
$\lambda_2^3<0$. Therefore, $(\theta_1^3, \theta_2^3)$ is a  sink. 
Note that for $\alpha<\alpha_{min}$, $\lambda_2^3$ becomes positive and so 
$(\theta_1^3, \theta_2^3)$ becomes a saddle point. 

\item The eigenvalues  of $J_3$ at $(\theta_1^4, \theta_2^4)= (2T/3-\et, 2T/3-\et)$ are 
\[\lambda_1^4=  -H'(2T/3-\et; \x)-H'(T/3+\et; \x) \quad \mbox{and} \quad \lambda_2^4= -H'(T/3+\et; \x).\]
$H'(2T/3-\et; \x)+H'(T/3+\et; \x)>0$ and $H'(2T/3-\et; \x)<0$ imply that $H'(T/3+\et; \x)>0$. Therefore,  
 both eigenvalues are negative and independent of the choice of $\alpha$, $(\theta_1^4, \theta_2^4)$ is always a sink. 
\end{enumerate}
\vspace{-0.9cm} 
\ep

On the other hand, if we assume that $H'(2T/3-\et;\x)>0$, then all stable fixed points  
become saddle points and the saddle points  become stable fixed points.

 \begin{proposition}\label{fixedpoints_zero_T2}
 In addition to $\lt(\theta_1^i, \theta_2^i\rt)$, $i=1,\ldots,4$, when $\c_1=\c_2=\c_3$, 
Equations~(\ref{torus:equation}) admit the following fixed points. 

\begin{enumerate}[leftmargin=*]

\item  $(\theta_1^5, \theta_2^5) = (T/2,T/2)$ is a fixed point and if $\exists \, \x_* \in [\x_1,\x_2]$ such that for $\x < \x_*$, 
 $H'\lt({T}/{2}; \xi\rt) < 0$, while for  $\x > \x_*$, $H'\lt({T}/{2}; \xi\rt) > 0$, 
 then the fixed point $(T/2, T/2)$ changes its stability to a sink from a source as $\x$ increases. 
 
 \item $\lt(\theta_1^6, \theta_2^6\rt)=(0,0)$ is a fixed point and when $H'(0; \x)<0$, it is a source. 
\end{enumerate}

  \end{proposition}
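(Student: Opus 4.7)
The plan is to verify both claims by direct substitution into the simplified system (\ref{eq.osc.simplified}) of Proposition \ref{special_coupling}, which is valid under the standing assumptions of this subsection (balance together with $\c_1 = \c_2 = \c_3$), and then to linearize at each candidate fixed point and read off stability from the trace and determinant of the resulting $2 \times 2$ Jacobian $J_3$. The key observation that makes this tractable is that both $T/2$ and $0$ are self-symmetric modulo $T$, in the sense that $-T/2 \equiv T/2 \pmod{T}$ and $-0 = 0$; at both points the arguments $\pm\theta_i$ in the vector field collapse to a single value, so every term reduces to a scalar multiple of $H(T/2;\x)$ (respectively $H(0;\x)$). A short algebraic check using the identity $1 - \alpha - (1-\alpha) = 0$ then shows that each right hand side of (\ref{eq.osc.simplified}) vanishes, establishing that $(T/2, T/2)$ and $(0,0)$ are indeed fixed points.

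Next I would compute $J_3$ at each point. By the $T$-periodicity of $H'$ one has $H'(-T/2;\x) = H'(T/2;\x)$, and trivially $H'(-0;\x) = H'(0;\x)$, so every entry of $J_3$ becomes a scalar multiple of $H'(\theta_*;\x)$ for $\theta_* \in \{T/2, 0\}$. A direct calculation yields
\begin{equation*}
\mathrm{Tr}(J_3) = -3\, H'(\theta_*;\x), \qquad \det(J_3) = 2\, [H'(\theta_*;\x)]^2.
\end{equation*}
Notably both quantities are independent of $\alpha$, so stability at $(T/2, T/2)$ and at $(0,0)$ depends only on the sign of $H'$ evaluated at the respective point.

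From here the conclusions follow immediately. Since $\det(J_3) \geq 0$ always, the two eigenvalues are real of the same sign (or complex conjugates with equal real parts), and the sign of the trace determines source versus sink. For $(0,0)$, the hypothesis $H'(0;\x) < 0$ gives $\mathrm{Tr} > 0$ and $\det > 0$, so both eigenvalues have positive real parts and the point is a source. For $(T/2, T/2)$, the sign change of $H'(T/2;\x)$ at $\x = \x_*$ flips the trace from positive to negative while the determinant remains strictly positive away from $\x_*$, yielding the claimed transition from source to sink as $\x$ increases through $\x_*$.

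I do not foresee any substantial obstacle. The only mild care needed is the use of $T$-periodicity of $H'$ at the half-period point $\pm T/2$, and the observation that the $\alpha$-dependence cancels in both the trace and the determinant; this cancellation is precisely what makes the stability criterion depend only on $H'(\theta_*;\x)$ and not on the coupling ratio $\alpha$, consistent with the statement of the proposition.
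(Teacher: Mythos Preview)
Your proposal is correct and follows essentially the same route as the paper: verify the fixed-point property using $-T/2 \equiv T/2$ and $-0 = 0$ (mod $T$), then linearize via $J_3$ and read off stability. The only cosmetic difference is that the paper computes the eigenvalues explicitly as $\lambda_1 = -H'(\theta_*;\x)$ and $\lambda_2 = -2H'(\theta_*;\x)$, whereas you work with the trace $-3H'$ and determinant $2(H')^2$; these are of course equivalent, and your observation that the $\alpha$-dependence cancels is exactly what underlies the paper's $\alpha$-free eigenvalue formulas.
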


\bp
\begin{enumerate}[leftmargin=*]

\item The eigenvalues  of $J_3$ at $(T/2, T/2)$ are 
\[\lambda_1^5 = -H'(T/2; \x)\quad \mbox{and}\quad \lambda_1^5  =-2H'(T/2; \x),\]
 so  the stability depends on the sign of $H'(T/2; \x)$, which by assumption is positive for $\x<\x_*$.  
  Hence, for $\x<\x_*$, both eigenvalues are positive and $(\theta_1^5, \theta_2^5)$ is a source
and for  for $\x>\x_*$,  both eigenvalues becomes negative and hence  
$(\theta_1^5, \theta_2^5)= (T/2, T/2)$ becomes a sink. 

\item The eigenvalues  of $J_3$ at $(0,0)$ are 
\[\lambda_1^6 = -H'(0; \x)\quad \mbox{and}\quad \lambda_1^6  =-2H'(0; \x),\]
 so the stability depends on the sign of $H'(0; \x)$, which we assumed 
 is negative. Therefore, $(0,0)$ is a source.
\end{enumerate}
\vspace{-0.5cm} 
\ep

\ZA{Note that as explained in Remark~\ref{index_zero}, 
by the Euler characteristic of zero for the 2-torus, 
there should exist more fixed points (e.g. saddle points).}

\begin{proposition} \label{invariant_diagonal_line}
If $\c_1=\c_3$ and $\c_5=\c_6$, then $\theta_1=\theta_2$ is an invariant line.
\end{proposition}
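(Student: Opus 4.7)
The plan is to verify invariance directly from Equations~(\ref{torus:equation}) by showing that, whenever $\theta_1=\theta_2$, the vector field satisfies $\dot\theta_1=\dot\theta_2$; this is exactly the condition for the diagonal $\{\theta_1=\theta_2\}$ to be tangent to the flow and hence invariant.

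Concretely, I would set $\theta_1=\theta_2=:\theta$ in the two right-hand sides of Equations~(\ref{torus:equation}) and compute the difference
\[
\dot\theta_1 - \dot\theta_2 \;=\; (\c_1-\c_3)\,H\!\left(\tfrac{2T}{3}-\eta;\x\right) \;+\; (\c_5-\c_6)\,H(-\theta;\x),
\]
since the remaining terms $-\c_4 H(\theta_1;\x) - \c_7 H(\theta_2;\x)$ appear identically in both equations and cancel. Under the assumptions $\c_1=\c_3$ and $\c_5=\c_6$, both terms on the right vanish, so $\dot\theta_1 = \dot\theta_2$ at every point of the diagonal. It then follows that if a trajectory starts with $\theta_1(0)=\theta_2(0)$, it satisfies $\theta_1(t)=\theta_2(t)$ for all $t$ in its interval of existence, i.e., $\{\theta_1=\theta_2\}\subset\mathbb{T}^2$ is invariant under the flow.

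There is no real obstacle here: the argument is a one-line cancellation made possible by the symmetric structure of the equations (the $\c_4 H(\theta_1;\x)$ and $\c_7 H(\theta_2;\x)$ terms are common to both components) together with the two hypotheses, which precisely kill the two remaining asymmetric contributions. The only mild subtlety worth mentioning is that one should note the constant forcing terms $(\c_1-\c_2)H(2T/3-\eta;\x)$ and $(\c_3-\c_2)H(2T/3-\eta;\x)$ differ only through $\c_1$ vs.\ $\c_3$, so it is exactly the hypothesis $\c_1=\c_3$ (not the full balance condition of Proposition~\ref{prop:tetra:stability}) that is needed for this difference to vanish.
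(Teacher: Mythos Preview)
Your argument is correct and is essentially identical to the paper's own proof: both simply observe that with $\c_1=\c_3$ and $\c_5=\c_6$ the two right-hand sides of Equations~(\ref{torus:equation}) agree on the diagonal, so $\dot\theta_1=\dot\theta_2$ there and the line $\theta_1=\theta_2$ is invariant. Your version is in fact slightly more explicit, since you write out the difference $\dot\theta_1-\dot\theta_2$ and note exactly which hypotheses kill which terms.
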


\bp
Setting $\c_1=\c_3$ and $\c_5=\c_6$ in Equations~(\ref{torus:equation}), 
we conclude that $\dot\theta_1=\dot\theta_2$. Hence $\theta_1=\theta_2$
is invariant. 
\ep

\bcor
\label{reflection_symmetric}
Under the conditions of Proposition \ref{special_coupling},  $\theta_1 =
\theta_2$ is an invariant line. In addition, if $\c_4=\c_7$, then the system
is reflection symmetric with respect to $\theta_1=\theta_2$; i.e., 
if $(\dot\theta_1,\dot\theta_2)=(a,b)$ at $(\bar\theta_1, \bar\theta_2)$,
then $(\dot\theta_1,\dot\theta_2)=(b,a)$ at $(\bar\theta_2, \bar\theta_1)$. 
\ecor

\bp 
Setting $(\theta_1, \theta_2) = (\bar\theta_1, \bar\theta_2)$  and 
$(\theta_1, \theta_2) = (\bar\theta_2, \bar\theta_1)$ in 
Equations~(\ref{eq.osc.simplified}) yields the result.
\ep

\ZA{In the following sections we first apply the results of this section to the coupling functions
 computed  for the bursting neuron model (Section \ref{application_BN}). 
 Then, we characterize a class of functions $H$ which satisfies 
 Assumption \ref{eta_assumption} (Section \ref{application_general_H}).}

\section{\ZAA{Application to the bursting neuron model}}\label{application_BN}
\ZA{In Section \ref{pair_coupled}, for some $\delta$ and $I_{ext}$ values,  we numerically 
computed  the coupling function $H_{BN}$ for the bursting neuron model 
(see Figures~\ref{H_PRC_versus_delta} and \ref{H_PRC_versus_Iext}). 
Here we show that  the results of  Section~\ref{existence_and_stability} apply to the coupling function 
$H_{BN}$.}
 
\begin{lemma}
The coupling function $H_{BN}$, which is computed numerically from the bursting
neuron model, satisfies Assumption \ref{eta_assumption}. 
\end{lemma}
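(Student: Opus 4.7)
The plan is to verify the three components of Assumption~\ref{eta_assumption} separately: $T$-periodicity, differentiability, and the existence/uniqueness/monotonicity of the solution branch $\eta(\xi)$.

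The first two are immediate from the convolution formula~(\ref{H:interconnected:bursting:neuron}). Since the attracting limit cycle $\Gamma$ of the single bursting neuron~(\ref{BN}) is smooth and $T$-periodic, and the iPRC $Z_v$ is smooth along $\Gamma$, the integrand $Z_v(\Gamma(\tilde t))(v(\Gamma(\tilde t))-E_s^{post})\,s(\Gamma(\tilde t+\theta))$ is smooth in $\theta$, and the change of variable $\tilde t \mapsto \tilde t + T$ shows that $H_{BN}(\theta+T;\xi) = H_{BN}(\theta;\xi)$. This gives $H_{BN}(\cdot;\xi)\in C^1(\mathbb{R})$ with period $T$, which is the first half of the assumption.

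For the main claim, define
\[
F(\eta;\xi) \;:=\; H_{BN}\!\left(\tfrac{2T}{3}-\eta;\,\xi\right) - H_{BN}\!\left(\tfrac{T}{3}+\eta;\,\xi\right).
\]
Then $F(T/6;\xi)\equiv 0$ for every $\xi$, which is the trivial solution already noted in the assumption. The task is to extract a second, non-trivial root $\eta(\xi)\in[0,T/6)$ and show it depends continuously and monotonically on $\xi$, with $\eta(\xi_1)=0$ and $\eta(\xi_2)=T/6$. I would argue existence by the intermediate value theorem applied in $\eta$ at fixed $\xi$: inspection of Figures~\ref{H_PRC_versus_delta} and~\ref{H_PRC_versus_Iext} shows that $F(0;\xi_1) = H_{BN}(2T/3;\xi_1) - H_{BN}(T/3;\xi_1)$ is nonzero at the low-speed end, while $F(T/6;\xi)=0$ identically, so continuity of $F$ in $\eta$ provides at least one root on $[0,T/6]$ distinct from the trivial one. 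Uniqueness on this interval reduces to checking numerically that $\partial_\eta F$ has a single sign change between the non-trivial root and $T/6$.

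Monotonicity and the onto property then follow from the implicit function theorem applied to the non-trivial branch: provided $\partial_\eta F(\eta(\xi);\xi)\neq 0$, one has $\eta'(\xi) = -\partial_\xi F/\partial_\eta F$, and a numerical check that $\partial_\xi F$ and $\partial_\eta F$ have consistent signs along the branch yields $\eta'(\xi)\geq 0$. The onto requirement demands that the non-trivial branch starts at $\eta=0$ for $\xi=\xi_1$ and merges with the trivial branch $\eta\equiv T/6$ at $\xi=\xi_2$; biologically this is precisely the tetrapod-to-tripod coalescence that motivates the whole paper, so the endpoint values of $\xi_1,\xi_2$ should be chosen to realize exactly this.

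The main obstacle is that $H_{BN}$ is only known numerically, as a convolution of the numerically integrated $\Gamma(t)$ and $Z_v(t)$. Accordingly, the verification is inherently computational rather than closed-form: it reduces to checking sign conditions on $F$, $\partial_\eta F$, and $\partial_\xi F$ along a sufficiently fine grid in $[\xi_1,\xi_2]\times[0,T/6]$, and then invoking continuity (and the implicit function theorem) to extend the discrete checks to the full parameter interval, using the representative plots in Figures~\ref{H_PRC_versus_delta} and~\ref{H_PRC_versus_Iext} as evidence that the qualitative picture is robust across the range of $\delta$ and $I_{ext}$ considered.
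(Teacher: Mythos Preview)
Your proposal is correct in spirit and, like the paper, ultimately rests on numerical verification since $H_{BN}$ is only available as a computed convolution. The paper's own proof is considerably more terse: it simply observes that solving Equation~(\ref{eta}) is equivalent to finding zeros of $G_{BN}(\theta;\xi):=H_{BN}(\theta;\xi)-H_{BN}(-\theta;\xi)$, numerically computes the non-trivial root $\eta(\xi)$ over the parameter ranges $\delta\in[0.0097,0.04]$ and $I_{ext}\in[35.65,37.7]$, and displays the resulting curves in Figure~\ref{eta_delta_Iext} as direct evidence that $\eta$ is unique, non-decreasing, and onto $[0,T/6]$. No intermediate-value or implicit-function arguments are invoked.

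Your version supplies more analytical scaffolding---the IVT/IFT framework and the explicit sign conditions on $F$, $\partial_\eta F$, $\partial_\xi F$---which makes transparent \emph{why} the numerical picture should be trusted and anticipates the structure of Proposition~\ref{H-properties} (where the paper later formalizes exactly this kind of argument for a general class of $H$). The paper's approach is shorter and lets the figure do the work; yours is more self-contained but still bottoms out in the same numerical checks. Note that your $F(\eta;\xi)$ equals $-G_{BN}(T/3+\eta;\xi)$ by periodicity, so the two formulations are equivalent.
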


\bp
 Figure~\ref{eta_delta_Iext} shows the graphs of  $\et=\et(\x)$, the solutions of  Equation~(\ref{eta}) for $H=H_{BN}$, where 
$\x=\delta\in\ZAA{[\delta_1,\delta_2]} =[0.0097,0.04]$ (left) and $\x=I_{ext}
\in \ZAA{[I_1, I_2]} =[35.65,37.7]$  (right). (Note that solving Equation~(\ref{eta}) is equivalent to solving $G_{BN}(\theta; \x)=0$ for $\theta$, where 
$G_{BN}(\theta; \x) := H_{BN}(\theta; \x) - H_{BN}(-\theta; \x)$.) Note that $\et$ is the unique solution of Equation~(\ref{eta}) which is non-decreasing and onto. Therefore,  Assumption  \ref{eta_assumption}  is satisfied. 
\begin{figure}[h!]
\begin{center}
 \includegraphics[scale=.3]{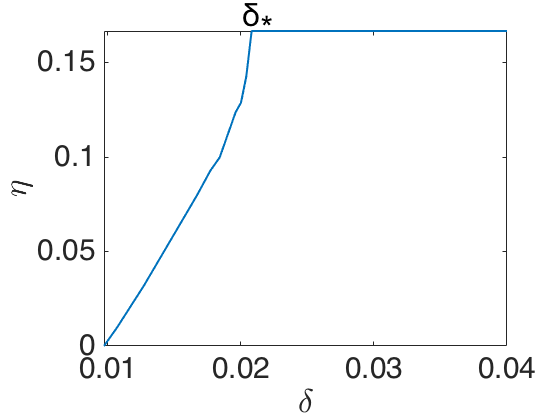}\qquad
 \includegraphics[scale=.3]{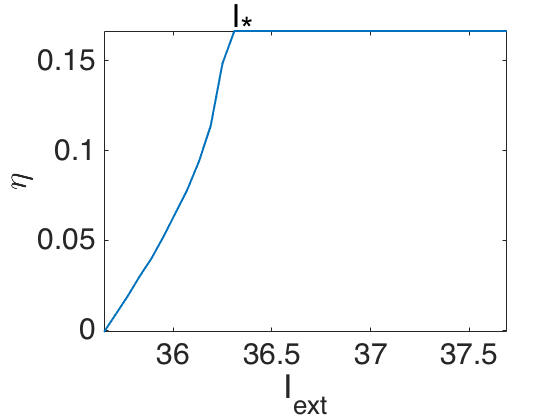}
\end{center}
\caption{The solution $\et(\x)$ of $H_{BN}\lt({2T}/{3} - \et; \x\rt) = H_{BN}\lt({T}/{3}+\et; \x\rt)$ where  $\x=\delta$ and 
$\et:[0.0097,0.04] \to [0,1/6]$ (left); and where $\x=I_{ext}$ and $\et:[35.65,37.7] \to [0,1/6]$ (right).}
\label{eta_delta_Iext}
\end{figure}
\ep

\subsection{Balance condition}

Since $H_{BN}$ satisfies Assumption \ref{eta_assumption}, one can apply Proposition \ref{prop:tetra:stability} to show that under balance condition for the coupling strengths, and Equations (\ref{trace_det}), $(2T/3-\et, T/3+\et)$ is a stable fixed point of Equation~(\ref{torus:equation}) with $H=H_{BN}$. 
In Section~\ref{Example_Qualitative_behavior}, Figure~\ref{NC_PP_balance_Tr_Det},  we showed the nullclines and
phase planes of  Equations (\ref{torus:equation}) with coupling strengths given in Equation~(\ref{example_only_balance}). 
Note that those coupling strengths satisfy the balance equation and for $\delta=0.0097$, they satisfy 
 Equations~(\ref{trace_det})
\ZA{(Tr $\approx -2.78 <0$ and  Det $\approx 0.61 >0$).} 
In Figure~\ref{NC_PP_balance_Tr_Det} (left to right) we observe that for 
small $\delta$, there exist 3 sinks corresponding to $(\theta_1^i, \theta_2^i)$,
$i=1,3,4$, and  1 saddle point corresponding to $(\theta_1^2, \theta_2^2)$. 
 In addition, there exist 2 sources (one located at $(1/2,1/2)$ and the other
 one at $(0,0)$), and  $4$ more saddle points. When $\delta$ is large, 
  for $i=1,2,3,4$, $(\theta_1^i, \theta_2^i)$ merge to $(1/2,1/2)$,  and 
 $(1/2,1/2)$, which corresponds to the tripod gait, becomes a sink. The
 unstable fixed point $(0,0)$ and the two remaining saddle points, near the boundary, 
 preserve their stability types. 
 
\subsection{Balance condition and equal contralateral couplings}

In this section we apply Proposition~\ref{special_coupling} to $H_{BN}$ to show existence and stability of tetrapod and tripod gaits. 

\begin{proposition} \label{special_coupling_HBN}
Consider Equations~(\ref{eq.osc.simplified}) for $H=H_{BN}$. 
 If $\alpha<\alpha_{max}$ (as defined in Equation~(\ref{alpha_alphamax})), then 
 $(\theta_1^1, \theta_2^1)= (2T/3-\et, T/3+\et)$
 is a stable fixed point of Equations~(\ref{torus:equation}) and if $\alpha>\alpha_{max}$
  then $(\theta_1^1, \theta_2^1)= (2T/3-\et, T/3+\et)$ is a saddle point.
   \end{proposition}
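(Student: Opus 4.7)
The plan is to deduce this result by specializing Proposition~\ref{special_coupling} to $H = H_{BN}$. The preceding lemma already establishes that $H_{BN}$ satisfies Assumption~\ref{eta_assumption}, so the function $\eta(\xi)$ is well defined, and by assumption the coupling strengths satisfy the balance condition with $c_1 = c_2 = c_3$ so that Equations~(\ref{eq.osc.simplified}) apply. Once I have these ingredients in place, the whole argument reduces to determining the signs of two quantities involving the derivative of $H_{BN}$ at the angles $T/3 + \eta$ and $2T/3 - \eta$, from which the stability criterion (\ref{lambda12negative}) or (\ref{lambda12positive}) follows.

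Concretely, I would first verify numerically, from the graphs in Figures~\ref{H_PRC_versus_delta} and~\ref{H_PRC_versus_Iext}, that for every $\xi$ in the relevant range and the corresponding $\eta(\xi) \in [0, T/6]$ one has
\[
H_{BN}'\!\left(\tfrac{T}{3}+\eta;\xi\right) + H_{BN}'\!\left(\tfrac{2T}{3}-\eta;\xi\right) > 0,
\]
which is exactly hypothesis (\ref{condition_dH_1}) of Proposition~\ref{special_coupling}. Next, I would check from the same plots that the \emph{difference} of the two derivatives is positive, i.e.
\[
H_{BN}'\!\left(\tfrac{T}{3}+\eta;\xi\right) - H_{BN}'\!\left(\tfrac{2T}{3}-\eta;\xi\right) > 0.
\]
Geometrically this is just the statement that $H_{BN}$ is (on average) increasing between $T/3+\eta$ and $2T/3-\eta$, and inspection of the computed $H_{BN}$ suggests this holds. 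With the sign of the second factor fixed to be positive, the condition $(\alpha_{max}-\alpha)\bigl(H'(T/3+\eta;\xi) - H'(2T/3-\eta;\xi)\bigr) > 0$ from (\ref{lambda12negative}) collapses to $\alpha < \alpha_{max}$, and likewise (\ref{lambda12positive}) collapses to $\alpha > \alpha_{max}$.

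Applying Proposition~\ref{special_coupling} then directly yields the two dichotomous conclusions: the fixed point $(\theta_1^1,\theta_2^1) = (2T/3-\eta, T/3+\eta)$ is a stable sink when $\alpha < \alpha_{max}$ and a saddle when $\alpha > \alpha_{max}$. The main obstacle I anticipate is the verification of the two derivative inequalities above, since $H_{BN}$ is only known numerically through the convolution integral (\ref{H:interconnected:bursting:neuron}); a rigorous proof would require either uniform numerical bounds on $H_{BN}'$ over $\eta \in [0, T/6]$ and $\xi$ in its range, or an argument based on monotonicity properties of the iPRC $Z_v$ and of $s_j$ in the bursting cycle. In practice I would settle for a careful graphical/numerical verification across the ranges $[\delta_1,\delta_2]$ and $[I_1,I_2]$ already displayed in the figures, which is the standard convention for this kind of reduced-model analysis.
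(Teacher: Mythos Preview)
Your proposal is correct and follows essentially the same approach as the paper: verify numerically that the sum $H_{BN}'(T/3+\eta;\xi)+H_{BN}'(2T/3-\eta;\xi)>0$ and the difference $H_{BN}'(T/3+\eta;\xi)-H_{BN}'(2T/3-\eta;\xi)>0$, then invoke Proposition~\ref{special_coupling}. The only minor refinement in the paper is that it verifies the individual signs $H_{BN}'(T/3+\eta;\xi)>0$ and $H_{BN}'(2T/3-\eta;\xi)<0$ separately via dedicated plots of these quantities against $\xi$ (rather than reading them off the $H_{BN}$ graphs directly), which is slightly stronger and feeds into the next proposition; your parenthetical geometric remark that the positivity of the difference means ``$H_{BN}$ is (on average) increasing'' is not quite right---it is a statement about the relative sizes of the slopes at two points, not about monotonicity of $H_{BN}$---but this heuristic aside does not affect the logic of your argument.
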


\bp

Figure \ref{eigenvalues_tetrapods_diffrent_alpha_delta} shows that $H_{BN}'(2T/3-\et; \x)+H_{BN}'(T/3+\et; \x)>0$, for $\x=\delta$ and $\x=I_{ext}$. Hence, Equation~(\ref{condition_dH_1}) holds. 
\begin{figure}[h!]
\begin{center}
\includegraphics[scale=.32]{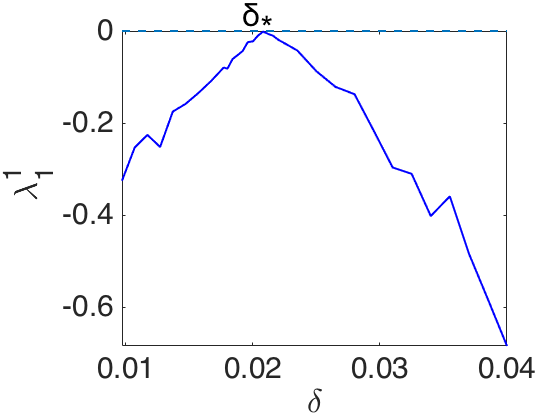}\qquad
\includegraphics[scale=.32]{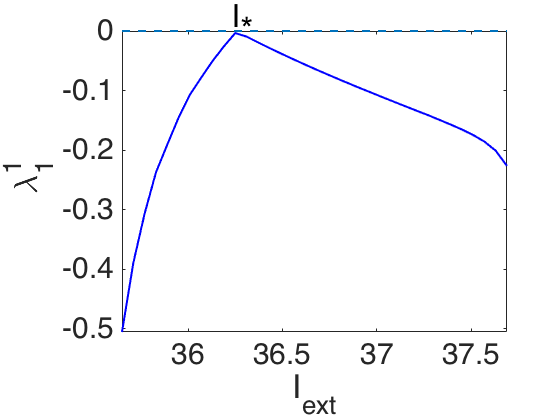}
\end{center}
\caption{$\lambda_1^1= -H_{BN}'(2T/3-\et; \x)-H_{BN}'(T/3+\et; \x)$ vs.
$\x=\delta$ (left) and $\xi=I_{ext}$ (right) are  shown. }
\label{eigenvalues_tetrapods_diffrent_alpha_delta}
\end{figure}
Figure \ref{dH13_dH23_xi} shows that $H_{BN}'(2T/3-\et; \x)<0$ and  $H_{BN}'(T/3+\et; \x)>0$. Therefore,  
$H_{BN}'(T/3+\et; \x) - H_{BN}'(2T/3-\et; \x)>0$. 
Hence, by Proposition~\ref{special_coupling}, If $\alpha<\alpha_{max}$, Equation~(\ref{lambda12negative}) holds and  
 $(\theta_1^1, \theta_2^1)= (2T/3-\et, T/3+\et)$
 is a stable fixed point of Equations~(\ref{torus:equation}) and 
 if $\alpha>\alpha_{max}$, Equation~(\ref{lambda12positive}) holds and
  $(\theta_1^1, \theta_2^1)= (2T/3-\et, T/3+\et)$ is a saddle point.
\begin{figure}[h!]
\begin{center}
\includegraphics[scale=.32]{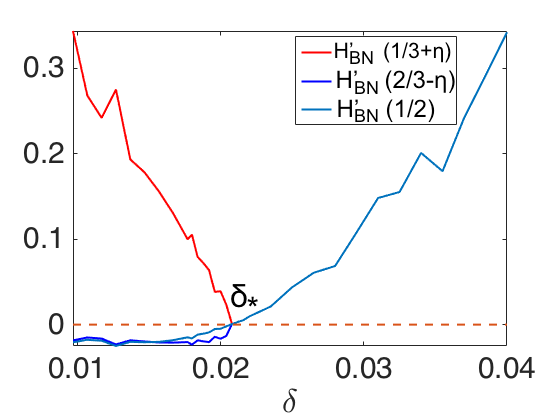}\qquad\qquad
\includegraphics[scale=.33]{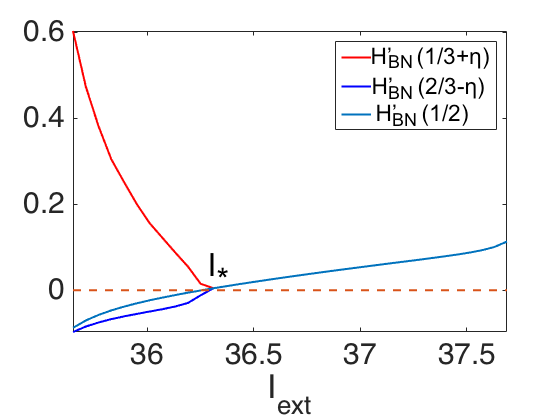}
\end{center}
\caption{ $H_{BN}'(2T/3-\et; \x)$,  $H_{BN}'(T/3+\et; \x)$, and $H_{BN}'(T/2; \x)$ vs. $\x=\delta$ (left) and $\x=I_{ext}$ (right) are  shown. 
Note that the curves first meet at $\delta_*$ (left) and $I_*$ (right) and subsequently overlap for $\delta>\delta_*$ and $I_{ext}>I_*$. }
\label{dH13_dH23_xi}
\end{figure}
\ep

Moreover, we apply Propositions~\ref{special_coupling_corollary} and \ref{fixedpoints_zero_T2} to show the existence and stability of more fixed points. 

\begin{proposition} \label{other_fixedpoints_HBN}
In Equations~(\ref{eq.osc.simplified}) with $H=H_{BN}$:

\begin{enumerate}[leftmargin=*]
\item $\lt(\theta_1^2, \theta_2^2\rt)= (T/3+\et, T/3+\et)$ is a saddle  point. 

 \item If $\alpha>\alpha_{min}$ (as defined in Equation~(\ref{alphamin})), then 
 $(\theta_1^3, \theta_2^3)= (T/3+\et, 2T/3-\et)$
 is a stable fixed point, otherwise, it is a saddle point.
 
\item   $\lt(\theta_1^4, \theta_2^4\rt)= (2T/3-\et, 2T/3-\et)$ is a sink. 

\item For $\x < \x_*$ ($\x_*=\delta_* \approx 0.0208$ and $\x_*=I_*\approx 36.3$), $(T/2,T/2)$ is a source and   for  $\x > \x_*$, $(T/2, T/2)$ becomes a sink. 
 
\end{enumerate}
 
   \end{proposition}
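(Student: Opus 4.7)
The plan is to verify that the coupling function $H_{BN}$ satisfies the hypotheses of Propositions~\ref{special_coupling_corollary} and \ref{fixedpoints_zero_T2}, and then to read off the four conclusions by direct invocation of those results. All the analytic work, in other words, has already been done; the task here is to supply the numerical/graphical verification of the sign conditions on $H'_{BN}$ at the relevant phase values.

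First, I would record the two sign facts about $H'_{BN}$ that were established inside the proof of Proposition~\ref{special_coupling_HBN}. Namely, Figure~\ref{eigenvalues_tetrapods_diffrent_alpha_delta} shows that
\begin{equation*}
H'_{BN}\!\left(\tfrac{T}{3}+\eta;\xi\right) + H'_{BN}\!\left(\tfrac{2T}{3}-\eta;\xi\right) > 0
\end{equation*}
for all $\xi$ in both the $\delta$- and $I_{ext}$-ranges, so Assumption (\ref{condition_dH_1}) holds. Figure~\ref{dH13_dH23_xi} shows that $H'_{BN}(2T/3-\eta;\xi)<0$ throughout these ranges. These are exactly the two running hypotheses of Proposition~\ref{special_coupling_corollary}. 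Items (1), (2), (3) of the current proposition then follow verbatim from items (1), (2), (3) of Proposition~\ref{special_coupling_corollary}: the saddle character of $(\theta_1^2,\theta_2^2)=(T/3+\eta,T/3+\eta)$ is automatic; the sink/saddle dichotomy for $(\theta_1^3,\theta_2^3)=(T/3+\eta,2T/3-\eta)$ is controlled by the threshold $\alpha_{min}$ defined in (\ref{alphamin}); and the sink character of $(\theta_1^4,\theta_2^4)=(2T/3-\eta,2T/3-\eta)$ is unconditional given the two sign facts above.

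For item (4), I would appeal to part (1) of Proposition~\ref{fixedpoints_zero_T2}, which states that $(T/2,T/2)$ switches from a source to a sink precisely when $H'(T/2;\xi)$ changes sign from negative to positive as $\xi$ increases. From Figure~\ref{dH13_dH23_xi}, the curve $\xi \mapsto H'_{BN}(T/2;\xi)$ is negative for small values of the speed parameter, vanishes at a unique value (identified numerically as $\delta_* \approx 0.0208$ when $\xi=\delta$, and $I_* \approx 36.3$ when $\xi=I_{ext}$, i.e., the point where the three curves of Figure~\ref{dH13_dH23_xi} first meet and then coincide because $\eta=T/6$), and is positive for larger values. Hence the hypothesis of Proposition~\ref{fixedpoints_zero_T2}(1) is met with this $\xi_*$, and the claimed source$\to$sink transition at $(T/2,T/2)$ follows.

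The main substantive obstacle is not algebraic but numerical/graphical: one must be confident that the two signs $H'_{BN}(2T/3-\eta;\xi)<0$ and $H'_{BN}(T/3+\eta;\xi)+H'_{BN}(2T/3-\eta;\xi)>0$ genuinely hold across the \emph{entire} parameter window $[\delta_1,\delta_2]$ and $[I_1,I_2]$, and that the zero of $H'_{BN}(T/2;\xi)$ is unique and sign-changing. These are read off from Figures~\ref{eigenvalues_tetrapods_diffrent_alpha_delta} and~\ref{dH13_dH23_xi} respectively; beyond that point the conclusions are immediate corollaries of the two abstract propositions already proved in Section~\ref{existence_and_stability}.
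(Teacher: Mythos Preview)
Your proposal is correct and matches the paper's own proof essentially line for line: the paper also verifies the sign conditions $H'_{BN}(T/3+\eta;\xi)+H'_{BN}(2T/3-\eta;\xi)>0$ and $H'_{BN}(2T/3-\eta;\xi)<0$ via Figures~\ref{eigenvalues_tetrapods_diffrent_alpha_delta} and~\ref{dH13_dH23_xi}, notes the sign change of $H'_{BN}(T/2;\xi)$ at $\xi_*$ from Figure~\ref{dH13_dH23_xi}, and then simply invokes Propositions~\ref{special_coupling_corollary} and~\ref{fixedpoints_zero_T2}.
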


\bp 

By Figure \ref{eigenvalues_tetrapods_diffrent_alpha_delta}, $H_{BN}'(2T/3-\et; \x)+H_{BN}'(T/3+\et; \x)>0$ and by Figure \ref{dH13_dH23_xi}, $H_{BN}'(2T/3-\et; \x)<0$, and $H_{BN}'(T/2; \x)$ changes sign from negative to positive at $\x=\x_*$
($\x_*=\delta_* \approx 0.0208$ and $\x_*=I_*\approx 36.3$). Therefore, Propositions~\ref{special_coupling_corollary} and \ref{fixedpoints_zero_T2} give the desired results. 
\ep 

\subsection{Phase plane analyses} 
We now study Equations (\ref{eq.osc.simplified}) by analyzing phase planes.
In the following cases we preserve the balance condition and let $\c_1=\c_2=\c_3$, 
but allow $\alpha$ to vary.  First we assume that $\alpha =1/2$ (rostrocaudal symmetry),
for which, by Corollary~\ref{reflection_symmetric}, the system is reflection
symmetric with respect to $\theta_1 = \theta_2$. For example, we let 
\[ \c_1=\c_2=\c_3=0.5,\; \c_4=\c_7=1,\; \c_5=\c_6=2. \]

\begin{figure}[h!]
\begin{center}
\includegraphics[scale=.1]{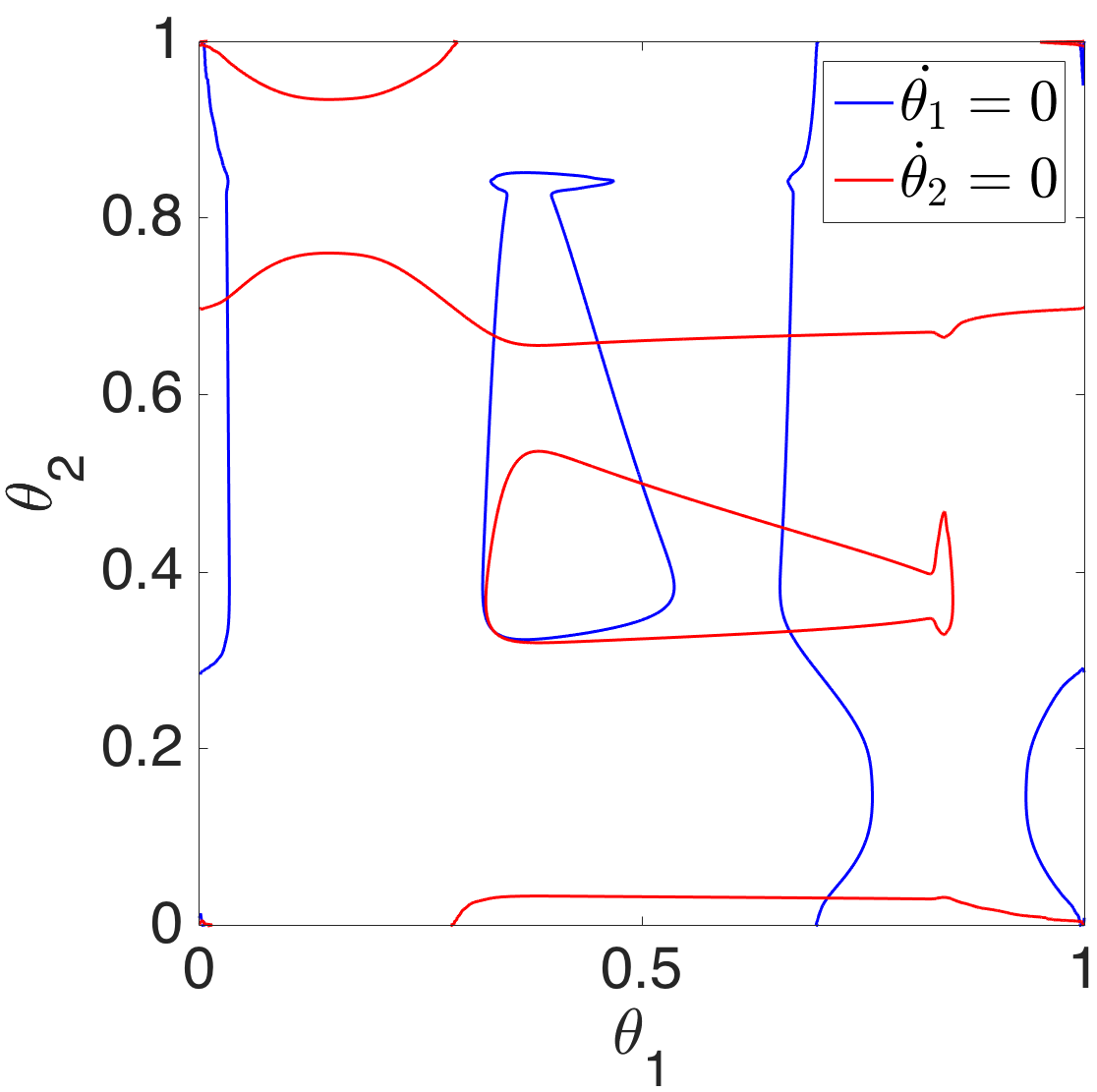}
\includegraphics[scale=.1]{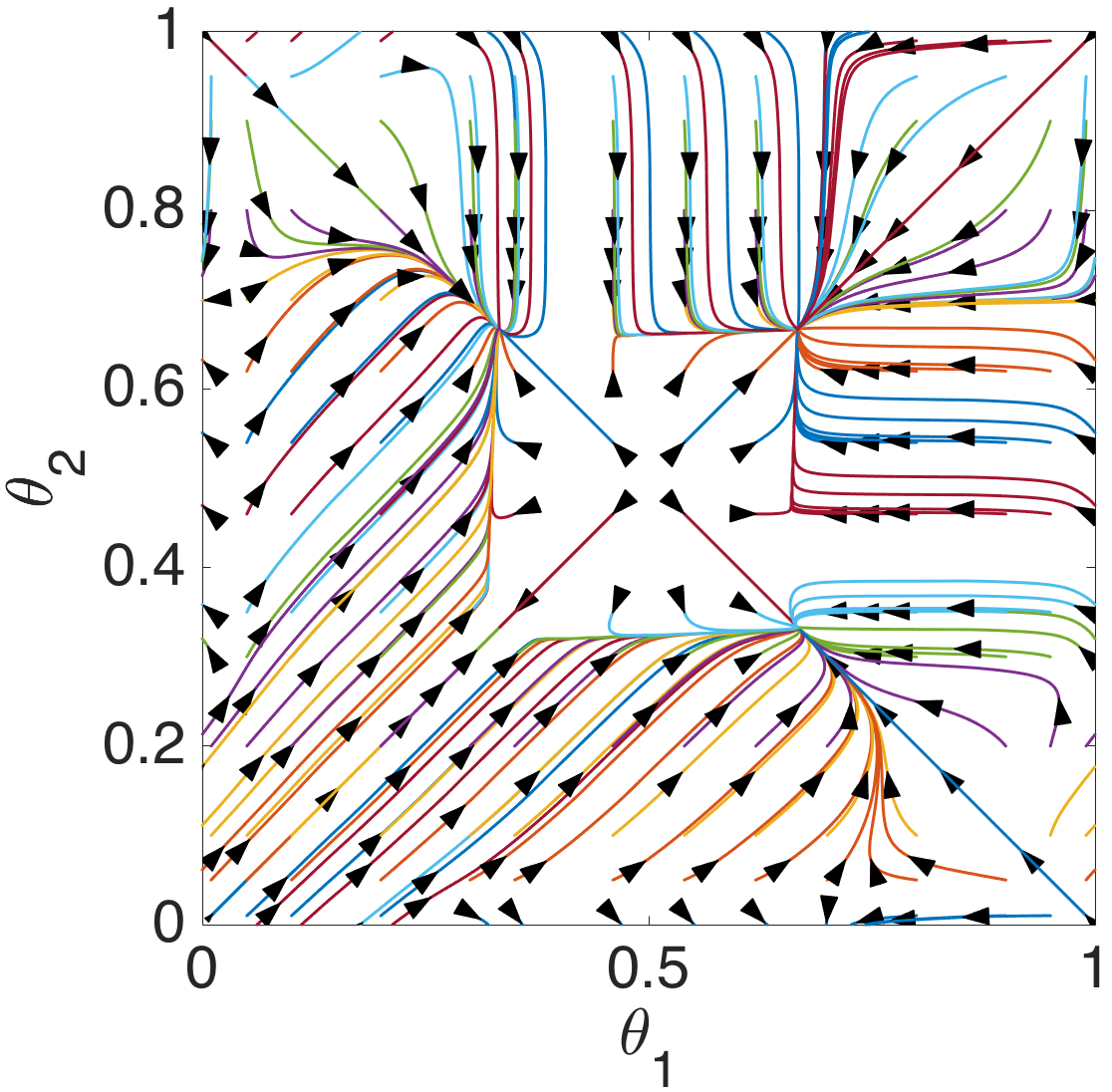}\quad
\includegraphics[scale=.1]{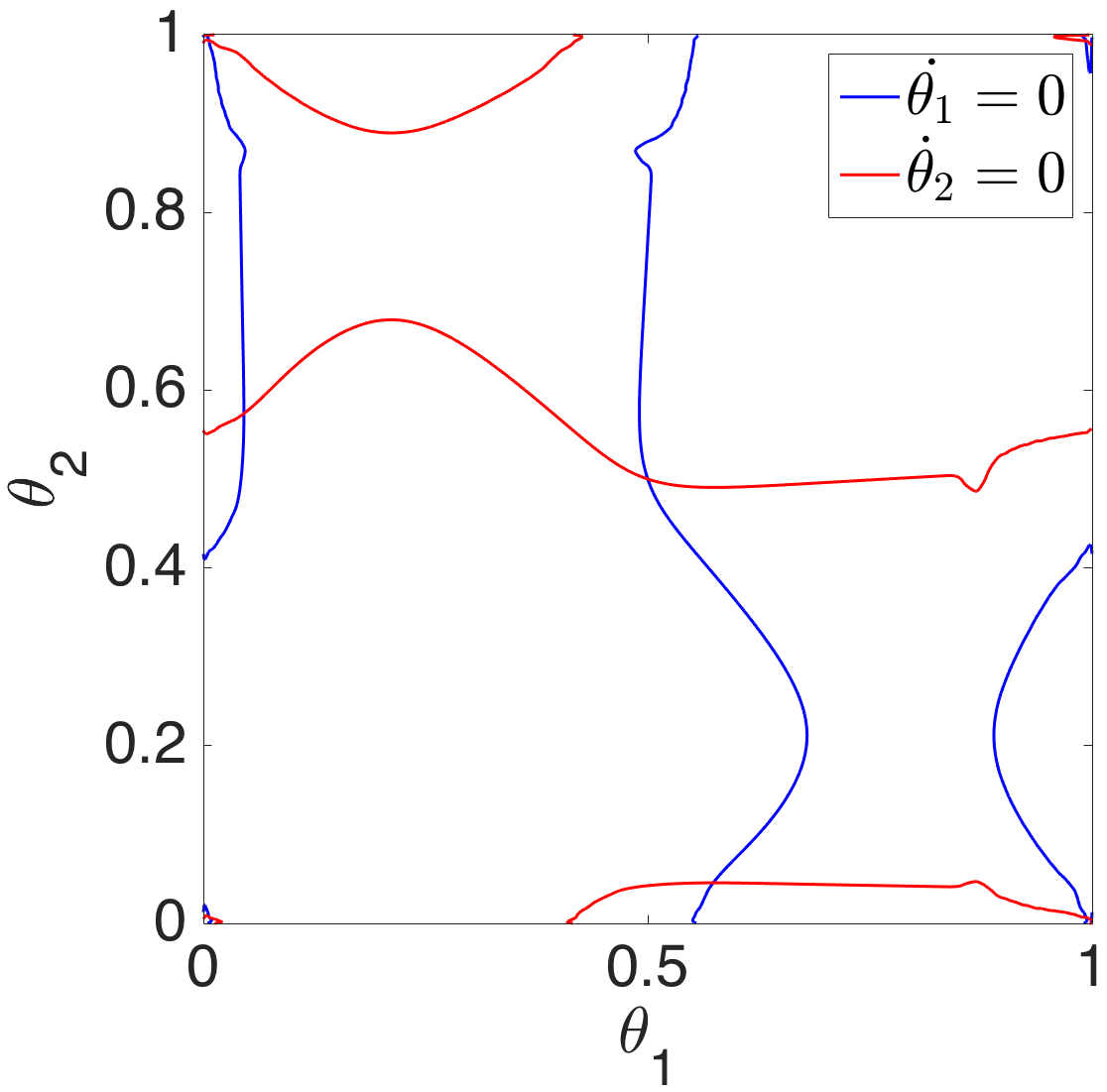}
\includegraphics[scale=.1]{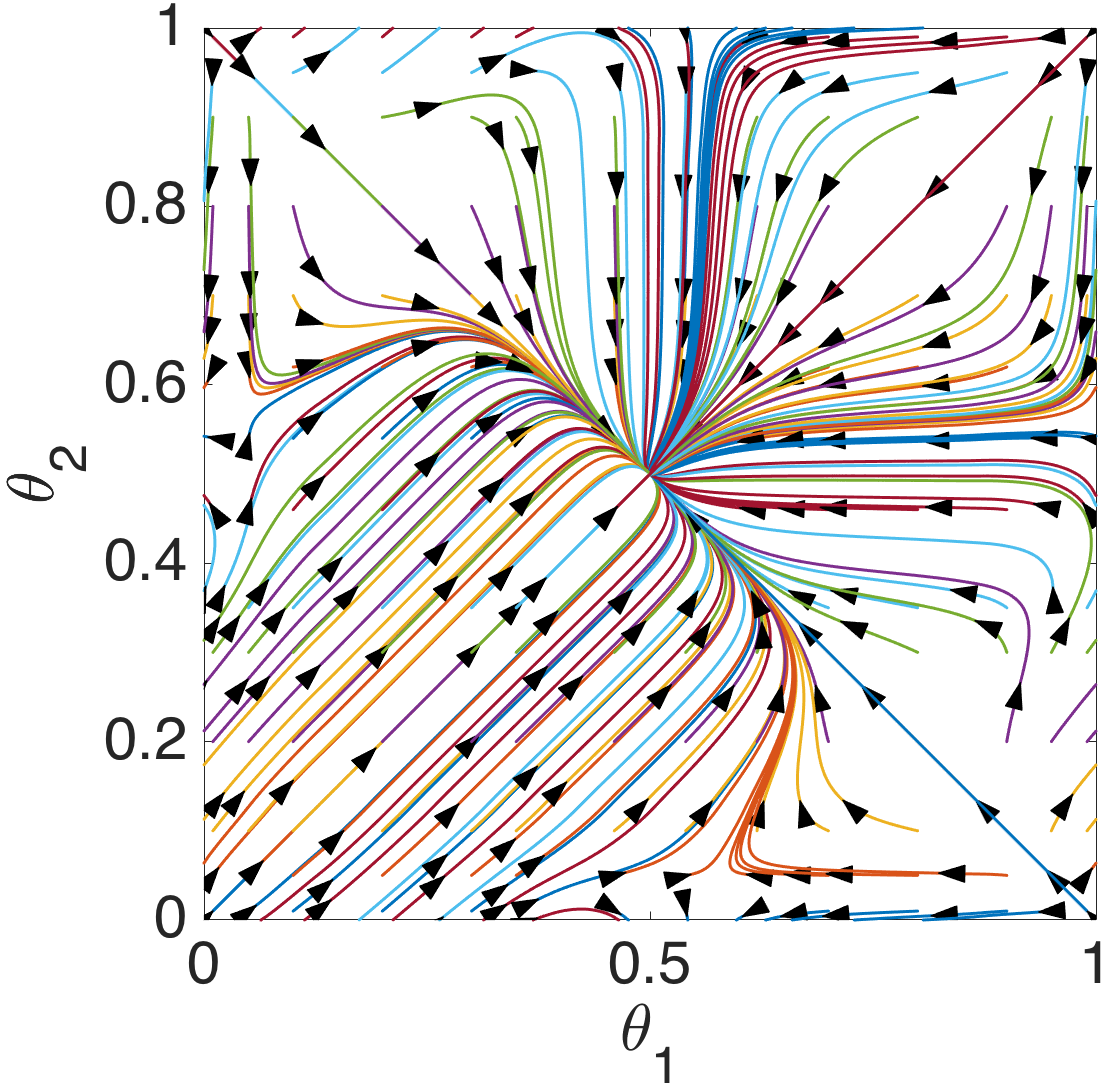}
\includegraphics[scale=.2]{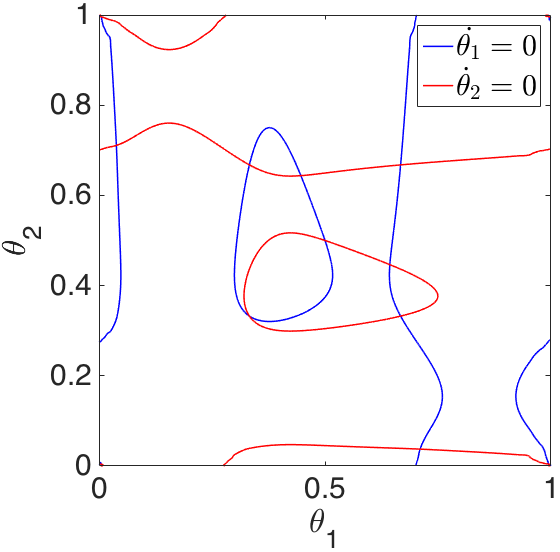}
\includegraphics[scale=.2]{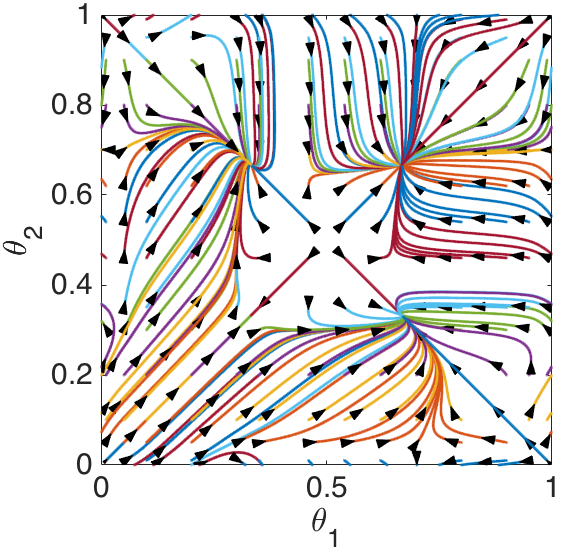}\quad
\includegraphics[scale=.2]{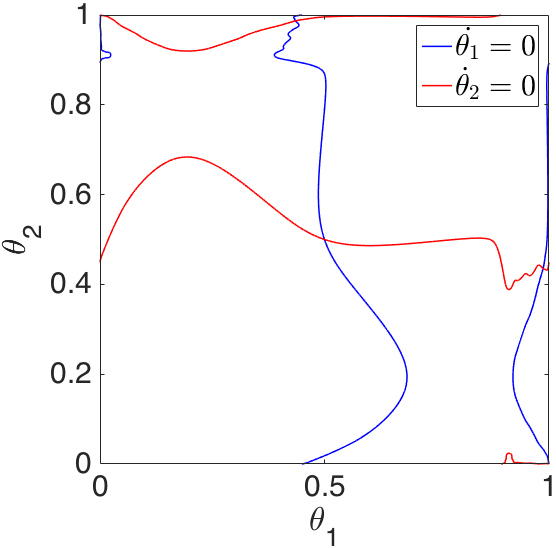}
\includegraphics[scale=.2]{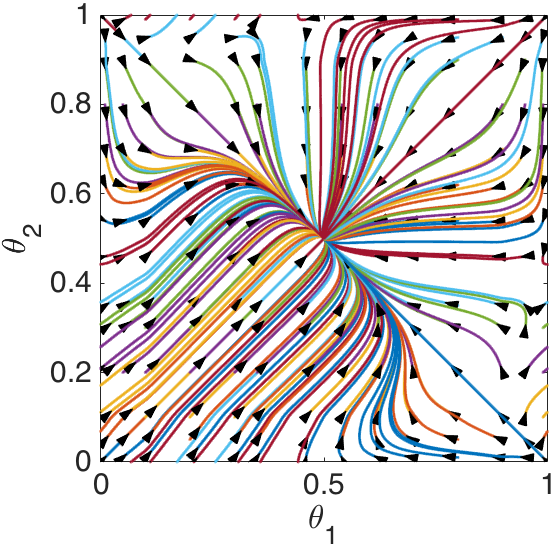}
\end{center}
\caption{Nullclines and  phase planes of Equation~(\ref{eq.osc.simplified})
when $\alpha=1/2$. First row: $\delta=0.0097$ (left) and  $\delta=0.03$ (right).
Second row: $I_{ext}= 35.65$ (left) and $I_{ext}= 37.1$ (right). Note reflection
symmetry.} 
\label{phase differences model}
\end{figure}
Figure~\ref{phase differences model} (first row, left to right) shows the 
nullclines and phase planes of  Equations (\ref{eq.osc.simplified}), for a small 
$\delta = 0.0097 < \delta_*$, and a large $\delta = 0.03 > \delta_*$, respectively. 
Figure~\ref{phase differences model} (second row, left to right) shows the 
nullclines and phase planes of  Equations~(\ref{eq.osc.simplified}), for a small 
$I_{ext}= 35.65 < I_*$ and a large  $I_{ext}= 37.1 > I_*$, respectively. 
 As expected from Proposition~\ref{special_coupling} and
 Proposition~\ref{special_coupling_corollary}, we observe that when
 $\delta$ or $I_{ext}$ is small, there exist 3 sinks corresponding to
 $(\theta_1^i, \theta_2^i)$, $i=1,3,4$, and 2 sources corresponding to
 $(\theta_1^i, \theta_2^i)$, $i=5,6$. In addition, there exist  $5$ saddle
 points, of which one corresponds to $(\theta_1^2, \theta_2^2)$. 
When $\delta$ or $I_{ext}$ is large, $(\theta_1^i, \theta_2^i)$,
for $i=1,2,3,4$ merge to $(\theta_1^5, \theta_2^5)= (1/2,1/2)$,  and we
observe that  $(1/2,1/2)$  which corresponds to the tripod gait, becomes
a sink. The unstable fixed point $(0,0)$ and two saddle points continue 
to exist and preserve their stability types. 

\begin{figure}[h!]
\begin{center}
\includegraphics[scale=.1]{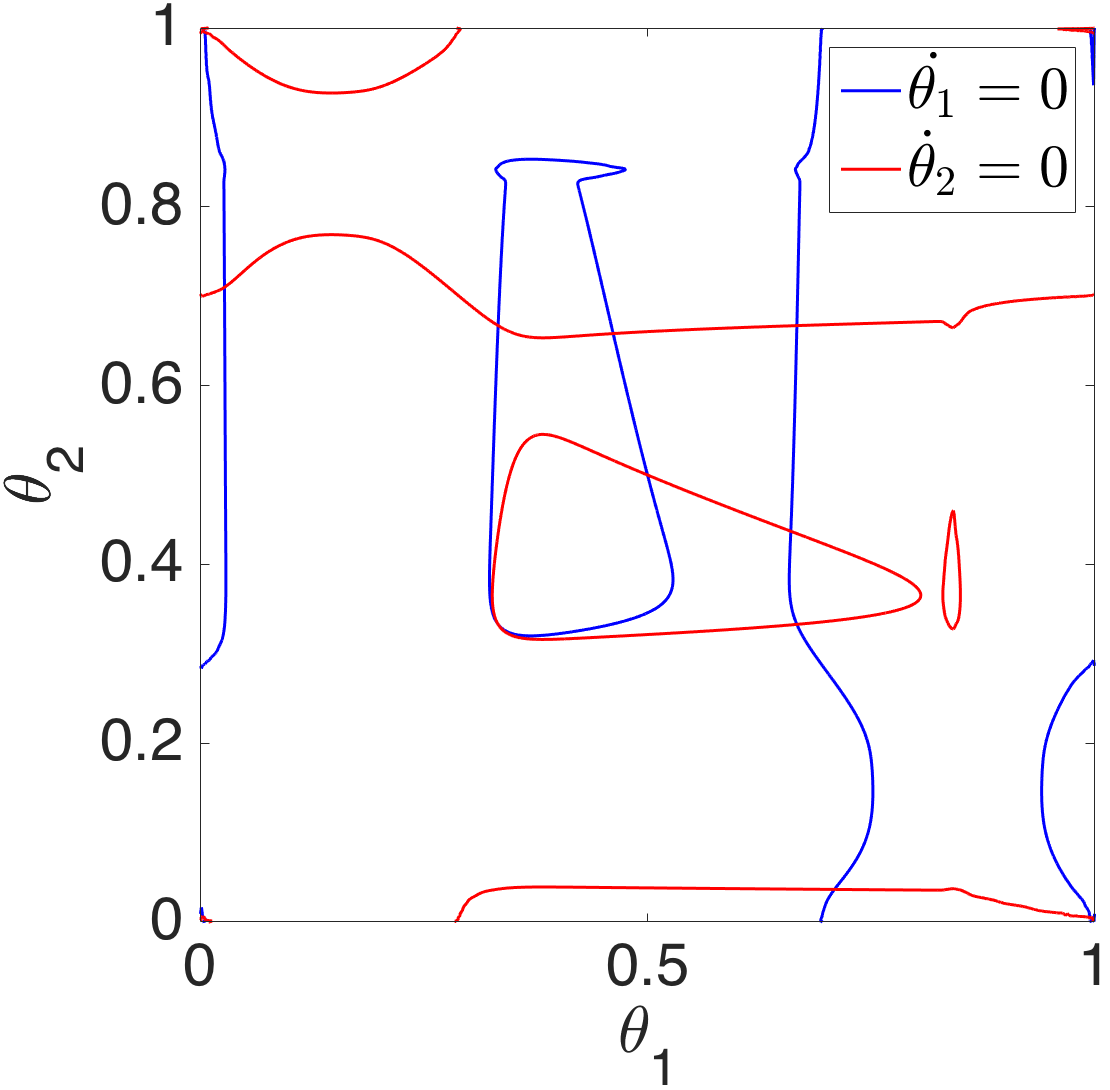}
\includegraphics[scale=.1]{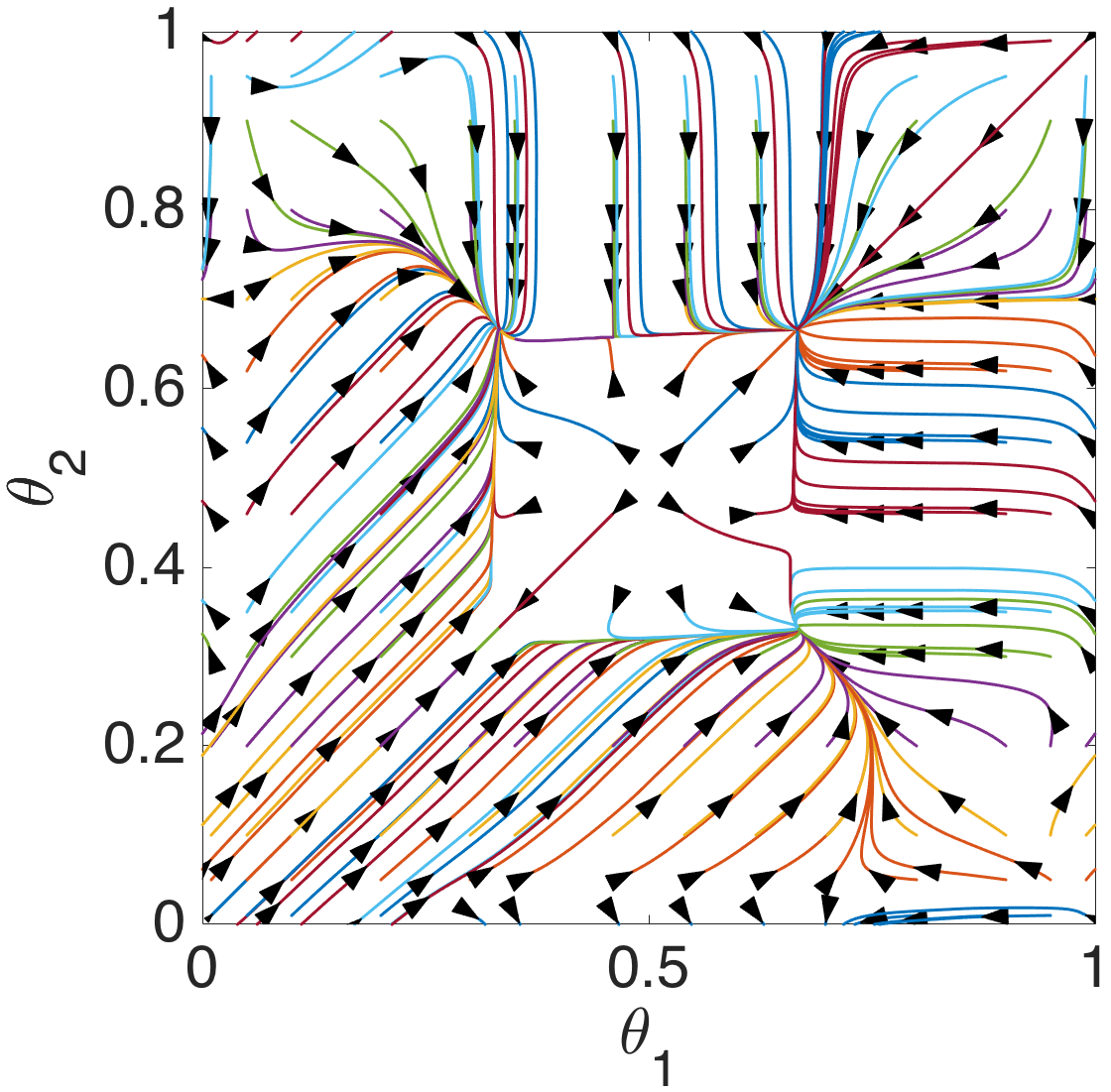}\quad
\includegraphics[scale=.1]{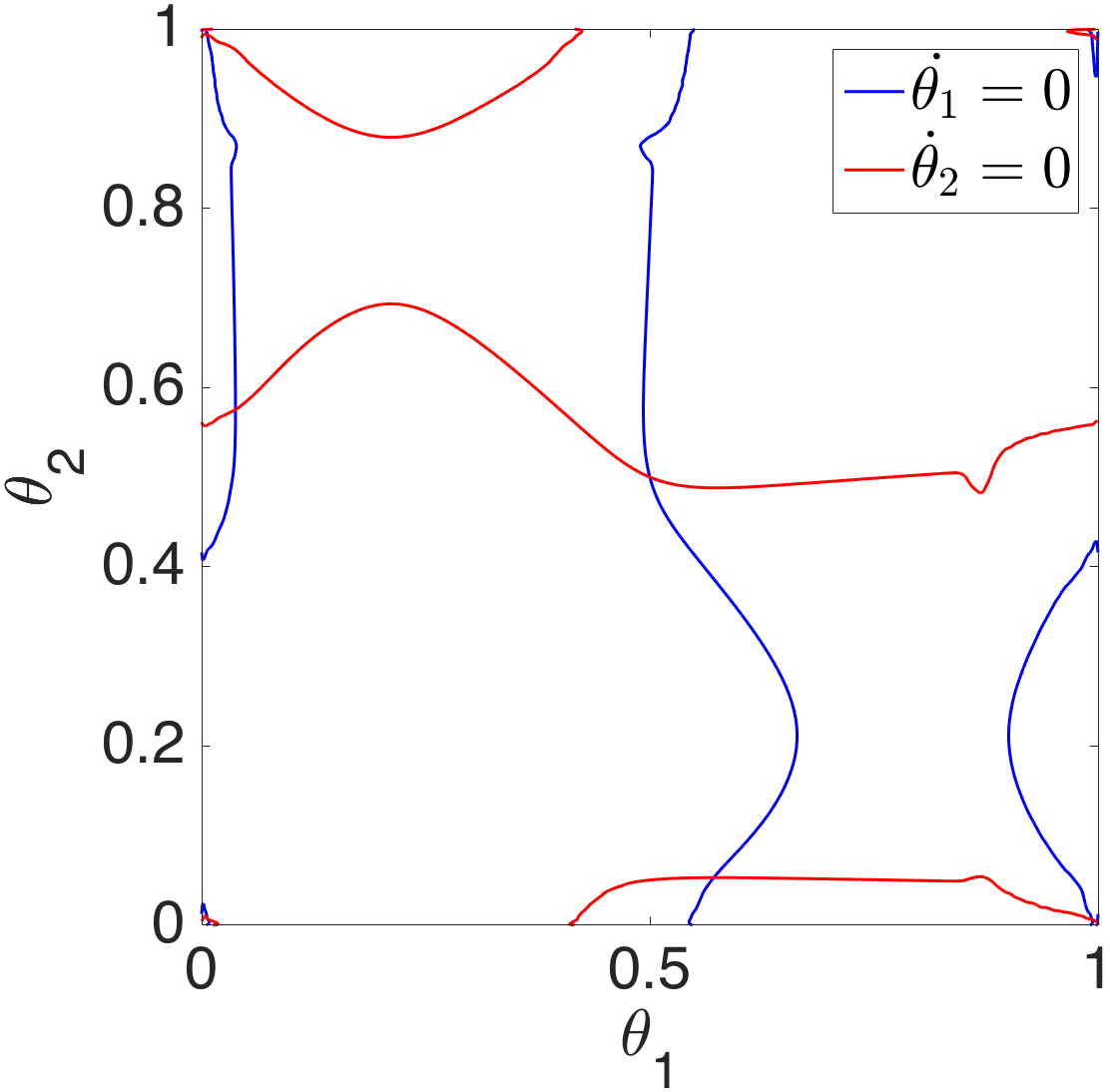}
\includegraphics[scale=.1]{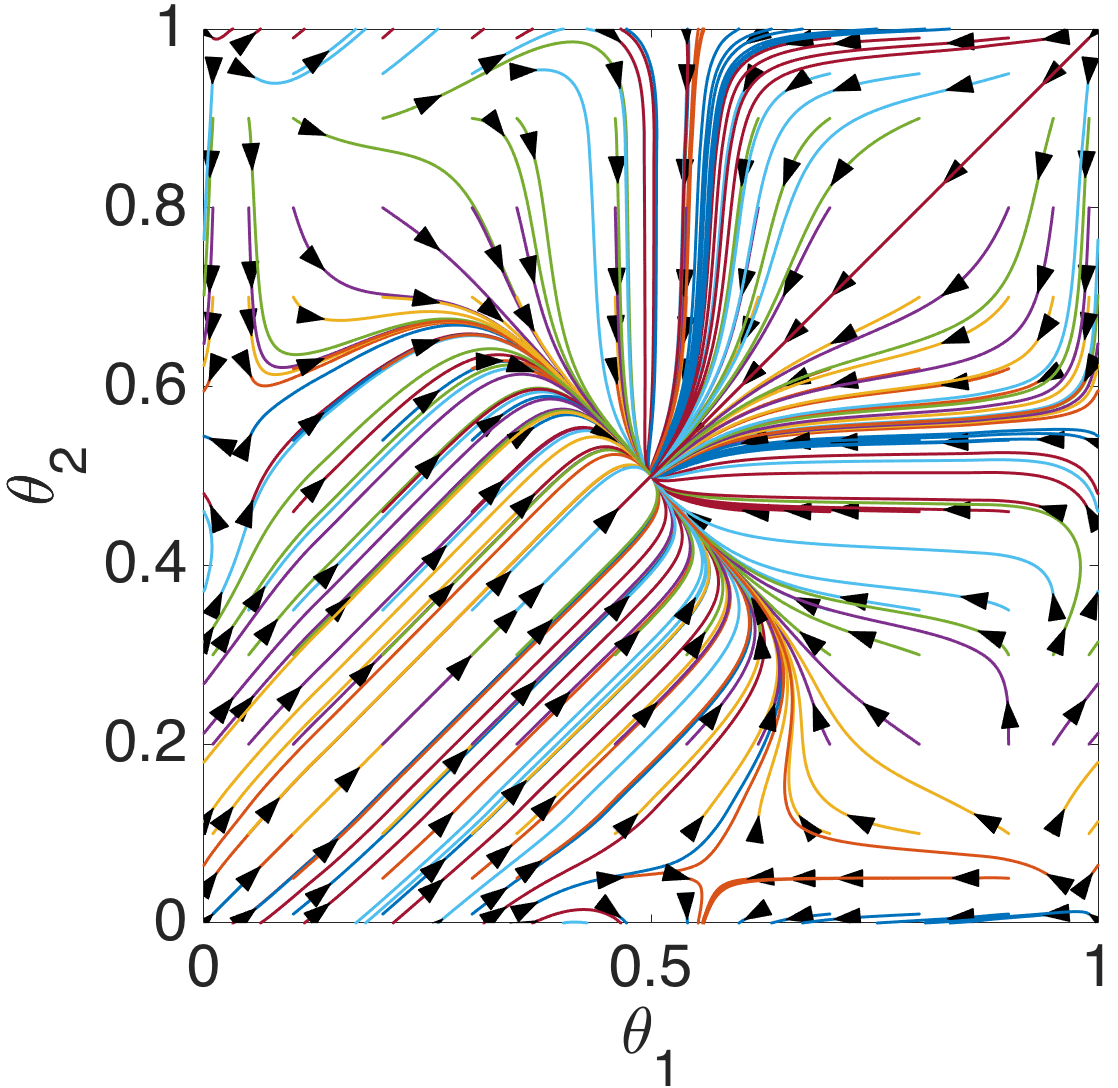}
\includegraphics[scale=.2]{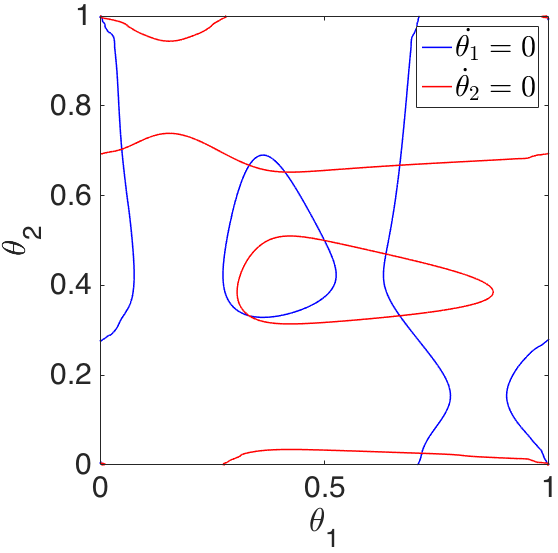}
\includegraphics[scale=.2]{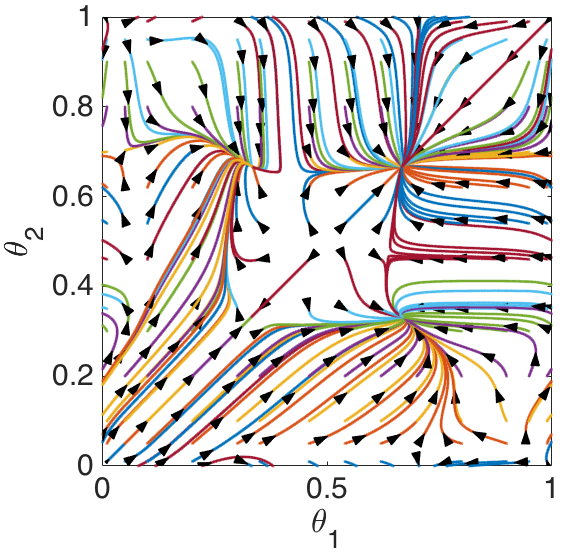}\quad
\includegraphics[scale=.2]{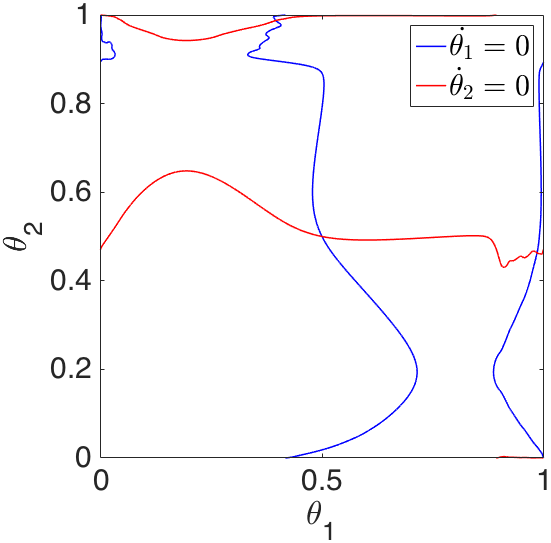}
\includegraphics[scale=.2]{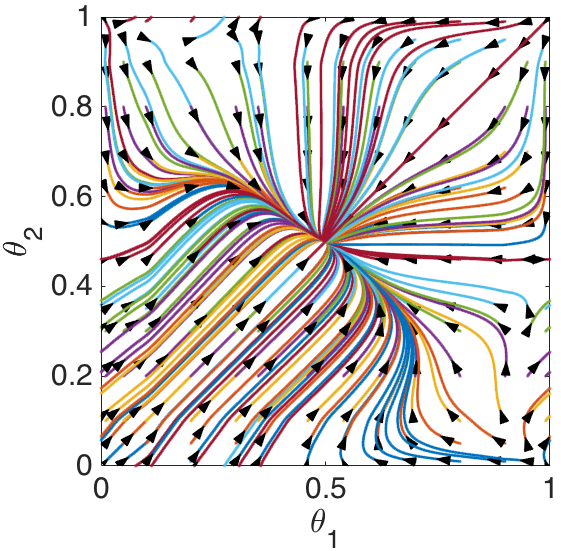}
\end{center}
\caption{Nullclines and  phase planes of Equations~(\ref{eq.osc.simplified})
when $\alpha=1/3$. First row: $\delta=0.0097$ (left) and  $\delta=0.03$ (right).
Second row: $I_{ext}= 35.65$ (left) and $I_{ext}= 37.1$ (right). Reflection 
symmetry is slightly broken, but the invariant line $\theta_1 = \theta_2$ persists.} 
\label{phase differences model_alpha_13}
\end{figure}

Next, we let $\a \neq 1/2$ but keep it close to $1/2$, i.e., we want 
$\alpha_{min} < \alpha <\alpha_{max}$. Specifically, we set 
\[\c_1=\c_2=\c_3= \c_4=1,\; \c_7=2,\; \c_5=\c_6=3,\]
so that $\a=1/3$. 
Figure~\ref{phase differences model_alpha_13} (first row, left to right)
shows the nullclines and the phase planes of  Equations (\ref{eq.osc.simplified}),
for a small $\delta = 0.0097$, and a large $\delta = 0.03$, respectively.
Figure~\ref{phase differences model_alpha_13} (second row, left to right)
shows the nullclines and the phase planes of  Equations (\ref{eq.osc.simplified}),
for a small $I_{ext}= 35.65$ and a large $I_{ext}= 37.1$, respectively. 
As we expect, the qualitative behaviors of the fixed points do not change, but 
reflection symmetry about the diagonal $\theta_1=\theta_2$ is broken, most
easily seen in the nullclines.

Finally, we let $\a \approx 1$, i.e., $\alpha > \alpha_{max}$. 
For $\delta < \delta_*$ (resp. $I_{ext} <I_*$), we expect to have a stable
backward tetrapod gait at $(T/3+\et, 2T/3-\et)$ and an unstable forward
tetrapod gait at $(2T/3-\et, T/3+\et)$. For $\delta > \delta_*$ (resp. $I_{ext}
> I_*$), the tripod gait at $(T/2,T/2)$ becomes stable. In the simulations
shown below we let
 \[\c_1=\c_2=\c_3=0.5,\; \c_4=2,\; \c_7=0.1,\; \c_5=\c_6=2.1, \]
 so that $\a \approx 0.952.$

\begin{figure}[h!]
\begin{center}
\includegraphics[scale=.1]{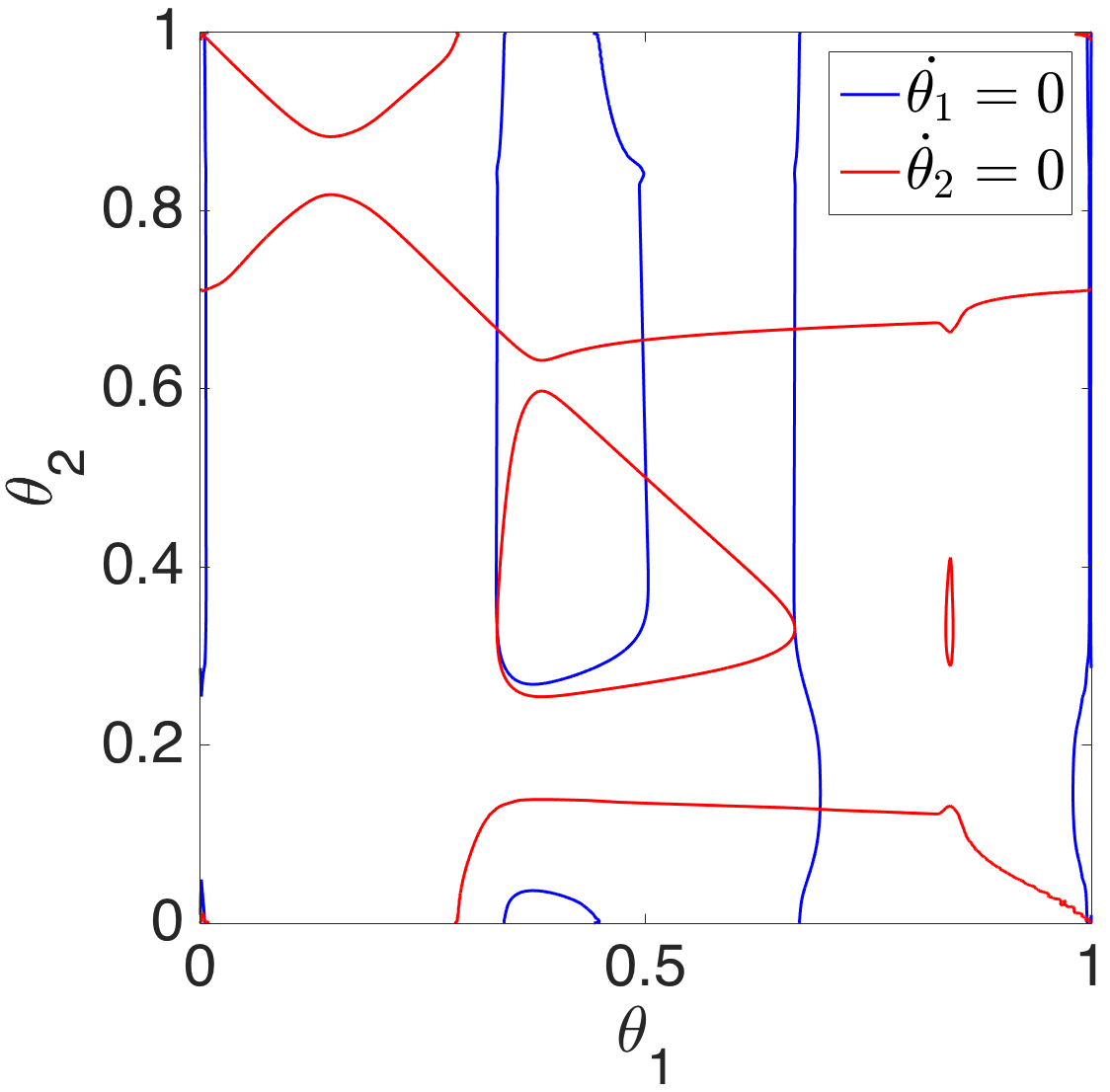}
\includegraphics[scale=.1]{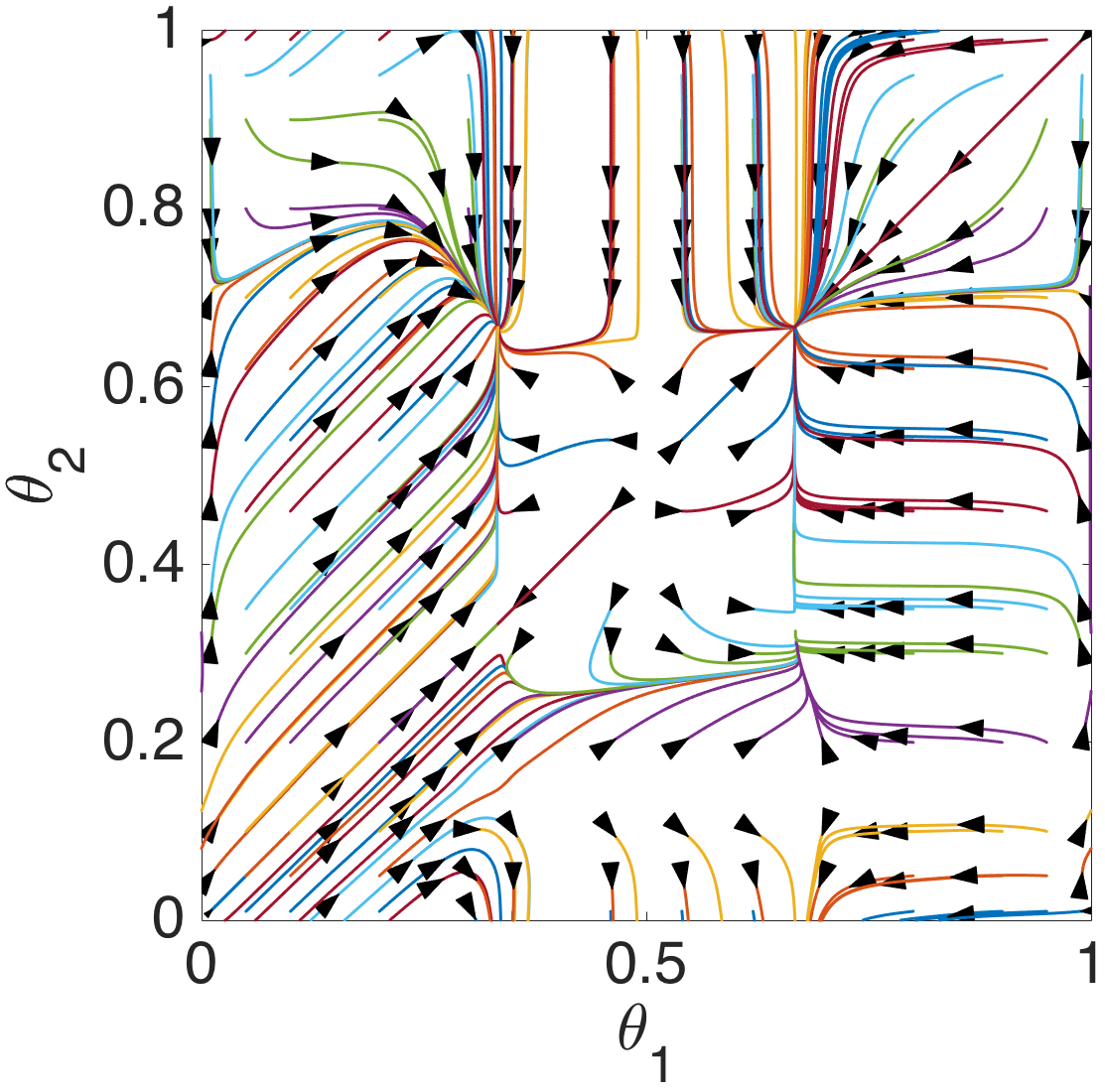}\quad
\includegraphics[scale=.1]{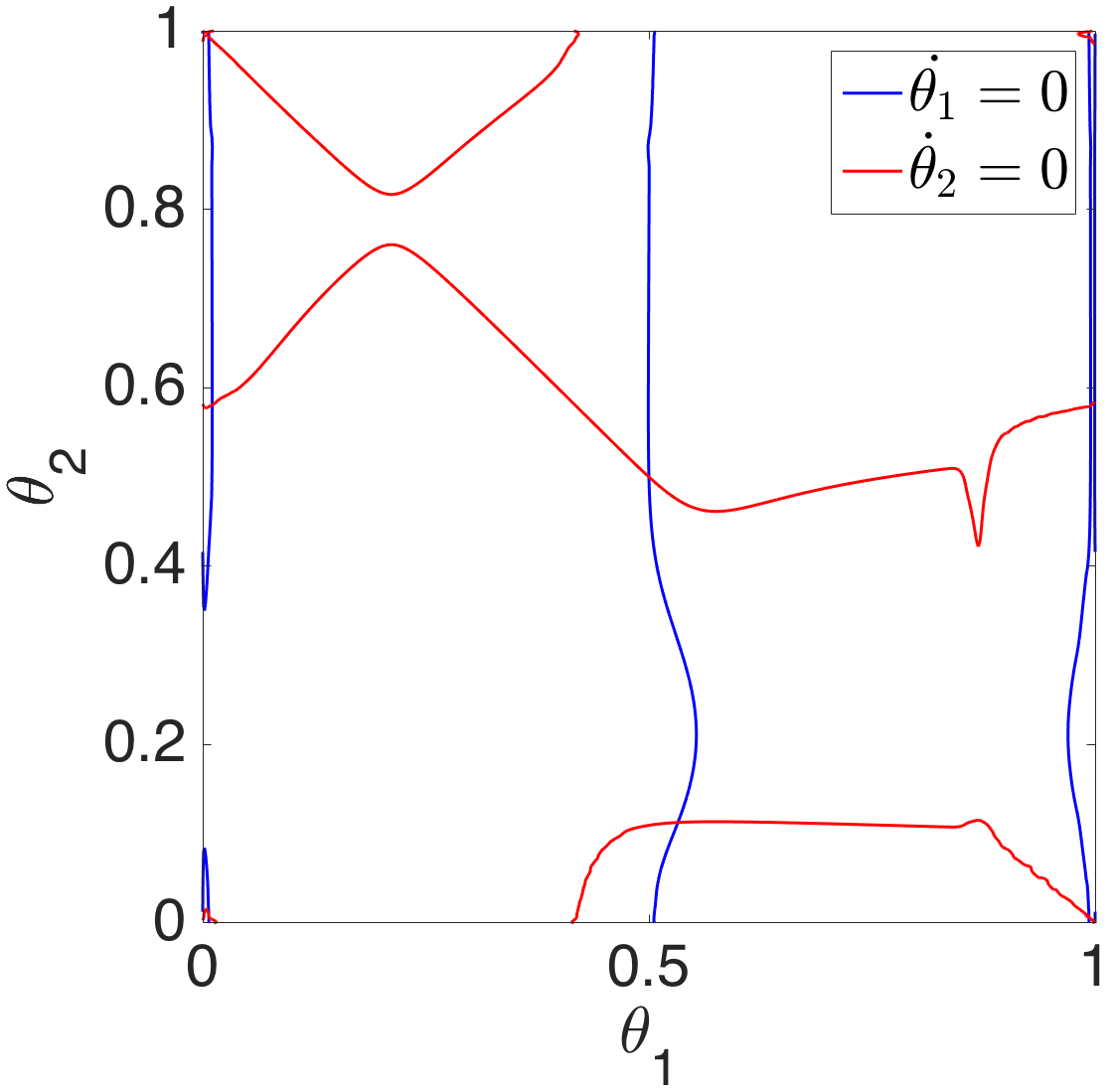}
\includegraphics[scale=.1]{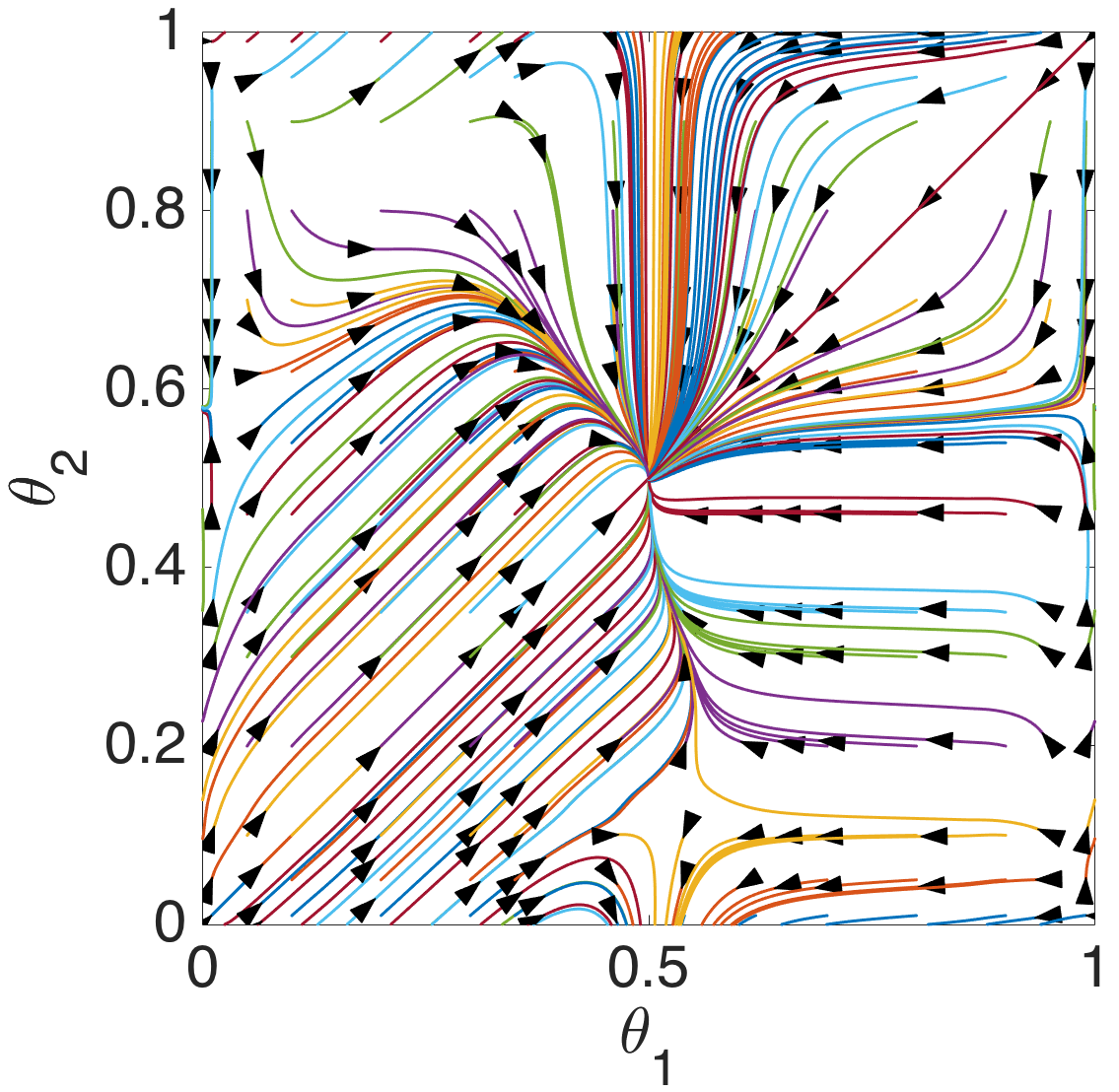}
\includegraphics[scale=.2]{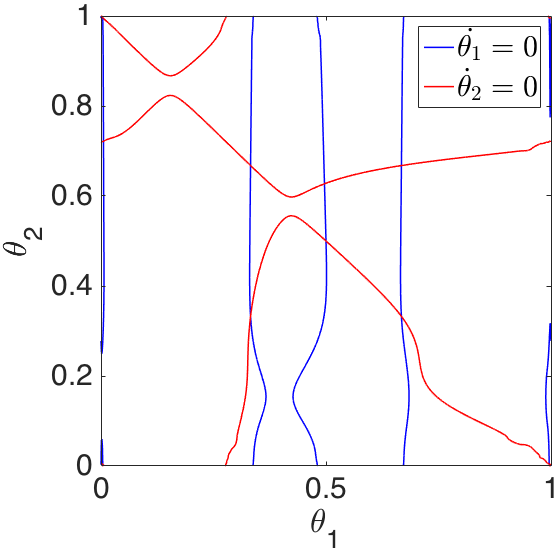}
\includegraphics[scale=.2]{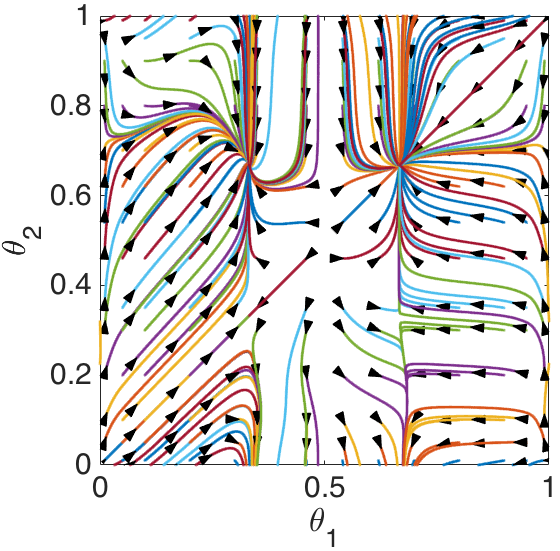}\quad
\includegraphics[scale=.2]{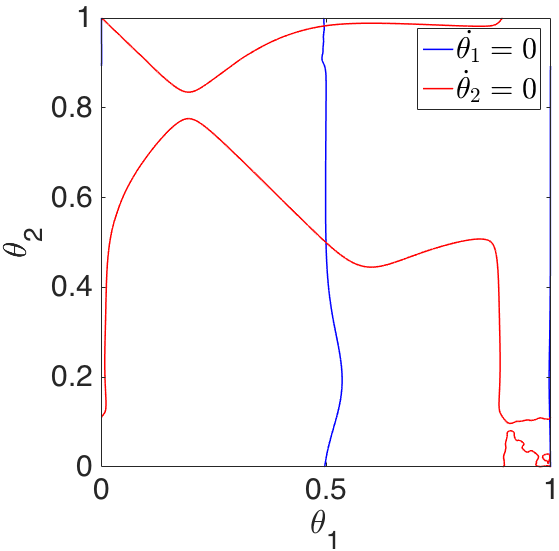}
\includegraphics[scale=.2]{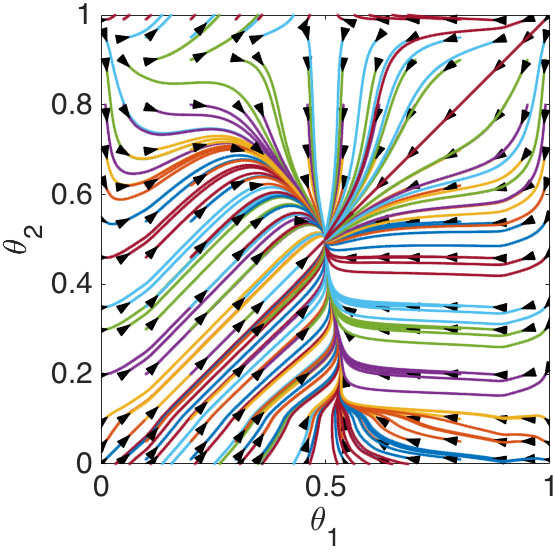}
\end{center}
\caption{Nullclines and  phase planes of Equations~(\ref{eq.osc.simplified}) 
when $\alpha\approx 0.95 > \a_{max}$. First row: $\delta=0.0097$ (left) and
$\delta=0.03$ (right).  Second row: $I_{ext}= 35.65$ (left) and $I_{ext}= 37.1$ (right).
Reflection symmetry is clearly broken.} 
\label{phase differences model_near_one_del}
\end{figure}

Figure~\ref{phase differences model_near_one_del} (first row, left to right)
shows the nullclines and phase planes of  Equation (\ref{eq.osc.simplified})
for a small $\delta = 0.0097$, and a large $\delta = 0.03$, respectively. 
Figure~\ref{phase differences model_near_one_del} (second row, left to right)
shows the nullclines and the phase planes of  Equation (\ref{eq.osc.simplified}),
for a small $I_{ext}= 35.65$ and a large $I_{ext}= 37.1$, respectively. Here
reflection symmetry is broken more obviously.
Similarly, when $\a$ is near zero, i.e., $\alpha < \alpha_{min}$, we expect to
have a stable forward tetrapod gait, and an unstable backward tetrapod gait. 
\ZA{In Figure~\ref{phase differences model_near_0_del}, we let 
$\c_1=\c_2=\c_3=0.5,\; \c_4=0.1,\; \c_7=3,\; \c_5=\c_6=3.1,$
 so that $\a \approx 0.032.$
 As we expect, the forward tetrapod gait remains stable while the backward tetrapod gait 
 becomes a saddle through a transcritical bifurcation. 
 However, a stable fixed point appears (through the same transcritical bifurcation) 
 very close to the backward tetrapod gait. 
\begin{figure}[h!]
\begin{center}
\includegraphics[scale=.1]{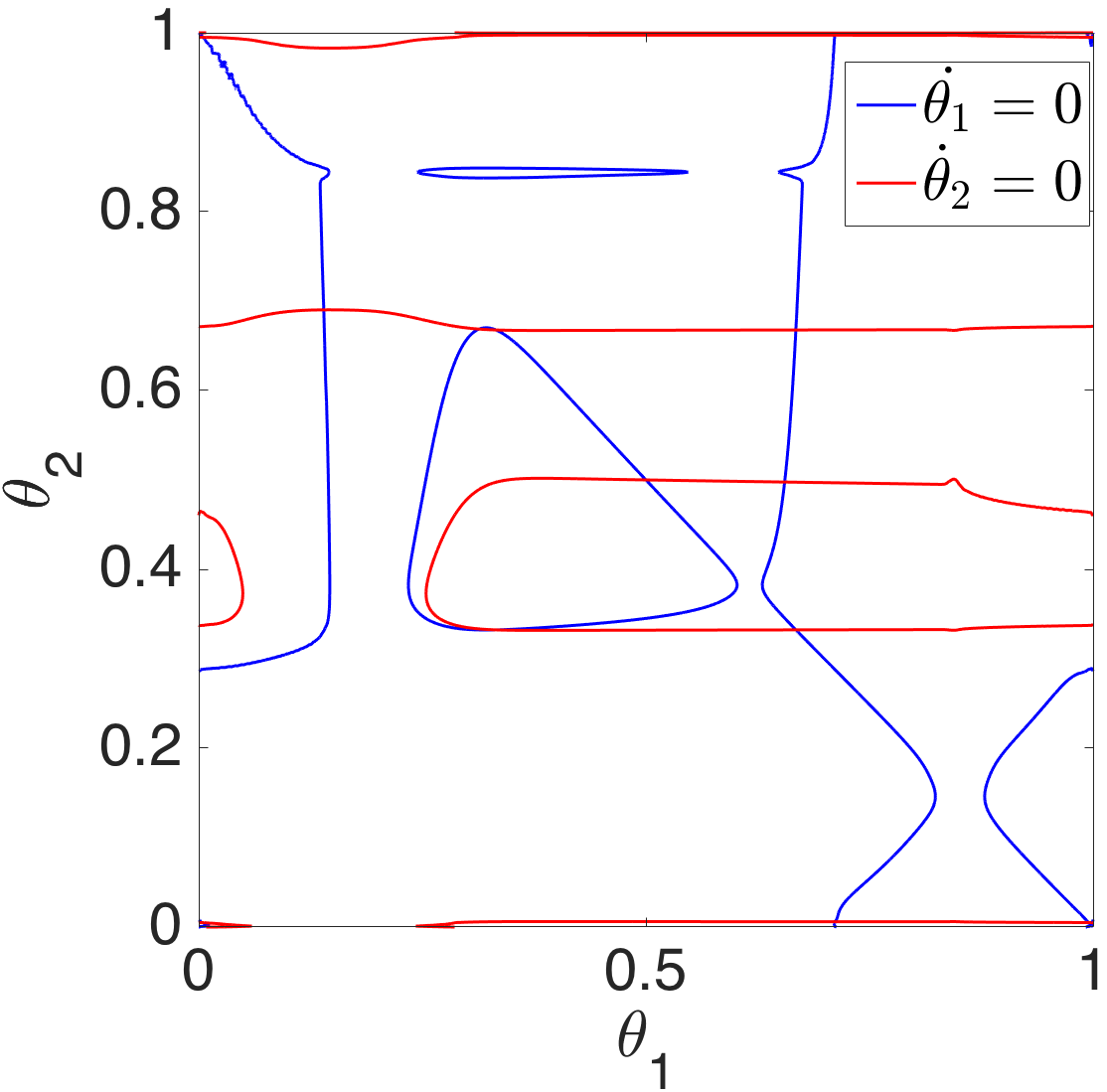}
\includegraphics[scale=.1]{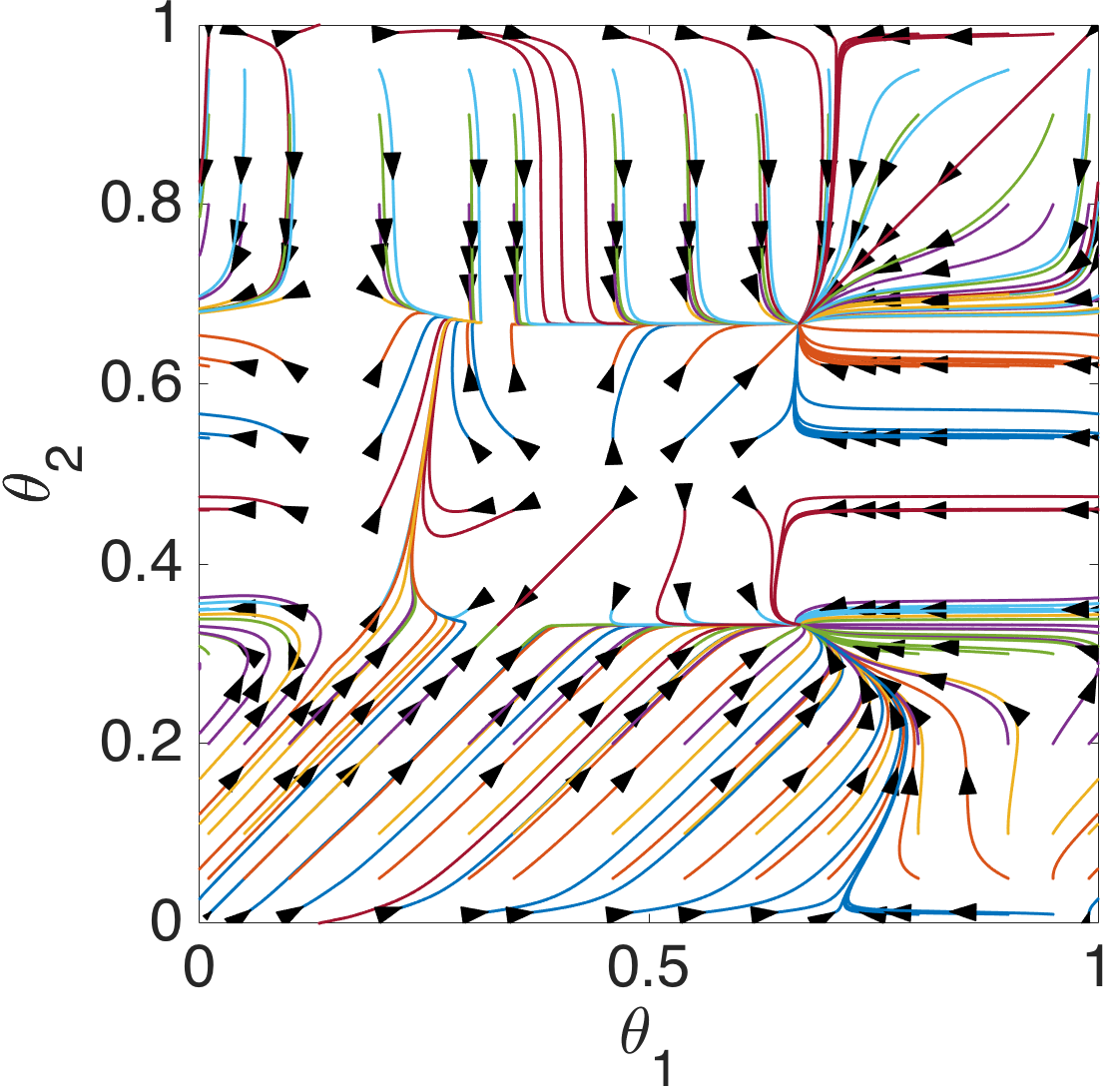}\quad
\includegraphics[scale=.1]{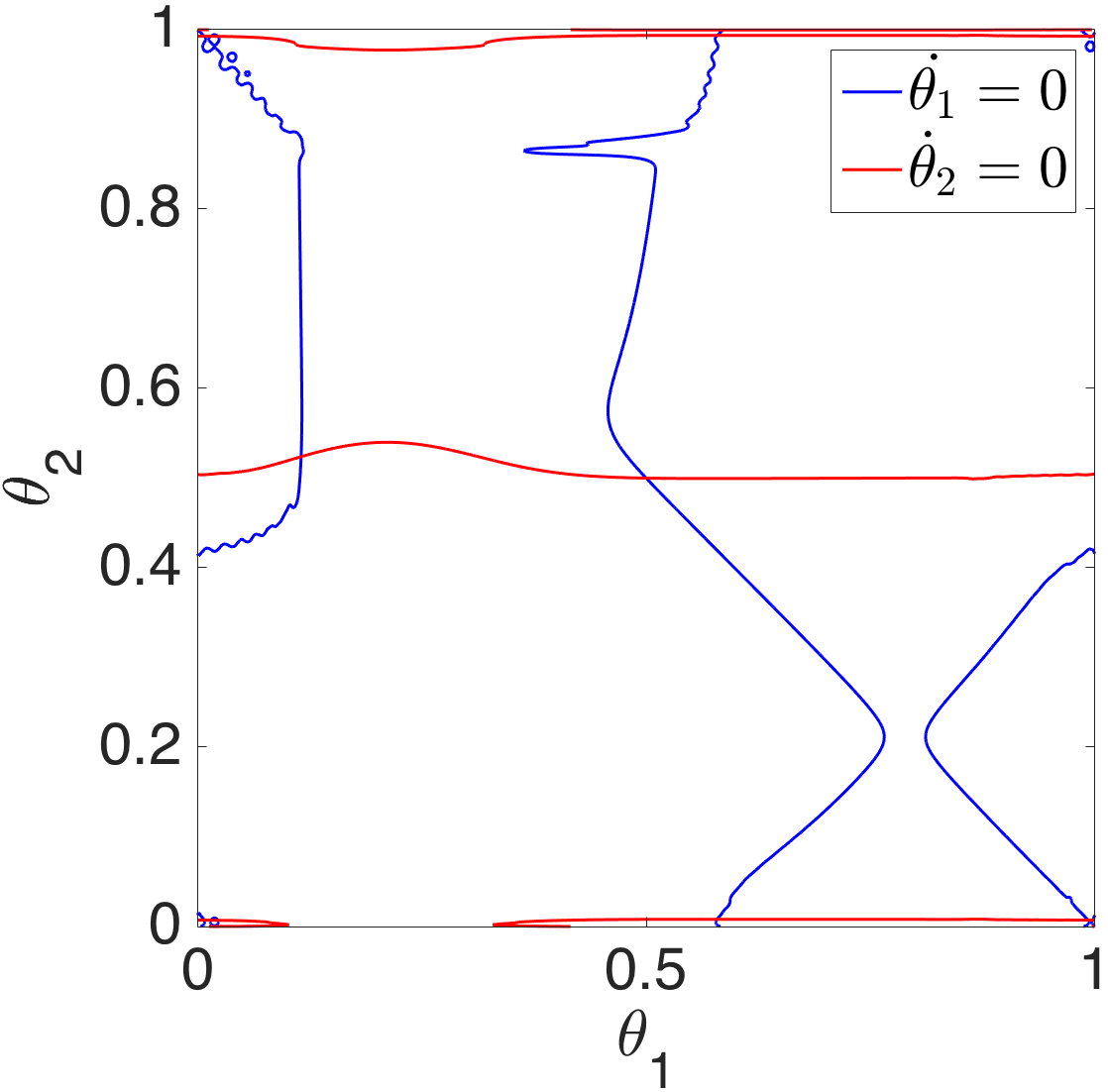}
\includegraphics[scale=.1]{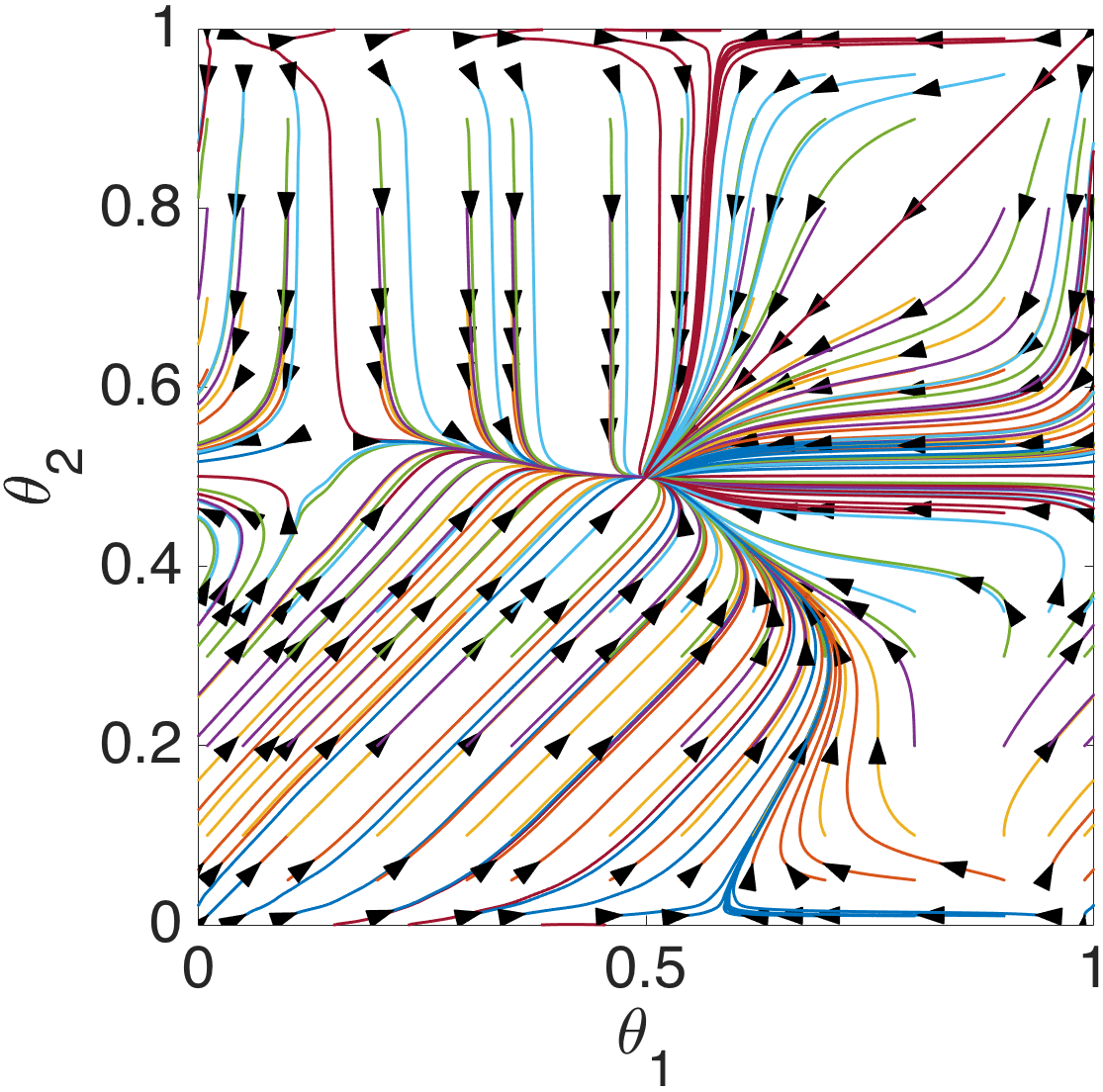}
\includegraphics[scale=.1]{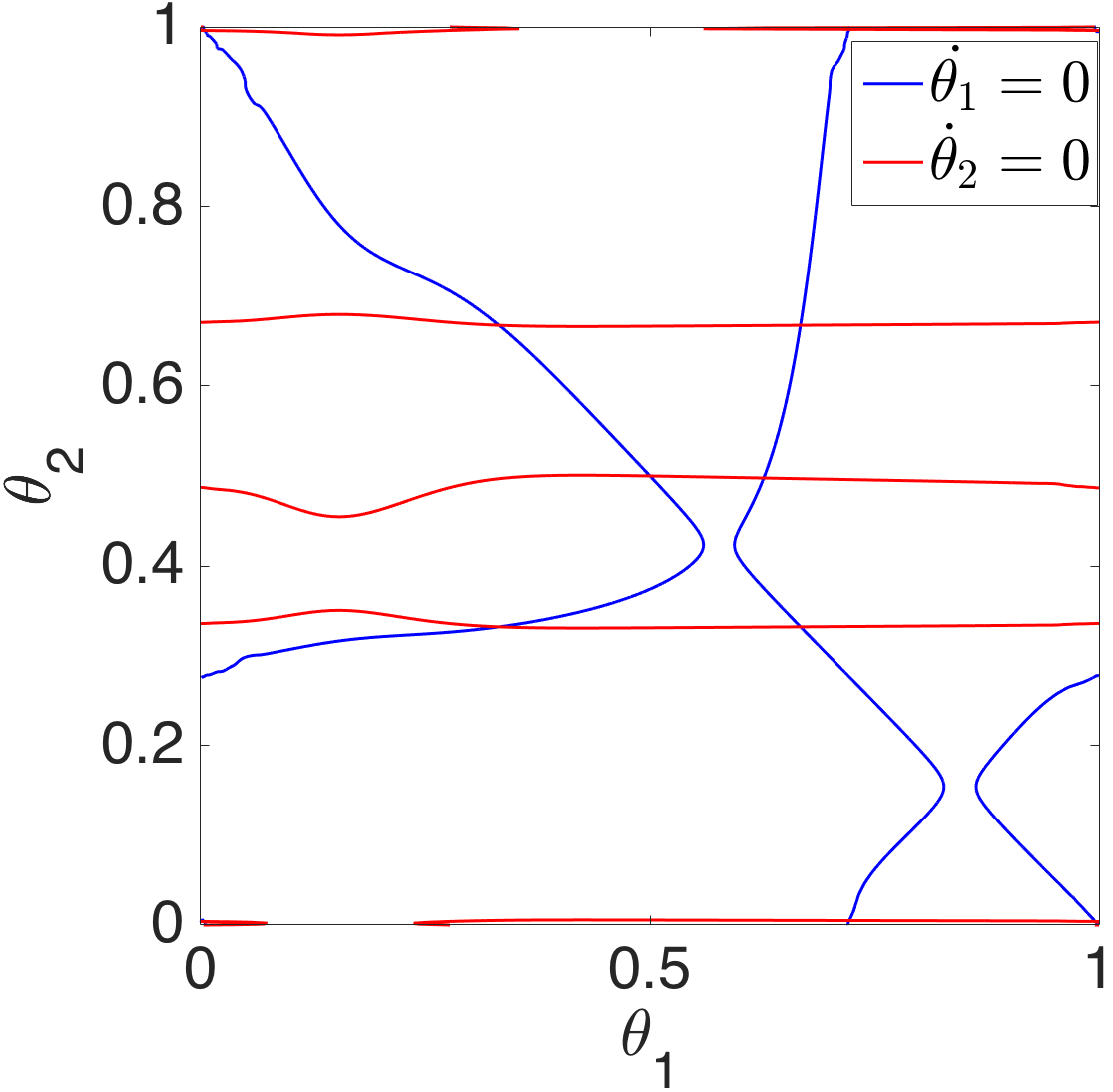}
\includegraphics[scale=.1]{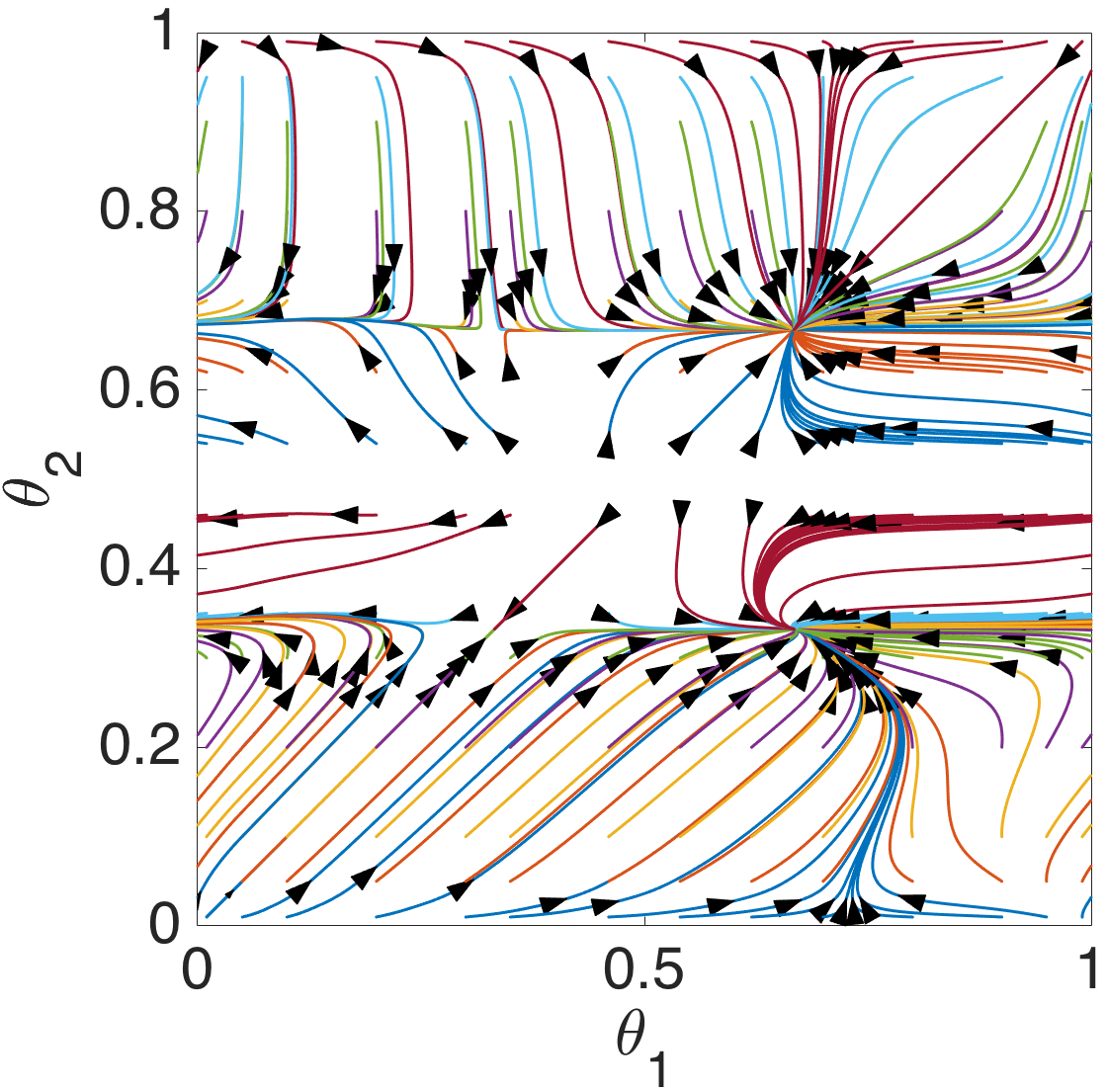}\quad
\includegraphics[scale=.1]{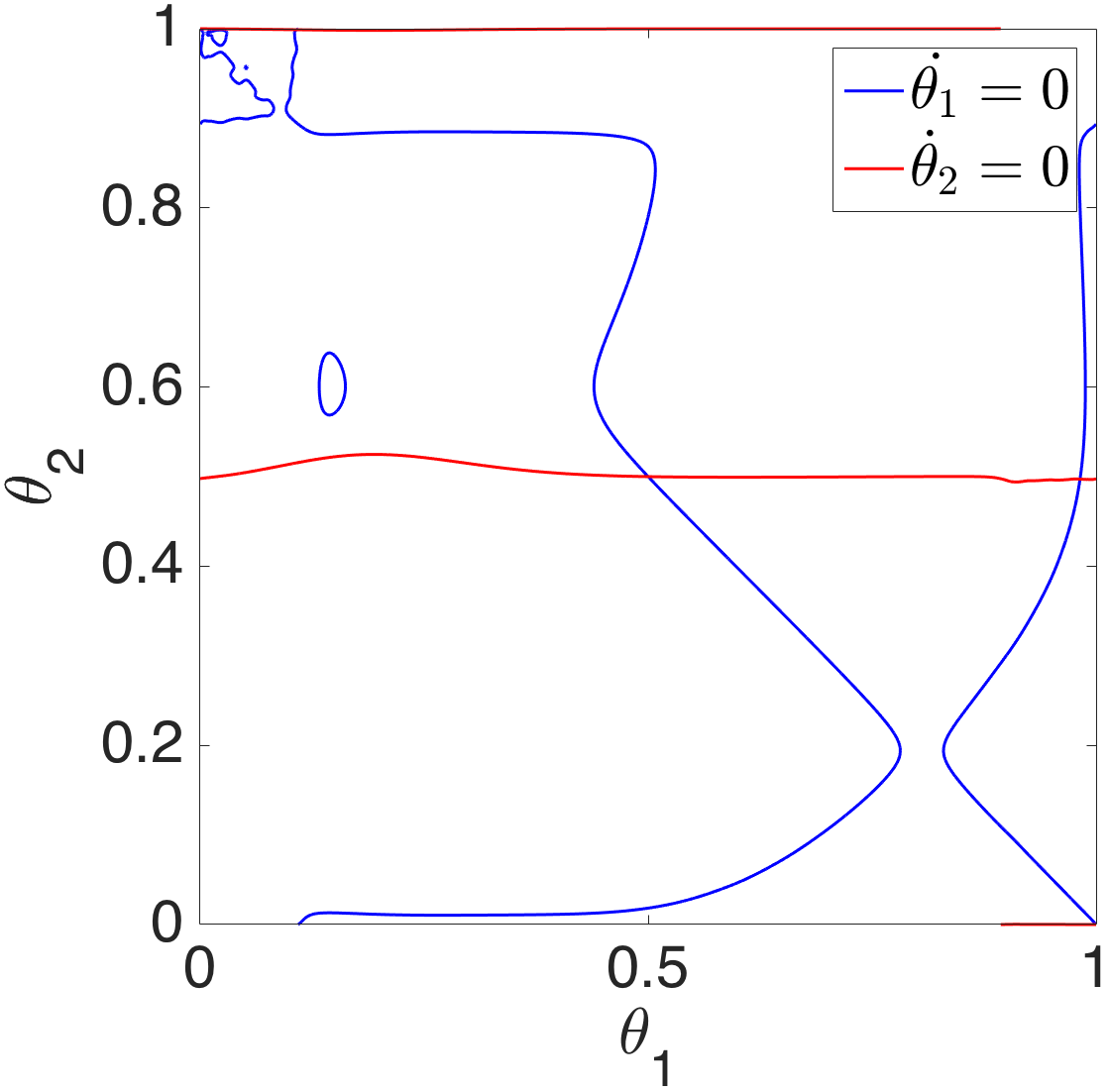}
\includegraphics[scale=.1]{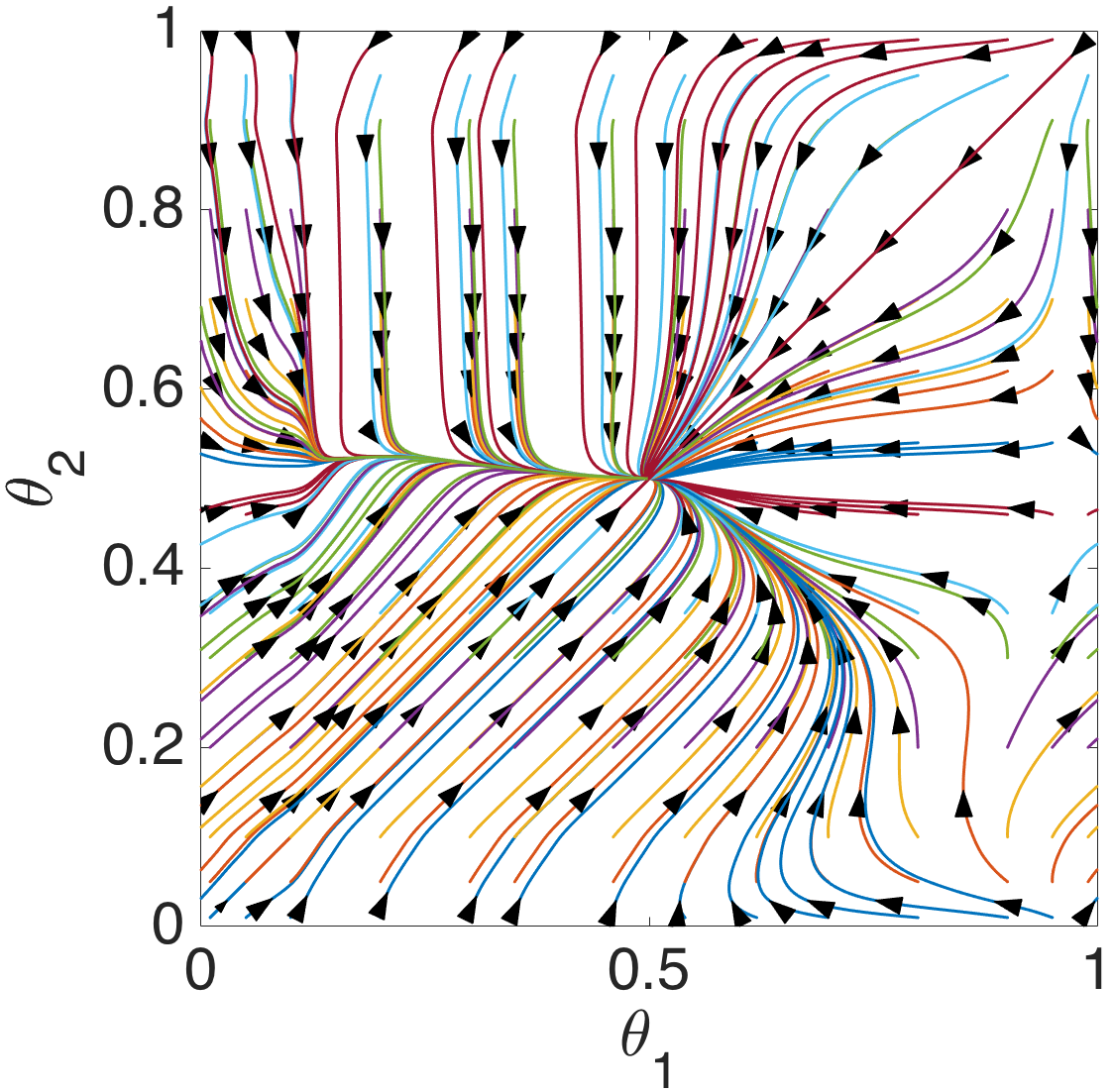}
\end{center}
\caption{Nullclines and  phase planes of Equations~(\ref{eq.osc.simplified}) 
when $\alpha\approx 0.032 <\a_{min}$. First row: $\delta=0.0097$ (left) and
$\delta=0.03$ (right).  Second row: $I_{ext}= 35.65$ (left) and $I_{ext}= 37.1$ (right).
Reflection symmetry is clearly broken.} 
\label{phase differences model_near_0_del}
\end{figure}
}

In this section, using the coupling functions $H_{BN}$ that we computed
numerically and with appropriate conditions on coupling strengths $\c_i$,
we saw that the phase difference equations admit 10 fixed points when
the speed parameter is small
 \ZAB{(Figures~\ref{phase differences model}-\ref{phase differences model_alpha_13} (left))},  
and 4 fixed points when the speed parameter is high 
\ZAB{(Figures~\ref{phase differences model}-\ref{phase differences model_alpha_13} (right))}.
 We saw how 4 fixed points (located on the corners of
a square) together with 2 saddle points (near the corners of the square),
merged to one fixed point (located on the center of the square). We would
like to show that in fact 7 fixed points merge and one fixed point bifurcates.
To this end, in Section \ref{H_app}, we approximate the coupling function 
$H_{BN}$ by a low order Fourier series.

\section{\ZAA{A class of coupling functions producing gait transitions }}
\label{application_general_H}

\ZA{In this section, we first  characterize a class of functions
satisfying Assumption \ref{eta_assumption} and then provide an example based on the bursting neuron model.}

\begin{proposition} \label{H-properties}
Let $H(\theta; \x)$ be $C^2$ and $T$-periodic on $\theta \in [0, T]$
and $C^1$ on $\x \in [\x_1, \x_2]$ and let $G(\theta; \x) = H(\theta; \x)
- H(-\theta; \x)$. Assume that 
\begin{description}
\item[(1)] $\exists \, \bar\x\in [\x_1,\x_2)$ such that $G(T/3; \bar\x) = 0$;
\item[(2)] $\forall \, \x > \bar\x$ and $T/3 \le \theta < T/2, \dis\frac{dG}{d \x}(\theta; \x) < 0$;
\item[(3)] $\exists \, \x_*\in (\bar\x,\x_1]$ such that $\forall \, \theta \in (T/3, T/2)$, and $\bar\x<\x<\x_*$, 
$\dis\frac{d^2 G}{d \theta^2} (\theta; \x) = G''(\theta; \x) < 0$.
\end{description}
Then, $\forall \, \x \in (\bar\x, \x_*), \, G(\theta; \x)$ has a unique solution
in $[T/3, T/2]$ denoted by $\hat{\theta}(\x)$ such that  $\hat{\theta}(\bar\x) = T/3$,
$\hat{\theta}(\x_*) = T/2$ and $\hat{\theta}(\x)$ is a continuous and increasing
function on $[\bar\x, \x_*]$.

{Moreover,  Equations~(\ref{torus:equation}), with the balance equation (\ref{balance2}) and
$\et(\x)= \hat\theta(\x) -T/3$, admit a fixed point at $(T-\hat\theta(\x), \hat\theta(\x)) = (2T/3-\et, T/3+\et)$, 
which corresponds to a forward tetrapod gait at $\x=\bar\x$,
 a tripod gait at $\x=\x_*$, and a transition gait for $\x\in(\bar\x,\x_*)$. }
\end{proposition}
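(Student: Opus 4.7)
The plan is to first establish the existence, uniqueness, and monotonicity of $\hat\theta(\x)$ as a zero of $G(\cdot; \x)$ in $[T/3, T/2]$, and then translate this into a fixed point of Equations~(\ref{torus:equation}) via Proposition~\ref{prop:tetra:stability}. The starting observation is that $G(\theta; \x) = H(\theta; \x) - H(-\theta; \x)$ is odd in $\theta$, and by $T$-periodicity of $H$ we have $H(-T/2; \x) = H(T/2; \x)$, so
$$G(T/2; \x) \equiv 0 \quad \mbox{for all } \x \in [\x_1, \x_2].$$
Together with hypothesis~(1), at $\x = \bar\x$ the function $G(\cdot; \bar\x)$ vanishes at both endpoints of $[T/3, T/2]$; combined with the weak concavity at $\bar\x$ inherited by continuity from~(3), this forces $G(\cdot; \bar\x) > 0$ on the open interval.

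Next, for $\x \in (\bar\x, \x_*)$, I would use strict concavity together with the sign information $G(T/3; \x) < 0$ (from~(2)) and $G(T/2; \x) = 0$. A strictly concave function has at most two zeros on an interval, so with the boundary zero at $T/2$ there is at most one other zero in $(T/3, T/2)$. For existence, continuity of $G$ in $\x$ together with $G(\cdot; \bar\x) > 0$ on the interior implies $G(\cdot; \x) > 0$ on a large middle portion of the interval for $\x$ close to $\bar\x$; the intermediate value theorem between $T/3$ and this region then produces an interior zero $\hat\theta(\x)$. For continuity and monotonicity of $\hat\theta$, the implicit function theorem applies because the sign change $G < 0 \to G > 0$ forces $\partial G/\partial \theta (\hat\theta; \x) > 0$, giving
$$\frac{d\hat\theta}{d\x} \;=\; -\frac{\partial G/\partial \x}{\partial G/\partial \theta}\bigg|_{(\hat\theta(\x),\,\x)} \;>\; 0,$$
since the numerator is negative by~(2) and the denominator is positive.

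The boundary values $\hat\theta(\bar\x) = T/3$ and $\hat\theta(\x_*) = T/2$ follow by one-sided limits. The first is immediate: monotonicity of $\hat\theta$ combined with the fact that $G(\cdot; \bar\x)$ has no zeros in $(T/3, T/2)$ forces $\lim_{\x \to \bar\x^+} \hat\theta(\x) \in \{T/3, T/2\}$, and monotonicity rules out $T/2$. I expect the right-endpoint limit $\hat\theta(\x_*) = T/2$ to be the main technical obstacle, since hypothesis~(3) only specifies that strict concavity holds up to $\x_*$ without directly controlling how the interior zero merges with the boundary. The argument I would use is that while the branch lies in the open interval, strict concavity plus $G(\hat\theta; \x) = 0 = G(T/2; \x)$ with $G > 0$ on $(\hat\theta, T/2)$ forces $G'(T/2; \x) < 0$; the branch can only terminate before reaching the boundary of $(\bar\x, \x_*)$ via a fold where $G'(T/2; \x) = 0$ and the two zeros coalesce, which must occur at $\x = \x_*$.

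For the \emph{moreover} statement, setting $\et(\x) := \hat\theta(\x) - T/3 \in [0, T/6]$ yields
$$H\lt(\frac{2T}{3} - \et; \x\rt) - H\lt(\frac{T}{3} + \et; \x\rt) \;=\; H(T - \hat\theta; \x) - H(\hat\theta; \x) \;=\; -G(\hat\theta; \x) \;=\; 0,$$
where the first equality uses $T$-periodicity of $H$. Hence Equation~(\ref{eta}) of Assumption~\ref{eta_assumption} is satisfied. Combined with the balance condition~(\ref{balance2}), Proposition~\ref{prop:tetra:stability} provides a fixed point at $(2T/3 - \et, T/3 + \et) = (T - \hat\theta, \hat\theta)$. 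Substituting the endpoint values $\hat\theta(\bar\x) = T/3$ and $\hat\theta(\x_*) = T/2$ recovers the forward tetrapod fixed point $(2T/3, T/3)$ and the tripod fixed point $(T/2, T/2)$ respectively, with transition gaits lying in between for intermediate $\x$.
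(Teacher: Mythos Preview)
Your proposal is correct but proceeds by a genuinely different route from the paper. The paper works pointwise in $\x$ with elementary tools: for each fixed $\x \in (\bar\x, \x_*)$ it asserts $G(T^-/2; \x) > 0$ directly from strict concavity together with $G(T/2;\x)=0$, combines this with $G(T/3; \x) < 0$ from condition~(2), and applies the intermediate value theorem for existence; uniqueness comes from strict concavity, monotonicity from a second IVT comparison at $\hat\theta(x_1)$, and continuity from a separate hand-built $\epsilon$--$\delta$ argument. You instead bootstrap from positivity of $G(\cdot;\bar\x)$ on the open interval (concave with zero endpoints) to get existence near $\bar\x$, and then let the implicit function theorem carry the rest, obtaining continuity, strict monotonicity, and even $C^1$ regularity of $\hat\theta$ simultaneously from $\partial_\theta G(\hat\theta;\x)>0$ and $\partial_\x G<0$. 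Your route is more economical and yields a stronger regularity conclusion; the price is that the paper's pointwise existence is replaced by a continuation argument that must be carried across all of $(\bar\x,\x_*)$, which you acknowledge when discussing the right endpoint $\hat\theta(\x_*)=T/2$ --- a boundary value the paper's proof in fact does not address at all. Your handling of the \emph{moreover} clause, reducing to Equation~(\ref{eta}) and invoking Proposition~\ref{prop:tetra:stability}, matches the paper's intent.
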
 

\bp
Since $H$ is $T$-periodic, $G(T/2; \x) = 0, \forall \, \x$, and because
$G''(\theta; \x) < 0$ for $\theta \in (T/3, T/2)$ and $\forall \, \x < \x_*$,
\be{eqG1}
G(T^-/2; \x) > 0 \;\; \mbox{where $\; T^-/2 < T/2\;$ is sufficiently close to $T/2$}.
\ee
Also, since $G(T/3; \bar\x) = 0$ and $\frac{dG}{d \x}(T/3; \x) < 0$,
\be{eqG2}
G(T/3; \x) < 0, \quad \forall \, \x > \bar\x .
\ee
Equations~(\ref{eqG1}) and (\ref{eqG2}) and Bolzano's intermediate value
theorem imply that for any $\x \in (\bar\x, \x_*)$, $G(\theta; \x)$ has a zero
$\hat{\theta}(\x) \in (T/3, T/2)$. $G''(\theta; \x) < 0$ for $\theta \in (T/3, T/2)$
guarantees uniqueness of $\hat{\theta}(\x)$. 

Next we show that $\hat{\theta}(\x)$ is increasing; i.e.,  for any $x_2 > x_1
\Rightarrow \hat{\theta}(x_2) > \hat{\theta}(x_1)$. Fix $x_1>\bar\x$. 
By definition of $\hat\theta(\x)$, $G(\hat{\theta}(x_1); x_1) = 0$, and because 
$\frac{dG}{d \x}(\hat{\theta}(x_1); \x) < 0, \;\, \forall \, \x > x_1$, 
\be{eqG3}
G(\hat{\theta}(x_1); x_2) < 0 .
\ee
Equations~(\ref{eqG1}) and (\ref{eqG3}) and Bolzano's theorem imply that
$G(\theta; x_2)$ has a zero in $(\hat{\theta}(x_1), T/2)$. Since the zero is
unique, it lies at $\hat{\theta}(x_2)$ and so $\hat{\theta}(x_2) > \hat{\theta}(x_1)$.
Moreover, $\hat{\theta}(x)$ is continuous: $\forall \, \epsilon > 0, \exists \, \delta > 0$
such that 
\be{delta_epsilon}
|x_1 - x_2| < \delta \, \Rightarrow \, |\hat{\theta}(x_1) - 
\hat{\theta}(x_2)| < \epsilon.
\ee 

We now prove inequality (\ref{delta_epsilon}). Fix $x_1 \in (\bar\x, \x_*)$ 
and choose $\bar{\x} < x_1$ small enough such that
\[
0 < b := G\lt(\hat{\theta}(\bar{\x}) + \frac{\epsilon}{2}; x_1\rt) < a := 
G\lt(\hat{\theta}(\bar{\x}) + \frac{\epsilon}{2}; \bar{\x}\rt) .
\]
Now $G(\theta; \x)$ is continuous, decreasing with $\x$, and $\bar{\x} < x_1$,
therefore $G(\hat{\theta}(\bar{\x}); x_1) < 0$. Since  $G(\hat{\theta}(\x) + 
{\epsilon}/{2}; x_1) > 0$ and $G(\hat{\theta}(\bar{\x}); x_1) < 0$
we find that $\hat{\theta}(x_1) \in (\hat{\theta}(\bar{\x}), \hat{\theta}(\bar{\x})
 + \epsilon/2)$, and hence that
\be{eqG4}
 |\hat{\theta}(\bar{\x}) - \hat{\theta}(x_1)| < \frac{\epsilon}{2}.
\ee

Since $G(\theta, \x)$ is continuous on $\x$, for $\epsilon_1 = (a - b)/4 > 0, \, 
\exists \, \delta_1 < (x_1 - \bar{\x})/2$ such that $|x_2 - x_1| < \delta_1$ implies that
\[
\left| G \left(\hat{\theta}(\bar{\x}) + \frac{\epsilon}{2}; x_2 \right) -  
G \left(\hat{\theta}(\bar{\x}) + \frac{\epsilon}{2}; x_1) \right)  \right| < \epsilon_1 ,
\]
and this in turn implies that $G(\hat{\theta}(\bar{\x}) + {\epsilon}/{2}; x_2) > 0$.
Since $\delta_1 < (x_1 - \bar{\x})/2$, $x_2 > \bar{\x}$ and so 
$G(\hat{\theta}(\bar{\x}); x_2) < 0$.
Therefore if $\hat{\theta}(x_2) \in (\hat{\theta}(\bar{\x}), \hat{\theta}(\bar{\x}) + \epsilon/2)$ 
then 
\be{eqG5}
|\hat{\theta}(x_2) - \hat{\theta}(\bar{\x})| < \frac{\epsilon}{2} .
\ee

Finally, Equations~(\ref{eqG4}) and (\ref{eqG5}) imply that for $\delta = \delta_1$,
if $|x_1 - x_2| < \delta$ then $|\hat{\theta}(x_1) - \hat{\theta}(x_2)| < \epsilon$.
\ep

\ZA{As an example, we next show that  $H_{app}(\theta; \x)$, an explicit function which  approximates $H_{BN}(\theta; \x)$, 
satisfies assumptions (1), (2) and (3) in Proposition~\ref{H-properties}.}

\subsection{Example of an explicit coupling function}\label{H_app}

In this section, we approximate  $H_{app}$ by its Fourier series and derive an explicit function $H_{app}$ as follows.   
To derive $H_{app}$, we first computed the coefficients of the Fourier series of $H_{BN}$, and then, using {\tt polyfit} in Matlab, fitted an
appropriate quadratic function for each coefficient, obtaining
\\
\begin{wrapfigure}[7]{r}{0.4\textwidth}
\centering \includegraphics[width=.4\textwidth]{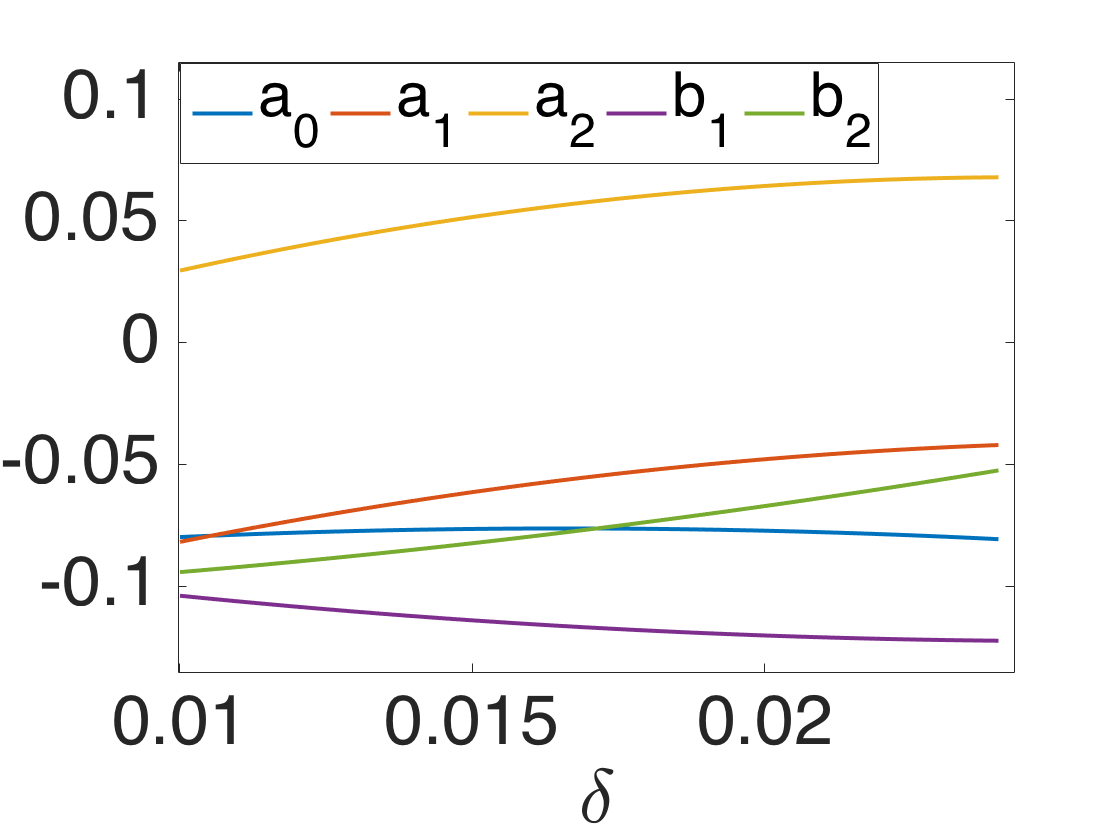}
\caption{Fourier coefficients of $H_{app}$ }
\label{fourier_coef_delta}
\vspace{-100pt}  
\end{wrapfigure}
\begin{subequations}\label{coefficient:Happ}
\begin{align}
a_0(\delta) &= -80.8384\delta^2+2.6862\delta-0.0986, \\
a_1(\delta) &=-137.9839\delta^2+7.5308\delta-0.1433, \\
b_1(\delta) &=77.9417 \delta^2-3.9694 \delta -0.0720, \\
a_2(\delta) &=-184.2374\delta^2+8.9996 \delta -0.0420, \\ 
b_2(\delta) &=68.0350 \delta^2+0.6692 \delta -0.1077 . 
\end{align}
\end{subequations}
By definition, $H_{{app}}(\theta;\delta)$ on $[0\; 1]\times[0.008\;  0.024]$ is
\beqn
H_{{app}}(\theta;\delta)\;:=\;\dis\sum_{k=0}^2 a_k(\delta)\cos(2\pi k\theta) + \dis\sum_{k=1}^2 b_k(\delta)\sin(2\pi k\theta).
\eeqn

In Figure~\ref{H_Happ_delta}, we compare the approximate coupling
function $H_{app}$ with $H_{BN}$ for the values of $\delta$ at the endpoints
of the interval of interest.
\begin{figure}[h!]
\begin{center}
\includegraphics[scale=.22]{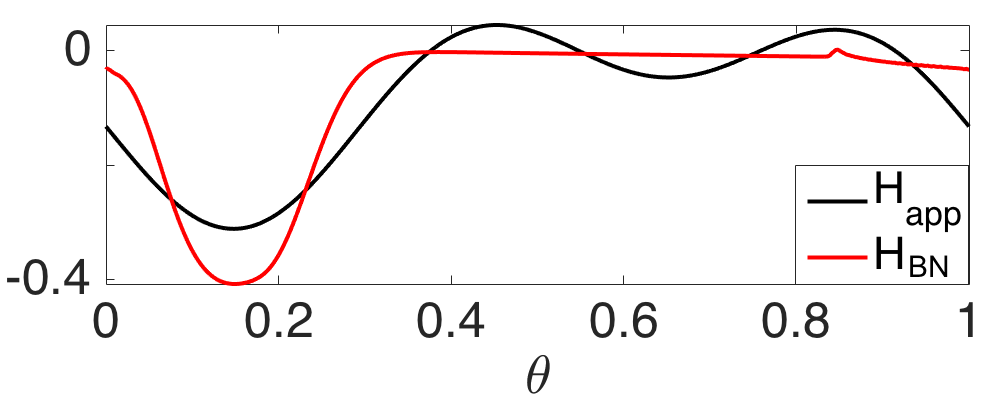}\quad
\includegraphics[scale=.22]{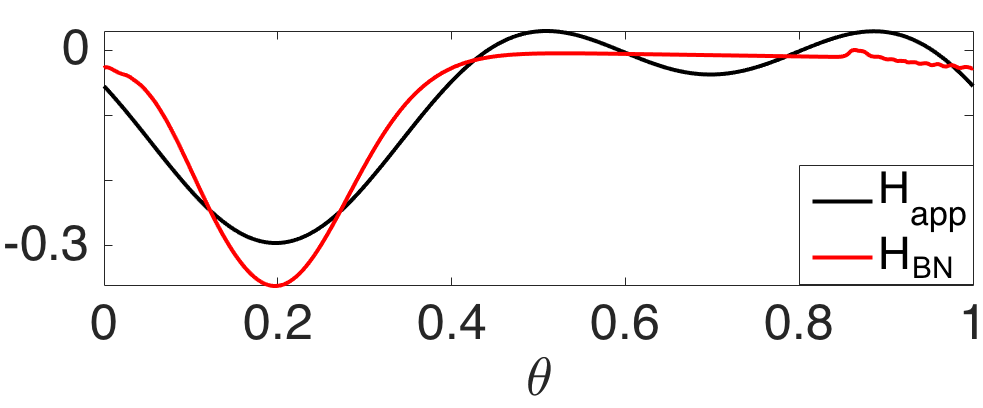}
\end{center}
\caption{The coupling function $H_{BN}$ and its approximation $H_{app}$ are 
shown for $\delta= 0.01$ (left) and $\delta= 0.024$ (right). }
\label{H_Happ_delta}
\end{figure}

We next verify that 
\be{Gapp}
G_{app} (\theta; \delta):= H_{app} (\theta; \delta)- H_{app} (-\theta; \delta)= 2b_1(\delta)\sin(2\pi\theta) + 2b_2(\delta)\sin(4\pi\theta),
\ee
 satisfies conditions  (1), (2), and (3) of Proposition~\ref{H-properties}. 

\begin{description}[leftmargin=*]
\item [Conditions of Proposition~\ref{H-properties}] 

Figure~\ref{Gapp_vs_delta} shows the graphs of $G_{app}$ for different values of $\delta$. 
Since we are only interested in the interval $[1/3,1/2]$, we only show the $G_{app}$'s in this interval.
\begin{figure}[h!]
\begin{center}
\includegraphics[scale=.2]{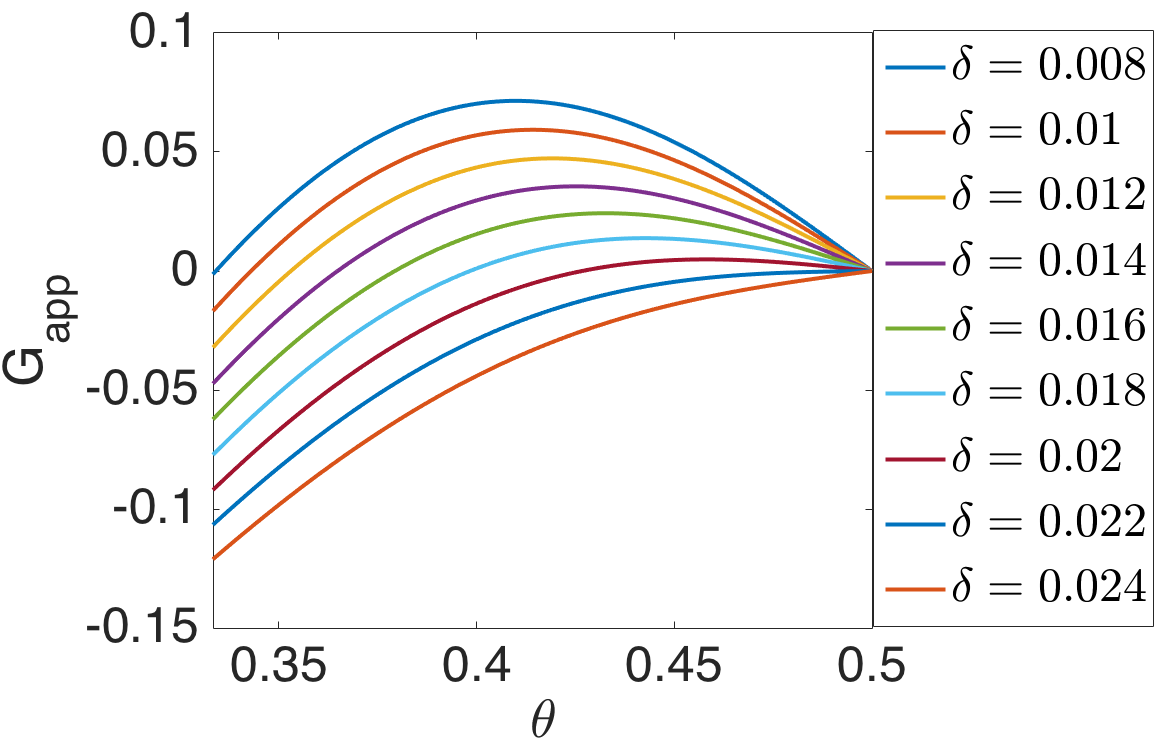}
\end{center}
\caption{The graphs of $G_{app}$ on $[1/3,1/2]$ and  for different values of $\delta$ are shown.}
\label{Gapp_vs_delta}
\end{figure}
As Figure~\ref{Gapp_vs_delta} shows, 
 for $\bar\delta=0.008$, $G_{app}$ equals to zero at $1/3$: $G_{app}(1/3;0.008)=0$. 
In the interval $[1/3,1/2]$,
as $\delta$ increases, at each point $\theta$, $G_{app}$ decreases: $dG_{app}/d\delta<0$. 
For $\delta<\delta_*=0.0218$, the graph of $G_{app}$ is concave down: $G''_{app}<0$. 
One can compute the zero of $G_{app}(1/3;\delta)$, $dG_{app}/d\delta$, and $G''_{app}$ 
explicitly and verify the above conditions. 
 
  \textbf{Computing $\et$.} 
We show that 
\be{eta_explicit_Happ}
\et(\delta) = \frac{1}{2\pi}\arccos\lt(\frac{-b_1(\delta)}{2b_2(\delta)}\rt) - \frac{1}{3},
\ee
is a unique non-constant and non-decreasing  solution of
$H_{app}\lt({2}/{3} - \et; \delta\rt) = H_{app}\lt({1}/{3}+\et, \delta\rt)$. 
Note that $\et$ is defined only where $\abs{{-b_1(\delta)}/{2b_2(\delta)}}
\leq 1$. Figure~\ref{eta_Happ} (left) shows that $\exists\delta_*\approx
0.0218$ such that for $\delta\in [0.008, \delta_*]$,
$-1\leq {-b_1(\delta)}/{2b_2(\delta)} <0$. Therefore, we let $[0.008,
\delta_*]$  be the domain of $\et$, where $\delta_*$ satisfies 
\be{delta_star_Happ}
\frac{-b_1(\delta_*)}{2b_2(\delta_*)} =- 1.
\ee
Figure~\ref{eta_Happ} (right) shows the graph of $\et$. Note that 
the range of $\et$ is approximately $[0, 1/6]$, as desired.  
\begin{figure}[h!]
\begin{center}
\includegraphics[width=0.35\textwidth]{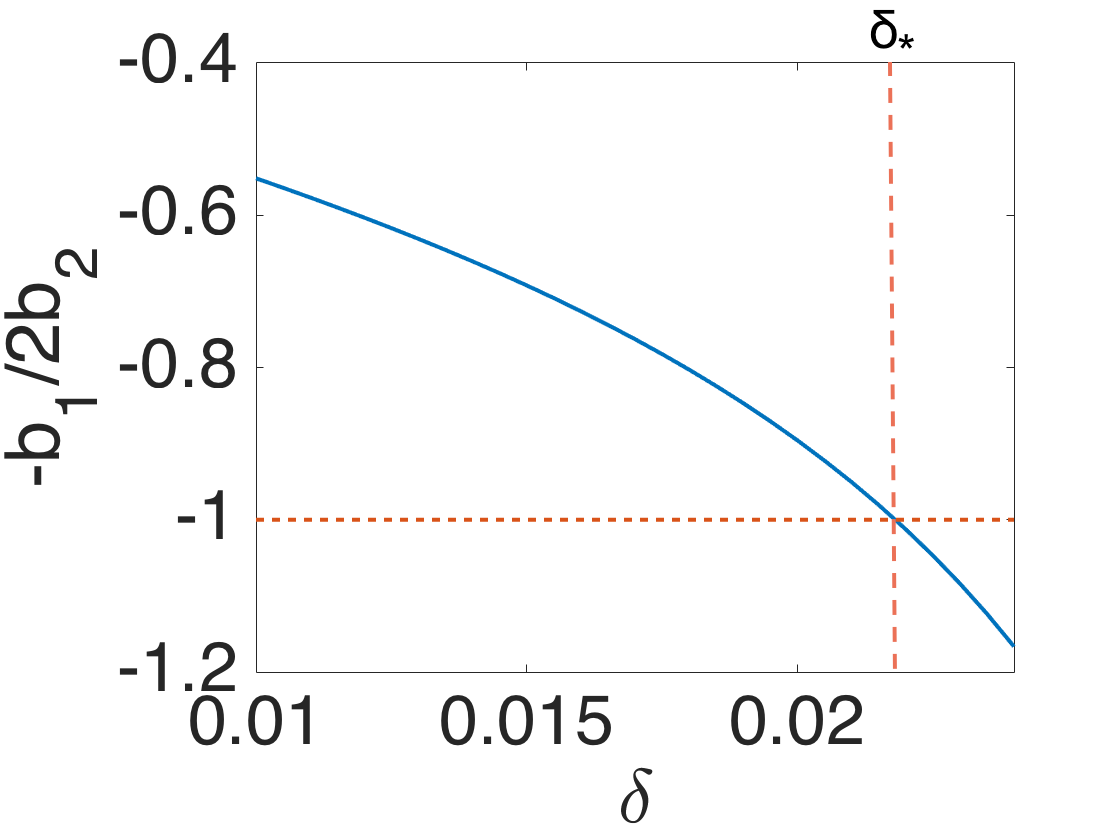}\quad
\includegraphics[width=0.35\textwidth]{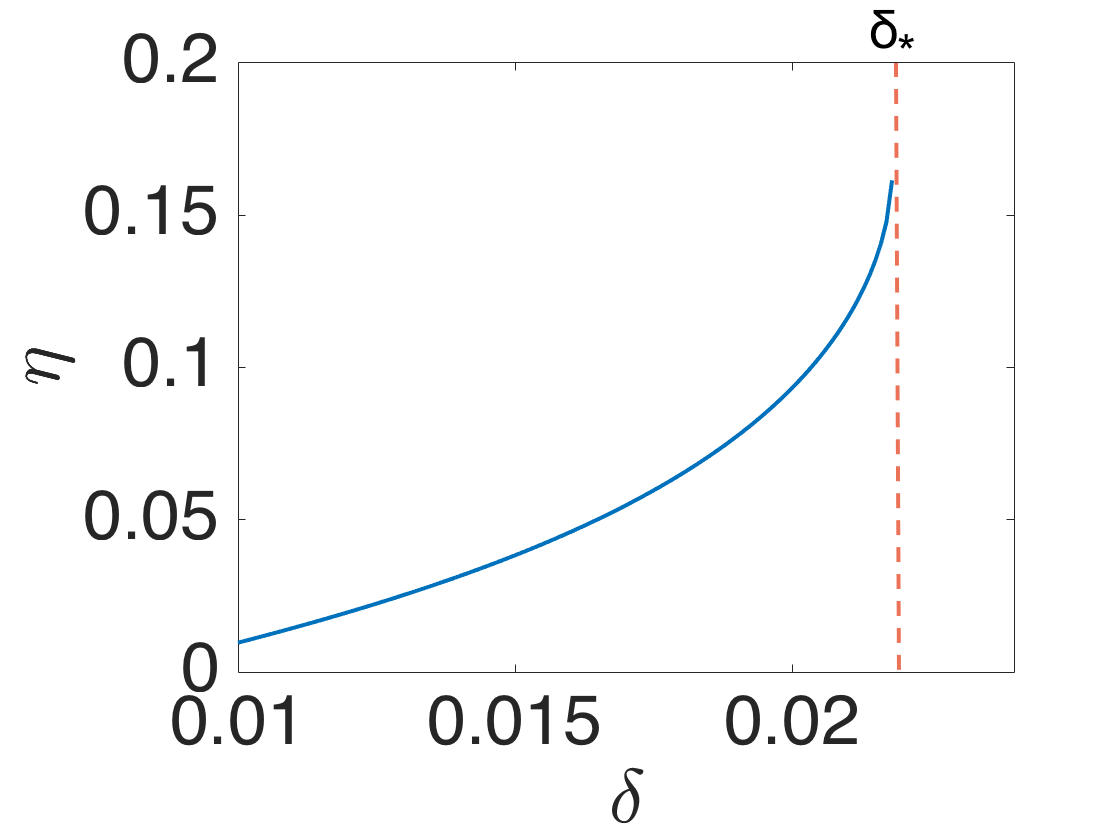}
\caption{(Left) the graph of ${-b_1(\delta)}/{2b_2(\delta)}$ which 
determines the domain of $\et$ defined in 
Equations~(\ref{eta_explicit_Happ}); (right) the graph of $\et$. }
\label{eta_Happ}
\end{center}
\end{figure}
A simple calculation shows that because $\cos(2\pi-x) = \cos x$,  
\[
\cos \lt(2\pi k\lt({2}/{3}- \et\rt)\rt) =\cos \lt(2\pi k-\lt(2\pi k\lt({2}/{3} - \et\rt)\rt)\rt) = \cos \lt(2\pi k\lt({1}/{3} +\et\rt)\rt),
\]
and therefore the cosine terms in the Fourier series cancel, resulting in 
\be{finding:eta:for:Happ}
\bal
  H_{app}\lt(\frac{2}{3} - \et; \delta\rt) = H_{app}\lt(\frac{1}{3}+\et; \delta\rt) &\Longleftrightarrow\\
  \sum_{k=1}^2 b_k(\delta)\sin \lt(2\pi k\lt({2}/{3}- \et\rt)\rt)  
  &=\sum_{k=1}^2 b_k(\delta)\sin \lt(2\pi k\lt({1}/{3}+ \et\rt)\rt).
\eal
\ee
Using the fact that $\sin(2\pi-x) = -\sin x$, we have 
\[\sin \lt(2\pi k\lt({2}/{3}- \et\rt)\rt) = - \sin \lt(2\pi k-\lt(2\pi k\lt({2}/{3}- \et\rt)\rt)\rt)= -\sin \lt(2\pi k\lt({1}/{3}+ \et\rt)\rt),  \]
and so the right hand equality of Equation (\ref{finding:eta:for:Happ}) 
can be written as follows. 
\begin{subequations}\label{OMEGA2}
\begin{align}
&\quad\quad \sum_{k=1}^2 b_k(\delta)\sin \lt(2\pi k\lt({2}/{3}- \et\rt)\rt)  =\sum_{k=1}^2 b_k(\delta)\sin \lt(2\pi k\lt({1}/{3}+ \et\rt)\rt) \\
\Longleftrightarrow& \quad - \sum_{k=1}^2 b_k(\delta)\sin \lt(2\pi k\lt({1}/{3}+ \et\rt)\rt)  =   \sum_{k=1}^2 b_k(\delta)\sin \lt(2\pi k\lt({1}/{3}+ \et\rt)\rt)\\
\Longleftrightarrow& \quad b_1(\delta)\sin \lt(2\pi \lt({1}/{3} +\et\rt)\rt) + b_2(\delta)\sin \lt(4\pi \lt({1}/{3}+ \et\rt)\rt) = 0. 
\end{align}
\end{subequations}
Now using the double-angle identity,  $\sin (2x) = 2 \sin x \cos x$, we get
\[H_{app}\lt(\frac{2}{3} - \et; \delta\rt) = H_{app}\lt(\frac{1}{3}+\et;  \delta\rt) 
 \Longleftrightarrow \sin \lt(2\pi(1/3+\et)\rt) \lt[ b_1(\delta) +2b_2(\delta) \cos \lt(2\pi(1/3+\et)\rt) \rt] = 0. \]
Since we are looking for a non-constant and non-decreasing solution, we solve 
\[b_1(\delta) +2b_2(\delta) \cos \lt(2\pi(1/3+\et)\rt)=0,\]
 for $\et$, which gives $\et$ as in Equation~(\ref{eta_explicit_Happ}). 
 
 Therefore, by Proposition~\ref{H-properties}, 
 Equations~(\ref{torus:equation}), with $H=H_{app}$ and the balance equation (\ref{balance2}) 
 admit a fixed point at $(T-\hat\theta(\x), \hat\theta(\x)) = (2T/3-\et, T/3+\et)$, 
which corresponds to a forward tetrapod gait at $\x=\bar\x$,
 a tripod gait $\x=\x_*$, and a transition gait for $\x\in(\bar\x,\x_*)$.

In what follows we assume Equations~(\ref{eq.osc.simplified}) with $H=H_{app}$.
 We compute $H_{app}'$, and show that it satisfies conditions of Propositions~\ref{special_coupling}, 
 \ref{special_coupling_corollary},  and \ref{fixedpoints_zero_T2}. 
   \be{der_Happ}
   H_{app}' = - 2\pi \dis\sum_{k=1}^2 k\;a_k(\delta)\sin(2\pi k\theta) + 2\pi \dis\sum_{k=1}^2 k\; b_k(\delta)\cos(2\pi k\theta).
   \ee

\item  [Conditions of Proposition~\ref{special_coupling}] First, we verify the stability of $(2/3-\et, 1/3+\et)$. 
\[
H_{app}'\lt( {1}/{3} +\eta, \delta\rt) \pm H_{app}'\lt( {2}/{3} -\eta, \delta\rt) >0, \quad\forall\delta\in [0.01, \delta_*].
\]
Substituting Equation~(\ref{eta_explicit_Happ}) in  the derivative of $H_{app}$,  
Equation~(\ref{der_Happ}), and using trigonometrical identities yields
\[
H_{app}'\lt( \frac{1}{3} +\et; \delta\rt) + H_{app}'\lt( \frac{2}{3} -\et;  \delta\rt) =
- 2\pi \;\dis\frac{4b_2^2(\delta) - b_1^2(\delta)}{b_2(\delta)}>0,
\]
and 
\[
H_{app}'\lt( \frac{1}{3} -\et; \delta\rt) + H_{app}'\lt( \frac{2}{3} -\et;  \delta\rt) =
 \pi\lt(a_1(\delta)-2\frac{a_2(\delta)b_1(\delta)}{b_2(\delta)}\rt) \;\dis\frac{\sqrt{4b_2^2(\delta) - b_1^2(\delta)}}{b_2(\delta)}>0,
\]
which are positive because $4b_2^2(\delta) - b_1^2(\delta)>0$  on  $[0.008, \delta_*]$, $a_1(\delta), b_1(\delta), b_2(\delta)<0$, and $a_2(\delta)>0$
(see Figures~\ref{eta_Happ}(left) and \ref{fourier_coef_delta}). Therefore, by  Proposition~\ref{special_coupling_corollary}, 
for $\a<\a_{max}$ (resp. $\a>\a_{max}$), 
$(2/3-\et, 1/3+\et)$ is a sink (resp. saddle point).

\item  [Conditions of Proposition~\ref{special_coupling_corollary}]  Next, we verify the stability of $(1/3+\et, 1/3+\et)$, 
$(1/3+\et, 2/3-\et)$, and $(2/3-\et, 2/3-\et)$. 

 $H_{app}'\lt( {2}/{3} -\eta, \delta\rt)$ changes sign, on the domain of $\et$, i.e., $[0.01,\delta_*]$. 
Substituting Equation~(\ref{eta_explicit_Happ}) in  the derivative of $H_{app}$,
Equation~(\ref{der_Happ}), and using trigonometrical identities yields
\[
H_{app}'\lt( \frac{2}{3} -\et; \delta\rt) =
 -\dis\frac{\pi}{b_2(\delta)} \sqrt{4b_2^2(\delta) - b_1^2(\delta)}\lt( a_1(\delta) -2 \frac{a_2(\delta)b_1(\delta)}{b_2(\delta)} +\sqrt{4b_2^2(\delta) - b_1^2(\delta)}\rt).
\]
Figure~\ref{der_Happ_23_13_12_0} (left) shows that $H_{app}'\lt( {2}/{3} -\et; \delta\rt)$ 
changes sign from positive to negative on $\delta\in[0.01, \delta_*]$, 
{at some $\delta$ near $0.01$.} 
We will see that through a transcritical bifurcation, $(1/3+\et, 1/3+\et)$ becomes a saddle point from a sink. 
The reason is that by Proposition \ref{special_coupling_corollary}, as $H'_{app} (2/3-\et; \delta)$ changes sign, 
one of the eigenvalues of $(1/3+\et,1/3+\et)$ becomes positive while the other one remains negative. 
For $\a>\a_{min}$, the fixed points $(1/3+\et, 2/3-\et)$ and  $(2/3-\et, 2/3-\et)$ are always sinks. 
\begin{figure}[h!]
\begin{center}
        \includegraphics[width=0.36\textwidth]{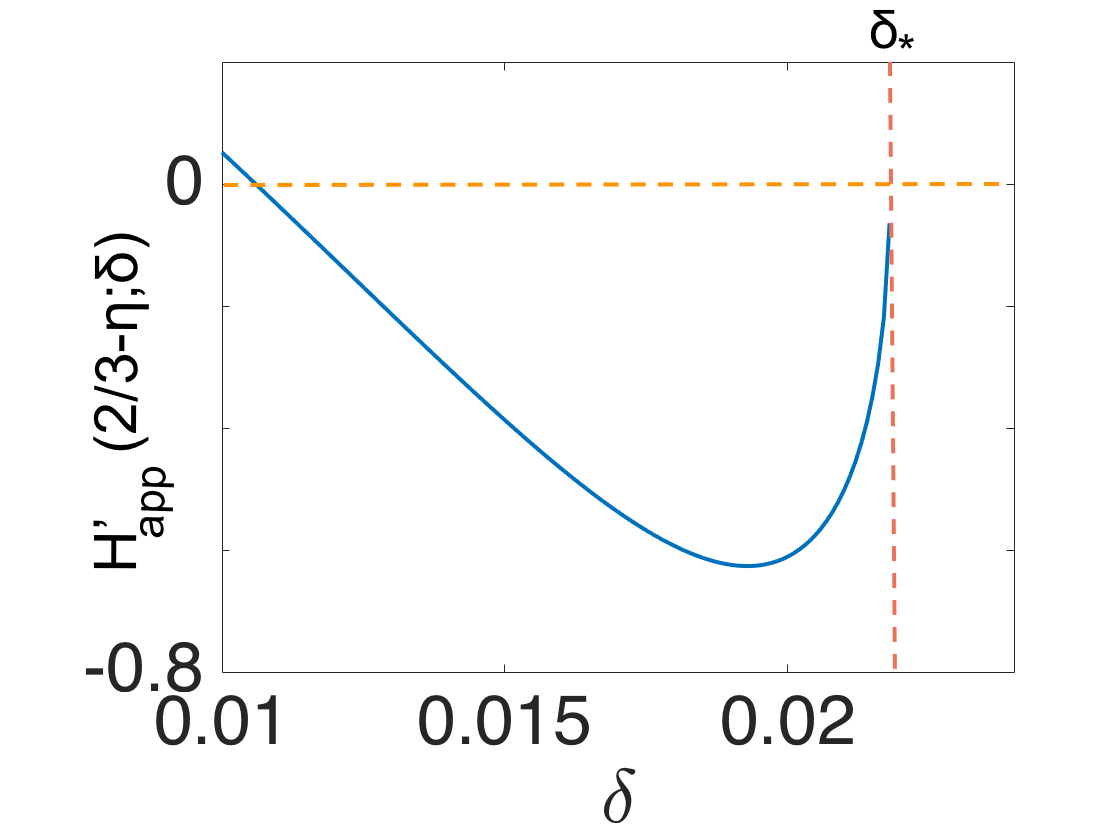}\quad
        \includegraphics[width=0.36\textwidth]{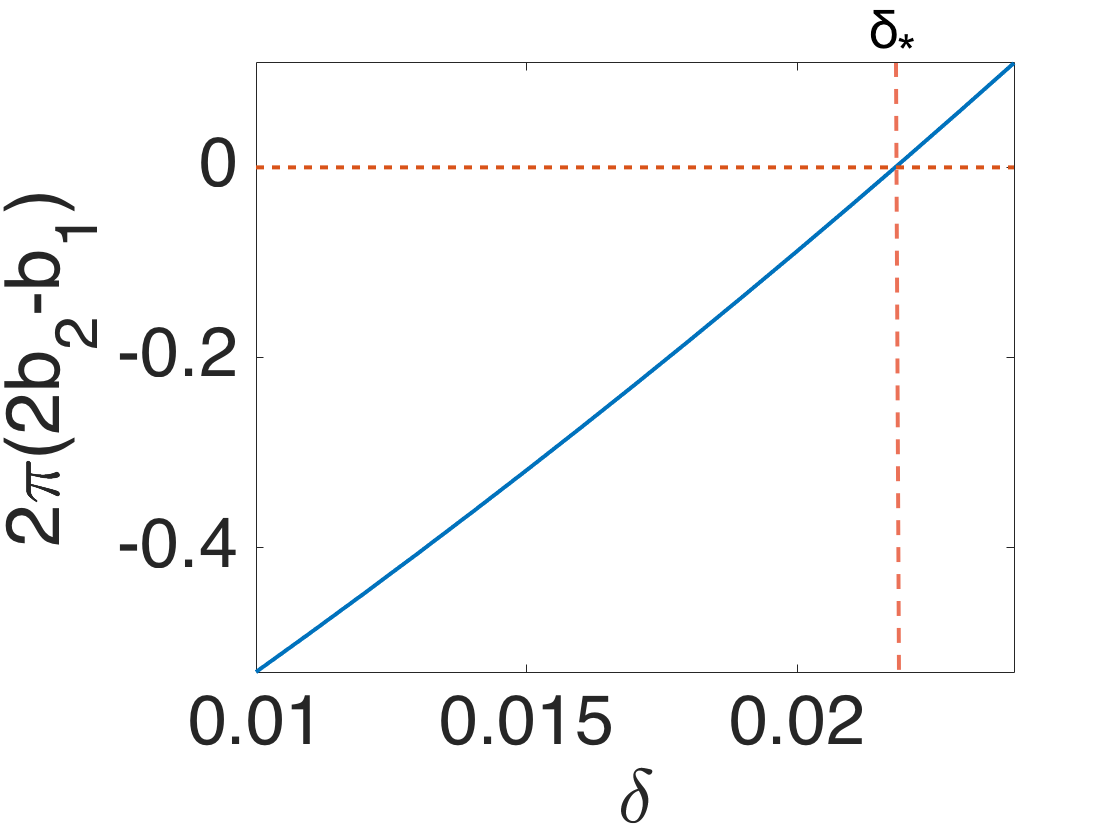}
\end{center}
\caption{(Left) $H_{app}'\lt( \frac{2}{3} -\eta; \delta\rt) <0$,
(right) $H'_{app} \lt(\frac{1}{2}; \delta\rt)$.}
 \label{der_Happ_23_13_12_0}
\end{figure}
\item  [Conditions of Proposition~\ref{fixedpoints_zero_T2}] 
Finally, we verify the stability types of $(1/2,1/2)$ and $(0,0)$. 
\bi
\item  For $\delta< \delta_*$, $H_{app}'\lt({1}/{2}; \delta\rt)<0$, 
while for  $\delta>\delta_*$, $H_{app}'\lt({1}/{2}; \delta\rt)>0$. 
Setting $\theta=1/2$ in Equation~(\ref{der_Happ}), we get 
\[H_{app}'(1/2;\delta) = 2\pi(2 b_2(\delta) - b_1(\delta)).\]
By the definition of $\delta_*$, for $\delta< \delta_*$, $-b_1/2b_2 < -1$.
Figure~\ref{der_Happ_23_13_12_0} (right)  shows that 
$H_{app}'(1/2,\delta)$ changes sign from negative to positive
at $\delta= \delta_*$. This guarantees that the fixed point $(1/2,1/2)$
becomes stable as $\delta$ passes $\delta_*$.

\item $H_{app}'(0;\delta)<0$. Setting $\theta=0$ in Equation~(\ref{der_Happ}),
we obtain $H_{app}'(0;\delta) = 2\pi(b_1(\delta) + 2 b_2(\delta))$, which is
negative because for $\delta\in[0.008 0.024]$ both  $b_1(\delta)$ and $b_2(\delta)$ are negative
(see Figure~\ref{fourier_coef_delta}). This guarantees that $(0,0)$ is always a source.  
\ei
\end{description}

\subsection{Bifurcation diagrams: 
balance conditions and equal contralateral couplings}
In this section, we consider  Equations (\ref{eq.osc.simplified})
for $H=H_{app}$ and study the bifurcations as $\delta$ increases. 
\ZAB{We draw the bifurcation  diagrams (Figure~\ref{Bifurcation_alpha_13_12_matcont}) using Matcont, 
a Matlab numerical continuation packages for the interactive bifurcation analysis of dynamical systems 
 \cite{Matcont}.}
We first consider the system with $\a=1/3$. When $\delta$ is small,
$\delta=0.01$, as Figures~\ref{Bifurcation_Happ_del} (first row, left) 
shows, there exist 12 fixed points: 6 saddle points, 2 sources, and 4 sinks. 
In this case, $(1/3+\et, 1/3+\et)$ is a sink (shown by a green dot in 
Figure~\ref{Bifurcation_Happ_del}). 
As $\delta$ increases and reaches $\delta^{(0)}$ (Figure~\ref{Bifurcation_alpha_13_12_matcont} (left)), 
through a transcritical bifurcation, $(1/3+\et, 1/3+\et)$
becomes a saddle. 
Further, as $\delta$ reaches $\delta^{(1)}$,  through a saddle node bifurcation, a sink (green dot)
and a saddle (orange star) annihilate each other and 10 fixed points remain: 
5 saddle points, 2 sources, and 3 sinks (see Figures~\ref{Bifurcation_Happ_del} (first row, right) and 
\ref{Bifurcation_alpha_13_12_matcont} (left)). 
Note that the two extra fixed points were not observed in the case of the numerically computed $H$ and 
 the transcritical and saddle node bifurcations did not occur.

As $\delta$ increases further to  $\delta^{(2)}$, through a degenerate
bifurcation, 4 fixed points disappear and only 6 fixed points remain (see 
Figures~\ref{Bifurcation_Happ_del} (second row, left) and 
\ref{Bifurcation_alpha_13_12_matcont} (left)).   

When $\delta$ reaches $\delta^{(3)}$, 2 fixed points vanish 
in a saddle node bifurcation and 4 fixed points remain: 2 saddle points, 
a source, and a sink  (see Figures~\ref{Bifurcation_Happ_del} 
(second row, right) and \ref{Bifurcation_alpha_13_12_matcont} (left)).
Note that 2 saddle points and 1 source near the edges of the square 
remain unchanged while $\delta$ varies. 
Figure~\ref{Bifurcation_alpha_13_12_matcont} (left) shows the 
bifurcation diagram when $\alpha=1/3$. 

\bremark
Figure~\ref{Bifurcation_alpha_13_12_matcont} (right) shows the bifurcation
diagram when $\alpha=1/2$. In this case, due to reflection symmetry
about $\theta^{(1)} = \theta^{(2)}$, there is no saddle node bifurcation
 at $\delta=\delta^{(3)}$ (as in the case of $\a=1/3$), and 7 
fixed points merge to $(1/2, 1/2)$ in a very degenerate bifurcation. 
\eremark
\begin{figure}[h!]
\begin{center}
        \includegraphics[scale=.12]{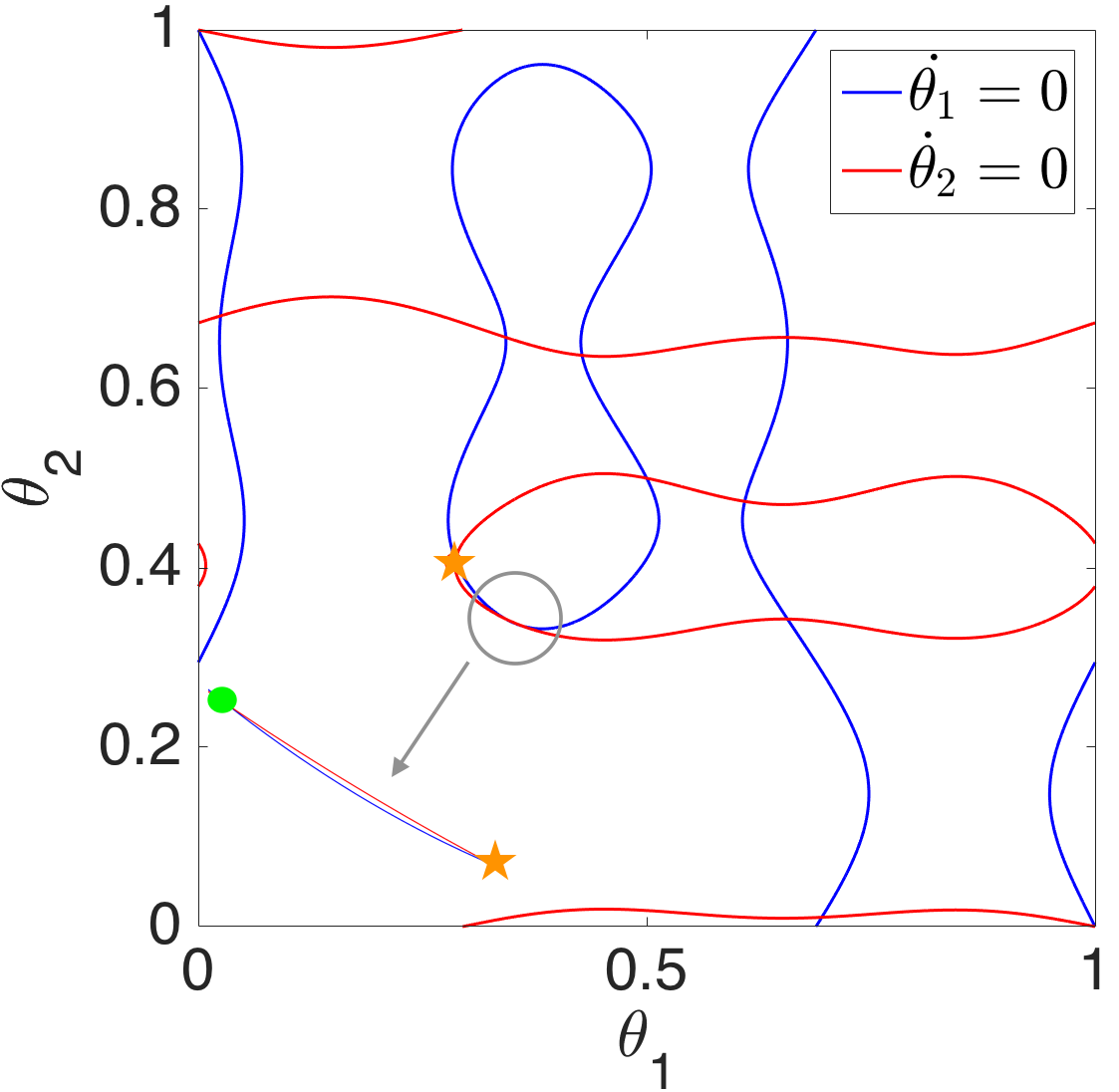}\quad
        \includegraphics[scale=.12]{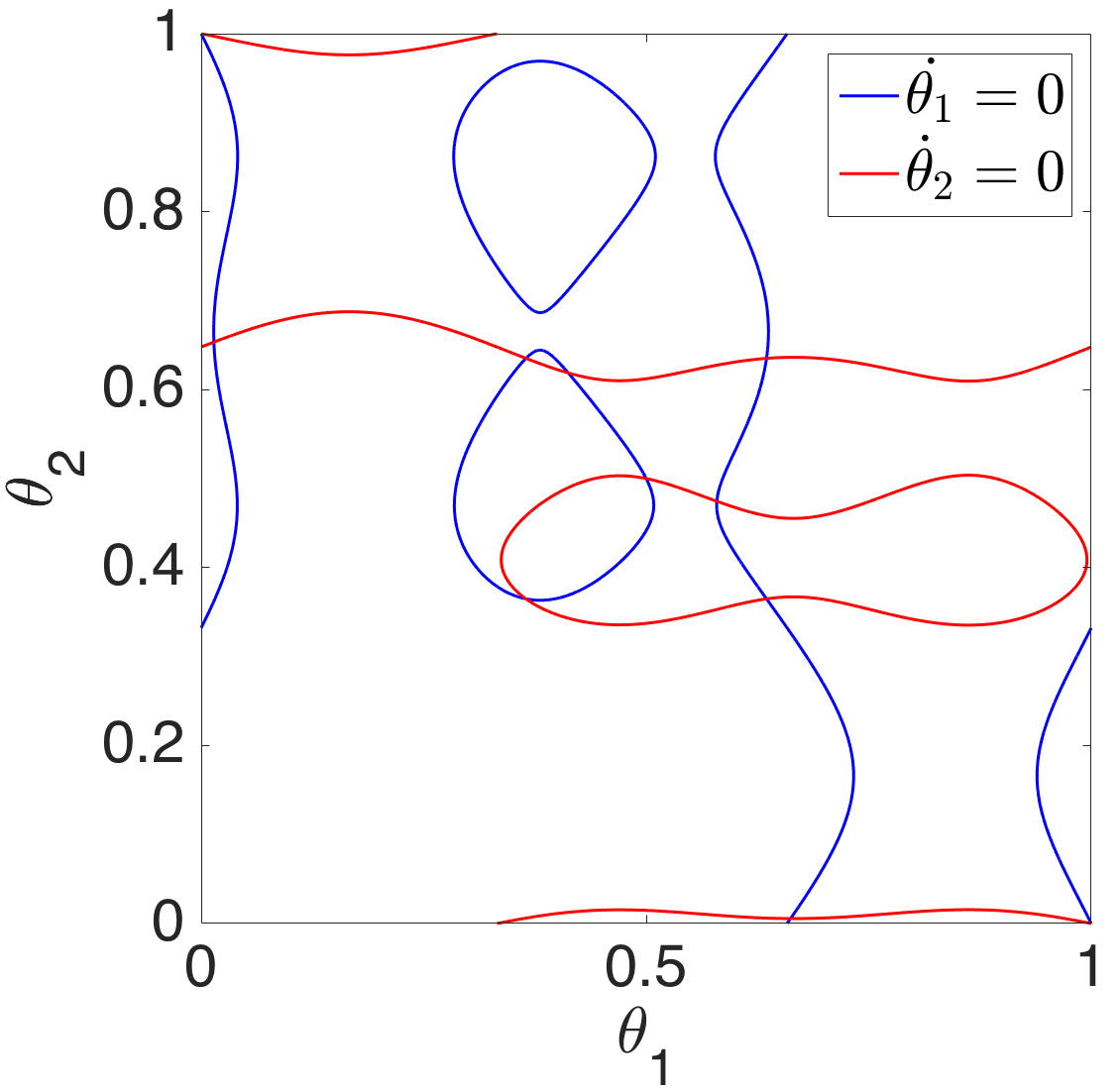}
        
        \includegraphics[scale=.12]{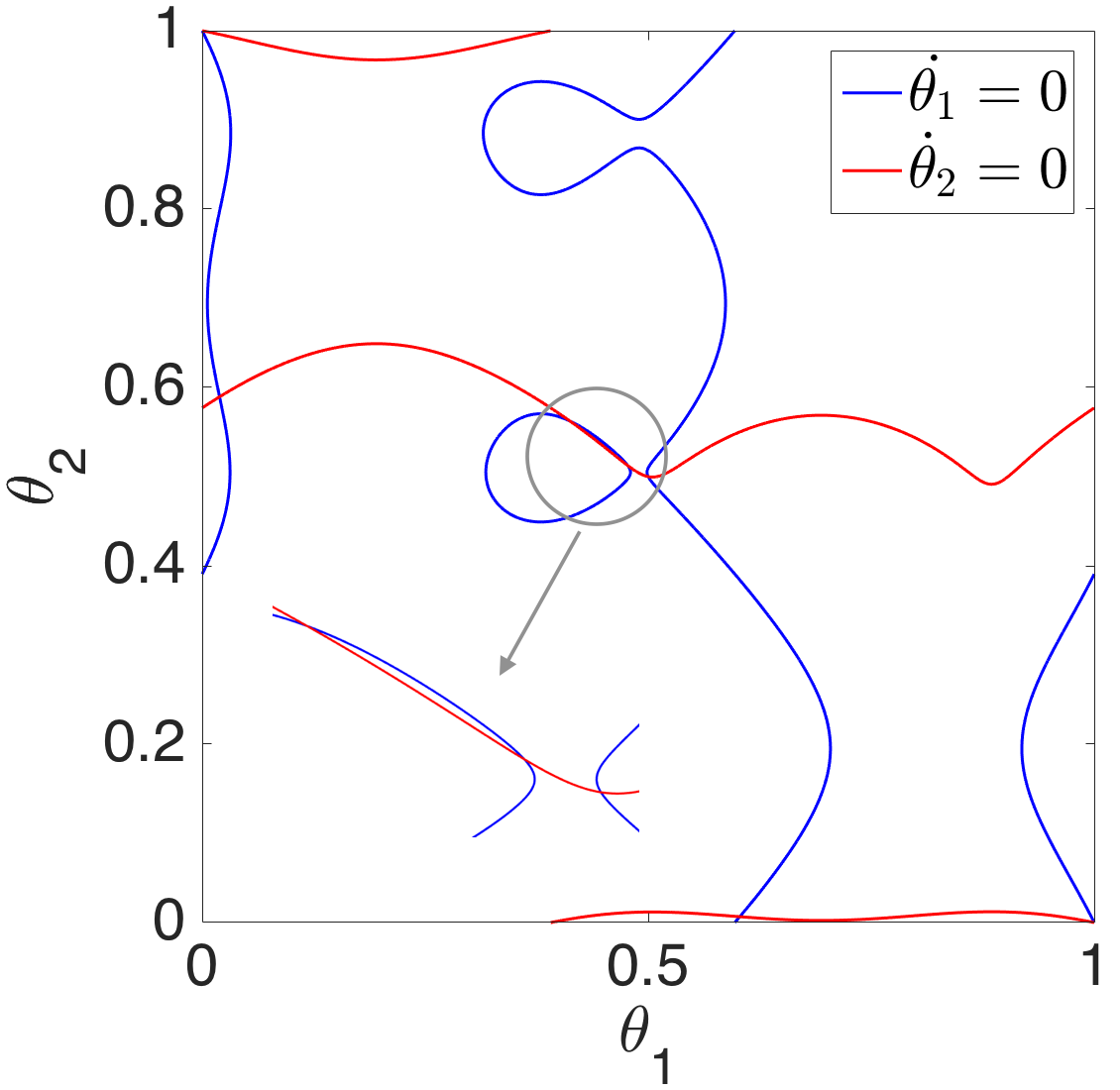}\quad
        \includegraphics[scale=.12]{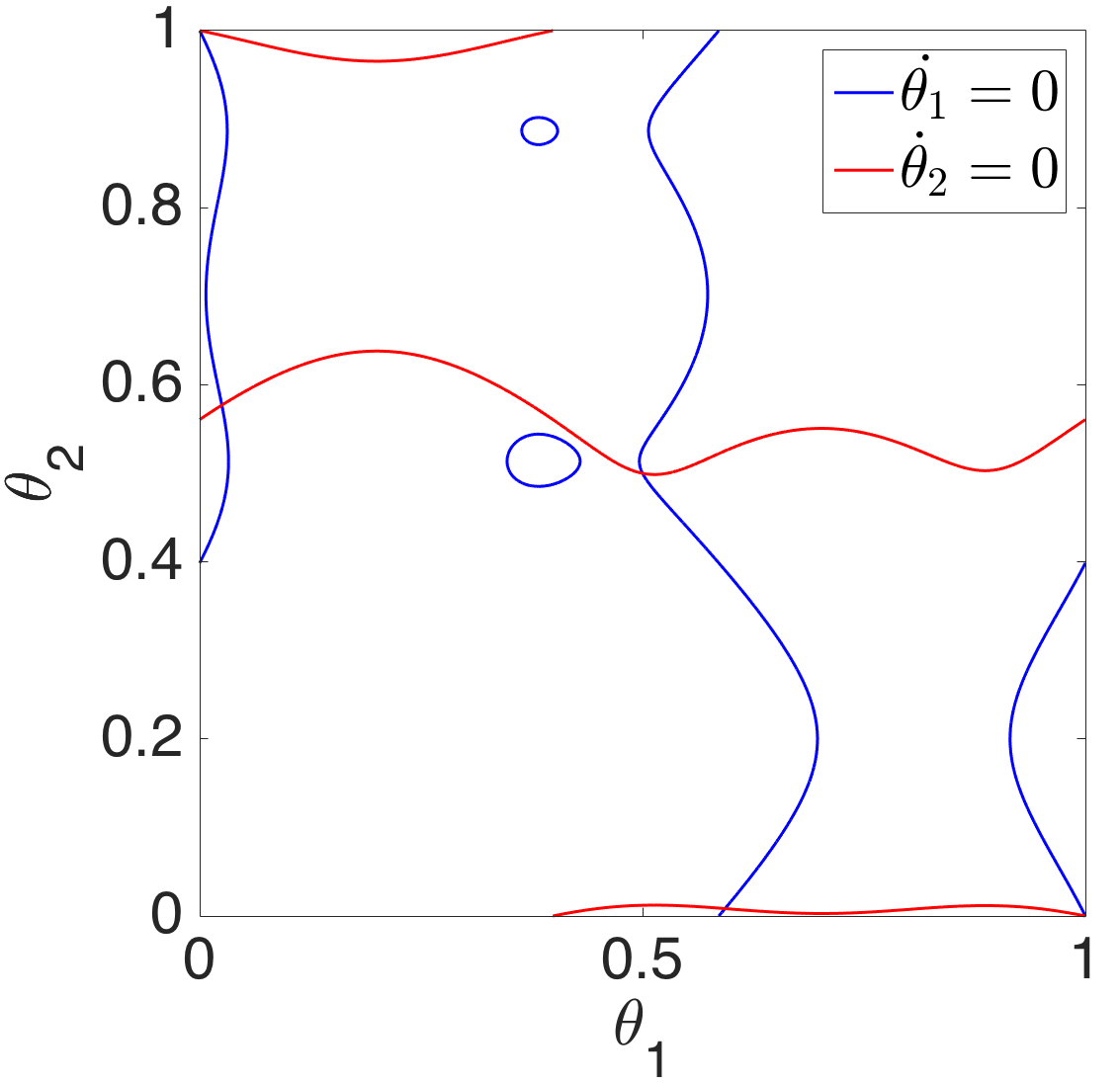}
\end{center}
\caption{ Nullclines of Equations~(\ref{eq.osc.simplified}) with $H=H_{app}$, $\a=1/3$ and 4 values of $\delta$
are shown. Note enlargements of nullcline intersections in left column.
First row: $\delta=0.01$ (left), $\delta=0.014$ (right); as $\delta$ increases,
a transcritical bifurcation at $\delta=\delta^{(0)}>0.01$ 
and a saddle node bifurcation at $\delta=\delta^{(1)}<0.014$ occur. 
Second row: 
$\delta=0.023$ (left), $\delta=0.025$ (right); as $\delta$ increases, a 
degenerate bifurcation at $\delta=\delta^{(2)}>0.023$ and a saddle node bifurcation at $\delta=\delta^{(3)}>0.025$ occur. 
The corresponding bifurcation diagram is shown in Figure~\ref{Bifurcation_alpha_13_12_matcont} (left)). 
Note that the green dot indicates a sink and the orange star indicates a saddle point. 
See text for further explanation. 
}
\label{Bifurcation_Happ_del}
\end{figure}
\begin{figure}[h!]
\begin{center}
\includegraphics[scale=.2]{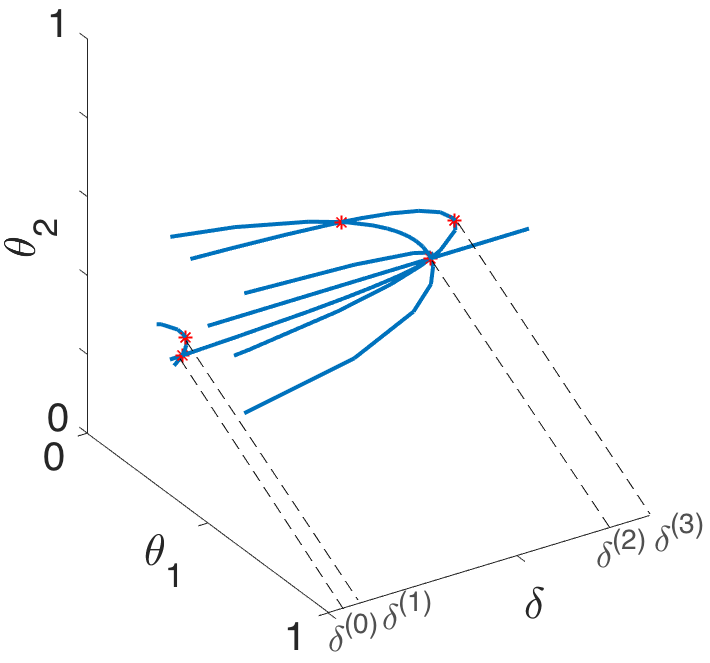}\qquad\qquad
\includegraphics[scale=.18]{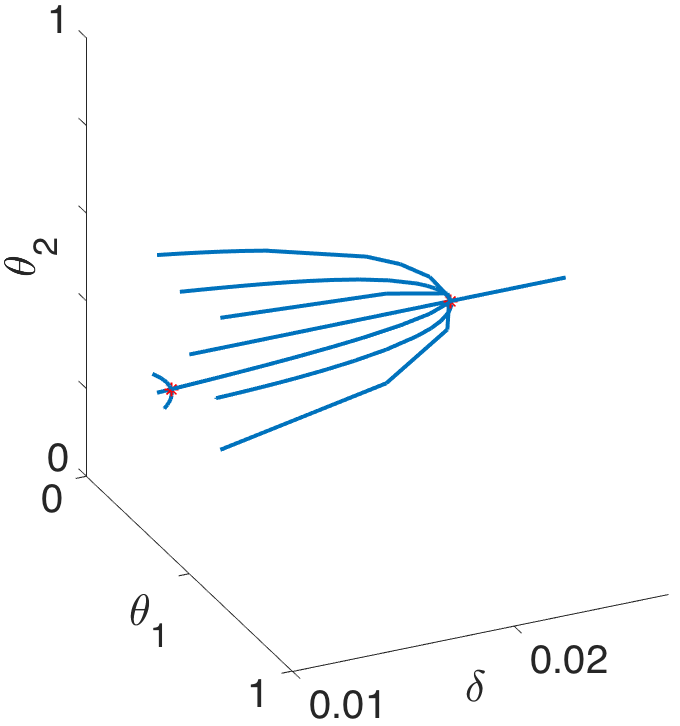}
\caption{Partial bifurcation diagrams 
of Equations~(\ref{eq.osc.simplified}) with $H=H_{app}$
for $\a=1/3$ (left) and $\a=1/2$ (right). 
In both cases the coupling strengths are balanced, but the $\a=1/3$
case is not rostrocaudally symmetric.  The source $(0,0)$ and two saddle points near 
$\theta_1=0$ and $\theta_2=0$ are omitted.}
\label{Bifurcation_alpha_13_12_matcont}
\end{center}
\end{figure}

\section{Gaits deduced from fruit fly  data fitting}
\label{DATA}

In this section, we use two sets of  coupling strengths which were estimated 
for slow, medium, and fast wild-type fruit flies in our reduced model on the 
torus and show the existence of stable tetrapod gaits at low frequency and 
stable tripod gaits at higher frequency. To vary frequency, we change $I_{ext}$
in the first set of estimates in Section \ref{data_vaibhav}, and we change 
$\delta$ in the second set of estimates in Section \ref{data_Enet}. Unlike
the gait transitions of Section~\ref{existence_and_stability}, the fitted data
predict different coupling strengths across the speed range. 
\ZAB{As in previous sections, we display both results from the bursting neuron model and the nullclines and phase planes
from the reduction to the $(\theta_1,\theta_2)$ plane.}

\subsection{Dataset 1}
\label{data_vaibhav}

We first exhibit a gait transition from tetrapod  to tripod 
as $I_{ext}$ increases.
\begin{table}[ht]
\begin{center}
 \begin{tabular}{|c |  c c c  c c c c c  |} 
 \hline
 & $\hat\omega$ 
 &$\c_1$ 
 & $\c_2$
 & $\c_3$
 & $\c_4$ 
 & $\c_5$ 
 & $\c_6$
 & $\c_7$
 \\ 
  [0.5ex] 
 \hline\hline
 slow 
 &9.92
 & 0.3614
 & 0.1478 
 &0.1780  
 &0.1837 
 &0.2509
 &0.3409
 &0.1495
  \\ 
 \hline
medium 
&12.48
&  0.2225 
&  0.6255
& 0.4715
& 0.1436 
&0.3895
&0.7921
&0.2964  
\\ 
 \hline
 fast 
 &15.52
 & 0.0580
 & 0.8608 
 &0.6726  
 &0.0470 
 &0.4294
 &1.1498
 &0.8500  
 \\ 
 \hline
 \end{tabular}
\caption{Values of estimated frequency and coupling strengths for slow, medium, and fast wild-type fruit flies.}
\label{vaibhav_data_table}
\end{center}
\end{table}
Table~\ref{vaibhav_data_table} shows the coupling strengths $\c_i$ which 
were estimated for slow (represented by coupled frequency $\hat\omega = 9.92$),
medium ($\hat\omega = 12.48$), and fast ($\hat\omega = 15.52$) wild-type
fruit flies. 
These fits were obtained after linearizing Equations~\eqref{eq.osc1} and adding i.i.d.  zero mean Gaussian noise to each equation. The touchdown times of every leg are treated as measurements of the phase of its associated oscillator in Equations~\eqref{eq.osc1}, additionally corrupted by a zero mean Gaussian measurement noise. 
To incorporate the circular nature of phase measurements, the initial condition distribution for Equations~\eqref{eq.osc1} is modeled by a mixture Gaussian distribution. 
For each sequence of leg touchdowns, a Gaussian sum filter~\cite{alspach1972nonlinear} is used to compute the distribution and the log-likelihood of leg touchdown times. The aggregate log-likelihood for pooled sequences of leg touchdowns for different flies is maximized to compute the maximum likelihood estimates (MLEs) of coupling strengths, phase differences, and 
variance of the i.i.d. measurement noises.

We choose 3 different values of $I_{ext}$:  $I_{ext}= 35.95$ for slow
 (represented by  uncoupled frequency $\omega=  8.76$), $I_{ext}= 36.85$ for 
 medium ($\omega= 12.64$), and $I_{ext}= 37.65$ for fast ($\omega=14.85$) speeds.
 Note that in general $\hat\omega<\omega$, because we assume that all the 
 couplings are inhibitory, $\c_i H<0$, although the coupled frequency corresponding
 to the slow and fast speed are not less than the uncoupled frequency in our simulations below.
Also note that the medium and fast speed coupling parameters
(Table~\ref{vaibhav_data_table}, second and third rows) are far from balanced. 

Figure~\ref{vaibhav_figure_gait} shows solutions of the 24 ODEs 
for the following initial conditions:
\be{initial_BN_Vaibhav}
v_1=-40,\;
v_2=10,\;
v_3=-10,\;
v_4=30,\;
v_5=15,\;
v_6=-30,\;
\ee
and for $i=1,\cdots,6$, the $m_i$'s, $\w_i$'s, and $s_i$'s take their steady 
state values as in Equation~(\ref{IC:numerical:tetra:tri:m,w,s:delta}). In
Figure~\ref{vaibhav_figure_gait} (left), $I_{ext} = 35.95$ and the coupling
strengths $\c_i$ are as in Table~\ref{vaibhav_data_table}, first row. 
In Figure~\ref{vaibhav_figure_gait} (middle), $I_{ext} = 36.85$ and the
coupling strengths $\c_i$ are as in Table~\ref{vaibhav_data_table}, second row. 
In Figure~\ref{vaibhav_figure_gait} (right), $I_{ext} = 37.65$ and the
coupling strengths $\c_i$ are as in Table~\ref{vaibhav_data_table}, third row. 
As we expect, these respectively depict  tetrapod, transition, and tripod gaits.  
We computed the solutions up to time $t=5000$ ms but only show the  time
windows $[4800, 5000],$ after transients have died out.  
\begin{figure}[h!]
\begin{center}
\includegraphics[scale=.2]{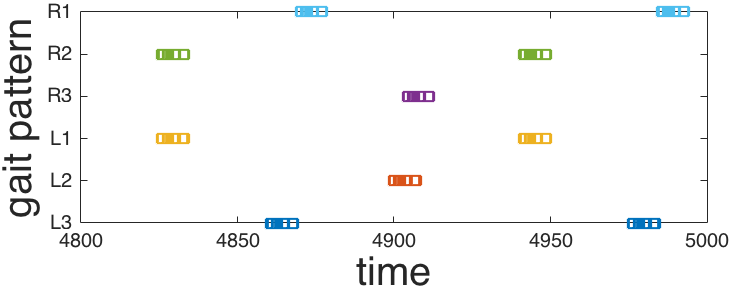}
\includegraphics[scale=.2]{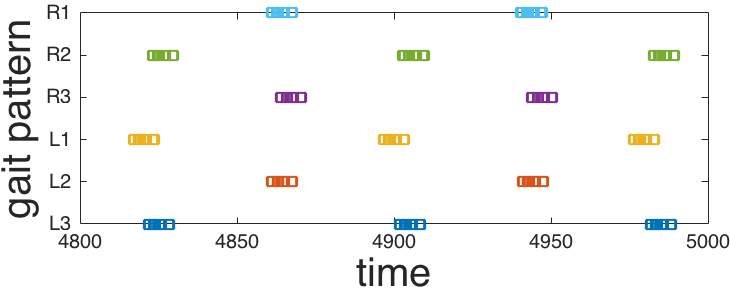}
\includegraphics[scale=.2]{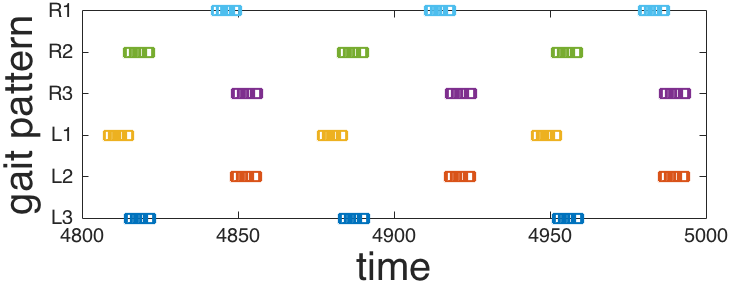}
\caption{
(Left to right) A solution of 24 ODEs  for 
$I_{ext} = 35.95$ and $\c_i$'s as in the first row of Table~\ref{vaibhav_data_table};  
$I_{ext} = 36.85$ and $\c_i$'s as in the second row of Table~\ref{vaibhav_data_table}; and for 
$I_{ext} = 37.65$  and $\c_i$'s as in the third row of Table~\ref{vaibhav_data_table}.}
 \label{vaibhav_figure_gait}
\end{center}
\end{figure}

Figure~\ref{vaibhav_figure_nullcline_phase_plane} shows the nullclines (first row) 
and the corresponding phase planes (second row) of 
Equation~(\ref{torus:equation}) for the three different values of $I_{ext}$.
 As Figure~\ref{vaibhav_figure_nullcline_phase_plane} 
 (left) depicts, when the speed parameter is small, there exist 6 fixed points: 2 sinks
which correspond to the forward and backward tetrapod gaits, a source,  and 3 saddle points. 
 As Figure~\ref{vaibhav_figure_nullcline_phase_plane} 
  (middle) depicts, when the speed parameter  increases, there exist 4 fixed points:
 a sink which corresponds to the transition  gait, a source,  and 2 saddle points. 
As Figure~\ref{vaibhav_figure_nullcline_phase_plane} 
(right) depicts, when the speed parameter is large, there exist only 2 fixed points:
a sink  corresponding to the tripod gait and a saddle point. 
\begin{figure}[h!]
\begin{center}
\includegraphics[scale=.1]{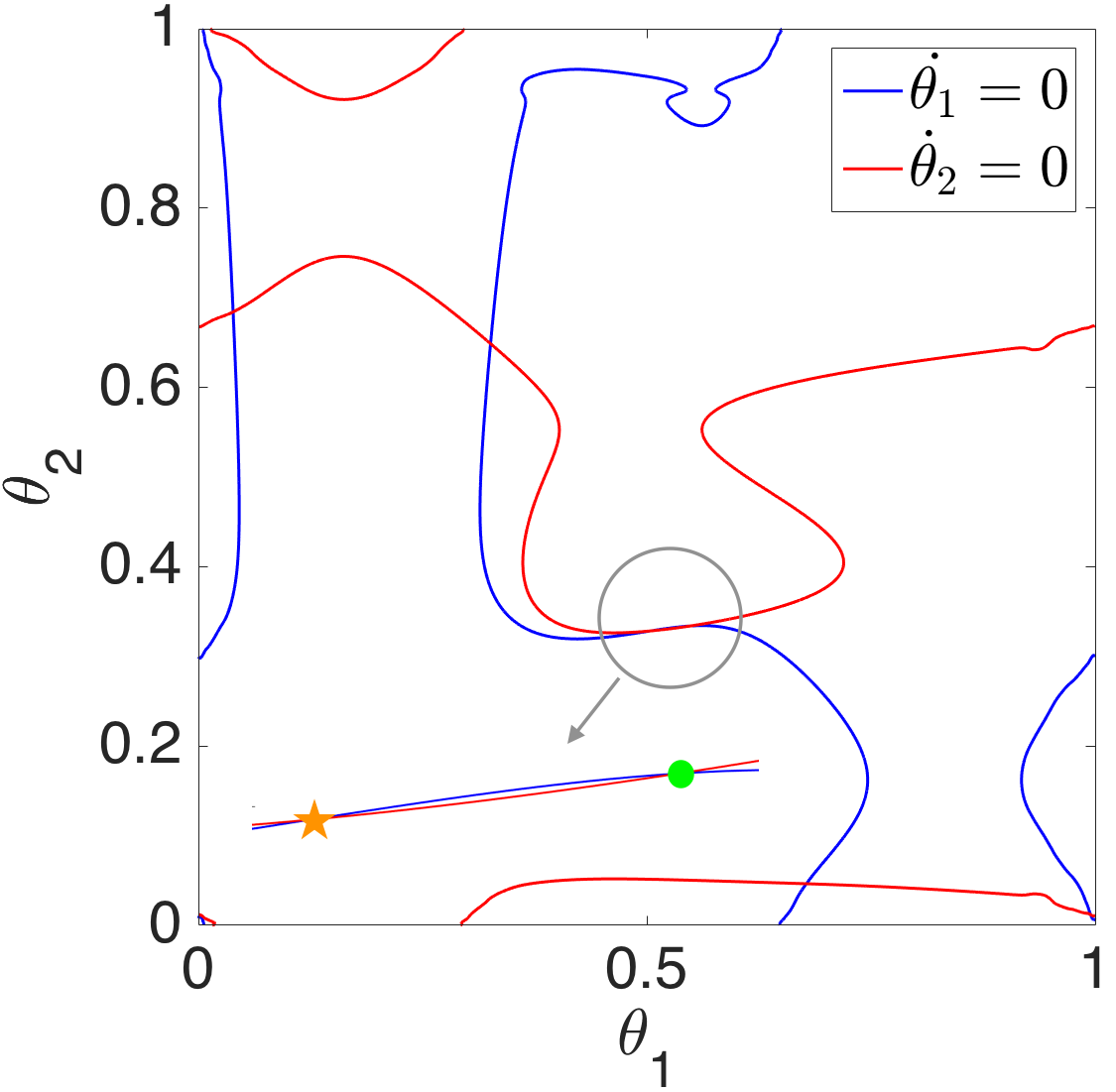}\quad
\includegraphics[scale=.1]{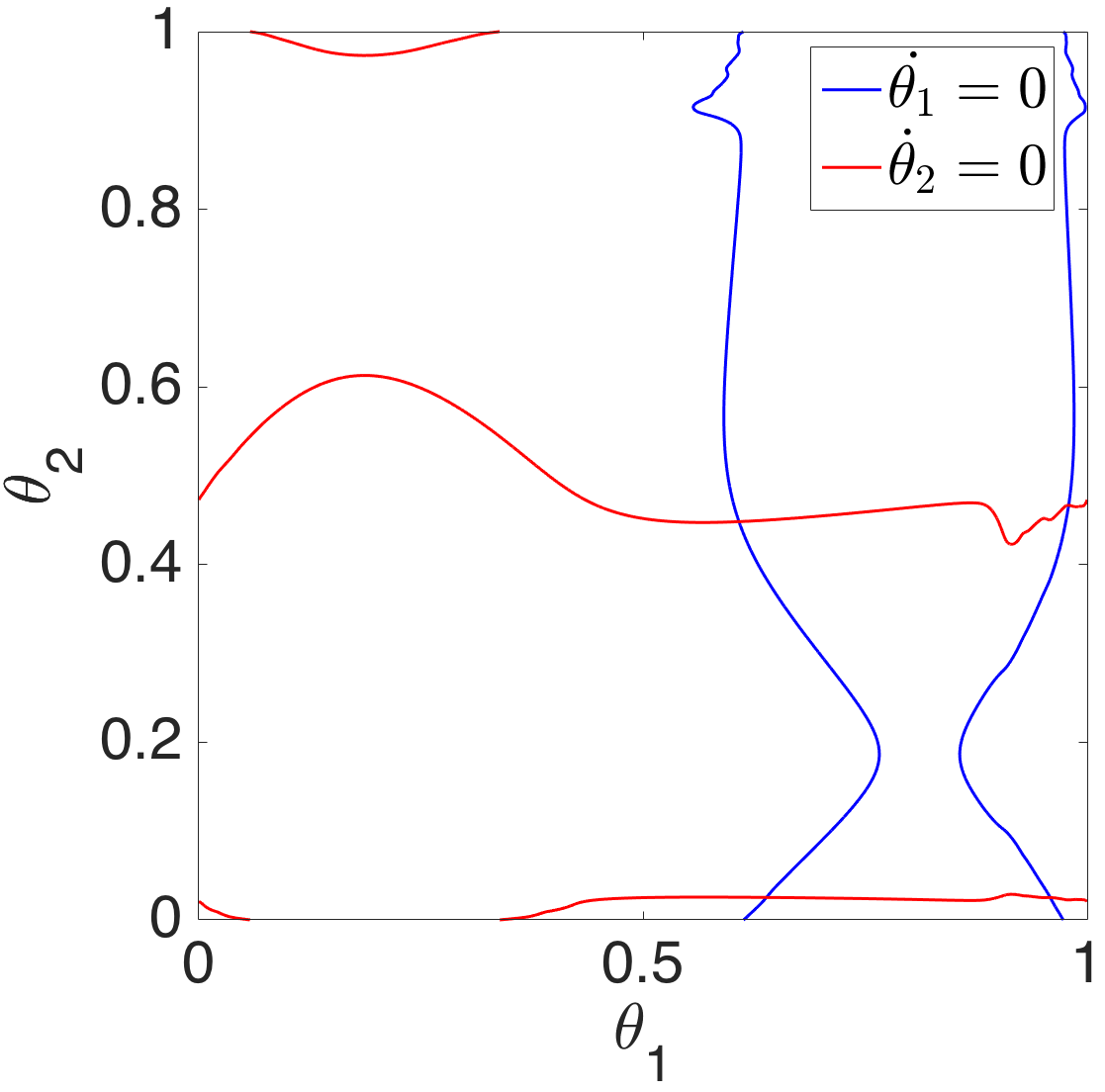}\quad
\includegraphics[scale=.1]{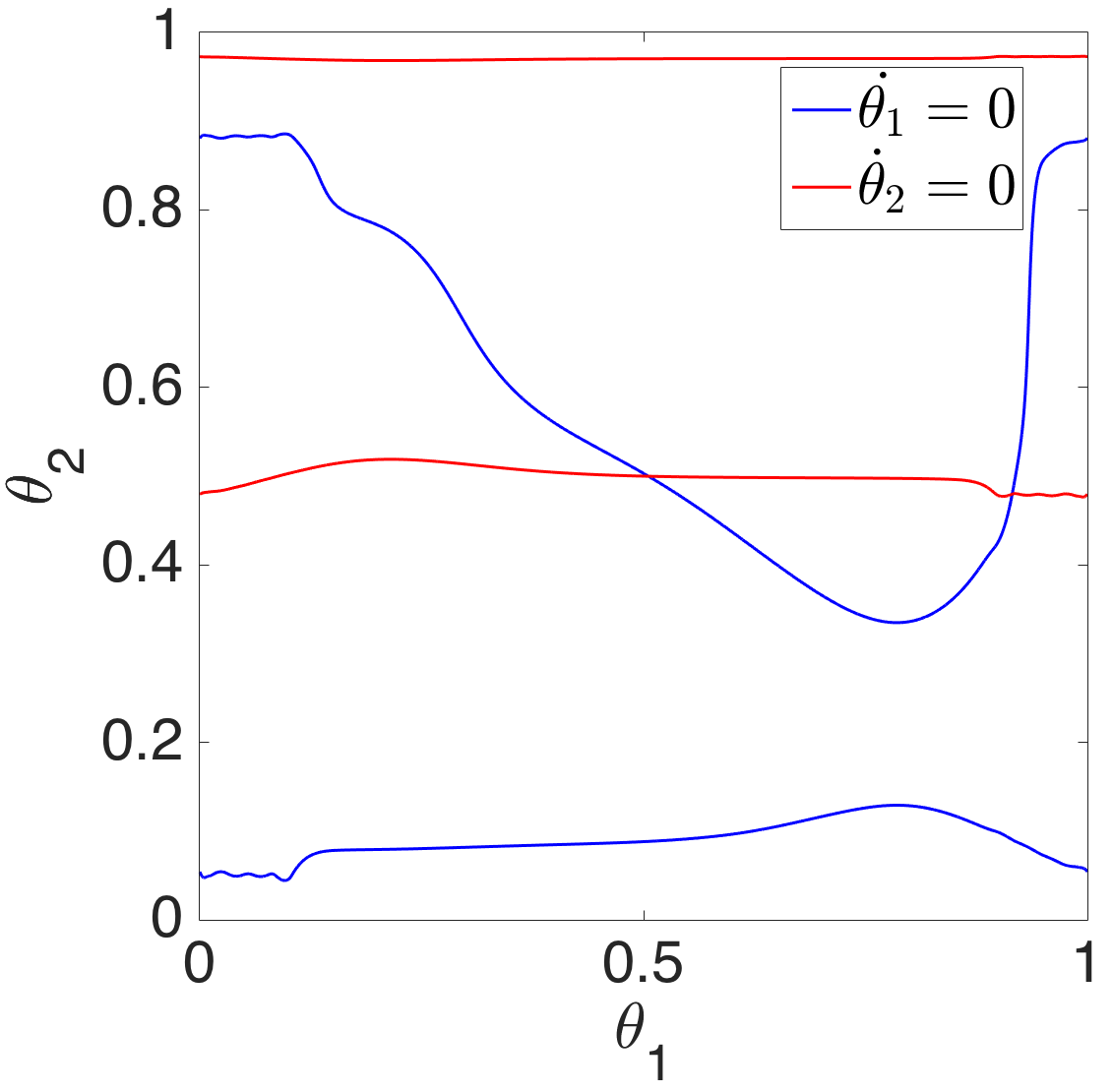}

\includegraphics[scale=.1]{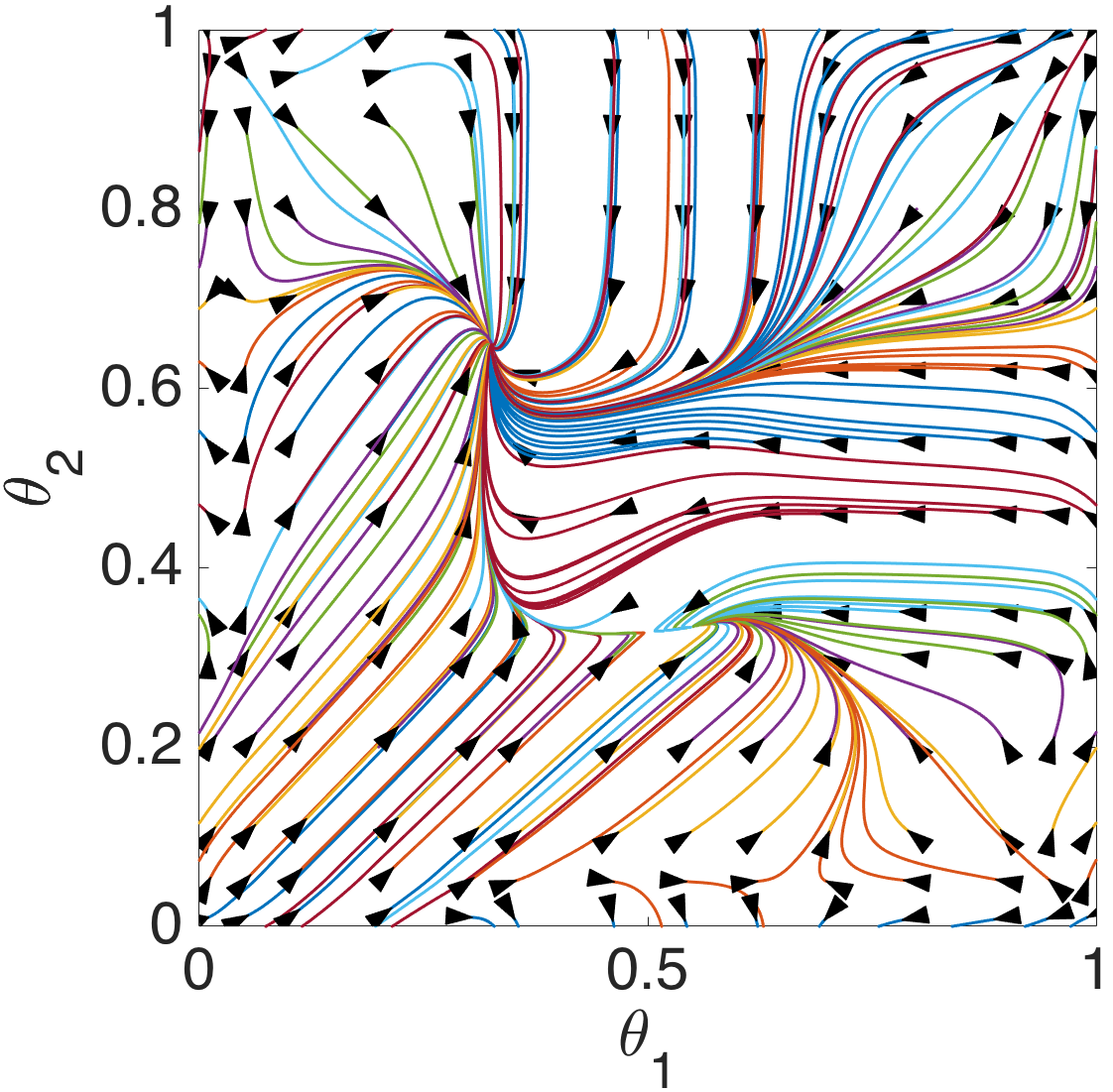}\quad
\includegraphics[scale=.1]{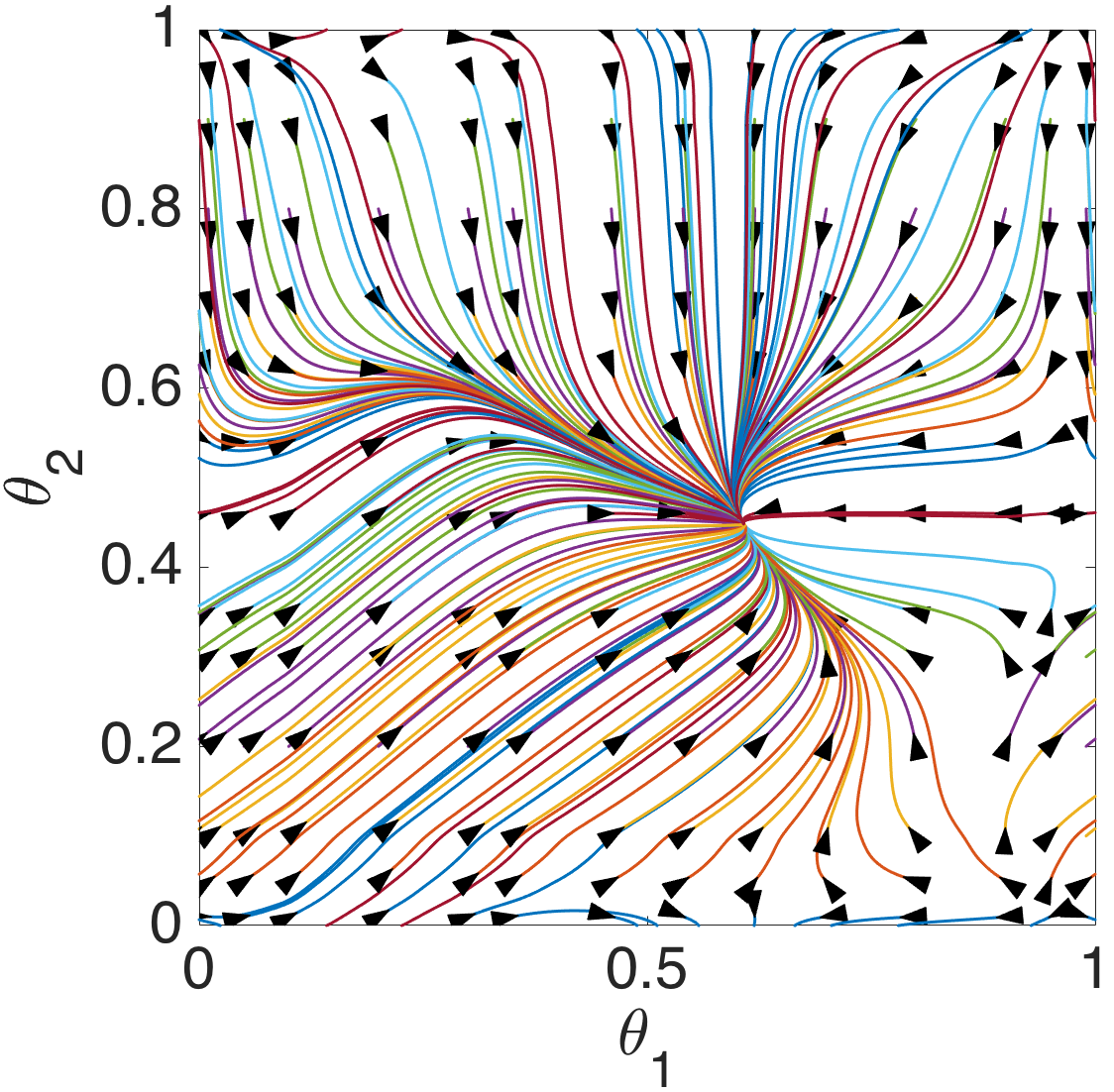}\quad
\includegraphics[scale=.1]{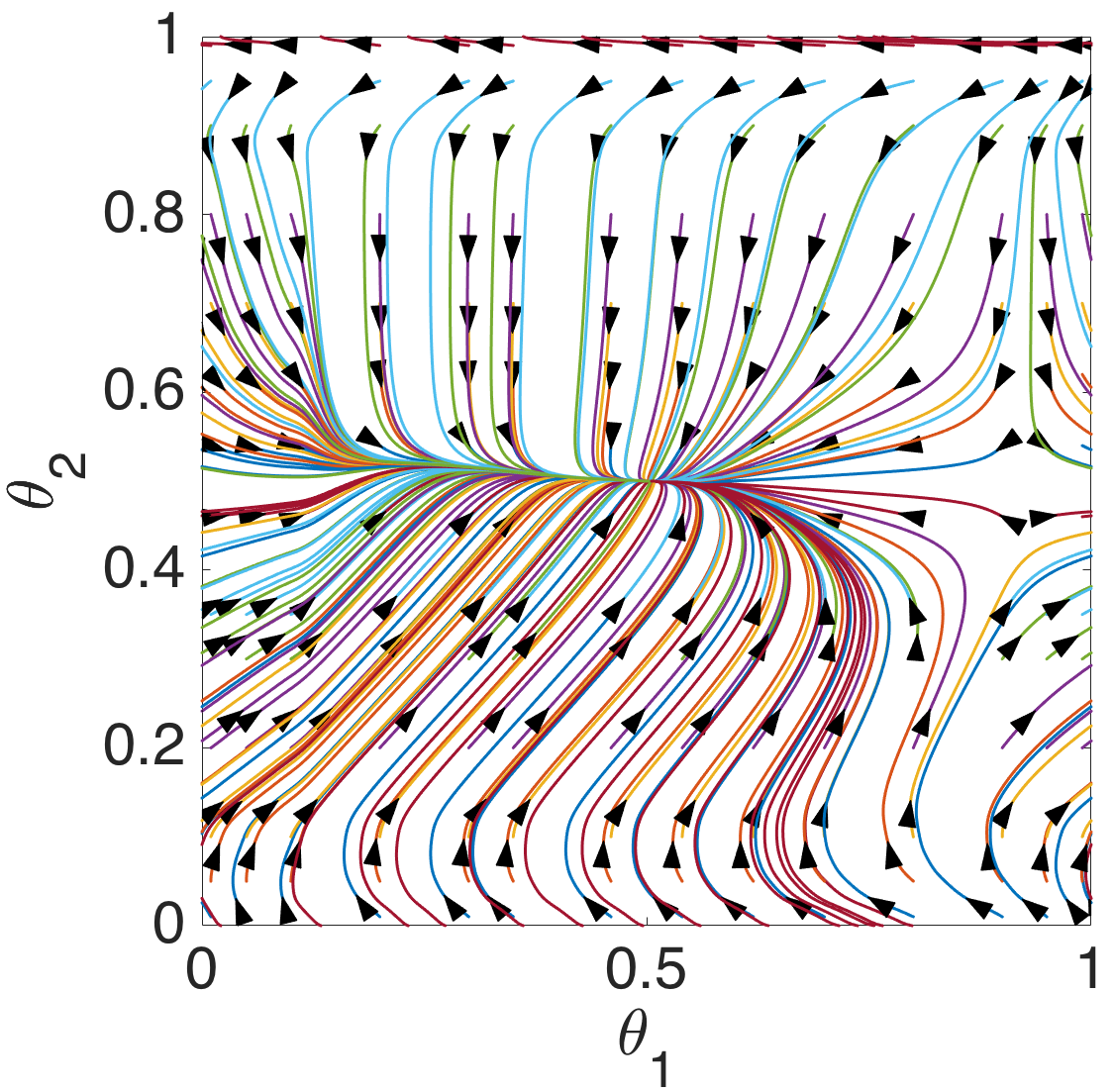}
\caption{
(First row: left to right) Nullclines  of Equations~(\ref{torus:equation}) for  
$I_{ext} = 35.95$ and $\c_i$'s  as in the first row of Table~\ref{vaibhav_data_table};  
$I_{ext} = 36.85$ and $\c_i$'s  as in the second row of Table~\ref{vaibhav_data_table}; and for 
$I_{ext} = 37.65$  and $\c_i$'s  as in the third row of Table~\ref{vaibhav_data_table}.
(Second row: left to right) Corresponding phase planes. 
Note that the green dot indicates a sink and the orange star indicates a saddle point. 
See text for further explanation. }
\label{vaibhav_figure_nullcline_phase_plane}
\end{center}
\end{figure}

\subsection{Dataset 2}
\label{data_Enet}

In this section, we show a gait transition from tetrapod  to tripod, as $\delta$ increases.
\begin{table}[ht]
\begin{center}
 \begin{tabular}{|c |  c c c  c c c c c  |} 
 \hline
 & $\hat\omega$ 
 &$\c_1$ 
 & $\c_2$
 & $\c_3$
 & $\c_4$ 
 & $\c_5$ 
 & $\c_6$
 & $\c_7$
 \\ 
  [0.5ex] 
 \hline\hline
medium 
&12.23
&0.2635 
&1.2860
& 2.9480
& 1.3185 
&1.3885 
&2.5025
&1.2265
  \\ 
 \hline
 fast 
 &15.65
 &2.9145
 & 2.5610
 & 2.6160 
 &2.9135  
 &5.1800 
 &5.4770
 & 2.6165 
  \\ 
 \hline
 \end{tabular}
\caption{Values of estimated frequency and coupling strengths for  medium, and fast free-walking wild-type fruit flies.}
\label{Enet_data_table}
\end{center}
\end{table}
Table~\ref{Enet_data_table} shows the coupling strengths $\c_i$ which were
estimated for medium (represented by coupled frequency $\hat\omega = 12.23$)
and fast ($\hat\omega = 15.65$) wild-type fruit flies. 
These fits are obtained using linearized ODEs similar to~Section~\ref{data_vaibhav}. However, to obtain these fits, touchdown sequences for different flies are concatenated to obtain a single large sequence and a Kalman filter is used to compute the distribution and the log-likelihood of leg touchdown times. The MLEs for coupling strengths are obtained by maximizing the aggregate likelihood for the concatenated touchdown sequence. 

We choose 2 different
values of $\delta$,  $\delta=0.014$ for medium (represented by  uncoupled
frequency $\omega= 3.57$), and $\delta=0.03$ for fast ($\omega=6.91$)
speeds \cite{Couzin_notes_16}. As noted earlier in Section~\ref{range_frequency},
as $\delta$ varies in the bursting neuron model, the range of frequency
does not match the range of frequency estimated from data. In spite of
this, we show that the estimated coupling strengths in the low speed range
(small $\delta$) give a tetrapod gait and in the high speed range 
(large $\delta$) give a tripod gait. 

Figure~\ref{Einet_figure_gait} shows solutions of the 24  ODEs  for  the following
initial conditions. 
\be{initial_BN_Einet}
v_1=-10,\;
v_2=-40,\;
v_3=-30,\;
v_4=-40,\;
v_5=5,\;
v_6=20,\;
\ee
and for $i=1,\cdots,6$, $m_i$'s, $\w_i$'s, and $s_i$'s take their steady
state values as in Equation~(\ref{IC:numerical:tetra:tri:m,w,s:delta}). 
In Figure~\ref{Einet_figure_gait} (left), $\delta=0.014$ and the coupling
strengths $\c_i$ are as in Table~\ref{Enet_data_table}, first row. 
In Figure~\ref{Einet_figure_gait} (right), $\delta=0.03$ and the coupling 
strengths $\c_i$ are as in Table~\ref{Enet_data_table}, second row. 
As we expect, Figure~\ref{Einet_figure_gait} (left to right) depicts transition
(still very close to a tetrapod gait) and tripod gaits, respectively.  
We computed the solutions up to time $t=5000$ ms but only show the
time window $[4000, 5000],$ after transients have died out.  
\begin{figure}[h!]
\begin{center}
\includegraphics[scale=.2]{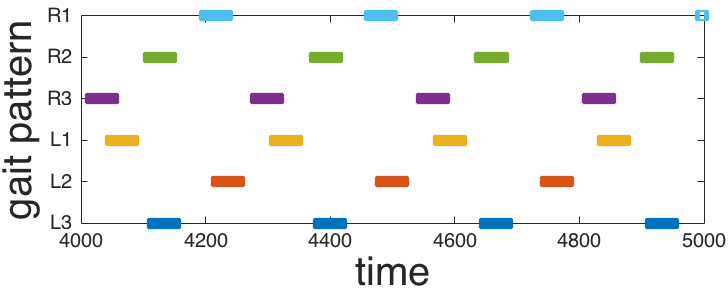}\quad
\includegraphics[scale=.2]{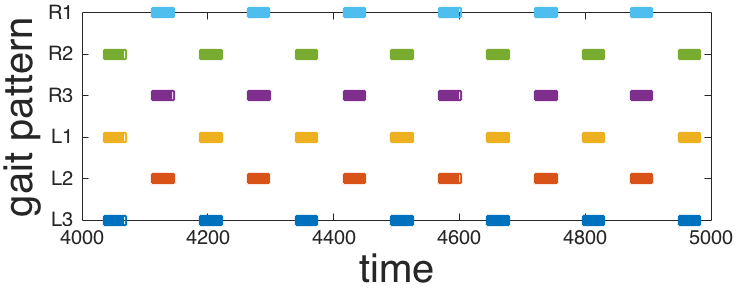}
\caption{(Left to right) A solution of 24 ODEs  for
$\delta=0.014$ and $\c_i$'s  as in the first row of Table~\ref{Enet_data_table} and for
$\delta= 0.03$ and $\c_i$'s  as in the second row of Table~\ref{Enet_data_table}. 
Note the approximate tetrapod and almost perfect tripod gaits.}
\label{Einet_figure_gait}
\end{center}
\end{figure}

Figures~\ref{Einet_figure_nullcline_phase_plane} (left to right) show the
nullclines and corresponding phase planes of Equations~(\ref{torus:equation})
for the two different values of $\delta$. As 
Figure~\ref{Einet_figure_nullcline_phase_plane} (left) depicts, when the
speed parameter is relatively small, there exist 4 fixed points: a sink which
corresponds to a transition gait, a source and 2 saddle points. 
Figure~\ref{Einet_figure_nullcline_phase_plane} (right) shows that these
fixed points persist as the speed parameter increases, but the sink now
corresponds to a tripod gait. No bifurcation of fixed points occurs, although
the topology of the nullclines changes.
\begin{figure}[h!]
\begin{center}
\includegraphics[scale=.1]{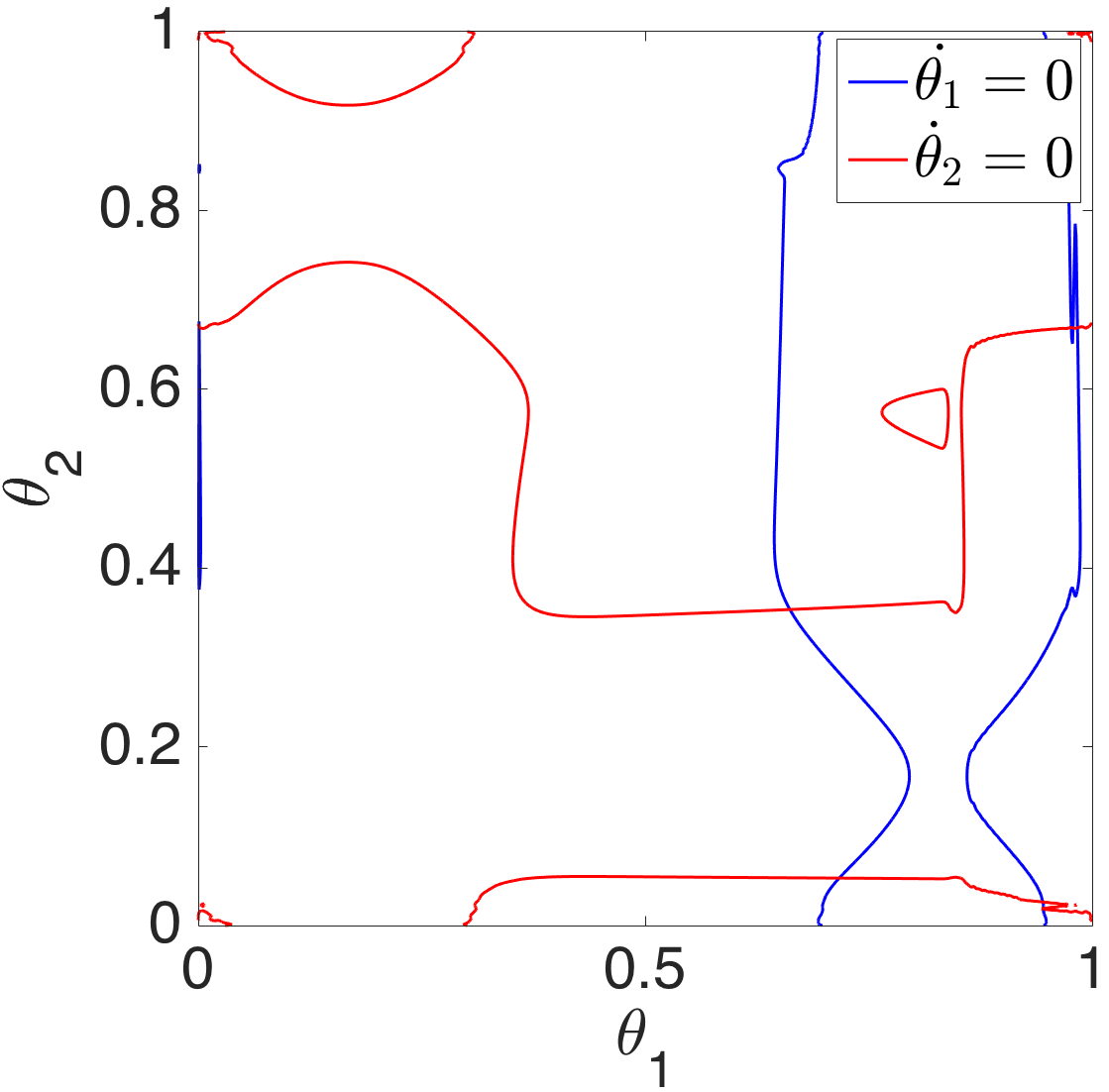}
\includegraphics[scale=.1]{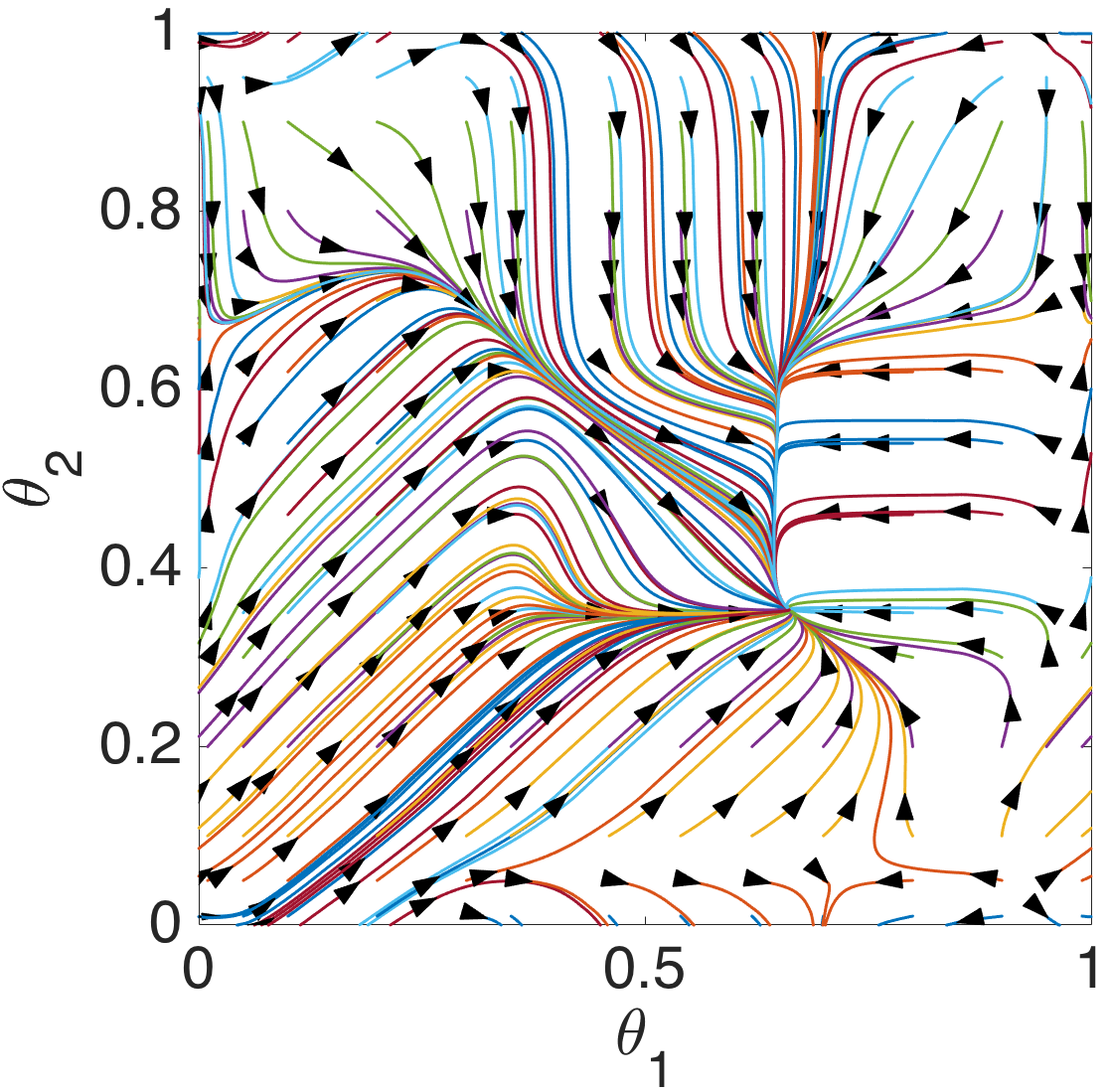}
\includegraphics[scale=.1]{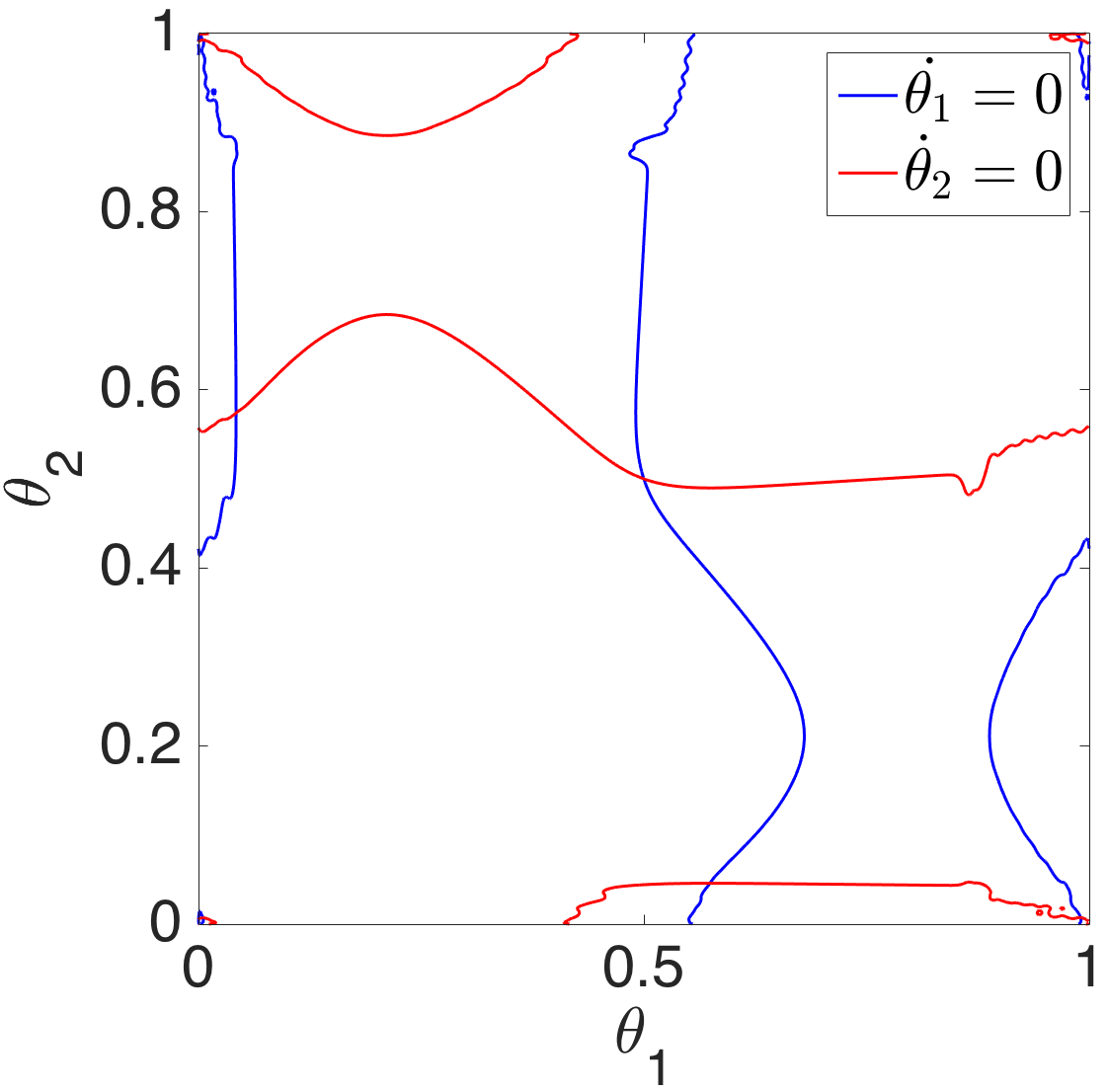}
\includegraphics[scale=.1]{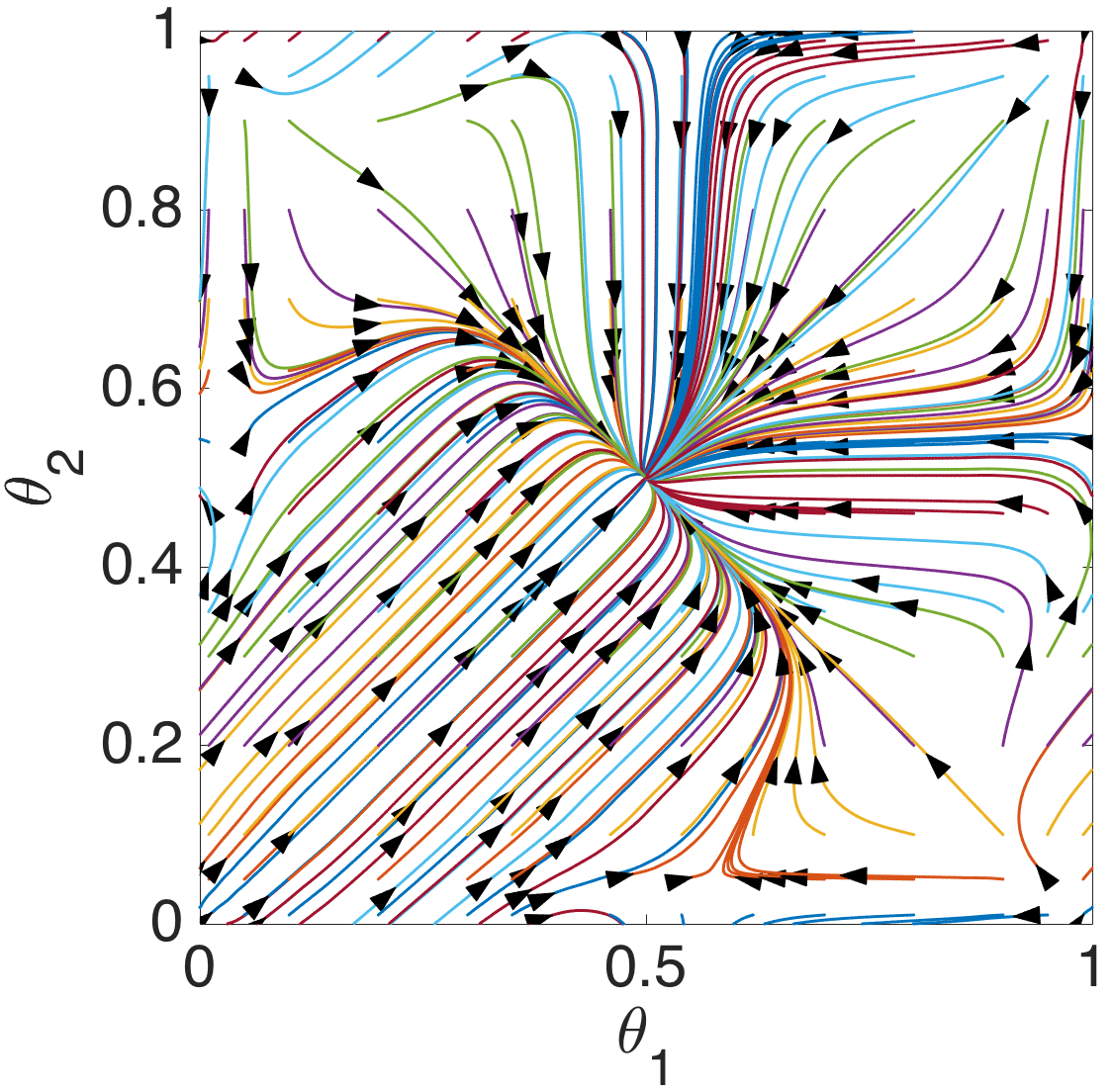}
\caption{(Left to right) Nullclines  and phase planes of Equations~(\ref{torus:equation}) for  
$\delta=0.014$ and $\c_i$'s  as in the first row of Table~\ref{Enet_data_table} (left pair); 
$\delta= 0.03$ and $\c_i$'s  as in the second row of Table~\ref{Enet_data_table} (right pair).
Note the close approximation to reflection symmetry at right due to almost perfect balance,
$\c_1\approx\c_2\approx \c_3$ and $\a = 0.5269 \approx 1/2$.}
\label{Einet_figure_nullcline_phase_plane}
\end{center}
\end{figure}

Note that the estimated coupling strengths in only the second row of
Table~\ref{Enet_data_table} approximately satisfy the balance equation
(\ref{balance2}) and also $\c_1 \approx \c_2 \approx \c_3$. Hence, as
our analysis predicts, the system has 4 fixed points: a sink corresponding
to a tripod gait, a source and 2 saddle points. Although the other estimated
coupling strengths do not satisfy the balance equation (\ref{balance2}),
we still observe the existence of one sink which corresponds to a tetrapod
gait (slow speed), a transition gait (medium speed), or a tripod gait (high
speed). As discussed earlier, the balance equation is a necessary
condition for the existence of tetrapod and tripod gaits but it is not sufficient.
The estimated coupling strengths in Tables~\ref{vaibhav_data_table}
and \ref{Enet_data_table} (first row), provide counterexamples. 

\bremark
The coupling strengths $\c_i$ in Tables~\ref{vaibhav_data_table} and 
\ref{Enet_data_table} are at most $O(1)$, the largest being 
$\approx 5.48$ in Table \ref{Enet_data_table}. From 
Figures~\ref{H_PRC_versus_delta} and \ref{H_PRC_versus_Iext}
(second rows), the maxima of $\abs{H}$ are $0.19$
(as $I_{ext}$ varies) and $0.4$ (as $\delta$ varies). Thus $\abs{c_i H}$
takes maximum values of $0.19 \times 1.15 \approx 0.219$ in 
Table~\ref{vaibhav_data_table} and $0.4 \times 5.48 \approx 2.19$
in Table~\ref{Enet_data_table}. 
\ZA{For both sets of data, we observe transition from a stable (forward) 
tetrapod gait to a stable tripod gait as the speed parameter
$\x$ increases. However,  the coupled frequency $\hat\omega$ should be 
less than the uncoupled frequency $\omega$, 
which does not hold  in some cases.}
\eremark 

\section{Discussion}
\label{conclusion}
In this paper we developed an ion-channel bursting-neuron 
model for an insect central pattern generator based
on that of \cite{SIAM2}. We used this to investigate tetrapod
to tripod gait transitions, at first numerically for a system of 24 ODEs
describing cell voltages, ionic gates and synapses, and then for a 
reduced system of six coupled phase oscillators. This still presents 
a challenging problem, but by fixing contralateral phase differences,
we further reduced to three ipsilaterally-coupled oscillators
and thence to a set of ODEs defined on the 2-torus that describes
phase differences between front and middle and hind and middle
legs. This allowed us to study different sets of inter-leg coupling 
strengths as stepping frequency increases, and to find constraints
on them that yield systems whose phase spaces are amenable
to analysis.

\ZAB{Recent studies of different 3-cell ion-channel bursting CPG networks 
\cite{Wojcik-PLoSone14, Barrio-EuroPhysLet15, Lozano-SciRepts16} 
share some common features with the current paper. Without explicitly addressing insect locomotion, 
or using phase reduction theory, the authors numerically extract Poincar\'{e} maps 
defined on 2-dimensional tori which have multiple stable fixed points corresponding to orbits with specific 
phase differences. In \cite{Lozano-SciRepts16} they discuss transient control inputs that can move
solutions from one stable state to another. A more abstract study of coupled cell systems with an emphasis on 
heteroclinic cycles that lie in ``synchronous subspaces" appears in \cite{Aguiar-JNS11}.}

In addition to Propositions \ref{prop:tetra:stability}, \ref{omega_hat}, 
 \ref{special_coupling},  \ref{special_coupling_corollary} and Corollary \ref{cor:tetra}, 
 which characterize particular tetrapod and tripod solutions of the
phase and phase-difference equations, our main results in
Sections~\ref{existence_and_stability} and \ref{DATA}
illustrate the existence of these solutions and their stability
types. Figures~\ref{NC_PP_balance_Tr_Det} and
\ref{phase differences model}-\ref{phase differences model_near_one_del}
display nullclines and phase portraits for systems
with balanced coupling strengths, showing how a set of fixed points
arrayed around a square astride the main diagonal 
$\theta_1= \theta_2$ on the 2-torus collapses to a single
fixed point, corresponding to a stable tripod gait, as speed increases.
Figures~\ref{Bifurcation_Happ_del} and \ref{Bifurcation_alpha_13_12_matcont}
illustrate nullclines and bifurcation diagrams for a Fourier series
approximation of the coupling function. Finally, 
Figures~\ref{vaibhav_figure_gait}-\ref{Einet_figure_nullcline_phase_plane}
show gaits, nullclines and phase portraits for several cases
in which coupling strengths were fitted to data from free running animals.

While details vary depending upon the coupling strengths, the
results of Section~\ref{existence_and_stability} reveal a
robust phenomenon in which a group of fixed points that
include stable forward and backward tetrapod gaits converge
upon and stabilize a tripod gait.
This occurs even for coupling strengths that are far from balanced. 
 For the coupling strengths
derived from data in Section~\ref{DATA}
(Figures~\ref{vaibhav_figure_gait}-\ref{Einet_figure_nullcline_phase_plane}),
as stepping frequency increases and coupling strengths change
there is still a shift from an approximate forward tetrapod
to an approximate tripod gait, {in which the tetrapod gaits disappear in  saddle node bifurcations.}
In the final example 
(Figures~\ref{Einet_figure_gait} and \ref{Einet_figure_nullcline_phase_plane}
(right panels)) the tripod gait is almost ideal.

In Definition~\ref{defn1} we introduced 4 tetrapod gaits, two of which
feature a wave traveling from front to hind legs. Such backward waves 
are not normally seen in insects and we excluded them from the
gaits illustrated thus far. They do, however, appear as fixed points
in the region $(\theta_1, \theta_2) = (T/3 + \et, 2T/3 - \et)$ on the
torus, which as shown in Proposition~\ref{other_fixedpoints_HBN}, 
are stable for some values of coupling strengths. 
We note that this backward wave in leg touchdowns 
does not imply backward walking,  the study of which 
demands a more detailed model with motoneurons and muscles,
to characterize different legs and leg joint angle sequences, as in e.g.
\cite{TothKnopsDaun-G-JNPhys12}. 

For completeness, see Figure~\ref{backward_gait} for a
 backward tetrapod gait of the interconnected bursting neuron model,  
 when $\delta = 0.01$. The initial conditions are as follows:
 \be{IC:backward}
v_1(0)= -40, \;
v_2(0)=-40,  \;
v_3(0)=-30, \;
v_4(0)=10, \; 
v_5(0)=5, \;
v_6(0)=-20, 
  \ee
  and for $i=1,\ldots,6$, $m_i$, $\w_i$, and $s_i$ are as in Equation~(\ref{IC:numerical:tetra:tri:m,w,s:delta}). The coupling strengths $\c_i$ are as in Equation~(\ref{coupling:numerical:tetra:tri}). 
\begin{figure}[h!]
\begin{center}
\includegraphics[scale=.25]{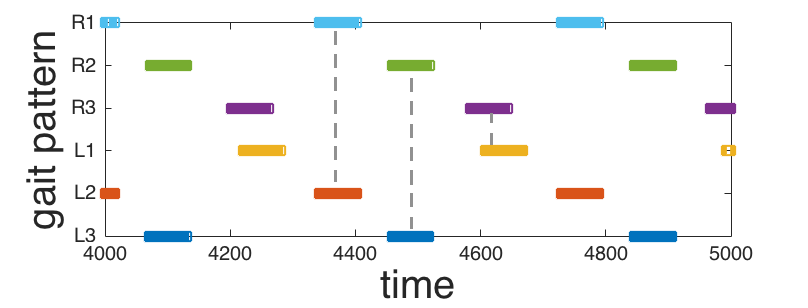}
\caption{Interconnected bursting neuron model: 
backward tetrapod gait for  $\delta =0.01$. 
}
\label{backward_gait}
\end{center}
\end{figure}

\ZA{Recall from Section \ref{application_BN} 
(Figures~\ref{phase differences model_near_one_del} and \ref{phase differences model_near_0_del}),
when $\a=0.95\approx1$, a stable 
backward tetrapod gait exists, but a stable forward tetrapod exists for $\a=0.032\ll1$. 
Since $\a=\frac{\c_4}{\c_4+\c_7}$, and $\c_5\approx\c_6$ if $\c_1\approx\c_2\approx \c_3$, 
this suggests that when couplings from front to hind legs are strong ($\c_4,\c_6\gg\c_7$), 
we expect to see backward tetrapod gaits, but when couplings from hind to front legs are strong 
($\c_5,\c_7\gg\c_4$), forward tetrapod gaits would be observed. 
Similarly, in \cite{Ermentrout_Kopell_SIAM}, a lamprey model suggested that the tail-to-head neural connections along the spinal cord would be stronger than those running from head to tail, despite the fact that the wave associated with swimming travels from head to tail. That prediction was later confirmed experimentally in \cite{SIGVARDT199237}. 
See Figure~\ref{vaibhav_figure_nullcline_phase_plane} (left) for examples of 
coexisting stable  backward and forward tetrapod gaits in a 
phase plane plot obtained from fitted fruit fly data. Backward tetrapod gaits have been 
observed in backward-walking flies, but have not been seen in forward-walking flies 
\cite[Supplementary Materials, Figure S1]{Bidaye-Science14}. 
}

{In the introduction we mentioned related work of Yeldesbay et. al.
\cite{YeldHolTothDaun_2016,YeldTothDaun_2017} in which a
non-bursting half center oscillator model for the CPG contained
in three ipsilateral segments is reduced to a set of
ipsilateral phase oscillators with unidirectional coupling running
from front to middle to hind and returning to front leg units. 
Tetrapod, tripod and transition gaits were also found in their work, although 
the cyclic architecture is strikingly different from our nearest neighbor coupling and
it involves excitatory and inhibitory proprioceptive feedback. 
It is therefore interesting to see that
similar gaits appear in both reduced models, although the bifurcations
exhibited in \cite{YeldHolTothDaun_2016,YeldTothDaun_2017}
appear quite different from those illustrated here in
Figures~(\ref{Bifurcation_Happ_del}) and (\ref{Bifurcation_alpha_13_12_matcont}).
Moreover, gait transitions occur in response to changes in feedback
 as well as to changes in stepping frequency.}

In summary, we have shown that multiple tetrapod gaits exist and can be stable, and described the transitions 
in which they approach tripod gaits as speed increases. 
In studying the phase reduced system on the 2-torus, we move from the special cases of Section \ref{existence_and_stability}, 
in which coupling strengths are balanced and other constraints apply, to the experimentally estimated data sets of
Section \ref{DATA} in which the detailed dynamics differ but tetrapod to tripod transitions still occur.

\section*{Acknowledgements}

This work was jointly supported by NSF-CRCNS grant DMS-1430077
and the National Institute of Neurological Disorders and Stroke of the
National Institutes of Health under Award U01-NS090514-01. The
content is solely the responsibility of the authors and does not 
necessarily represent the official views of the National Institutes of Health.
We thank Michael Schwemmer for sharing his Matlab code for adjoint iPRC computations,
Cesar Mendes and Richard Mann for providing fruit
fly locomotion data and Einat Couzin for sharing her values
of coupling strengths fitted to that data. \ZA{We also thank the anonymous  reviewers for 
their insightful comments and suggestions.}

\section{\ZAA{Appendix}}\label{Appendix}

Here, we  review the theory of weakly coupled oscillators which can 
reduce the dynamics of each neuron to a single first order
ODE describing the phase of the neuron. 
In Section~\ref{phase_reduction}, we  applied this method to the coupled bursting neuron models 
to reduce the 24 ODEs to 6 phase oscillator equations. 

Let the ODE
\be{single:neuron:appendix}
\dot X = f(X), \qquad X\in\r^n,
\ee
describe the dynamics of a single neuron. In our model, $X=(v,m,\w,s)^T$
and $f(X)$ is as the right hand side of Equations~(\ref{BN}).  Assume that
Equation~(\ref{single:neuron:appendix}) has an attracting hyperbolic limit cycle
{$\Gamma = \Gamma(t)$}, with period $T$ and frequency $\o=2\pi/T$. 

 The phase of a neuron is the time that has elapsed as its state
 moves around $\Gamma$, starting from an arbitrary reference point
 in the cycle. We define the phase of the periodically firing neuron at
 time $t$ to be
 \be{phase:def:appendix}
 \phi(t) = \omega t + \bar\phi \quad \mbox{mod T}.
\ee
The constant $\bar\phi$, which is called the relative phase, is determined
by the state of the neuron on $\Gamma$ at time $t=0$.
Note that by the definition of phase, Equation~(\ref{single:neuron:appendix}) 
for a single neuron is reduced to the scalar equation
\be{phase:equ:single:neuron:appendix}
\frac{d\phi}{dt} = \omega, 
\ee
while the dynamics of its relative phase are described by
\be{relative:phase:equ:single:neuron:appendix}
\frac{d\bar\phi}{dt} = 0.  
\ee 
Now consider the system of weakly coupled identical neurons 
\be{coupled:neurons:appendix}
\bal
&\dot X_1= f(X_1) + \epsilon g(X_1,X_2),\\
&\dot X_2= f(X_2) + \epsilon g(X_2,X_1),
\eal
\ee
where $0<\epsilon\ll1$ is the coupling strength and $g$ is the coupling function.
For future reference, recall that neurons are coupled only via their voltage
variables; see Equation~(\ref{cell1}). When a neuron is perturbed by
synaptic currents from other neurons or by other external stimuli,
its dynamics no longer remain on the limit cycle $\Gamma$, and the relative 
phase $\bar\phi$ is not constant. However, when perturbations are sufficiently weak, the 
intrinsic dynamics dominate, ensuring that the perturbed system
remains close to $\Gamma$ with frequency close to $\omega$. 
Therefore, we can approximate the solution of neuron $j$ by 
\be{Gamma:limit:cycle:appendix}
X_j(t) = \Gamma(\omega t+\bar\phi_j(t)),
\ee
where the relative phase  $\bar\phi_j(t)$ is now a function of time $t$.
Over each cycle of the oscillations, the weak perturbations to the
neurons produce only small changes in  $\bar\phi_j(t)$. These
changes are negligible over a single cycle, but they can slowly
accumulate over many cycles and produce substantial effects on
the relative firing times. The goal now is to understand how the
relative phases $\bar\phi_j(t)$ of the coupled neurons evolve. 
 
 To do this, we first review the concept of an 
infinitesimal phase response curve (iPRC), $Z(\phi)$,  and then 
we show how to derive the phase equation  given in 
Equation~(\ref{take:average:3}) from Equation~(\ref{coupled:neurons}). 
For details see \cite{SIAM2, Schwemmer2012}; specifically, we
borrow some material from \cite{Schwemmer2012}. 

Intuitively, an iPRC \cite{Winfree2001}
of an oscillating neuron measures the phase shifts in response to small
brief perturbations (Dirac $\delta$  function)  delivered at different times
in its limit cycle and acts like a Green's function for the oscillating neurons.
Below, we will give a precise mathematical definition of the iPRC and
explain how we compute it in our model. 

Suppose that a small brief rectangular current pulse of amplitude
$\epsilon I$ and duration $\Delta t$ is applied to a neuron
at phase $\phi$, i.e.,  the total charge applied to the cell by the stimulus 
is equal to $\epsilon I \Delta t$. Then the membrane potential $v$ changes by 
$\Delta v = \epsilon I \Delta t /C$.
Depending on the amplitude and duration of the stimulus and the phase
in the oscillation at which it is applied, the cell may fire sooner (phase advance) 
or later (phase delay) than it would
have fired without the perturbation. 
For sufficiently small and brief stimuli, the neuron will respond in an
approximately linear fashion, and the iPRC in the direction of $v$, denoted by
$Z_v$, scales linearly with the magnitude of the current stimulus in the limit 
$\Delta v\to 0$:
\be{iPRC:appendix}
 Z_v(\phi) :=\lim_{\Delta v\to 0}  \frac{\Delta \phi(\phi)}{\Delta v}. 
\ee
Note that $Z_v$ only captures the response to perturbations in the direction
of the membrane potential $v$. However, such responses can be
computed for perturbations in any direction in state space. 

There is a one to one correspondence between phase $\phi$ and each point
$x$ on the limit cycle $\Gamma$.  The phase map $\Phi$ on $\Gamma$ is
defined as follows. 
\be{phase:map:appendix}
\Phi(x(t)) :=  \phi(t) = \omega t+\bar\phi \quad \mbox{mod T},
\ee
which implies that
\be{phase:map:gradient:appendix}
\nabla_x\Phi\cdot \dot X =  \nabla_x\Phi\cdot f  = \omega. 
\ee
The phase map is well defined for all points  on $\Gamma$. For any
asymptotically stable limit cycle, we can extend the domain of the phase map
to points in the domain of attraction of the limit cycle. If $x$ is a point on
$\Gamma$ and $y$ is a point in a neighborhood of $\Gamma$, then we
say that $y$ has the same \textit{asymptotic phase} as $x$ if
\[\| X(t,x) - X(t,y)\| \to 0 \qquad \mbox{as $t\to\infty$},\] 
where $X(\cdot,x)$ is the unique solution of Equation~(\ref{single:neuron:appendix})
with initial condition $x$. Note that with $x\in\Gamma$, $X(t,x) = \Gamma 
(\omega t + \bar\phi)$, for some $\bar\phi$. 
This means that the solution starting at the initial point $y$ in a sufficiently
small neighborhood of $\Gamma$ converges to the solution starting at
the point $x \in \Gamma$ as $t \rightarrow \infty$, so that $\Phi(x)
= \Phi(y)$. The set of all points in the neighborhood of $\Gamma$ that
have the same asymptotic phase as the point $x \in \Gamma$ is called
the \textit{isochron} for phase  $\phi = \Phi(x)$ \cite{Winfree2001, Guck75}.

\begin{figure}[h!]
 \begin{center}
\includegraphics[scale=.3]{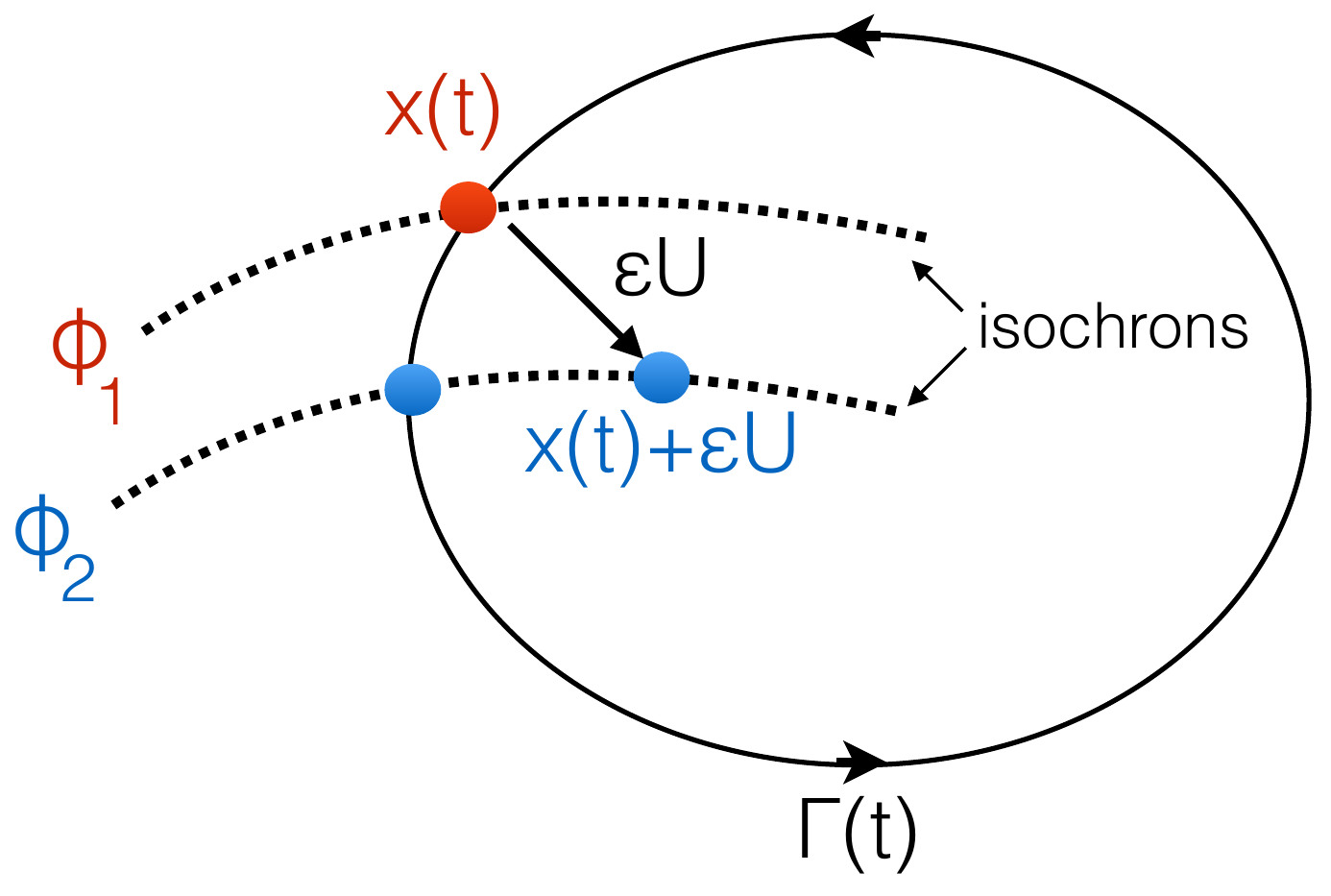}
\caption{ Isochrons and asymptotic phase.}
\label{isochron}
\end{center}
\end{figure}

Given the concepts of isochron and asymptotic phase,  we show that the
gradient of the phase map $\Phi$ is the vector iPRC, i.e., its components
are the iPRCs for every variable in Equation~(\ref{single:neuron:appendix}). 
Suppose that, at time $t$, the neuron is in  state $x(t)\in \Gamma(t)$ with
corresponding phase $\phi_1(t)$:
\[
 \Phi(x(t)) = \phi_1(t) = \omega t +\bar \phi_1(t).
 \]
At this time, it receives a small abrupt external perturbation $\epsilon U$ 
with magnitude $\epsilon$, where $U$ is the unit vector in the direction
of the perturbation in state space. Immediately after the perturbation, the
neuron is in the state $x(t)+\epsilon U$ and its new ``asymptotic phase" is  
\[ 
\Phi(x(t)+\epsilon U) = \phi_2(t) = \omega t +\bar \phi_2(t).
\]
See Figure \ref{isochron} for an illustration.  Using Taylor series,
\be{eq1PRC:appendix}
\phi_2(t) - \phi_1(t) =  \Phi(x(t)+\epsilon U) -  \Phi(x(t)) = \nabla_x\Phi(x(t))\cdot \epsilon U +O\lt(\epsilon^2\rt),
\ee
and dividing by $\epsilon,$ we obtain
\be{eq2PRC:appendix}
\dis\frac{\phi_2(t) - \phi_1(t)}{\epsilon}  = \nabla_x\Phi(x(t))\cdot U+O\lt(\epsilon\rt),
\ee
and therefore, by the definition of iPRC, as $\epsilon\to0$, the left hand 
side of Equation~(\ref{eq2PRC:appendix}) is the iPRC at $\phi_1(t)$ in the direction of $U$: 
\be{iPRC:gradient:appendix}
Z(\phi_1(t)) \cdot U  = \nabla_x \Phi(x(t))\cdot U. 
\ee
Hence, for any point on the limit cycle $\Gamma$, $Z= \nabla_x \Phi$. 

The iPRCs can also be computed from an adjoint formulation
 \cite{Schwemmer2012, Erment-Terman10}, which is the method adopted here. Specifically,
the iPRC $Z$ is a $T$-periodic solution of the adjoint equation of Equation~(\ref{single:neuron:appendix}), i.e., 
\be{adjoint:appendix}
\frac{dZ}{dt} = - [J_f(\Gamma)]^T \; Z, 
\ee
subject to the constraint that makes $Z(\phi_1(t))$ normal to the limit cycle $\Gamma(t)$ at $t=0$:
\be{normalization:appendix}
Z(0) \cdot \Gamma'(0) =0. 
\ee
In Equation~(\ref{adjoint:appendix}), $J_f(\Gamma)=D_f(\Gamma)$ is the linearization
of Equation~(\ref{single:neuron:appendix}) around the limit cycle $\Gamma$ and 
$\Gamma'(0)$ denotes the vector  tangent to the limit cycle at time $t=0$:
$\Gamma'(0) = f(x(0))\mid_{x \in \Gamma}$. 
Note that the adjoint system (\ref{adjoint:appendix}) has the
opposite stability of the original system~(\ref{single:neuron:appendix}), which has
an asymptotically stable solution $\Gamma$.
Thus, to obtain the unstable periodic
solution of Equation~(\ref{adjoint:appendix}), we integrate 
backwards in time from an arbitrary initial condition.
To obtain the iPRC, we normalize
the periodic solution using Equation~(\ref{normalization:appendix}).

There is a direct way to relate the gradient of the phase map  to the
solution of the adjoint equation~(\ref{adjoint:appendix}). In fact, $\nabla_x 
\Phi(\Gamma(t))$ satisfies the adjoint equation (\ref{adjoint:appendix}) and the
normalization condition Equation~(\ref{normalization:appendix}), \cite{Brown2004}.
Figure~\ref{H_PRC_versus_delta} and~\ref{H_PRC_versus_Iext}
(first rows) show $Z_v$, the first component of the vector iPRC $Z$ computed
by the adjoint method,  of the bursting neuron model for different values
of $\delta$,  and $I_{ext}$, respectively. 

Now consider the system of weakly coupled identical neurons introduced in Equation~(\ref{coupled:neurons:appendix}). As we discussed earlier, our goal is to  understand how the relative phase $\bar\phi_j(t)$ of the coupled neurons evolves slowly in time. 
For $i=1,2$, let  $X_i(t)$ be solutions of Equation~(\ref{coupled:neurons:appendix}) with corresponding phases
\[\phi_i(t) := \Phi(X_i(t)) = \omega t+\bar \phi_i(t).\] 
Then by taking the derivative of $\phi_i$ and using Equations~(\ref{coupled:neurons:appendix}), (\ref{Gamma:limit:cycle:appendix}), (\ref{phase:map:gradient:appendix}), and (\ref{iPRC:gradient:appendix}), we obtain: 
\begin{subequations}\label{deriv_coupling}
\begin{align}
\dis\frac{d\phi_i}{dt} (t)&= \nabla_x\Phi(X_i(t)) \cdot \dot{X}_i\\
&= \nabla_x\Phi(X_i(t)) \cdot \lt[f(X_i(t)) + \epsilon g(X_i,X_j)\rt]\\
&\approx  \nabla_x\Phi(\Gamma(\omega t+\bar\phi_i(t))) \cdot \lt[f(\Gamma(\omega t+\bar\phi_i(t))) + \epsilon g(\Gamma(\omega t+\bar\phi_i(t)),\Gamma(\omega t+\bar\phi_j(t)))\rt]\\
& = \omega + \epsilon Z (\Gamma(\omega t+\bar\phi_i(t))) \cdot g(\Gamma(\omega t+\bar\phi_i(t)),\Gamma(\omega t+\bar\phi_j(t))).
\end{align}
\end{subequations}
Using the change of variables $\phi_i(t) = \omega t + \bar \phi_i(t)$, 
we get the following dynamics for ${d\bar\phi_i}/{dt}$
\be{take:average:1}
\dis\frac{d\bar\phi_i}{dt}(t) = \epsilon Z (\Gamma(\omega t+\bar\phi_i(t))) \cdot g(\Gamma(\omega t+\bar\phi_i(t)),\Gamma(\omega t+\bar\phi_j(t))).
\ee
Now letting $\tilde t := \omega t+\bar\phi_i(t)$ and taking the average of
the right hand side of Equation~(\ref{take:average:1}) over one 
unperturbed period and using  the Averaging Theorem \cite[Section 4.1]{Holmes_book},
we obtain the following equation for the relative phase $\bar\phi_i$. 
\be{take:average:2}
\dis\frac{d\bar\phi_i}{dt} = \frac{\epsilon}{T}\int_0^T Z (\Gamma(\tilde t)) \cdot g(\Gamma(\tilde t),\Gamma(\tilde t +\bar\phi_j(t) -\bar\phi_i(t) )) \; d\tilde t=: \epsilon H(\bar\phi_j(t) -\bar\phi_i(t) ),
\ee
where 
\[H = H(\theta) =  \frac{1}{T}\int_0^T Z (\Gamma(\tilde t)) \cdot g(\Gamma(\tilde t),\Gamma(\tilde t +\theta )) \; d\tilde t,\]
 is the coupling function: the convolution  of the synaptic current  input to the neuron via coupling $g$ and the neuron's iPRC $Z$. 
Using $\phi_i(t) = \omega t+\bar \phi_i(t)$ and Equation~(\ref{take:average:2}), we can write the phase equation of each neuron instead of relative phase equations, 
\be{}
\dis\frac{d\phi_i}{dt}(t) =\omega + \epsilon H(\phi_j(t) -\phi_i(t)),
\ee
where $\epsilon$ denotes the coupling strength (cf. Equation~(\ref{take:average:2})).


\end{document}